\documentclass[11pt,a4paper]{article}
\usepackage[T1]{fontenc}
\usepackage{geometry}
\usepackage{amsmath,amssymb,amsthm}
\usepackage{graphicx}
\usepackage{xcolor}
\definecolor{revisionred}{HTML}{8B1A1A}
\definecolor{revisionblue}{HTML}{173F7A}
\usepackage{booktabs,array}
\usepackage[hidelinks,hypertexnames=false]{hyperref}
\usepackage{enumitem}
\setlist[enumerate]{itemsep=2pt,topsep=5pt}
\setlist[itemize]{itemsep=2pt,topsep=5pt}
\allowdisplaybreaks[1]
\usepackage{fancyhdr}
\hypersetup{
  pdftitle={On the Geometry of Low Rank Correlation Matrices},
  pdfauthor={Olivier Bisson, David Loiseaux and Xavier Pennec}
}

\usepackage{microtype}
\usepackage{stmaryrd}
\usepackage{subcaption}
\usepackage{mathtools}
\usepackage{tikz-cd}
\usetikzlibrary{cd}

\usepackage{tabularx,longtable}
\usepackage{algorithm}
\usepackage{algpseudocode}
\usepackage[normalem]{ulem}
\usepackage{cancel}
\usepackage{aliascnt}

\DeclareMathOperator{\diag}{diag}
\DeclareMathOperator{\rank}{rank}
\DeclareMathOperator{\spanv}{span}
\DeclareMathOperator{\tr}{tr}

\DeclareMathOperator{\im}{im}
\DeclareMathOperator{\sgn}{sgn}

\DeclareMathOperator*{\argmin}{arg\, min}

\DeclareMathOperator{\Length}{{Length}}
\DeclareMathOperator{\Vol}{Vol}
\DeclareMathOperator{\pdet}{pdet}
\DeclareMathOperator{\diam}{diam}
\DeclareMathOperator{\Ric}{Ric}
\DeclareMathOperator{\Scal}{Scal}
\DeclareMathOperator{\Hol}{Hol}
\DeclareMathOperator{\Exp}{Exp}

\usepackage{dsfont}
\newcommand{\ind}[1]{\mathds{1}_{#1}}

\DeclarePairedDelimiterXPP{\Frob}[2]{}{\langle}{\rangle}{_F}{#1,#2}

\newcommand{\SPD}{\mathcal{S}^{++}}
\newcommand{\so}{\mathfrak{so}}
\newcommand{\Corr}{\mathrm{Corr}}
\newcommand{\Crk}[2]{\Corr^{(#1)}(#2)}
\newcommand{\Crle}[2]{\Corr^{\le #1}(#2)}

\newcommand{\R}{\mathbb{R}}
\newcommand{\grad}{\nabla}

\newcommand{\dProc}[1]{\widehat d_{#1}^{\mathrm{Proc}}}

\newcommand{\Sym}{\mathcal{S}}
\newcommand{\dd}{\,\mathrm d}
\newcommand{\padIota}[2]{\iota_{#1\to #2}}
\newcommand{\padXi}[2]{\Xi_{#1\to #2}}
\newcommand{\iks}{\padIota{s}{k}}
\newcommand{\Xks}{\padXi{s}{k}}
\newcommand{\Qrk}[2]{\mathord{{Q_{#1}^{(#2)}}}}
\newcommand{\Mrk}[2]{\mathord{{M_{#1}^{(#2)}}}}
\newcommand{\Qle}[2]{\mathord{{Q_{#1}^{\le #2}}}}
\newcommand{\Qks}{\Qrk{k}{s}}
\newcommand{\Qkj}{\Qrk{k}{j}}
\newcommand{\Qss}{\Qrk{s}{s}}
\newcommand{\Mks}{\Mrk{k}{s}}
\newcommand{\Mss}{\Mrk{s}{s}}
\newcommand{\Qkles}{\Qle{k}{s}}

\newcommand{\gQrk}[2]{\mathord{{g_{Q_{#1}^{(#2)}}}}}
\newcommand{\dM}[1]{\mathord{{d_{M_{#1}}}}}
\newcommand{\dQ}[1]{\mathord{{d_{Q_{#1}}}}}
\newcommand{\dg}[1]{\mathord{{d_{#1}}}}
\newcommand{\dSphere}[1]{\mathord{{d_{\mathbb S^{#1}}}}}
\newcommand{\dIntrinsic}[1]{\mathord{{d_{#1}}}}
\newcommand{\dInt}[2]{\mathord{{d_{\mathrm{int},#1}^{(#2)}}}}
\newcommand{\dCorr}[2]{\mathord{{d_{#1,#2}^{\Corr}}}}
\newcommand{\dCorrle}[2]{\mathord{{d_{#1,\le #2}^{\Corr}}}}
\newcommand{\dChord}[1]{\mathord{{\delta_{#1}}}}
\newcommand{\dQrk}[2]{\mathord{{d_{Q_{#1}^{(#2)}}}}}
\newcommand{\dle}[2]{\mathord{{d_{#1}^{\le #2}}}}
\newcommand{\gQks}{\gQrk{k}{s}}
\newcommand{\gQss}{\gQrk{s}{s}}
\newcommand{\dQks}{\dQrk{k}{s}}
\newcommand{\dQss}{\dQrk{s}{s}}
\newcommand{\dkles}{\dle{\mathrm{int}, k}{s}}
\newcommand{\Hbundle}{\mathcal H}
\newcommand{\Vbundle}{\mathcal V}
\newcommand{\Tbundle}{\mathcal T}
\newcommand{\Hz}[1]{\Hbundle_{#1}}
\newcommand{\Vz}[1]{\Vbundle_{#1}}
\newcommand{\Tz}[1]{\Tbundle_{#1}}

\newcommand\restr[2]{{\left.\kern-\nulldelimiterspace #1
\vphantom{\big|} \right|_{#2} }}

\newcommand{\I}{\mathrm I}

\theoremstyle{plain}

\newtheorem{theorem}{Theorem}[section]

\newaliascnt{lemma}{theorem}
\newtheorem{lemma}[lemma]{Lemma}
\aliascntresetthe{lemma}

\newaliascnt{proposition}{theorem}
\newtheorem{proposition}[proposition]{Proposition}
\aliascntresetthe{proposition}

\newaliascnt{corollary}{theorem}
\newtheorem{corollary}[corollary]{Corollary}
\aliascntresetthe{corollary}

\theoremstyle{definition}

\newaliascnt{definition}{theorem}
\newtheorem{definition}[definition]{Definition}
\aliascntresetthe{definition}

\newaliascnt{example}{theorem}
\newtheorem{example}[example]{Example}
\aliascntresetthe{example}

\newaliascnt{remark}{theorem}
\newtheorem{remark}[remark]{Remark}
\aliascntresetthe{remark}

\usepackage[capitalise,noabbrev]{cleveref}

\crefname{theorem}{Theorem}{Theorems}
\crefname{lemma}{Lemma}{Lemmas}
\crefname{proposition}{Proposition}{Propositions}
\crefname{corollary}{Corollary}{Corollaries}
\crefname{definition}{Definition}{Definitions}
\crefname{example}{Example}{Examples}
\crefname{remark}{Remark}{Remarks}

\numberwithin{equation}{section}

\begin{document}

\title{On the Geometry of Low Rank Correlation Matrices}
\author{Olivier Bisson\thanks{Inria Centre at Universit\'e
    C\^ote d'Azur.
  Email: \texttt{olivier.bisson@inria.fr}.}
  \and
  David Loiseaux\thanks{
    Lawrence Berkeley National Lab.
  Email: \texttt{djloiseaux@lbl.gov}.}
  \and
  Xavier Pennec\thanks{Inria Centre at Universit\'e
    C\^ote d'Azur.
Email: \texttt{xavier.pennec@inria.fr}.}}
\date{October 20, 2026}

\maketitle

\begin{abstract}
  Correlation matrices of bounded rank form a stratified space whose factor
  representations lie in a product of spheres. 
  Using the orbit-type description of the rank stratification, we prove that padding factors preserves the Riemannian geometry of each
  fixed-rank stratum and the intrinsic length metric of every rank-bounded
  subset. In the reduced model, a Sylvester equation describes the mechanical
  connection and its horizontal lifts. We derive sectional-curvature formulas
  and bounds, characterize zero-curvature directions, and determine the
  connection's holonomy groups.
  In particular, a flat rank-two connection can have nontrivial
  dihedral holonomy.
  For geodesics subject to a rank bound, rank
  cannot decrease in the interior of a minimizing segment with maximal-rank
  endpoints; the local excess over the unconstrained quotient distance is controlled by
  a dimension-independent cubic estimate.
  Finally, we compute the exact change of measure
  between quotient Riemannian volume and Euclidean surface volume in
  the off-diagonal entries. Coarea yields explicit factor densities
  for both normalized uniform laws.

\end{abstract}

\noindent\textbf{Keywords.}
Low-rank correlation matrices, Riemannian quotients, stratified spaces,
rank-constrained geodesics, sectional curvature, mechanical connection,
holonomy, volume comparison.

\smallskip
\noindent\textbf{MSC 2020.} Primary 53C20; Secondary 53C05, 53C22, 53C23, 53C29.

\clearpage
\setcounter{tocdepth}{2}

\tableofcontents
\clearpage

\section{Introduction}
\label{sec:introduction}

Correlation matrices describe dependence after the scale of each variable
has been removed. Their positive semidefiniteness and unit diagonal impose
coupled constraints on their entries, and a rank restriction adds a
singular geometric structure. Such restrictions arise both from the data
and from the model: a sample correlation matrix constructed from \(N\)
centered observations of \(n\) nonconstant variables has rank at most
\(\min(n,N-1)\), while a factor representation limits the number of
independent directions used to describe dependence. Low-rank correlation
matrices consequently occur in problems ranging from financial
approximation to the analysis of brain connectivity
\cite{duanGeneralizedLowRank2014,Chen2025}. Interpolating these matrices,
computing averages, optimizing over them, and generating random examples
all require a geometry that respects both the diagonal constraint and
the prescribed rank.

{
  The difficulty is not merely to represent a correlation matrix using fewer
  parameters, but to perform statistical operations without leaving the
  prescribed model: the set of correlation matrices satisfying a
  chosen rank bound. 
  For example, in functional connectivity, each subject or
  recording window yields a correlation matrix
  \cite{HutchisonEtAl2013,AllenEtAl2014}. Comparing groups or tracking
  temporal changes calls for representative matrices and interpolating paths
  that respect the chosen rank bound \cite{Chen2025}.
  Entrywise averages preserve positive semidefiniteness and
  the unit diagonal, but have higher rank in general. Conversely,
  averaging factors
  requires accounting for their orthogonal nonuniqueness. Intrinsic
  distances and rank-constrained Fr\'echet means provide a way to compare
  dependence structures and define representative matrices within a chosen
  rank bound, motivating applications to group summaries, clustering, and
  interpolation \cite{Chen2025}.
}

{
  For \(n \times n\) correlation matrices, the rank-\(s\) stratum has
  dimension \(n(s-1)-s(s-1)/2\) (\cref{cor:dimension-fixed-rank-stratum}),
  rather than \(n(n-1)/2\) at full rank. At fixed \(s\), the scaling is linear in \(n\) 
  and supports parsimonious dependence models with reduced
  storage and computational costs
  \cite{GrubisicPietersz2007}.
  Low-rank structure also allows partially observed Gram or
  correlation matrices to be completed from fewer entries 
  \cite[Section~3]{SingerCucuringu2010}. In particular, in a statistical setting, this translates to 
  rank-controlled concentration bounds of the approximation
  error 
  of covariance matrices on Gaussian data
  \cite[Corollary~2]{KoltchinskiiLounici2017}.
  In simulation, however, full-dimensional laws for random correlation
  matrices \cite{Joe2006,LewandowskiKurowickaJoe2009} assign zero
  measure to lower-rank strata, so sampling at a prescribed smaller rank
  requires a reference measure supported on the relevant stratum. Our
  results address the intrinsic consistency of these constructions
  across factor dimensions, shortest paths under rank constraints, and
  the reference measures needed to define uniform random examples.
}

A factorization of a correlation matrix \(C=XX^\top\), with \(X\in\R^{n\times k}\), gives a
natural way to impose these constraints: the rows of \(X\) have unit
norm, forming a product of spheres, and the number of its
columns bounds the rank of \(C\). Orthogonal changes of factor
{(\(X\mapsto XR\), with \(R\) orthogonal)} leave
\(C\) unchanged, so the restriction of the Frobenius metric on the tangent bundle of the product of spheres induces a natural
quotient metric on correlation matrices.
This construction raises questions that are essential for computation.
Can a shortest path between matrices of the same rank benefit from
passing through another rank? How does the choice of a horizontal factor
evolve around a closed path? Finally, which probability law is obtained
by drawing independent uniform rows on a sphere, and how does it relate
to uniformity on correlation matrices themselves when pushing it forward via the Gram map $X\mapsto XX^\top$?

\paragraph{Rank constraints and minimizing geodesics.}
The quotient geometry already restricts how ambient shortest paths
can meet the rank strata. It is well known that a minimizing segment in a
Riemannian orbit space has no interior point of more singular
orbit type than either endpoint (see for instance \cite[Lemma~3.5]{AKLM03}). For correlation matrices, the
interior rank is therefore at least the rank of each endpoint.
An ambient minimizer between rank-\(s\) matrices may nevertheless
pass through higher rank, which is inadmissible when the model
requires rank at most \(s\).

Restricting admissible curves changes the intrinsic metric problem,
so the ambient \emph{bumping} phenomenon of minimizing geodesic cannot directly apply. We prove that, for
\(2\leq s\leq\min(n,k)\), \emph{every intrinsic minimizing
segment under the rank bound \(s\) between rank-\(s\) endpoints
remains rank \(s\)}
(\cref{thm:no-interior-rank-drop-rank-bounded}). Thus allowing
intermediate rank loss does not shorten fixed-rank distances.
The proof uses a global isometry with a reduced quotient, in which
the endpoints lie in the principal stratum and the orbit-type
theorem applies. Fixed-rank reduction then provides a principal
bundle of full-column-rank sphere factors for computing these
minimizers.

\paragraph{The limitations of projection.}
A natural approach to rank constraints is to carry out a
construction in the ambient space and then project its output to
lower rank. Recovering intrinsic geometry in this way presents
several difficulties. Spectral truncation solves the unstructured
Frobenius low-rank approximation problem \cite{EckartYoung1936},
but generally destroys the unit diagonal. Subsequent diagonal
rescaling restores feasibility when all diagonal entries remain
positive, as in the initialization discussed in
\cite{GrubisicPietersz2007}. This procedure need not be uniquely
defined: truncation can be nonunique at an eigenvalue tie,
and zero diagonal entries prevent rescaling. For example, a
rank-\(s\) spectral truncation of \(\I_n\), \(s<n\), may be
\(\diag(\I_s,0)\).

\begin{figure}[!htbp]
  \centering
  \begin{subfigure}[t]{.49\linewidth}
    \centering
    \includegraphics[width=\linewidth]{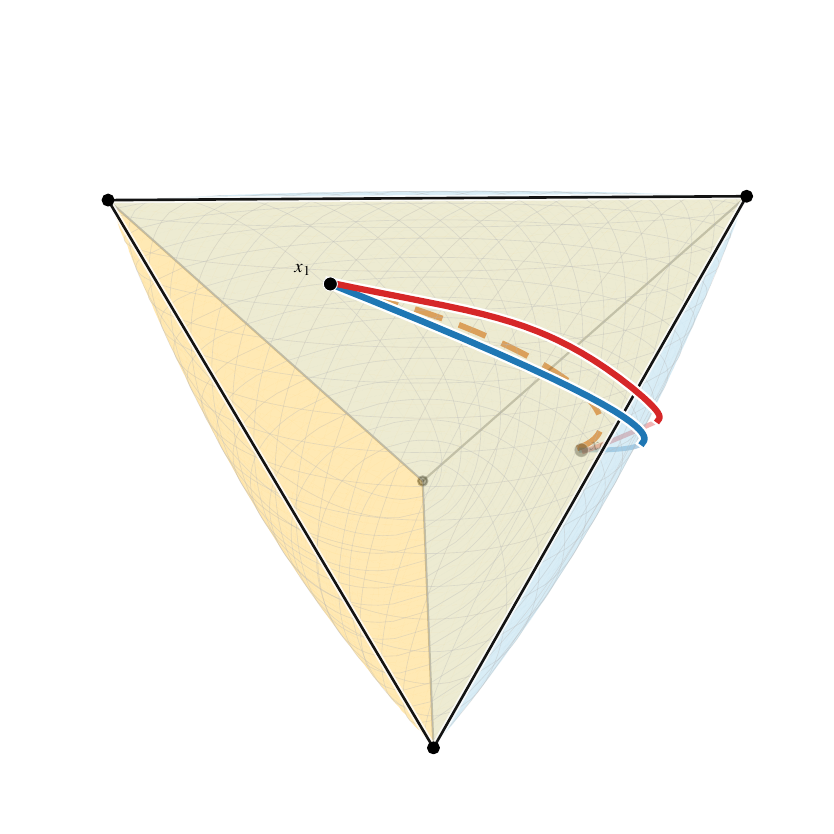}
    \caption{Continuous, non-minimizing nearest projection.}
    \label{fig:corr3-projected-ambient-geodesic-success}
  \end{subfigure}\hfill
  \begin{subfigure}[t]{.49\linewidth}
    \centering
    \includegraphics[width=\linewidth]{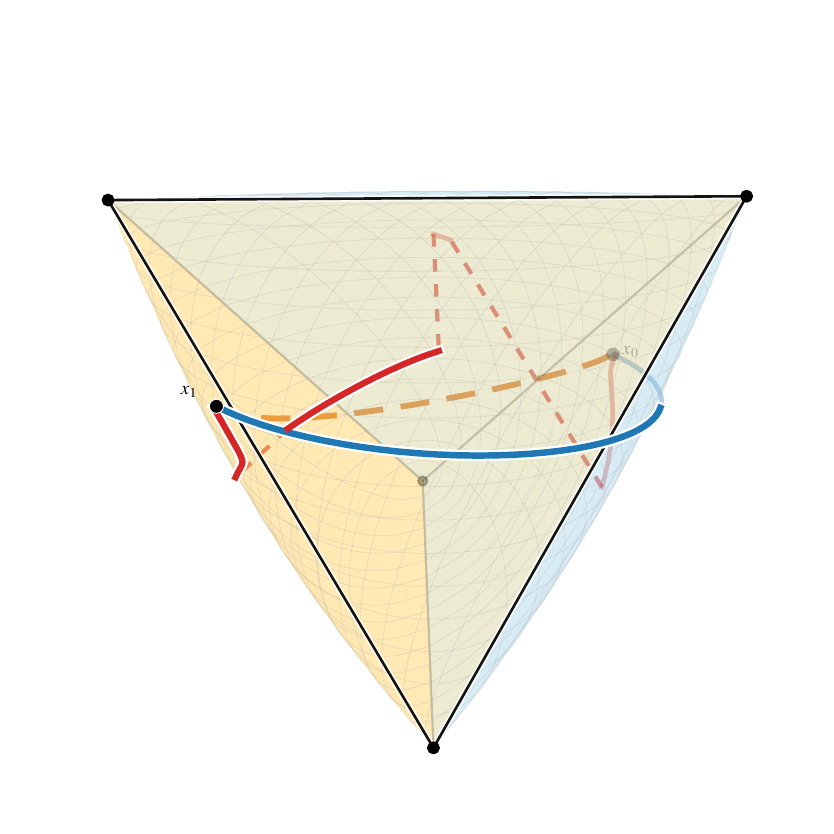}
    \caption{Numerical projection with face switches.}
    \label{fig:corr3-projected-ambient-geodesic-obstruction}
  \end{subfigure}
  \par\smallskip
  \includegraphics[width=.96\linewidth]{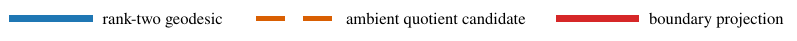}
  \caption{Projection of ambient candidates onto the rank-$2$ elliptope stratum $\Crk 23$ and intrinsic rank-two geodesics.
    }
  \label{fig:corr3-projected-ambient-geodesic-comparison}
\end{figure}

There is also a global topological obstruction. The full elliptope
is a compact convex space of dimension \(d\coloneq n(n-1)/2\)
whose relative boundary consists exactly of singular
correlation matrices, i.e., non-full rank matrices.
Radial parametrization about \(\I_n\)
identifies the body and its boundary with a closed \(d\)-ball and
its boundary sphere. Brouwer's no-retraction theorem
\cite[Corollary~2.15]{Hatcher2002} therefore rules out a continuous
map from the full elliptope onto its singular boundary that fixes
every singular matrix. Even for this boundary rank constraint, a
global projection cannot both be continuous and fix every
feasible point.

Smooth local projections near a fixed-rank matrix remain available
and yield valid retractions for manifold optimization
\cite[Lemma~4 and Proposition~5]{AbsilMalick2012}. Their local approximation properties do
not, by themselves, identify shortest paths under a rank
constraint. \Cref{fig:corr3-projected-ambient-geodesic-success}
illustrates this distinction for \(3\times3\) correlation matrices $\Corr (3)$:
a continuous nearest projection of a selected ambient candidate
can fail to minimize the intrinsic rank-two distance.
\Cref{fig:corr3-projected-ambient-geodesic-obstruction}
shows non-continuity of the projection.

To determine rank-constrained shortest paths, one therefore needs
comparisons of lengths along admissible curves. Similarly,
identifying the density of a projected probability law relative to
a chosen geometric volume requires Jacobian calculations.
The reduction, connection, and volume methods developed below
provide this information directly in the quotient geometry.

{
The main results follow four steps. Padding zero columns gives isometries for both
fixed-rank strata and their rank-bounded closures
(\cref{thm:fixed-rank-riemannian-reduction,thm:global-rank-bounded-metric-reduction}). More precisely, padding realizes the standard normalizer reduction of
each rank orbit type \cite{Michor2008}, providing a
principal $O(s)$-bundle over every rank-$s$ stratum.
Orthogonality to the fibers defines a principal connection, which we
call the \emph{reduced mechanical connection} and whose connection form
is obtained by solving a Sylvester equation.
This reduced connection then yields explicit curvature formulas and
a classification of horizontal-factor holonomy
(\cref{thm:sectional-curvature-fixed-rank-stratum,thm:holonomy-all-rank-strata}).
These reductions identify the intrinsic cost of a rank constraint and
give a uniform cubic bound for its local excess
(\cref{thm:no-interior-rank-drop-rank-bounded,thm:universal-cubic-rank-excess}).
Finally, an exact volume comparison provides factor densities for
sampling correlation matrices uniformly with respect to either
quotient or ambient surface volume
(\cref{thm:quotient-versus-ambient-volume,cor:ambient-euclidean-volume-lift}).
}

\subsection{Background and geometric setting}
\label{subsec:intro-background}

The approximation of correlation matrices has a substantial computational
literature. Higham's nearest-correlation problem \cite{Higham2002}
minimizes an entrywise discrepancy over the full convex set of
correlation matrices. A rank bound makes this feasible set nonconvex.
Grubi\v{s}i\'c and Pietersz \cite{GrubisicPietersz2007} develop
geometric optimization methods for the resulting approximation problem,
including weighted objectives. Their work already identifies the
product-of-spheres quotient and the smooth manifold of matrices of
maximal allowed rank.

Orthogonal quotients of matrix factors have an established role in the
geometry of positive semidefinite matrices. The Bures--Wasserstein
distance admits a factor formulation through orthogonal alignment;
its matrix analysis and its relation to geometric means are developed
in \cite[Theorems~1 and 2]{BhatiaJainLim2019}. 
Quotient geometry for positive semidefinite
matrices of fixed rank is studied in \cite[Section~2]{MassartAbsil2020}, while
\cite[Theorem~6.2]{Thanwerdas2023} treats minimizing geodesics between covariance
matrices of possibly different ranks. For correlation matrices, the
unit-diagonal condition replaces the Euclidean factor space by a
product of spheres. This changes the global distance problem:
alignment is measured by spherical lengths, and increasing the
factor dimension can decrease the quotient distance.

The direct starting point for our work is Chen's quotient geometry of
bounded- and fixed-rank correlation matrices \cite{Chen2025}.
Chen identifies the rank decomposition with the orbit-type
stratification, establishes the compatibility of the quotient and
matrix descriptions, and develops distances, minimizing geodesics,
and differential operations on the \emph{principal stratum}. We build on
this construction to study intrinsic reduction across the singular
rank decomposition, the finer curvature and holonomy of the
connection, and the comparison of natural volume measures.
Our arguments combine the geometry of proper isometric group actions
and their orbit spaces \cite{Michor2008,AlexandrinoBettiol2015}
with length-space methods for studying intrinsic distances under
rank constraints.

Let \(n,k\geq2\), put \(r\coloneq\min(n,k)\), and let \(1\leq s\leq r\).
In the following, we denote by \(\Corr(n)\) the elliptope of
\(n\times n\) correlation matrices,
\[
  \Corr(n)
  \coloneq
  \bigl\{
    C\in\R^{n\times n}:
    C=C^\top\succeq0,\ C_{ii}=1\ (1\leq i\leq n)
  \bigr\}.
\]
Its rank-\(s\) stratum and rank-\(s\) skeleton are denoted, respectively, by
\begin{align*}
\Crk{s}{n}
&\coloneq\{C\in\Corr(n):\rank C=s\}, \textnormal{ and }\\
\Crle{s}{n}
&\coloneq\{C\in\Corr(n):\rank C\leq s\}
  =\bigcup_{j=1}^{s}\Crk{j}{n}.
\end{align*}
The factor and quotient spaces, together with their rank pieces and
rank-bounded subsets, are
\begin{align*}
  M_k
  &\coloneq (\mathbb S^{k-1})^n
     =\{X\in\R^{n\times k}:\|X_{i\cdot}\|=1\ \text{for all }i\},
  & Q_k
  &=\pi_k(M_k)\coloneq M_k/O(k),\\
  \Mks
  &\coloneq \{X\in M_k:\rank X=s\},
  & \Qks
  &\coloneq \pi_k(\Mks),\\
  M_k^{\leq s}
  &\coloneq \{X\in M_k:\rank X\leq s\},
  & \Qkles
  &\coloneq \pi_k(M_k^{\leq s})
     =\bigcup_{j=1}^{s}\Qkj,
\end{align*}
where \(O(k)\) acts by right matrix multiplication and
\(\pi_k:M_k\to Q_k\) denotes the quotient projection.

The Gram map
\[
  \Gamma_k:M_k\longrightarrow\Crle{k}{n},
  \qquad X\longmapsto XX^\top,
\]
descends to a homeomorphism
\(\bar\Gamma_k:Q_k\to\Crle{k}{n}\), whose restriction to a fixed rank stratum
\(\bar\Gamma_k:\Qks\to\Crk{s}{n}\) is a diffeomorphism
{(\cref{prop:quotient-model-low-rank-correlations})}.
These maps form the commutative diagrams
\[
\begin{tikzcd}[column sep={4em,between origins},row sep=3em]
  M_k
    \arrow[rr,"\Gamma_k"]
    \arrow[dr,"\pi_k"']
  && \Crle{k}{n}\\
  & Q_k
    \arrow[ur,"\bar\Gamma_k"']
  &
\end{tikzcd}
\qquad
\begin{tikzcd}[column sep={4em,between origins},row sep=3em]
  \Mks
    \arrow[rr,"\Gamma_k"]
    \arrow[dr,"\pi_k"']
  && \Crk{s}{n}\\
  & \Qks
    \arrow[ur,"\bar\Gamma_k"']
  &
\end{tikzcd}
\]
where the maps in the second diagram denote the corresponding
restrictions, which we omit for clarity. Each quotient stratum \(\Qks\), and hence each correlation stratum
\(\Crk{s}{n}\), is a smooth manifold of dimension
\[
  m_s\coloneq n(s-1)-\frac{s(s-1)}2
\]
{(see \cref{cor:dimension-fixed-rank-stratum})}.
These strata are path connected for \(s\geq2\); at rank one,
each is a discrete set of \(2^{n-1}\) points
{(\cref{thm:path-connectedness-fixed-rank-strata,rem:rank-one-stratum})}.

The rank decomposition differs from the classical facial decomposition of the elliptope \(\Corr(n)\).
The facial structure of this convex body is studied by Laurent and
Poljak \cite{LaurentPoljak1996}, and its extreme points are
characterized by Li and Tam \cite{LiTam1994}. The minimal face
containing \(C\in\Corr(n)\) is
\[
  F(C)=\{A\in\Corr(n):\ker C\subseteq\ker A\},
\]
as described in \cite[Proposition~2.7]{LaurentPoljak1995}.
The matrix \(C\) lies in the relative interior of \(F(C)\),
throughout which the kernel, and hence the rank, is constant.
At lower rank, however, fixing the rank allows the kernel and the
minimal face to vary. The rank decomposition is therefore coarser
than the partition into relative interiors of faces.

The quotient representation provides a metric framework for studying
this rank decomposition. Equip \(M_k\) with the product of the round
sphere metrics,
\[
  (g_{M_k})_X(U,V)\coloneq \Frob{U}{V},
  \qquad U,V\in \Tz{X}M_k.
\]
This makes \(M_k\) a compact Riemannian manifold. Its Riemannian
distance \(\dM{k}\) induces the metric quotient distance \(\dQ{k}\) on
\(Q_k\), with
\begin{equation}
  \label{eq:intro-quotient-distance}
  \begin{aligned}
    \dM{k}(X,Y)^2
    =\sum_{i=1}^{n}
    \arccos^2\!\bigl(\langle X_{i\cdot},Y_{i\cdot}\rangle\bigr),\qquad
    \dQ{k}([X],[Y])
    =\min_{R\in O(k)}\dM{k}(X,YR).
  \end{aligned}
\end{equation}
The quotient \((Q_k,\dQ{k})\) is a compact geodesic length space:
its distance is the infimum of lengths of continuous rectifiable
curves {(\cref{def:curve-lengths})}, and this infimum is attained between every pair of points
{(\cref{prop:reduced-quotient-global-metric-geometry})}.
These curves may pass through any rank present in \(Q_k\).

On each smooth factor stratum \(\Mks\), the restricted product
metric induces a quotient Riemannian metric \(\gQks\), uniquely
characterized by requiring
\[
  \pi_k:
  \bigl(\Mks,\restr{g_{M_k}}{\Mks}\bigr)
  \longrightarrow
  \bigl(\Qks,\gQks\bigr)
\]
to be a Riemannian submersion. We denote its associated Riemannian
distance by \(\dQrk{k}{s}\). It is defined by minimizing
\(\gQks\)-lengths over continuous piecewise \(C^1\) curves
that remain in \(\Qks\).
This fixed-stratum Riemannian distance need not equal the pointwise restriction of the ambient quotient distance \(\dQ{k}\), which permits paths through other ranks.

A rank constraint can also be imposed directly on curves in the
ambient metric space \((Q_k,\dQ{k})\). For
\(2\leq s\leq\min(n,k)\), define
\[
  \begin{aligned}
    \dkles(p,q)
    \coloneq\inf_{\substack{
      \gamma:[0,1]\to \Qkles\\
      \gamma(0)=p,\ \gamma(1)=q}}
    \Length_{\dQ{k}}(\gamma),\qquad
    \dInt{k}{s}(p,q)
    \coloneq\inf_{\substack{
      \gamma:[0,1]\to \Qks\\
      \gamma(0)=p,\ \gamma(1)=q}}
    \Length_{\dQ{k}}(\gamma),
  \end{aligned}
\]
where both infima range over continuous \(\dQ{k}\)-rectifiable curves.
These are intrinsic length distances on the indicated subsets:
the first permits ranks at most \(s\), while the second requires
rank exactly \(s\). In both definitions, curve length is measured
using the same ambient quotient distance \(\dQ{k}\).
Their relation to the quotient Riemannian distance is established
by the reduction results below. In particular, the global reduction
will show that
\((\Qkles,\dkles)\) is compact and geodesic
{(\cref{cor:rank-bounded-global-alexandrov-geometry})}.

The Gram map transports these structures to correlation matrices.
We retain the notation \(\dQ{k}\) for the transported quotient distance
on \(\Crle{k}{n}\). On \(\Crk{s}{n}\), we write
\(g_Q\) and \(\dCorr{k}{s}\) for the transports of
\(\gQks\) and \(\dQrk{k}{s}\), respectively, and we write
\(\dCorrle{k}{s}\) for the transport of
\(\dkles\) to \(\Crle{s}{n}\).

For the later volume comparisons, we also use the embedded
Riemannian metrics on \(\Crk{s}{n}\) induced by off-diagonal
Euclidean coordinates and by the full Frobenius inner product.
For \(U,V\in \Tz{C}\Crk{s}{n}\), these are
\[
  (g_{\mathrm E})_C(U,V)
  \coloneq\sum_{i<j}U_{ij}V_{ij},
  \qquad
  (g_{\mathrm F})_C(U,V)
  \coloneq\Frob{U}{V}
  =2(g_{\mathrm E})_C(U,V).
\]
Their Riemannian distances therefore satisfy
\(\dg{g_{\mathrm F}}=\sqrt2\,\dg{g_{\mathrm E}}\).
These intrinsic distances generally differ from the corresponding
ambient chord distances \(\|C-D\|_{\mathrm E}\) and \(\|C-D\|_F\);
see \cref{sec:volume-measures-correlation-strata}.

The following table summarizes the conventions. All Riemannian
distances listed are length metrics; the full quotient and the
rank-bounded skeletons are treated as stratified length spaces.
\begin{center}
  \small
  \begin{tabularx}{\linewidth}{@{}l l X@{}}
    \toprule
    Space & Metric or distance & Geometry and admissible curves \\
    \midrule
    \(M_k\)
    & \(g_{M_k},\ \dM{k}\)
    & Riemannian manifold; all curves in the product of round spheres.
    \\

    \(Q_k\)
    & \(\dQ{k}\)
    & Stratified geodesic length space; all ranks present in \(Q_k\).
    \\

    \(\Crle{k}{n}\)
    & \(\dQ{k}\) via \(\bar\Gamma_k\)
    & Gram transport of the quotient length space; all ranks present.
    \\

    \(\Qks\)
    & \(\gQks,\ \dQrk{k}{s}\)
    & Riemannian manifold; rank exactly \(s\), with quotient
      Riemannian lengths.
    \\

    \(\Qks\)
    & \(\dInt{k}{s}\)
    & Intrinsic length space; rank exactly \(s\), with lengths
      measured using \(\dQ{k}\).
    \\

    \(\Qkles\)
    & \(\dkles\)
    & Stratified length space; rank at most \(s\), with lengths
      measured using \(\dQ{k}\).
    \\

    \(\Crk{s}{n}\)
    & \(g_Q,\ \dCorr{k}{s}\)
    & Riemannian manifold; rank exactly \(s\), with the transported
      quotient metric.
    \\

    \(\Crle{s}{n}\)
    & \(\dCorrle{k}{s}\)
    & Stratified length space; rank at most \(s\), with the
      transported intrinsic skeleton distance.
    \\

    \(\Crk{s}{n}\)
    & \(g_{\mathrm E},g_{\mathrm F};
        \ \dg{g_{\mathrm E}},\dg{g_{\mathrm F}}\)
    & Riemannian manifolds; rank exactly \(s\), with intrinsic
      lengths for the embedded metrics.
    \\
    \bottomrule
  \end{tabularx}
\end{center}

With these definitions in place, the reduction results give a precise
comparison between the distances. Let
\[
  \Xks:Q_s\longrightarrow \Qkles,
  \qquad [Z]\longmapsto[(Z,0)],
\]
be the map induced by zero-padding factors. For
\(p,q\in \Qks\), write
\(p_s\coloneq \Xks^{-1}(p)\) and
\(q_s\coloneq \Xks^{-1}(q)\). We establish
\[
  \begin{aligned}
    \dQ{k}(p,q)
    \leq \dkles(p,q)
      =\dQ{s}(p_s,q_s)
    =\dQrk{s}{s}(p_s,q_s)
      =\dQrk{k}{s}(p,q)
      =\dInt{k}{s}(p,q).
  \end{aligned}
\]
The inequality follows by restricting admissible curves.
The first equality is our global rank-bounded reduction
(\cref{thm:global-rank-bounded-metric-reduction}), and the second is
the full-rank quotient compatibility established by
Chen~\cite[Proposition~3.5]{Chen2025}.
The third is our fixed-rank Riemannian reduction
(\cref{thm:fixed-rank-riemannian-reduction}).
The last is deduced in
\cref{thm:ambient-quotient-length-on-strata} from these reductions
and the standard invariance of rectifiable curve lengths under
passage to the associated intrinsic length metric.

Thus allowing lower-rank points within the rank-\(s\) skeleton
does not decrease the intrinsic distance between rank-\(s\)
endpoints. If \(s<\min(n,k)\), the ambient distance \(\dQ{k}\) also
permits higher-rank paths and can be strictly smaller.
On correlation matrices, the reduction identities become
\[
  \begin{aligned}
    \dCorr{k}{s}=\dCorrle{k}{s}=\dCorr{s}{s}
    \quad\text{on }\Crk{s}{n},\qquad
    \dCorrle{k}{s}=\dCorrle{s}{s}
    \quad\text{on }\Crle{s}{n}.
  \end{aligned}
\]

\subsection{Rank reduction and constrained distances}
\label{subsec:intro-rank-constraints}

Appending zero columns defines a canonical map
\(\Xks:Q_s\to \Qkles\),
\([Z]\mapsto[(Z,0)]\). Our reduction theorems
{(\cref{thm:fixed-rank-riemannian-reduction,thm:global-rank-bounded-metric-reduction})}
show that this map
preserves both the Riemannian geometry of a fixed-rank stratum and
the intrinsic metric of its singular closure:
\begin{equation}
  \label{eq:intro-rank-reduction}
  \begin{aligned}
    \Xks:\
    (\Qss,\gQss)
    &\ \cong\ (\Qks,\gQks),\\
    \Xks:\ (Q_s,\dQ{s})
    &\ \cong\
    (\Qkles,\dkles),
    \qquad 2\leq s\leq r .
  \end{aligned}
\end{equation}
The first isometry provides a reduced principal \(O(s)\)-bundle
for differential calculations on every rank-\(s\) stratum. The
second identifies the full geometry of paths constrained by a
rank bound, including paths whose rank changes.
Thus the intrinsic geometry associated with the rank bound \(s\)
depends only on \(n\) and \(s\).
The smooth reduction realizes the standard normalizer reduction
\cite{GroveSearle2000} (\cref{prop:padding-normalizer-reduction}).

The singular reduction requires control of lengths across rank
changes. We combine length-preserving lifts for quotient submetries
\cite{KapovitchLytchak2022} with semialgebraic approximation of
intrinsic lengths \cite{KurdykaOrro1997}. On intervals of constant
rank, a moving orthonormal frame represents a lifted curve in
\(\R^{n\times s}\). The derivative of the frame contributes only
a vertical velocity, which disappears in the quotient. This
gives the length comparison needed to pass from the local
Riemannian reduction to the global isometry
{(\cref{thm:global-rank-bounded-metric-reduction})}.

{
Although \(\Crle{s}{n}\) is singular for $2\le s < n$, Alexandrov geometry
allows us to extend lower sectional-curvature bounds to such metric spaces.
On a complete, finite-dimensional geodesic metric space with distance
\(d\), nonnegative Alexandrov curvature is characterized by
\begin{equation}
\label{eq:alexandrov-midpoint-comparison}
  d(A,M)^2 \ge
  \frac{d(A,B)^2+d(A,C)^2}{2}-\frac{d(B,C)^2}{4}
\end{equation}
for every geodesic triangle \(ABC\), where \(M\) is the midpoint
of its chosen minimizing geodesic side \(BC\)
\cite{BuragoBuragoIvanov2001}.
Our reduction shows that \(\Crle{s}{n}\), equipped with its intrinsic
rank-bounded metric, is a compact geodesic Alexandrov space of
nonnegative curvature
(\cref{cor:rank-bounded-global-alexandrov-geometry}).
}
Combining our reduction with orbit-type monotonicity for minimizing
geodesics \cite{AKLM03}, we show that, between two rank-\(s\) endpoints,
every intrinsic minimizer under the rank bound \(s\) remains in
rank \(s\) throughout its interior
(\cref{thm:no-interior-rank-drop-rank-bounded}).
Allowing intermediate rank loss therefore does not shorten
fixed-rank distances. By fixed-rank reduction, these minimizers
are computed on \(\Qss\), using the principal bundle
\(\Mss\to \Qss\). Thus the global reduction controls
the intrinsic rank bound, while the fixed-rank reduction supplies
the principal-bundle geometry for its minimizing paths.
This property coexists with geodesic
incompleteness: we construct horizontal geodesics that reach
lower rank in finite time
(\cref{thm:finite-time-rank-degeneration}).

For a different constraint, in which a path between regular
endpoints is required to meet \(\Qle{k}{\ell}\), \(\ell<r\), we
show that each minimizer consists of two ambient minimizing
segments meeting at a single lower-rank point in its image. Its
length is determined by the finite-dimensional optimization
\[
  \min_{p\in \Qle{k}{\ell}}
  \bigl(\dQ{k}(x_0,p)+\dQ{k}(p,x_1)\bigr)
\]
(\cref{prop:one-contact-lower-rank-locus,prop:reduction-constrained-hit}).

Increasing the allowed rank can, however, shorten a path. For
\(p,q\in \Qks\), we quantify this effect by the excess
\[
  \Delta_s(p,q)
  \coloneq
  \dkles(p,q)-\dQ{k}(p,q).
\]
The chordal quotient distance
\(\dChord{k}([X],[Y])\coloneq \min_{O\in O(k)}\|X-YO\|_F\)
is computed by orthogonal Procrustes alignment
\cite{Schonemann1966} and is unchanged by padding
{(\cref{lem:chordal-comparison-padded-factors})}. Comparing
chords and arcs on the row spheres gives an explicit arcsine
bound for \(\Delta_s\) when \(\dChord{k}\leq2\), and the uniform
local estimate
\begin{equation}
  \label{eq:intro-cubic-excess}
  0\leq\Delta_s(p,q)
  \leq
  \left(\frac{\pi}{3}-1\right)\dChord{k}(p,q)^3
  \leq
  \left(\frac{\pi}{3}-1\right)\dQ{k}(p,q)^3,
  \qquad \dQ{k}(p,q)\leq1
\end{equation}
(\cref{thm:arcsine-bound-rank-excess,thm:universal-cubic-rank-excess}).
The constant is independent of \(n\), \(s\), and \(k\), and the
estimate requires no lower bound on the positive eigenvalues of
the endpoints. It therefore controls the local cost of a rank
constraint uniformly as the endpoints approach lower rank.

\subsection{Curvature and holonomy}
\label{subsec:intro-curvature-holonomy}

The fixed-rank reduction places the differential geometry on
the principal bundle \(\Mss\to \Qss\). Its mechanical
connection selects tangent factors orthogonal to the \(O(s)\)-orbits.
For \(Z\in \Mss\), let \(E_Z\coloneq Z^\top Z\), and consider the
Sylvester operator
\[
  \mathcal L_{E_Z}:\so(s)\longrightarrow\so(s),
  \qquad
  \mathcal L_{E_Z}(\Omega)\coloneq E_Z\Omega+\Omega E_Z.
\]
The established Sylvester horizontal projection \cite{MassartAbsil2020},
in its correlation-matrix formulation \cite[Section~5.1]{Chen2025},
is expressed through the connection form
{(\cref{prop:sylvester-mechanical-connection})}
\begin{equation}
  \label{eq:intro-mechanical-connection}
  \omega_Z(W)
  =
  \mathcal L_{E_Z}^{-1}(Z^\top W-W^\top Z),
  \qquad W\in \Tz{Z}\Mss.
\end{equation}
We use this formulation to compute the curvature of the
connection and, through O'Neill's equations \cite{ONeill1966},
the sectional curvature of every fixed-rank stratum.
The explicit formula separates the curvature of the row spheres
from a nonnegative term generated by the orthogonal quotient
(\cref{thm:sectional-curvature-fixed-rank-stratum}).
It also characterizes all zero-curvature planes: a plane is flat
exactly when, for horizontal lifts \(U,V\), the two tangent
vectors \(U_{i\cdot}\) and \(V_{i\cdot}\) are linearly dependent
in every row
(\cref{thm:zero-sectional-curvature-characterization}).

The spectrum of \(\mathcal L_{E_Z}\) on \(\so(s)\) consists of the pairwise sums
\(\lambda_a + \lambda_b\) (\(1\leq a<b\leq s\)) of eigenvalues of \(E_Z\) with distinct indices. This yields quantitative control
of curvature. In particular, on the region
\(\lambda_{\min}(E_Z)\geq\varepsilon>0\),
\begin{equation}
  \label{eq:intro-curvature-bound}
  0\leq\sec\leq1+\frac{3}{2\varepsilon}.
\end{equation}
We establish asymptotic sharpness of this bound, for fixed
\(s\geq3\) and \(\varepsilon>0\), as \(n\) increases
(\cref{cor:curvature-bounds-thick-parts,prop:asymptotic-sharpness-thick-part}).
The pairwise-sum structure also distinguishes different approaches
to the rank boundary: if only one positive eigenvalue tends to
zero while the others stay bounded below, these curvature
denominators remain bounded away from zero
{(\cref{rem:curvature-degeneration-lower-rank})}.

We further determine the restricted and full holonomy groups of
the mechanical connection
(\cref{thm:holonomy-all-rank-strata}).
For \(s\geq3\), the restricted holonomy is \(SO(s)\);
the full holonomy is \(SO(s)\) when \(n=s\) and \(O(s)\)
when \(n>s\). In rank two the connection is flat, with trivial restricted
holonomy, but its full holonomy is the dihedral group \(D_n\) of order \(2n\)
when \(n>2\); it is trivial when \(n=s=2\).
Rank one has trivial holonomy.
These groups describe the orthogonal transformations acquired
by a factor under horizontal lifting of closed curves in the
correlation stratum. The classification distinguishes local
curvature from global monodromy and determines the possible
path dependence of horizontal factorizations. In rank at least
three, the connection curvature also proves nonpolarity of the
action: there is no section meeting the orbits everywhere
orthogonally
(\cref{thm:non-polarity-rank-at-least-three}).

\subsection{Volume comparison and uniform probability laws}
\label{subsec:intro-volume}

A geometric description of random correlation matrices requires
a specified reference measure. For positive definite correlation
matrices, partial-correlation, vine, and extended onion constructions
\cite{Joe2006,LewandowskiKurowickaJoe2009}
provide prescribed distributions, including uniformity with respect
to Lebesgue measure on the off-diagonal entries. Lower-rank strata
have zero measure for this full-dimensional uniform law, so
uniformity at a prescribed smaller rank requires a reference
measure of the appropriate dimension.
For \(2\leq s\leq n\), two natural choices on \(\Crk{s}{n}\) are the Riemannian volume \(\Vol_Q\) of
the quotient metric and the surface volume
\(\Vol_{\mathrm E}\) induced by the off-diagonal
Euclidean inner product
\[
  g_{\mathrm E}(U,V)\coloneq \sum_{i<j}U_{ij}V_{ij}.
\]
The ambient Frobenius inner product is \(2g_{\mathrm E}\), and
therefore gives the same normalized volume law.
Independent uniform rows on \(\mathbb S^{s-1}\) define a third
natural law by the Gram map. Although this construction is
orthogonally invariant, its pushforward weights points according
to the volumes of their factor orbits.

We compute these orbit volumes and the exact density relating
the two geometric volumes. For \(C=XX^\top\in\Crk{s}{n}\)
with $X \in \Mss$,
write \(E_X=X^\top X\) and
\[
  \begin{aligned}
    P_C&\coloneq XE_X^{-1}X^\top=CC^\dagger,\\
    A_C&\coloneq 2\diag\bigl((P_C)_{11},\ldots,(P_C)_{nn}\bigr)
    -P_C\odot P_C,\\
    D_s(C)&\coloneq \prod_{1\leq a<b\leq s}
    (\lambda_a(C)+\lambda_b(C))^{1/2},
  \end{aligned}
\]
where \(C^\dagger\) is the Moore-Penrose inverse,
\(\odot\) denotes the Hadamard (entrywise) product, and
\(\lambda_1(C),\ldots,\lambda_s(C)\) are the positive eigenvalues
of \(C\). Thus \(P_C\) is the orthogonal projector onto the range
of \(C\), and
\(D_s(C)^2=\det\left(\restr{\mathcal L_{E_X}}{\so(s)}\right)\)
{(\cref{lem:volume-section-orbit-sylvester})}.
Our volume comparison theorem gives the Radon--Nikodym derivative
\begin{equation}
  \label{eq:intro-volume-density}
  \frac{\dd\Vol_{\mathrm E}}
  {\dd\Vol_Q}(C)
  =
  2^{(s-n)/2}\,
  \pdet(C)^{(n-s+1)/2}\,
  D_s(C)\sqrt{\det A_C},
\end{equation}
where \(\pdet(C)\coloneq \prod_{a=1}^s\lambda_a(C)\)
(\cref{thm:quotient-versus-ambient-volume}).
The formula accounts for both the spectrum and the position
of the range of \(C\) relative to the coordinate axes.
Its derivation combines the coarea formula
\cite{Federer1969,cibotaruNoteAreaCoarea2016} with an explicit
Gram-quotient Jacobian and the normal Jacobians of the
unit-row and unit-diagonal constraints.

Both volumes have finite positive total mass, so they define
normalized laws \(\mu_{n,s}^{Q}\) and \(\mu_{n,s}^{\mathrm E}\)
{(\cref{lem:finite-volume-comparison-section})}.
If \(\mathcal U_{M_s}\) denotes the uniform probability measure on the product of spheres $M_s$ (i.e., the mass-1-normalized volume measure),
these laws are Gram pushforwards of factor laws with densities
on \(\Mss\) given (up to renormalization constants) by
\begin{equation}
  \label{eq:intro-lifted-densities}
  \begin{aligned}
    \frac{\dd\nu_Q}{\dd\mathcal U_{M_s}}(X)
    &\propto
    D_s(XX^\top)^{-1},\\
    \frac{\dd\nu_{\mathrm E}}{\dd\mathcal U_{M_s}}(X)
    &\propto
    \pdet(XX^\top)^{(n-s+1)/2}\sqrt{\det A_{XX^\top}}.
  \end{aligned}
\end{equation}
These laws satisfy
\((\Gamma_s)_\#\nu_Q=\mu_{n,s}^{Q}\) and
\((\Gamma_s)_\#\nu_{\mathrm E}=\mu_{n,s}^{\mathrm E}\)
(\cref{thm:uniform-rank-r-sampler,cor:ambient-euclidean-volume-lift}).
In particular, for \(s\geq2\), the Gram matrix of independent
uniform sphere rows is uniform for quotient volume if and only
if \(s=2\)
(\cref{cor:raw-gram-not-uniform}).
These identities supply explicit densities on the factor space
for sampling the two geometric laws and for changing reference
measure in integrals on correlation strata. 

The comparison also quantifies behavior near rank loss.
When one positive eigenvalue \(\lambda\) tends to zero and the
remaining positive eigenvalues stay bounded below, the density given 
in \cref{eq:intro-volume-density} is comparable to
\(\lambda^{(n-s+1)/2}\)
(\cref{cor:normalized-ambient-quotient-density}).
Thus the two notions of uniformity have different densities
near the singular boundary, even when the curvature
denominators remain controlled. The volume and connection
calculations are linked by the same Sylvester operator:
its inverse determines horizontal correction and curvature,
while its determinant determines the volume of orthogonal
factor orbits.

\paragraph{Organization of the paper.}
\Cref{sec:quotient-model-rank-stratification} establishes the
quotient model and rank stratification, and
\cref{sec:rank-reduction-metric-reduction} proves the two
reduction theorems.
\Cref{sec:topology-fixed-rank-strata} gives explicit local
product descriptions and proves path connectivity.
\Cref{sec:quotient-metric-sylvester-connection} develops the
reduced connection expressed in terms of a Sylvester equation, metric compatibility, and finite-time
rank degeneration; \cref{sec:curvature-fixed-rank-strata}
derives the curvature formulas, quantitative bounds, and
nonpolarity; \cref{sec:holonomy-mechanical-connection} classifies the
holonomy groups.
\Cref{sec:intrinsic-geodesics-rank-constraints,sec:geodesics-constrained-hit-lower-rank}
treat intrinsic rank constraints and paths required to meet
lower rank, respectively.
\Cref{sec:excess-rank-constrained-geodesics} bounds the distance
excess caused by a rank constraint.
Finally, \cref{sec:volume-measures-correlation-strata} computes
the volume comparison and derives the corresponding uniform
laws and their factor densities.

\section{The quotient model and rank stratification}
\label{sec:quotient-model-rank-stratification}

The purpose of this section is to fix the basic quotient model and to show
that its orbit-type stratification is exactly the rank stratification of
low-rank correlation matrices.  This identification is the structural reason why rank constraints will be treated as stratified quotient constraints rather than as external algebraic constraints.

Throughout the paper, let $n\geq 2$ and $k\geq 2$ be fixed integers.  We set
\[
  r\coloneq \min(n,k),
\]
and use $s\in\{1,\ldots,r\}$ for an arbitrary rank.  The ambient space is the product of spheres
\[
  M_k\coloneq (\mathbb S^{k-1})^n
  =\Bigl\{X\in\R^{n\times k}: \|X_{i\cdot}\|_2=1,
  \ i=1,\ldots,n\Bigr\},
\]
where $X_{i\cdot}\in\R^k$ denotes the $i$-th row of $X$.  For later use, we record its tangent space:
\begin{equation}
\label{eq:tangent-space-product-spheres}
  \Tz{X}M_k
  =\Bigl\{V\in\R^{n\times k}:\;
  \langle V_{i\cdot},X_{i\cdot}\rangle_{\R^k}=0,
  \ i=1,\ldots,n\Bigr\}.
\end{equation}
The orthogonal group $O(k)$ acts on $M_k$ on the right by matrix multiplication
\begin{equation}
\label{eq:right-action}
  X\cdot O\coloneq XO,
  \qquad X\in M_k,\ O\in O(k).
\end{equation}
This action is smooth. Since $O(k)$ is compact, it is also proper. We denote the orbit map and the orbit space by
\[
  \pi_k:M_k\longrightarrow Q_k\coloneq M_k/O(k),
  \qquad \pi_k(X)=[X].
\]
We equip $M_k$ with the product of the standard round metrics on
$\mathbb S^{k-1}$, denoted by $g_{M_k}$. This is the restriction
of the Frobenius inner product to its tangent bundle. Explicitly, for
$X\in M_k$
and $U,V\in\Tz{X}M_k$,
\begin{equation}
\label{eq:product-sphere-metric}
  (g_{M_k})_X(U,V)
  \coloneq \sum_{i=1}^n
    \langle U_{i\cdot},V_{i\cdot}\rangle_{\R^k}
  =\tr(U^\top V)
  =\Frob{U}{V}.
\end{equation}
Thus $g_{M_k}$ is the metric induced by the Frobenius inner product
on $\R^{n\times k}$. The action in \cref{eq:right-action}
is isometric, since for every $O\in O(k)$,
\[
  (g_{M_k})_{XO}(UO,VO)
  =\tr(O^\top U^\top V O)
  =\tr(U^\top V)
  =(g_{M_k})_X(U,V).
\]
The metric consequences of this fact will be developed later;
in the present section we only use the smooth proper quotient
structure.

\subsection{Orbit-type conventions}
\label{subsec:orbit-type-conventions}

We recall the minimum amount of orbit-type terminology needed in the sequel.
Let a Lie group $G$ act smoothly and properly on a smooth manifold $M$.  For
$x\in M$, its isotropy subgroup is
\[
  G_x\coloneq \{g\in G:g\cdot x=x\}.
\]
If $H\leq G$ is a closed subgroup, we denote by $(H)$ its conjugacy class and set
\[
  M^{(H)}\coloneq \{x\in M:(G_x)=(H)\}.
\]
The slice theorem for proper actions implies that the connected components of the sets $M^{(H)}$ form the orbit-type stratification of $M$; the corresponding pieces in the quotient $M/G$ are the images of these sets under the orbit map. For compact Lie group actions, this stratification is Whitney; see, for instance, \cite{Palais1961,AKLM03,Pflaum2001,Sniatycki2013,AlexandrinoBettiol2015}.

In this paper, we shall use the notation $\Mks$ and $\Qks$ for the rank-$s$ orbit-type pieces. Strictly speaking, if one insists that a stratum be connected, then the strata are the connected components of the orbit-type pieces. This minor distinction will not matter until the topology of fixed-rank pieces is discussed in \cref{sec:topology-fixed-rank-strata}.

\subsection{The Gram map and the quotient model}
\label{subsec:gram-map-quotient-model}

Let
\[
  \Corr(n)
  \coloneq \{C\in\Sym(n): C\succeq 0,
  \ \diag(C)=\mathbf 1_n\}
\]
be the set of $n\times n$ \emph{correlation matrices}. We write the \emph{bounded-rank} and \emph{fixed-rank} subsets of correlation matrices respectively,
\[
  \Crle{k}{n}
  \coloneq \{C\in\Corr(n):\rank(C)\leq k\},
  \qquad
  \Crk{s}{n}
  \coloneq \{C\in\Corr(n):\rank(C)=s\}.
\]
The Gram map
\begin{equation}
\label{eq:gram-map}
  \Gamma_k:M_k\longrightarrow \Crle{k}{n},
  \qquad
  \Gamma_k(X)\coloneq XX^\top,
\end{equation}
is invariant under the right action of $O(k)$.  Hence it descends to a
continuous map
\begin{equation}
\label{eq:descended-gram-map}
  \overline\Gamma_k:Q_k\longrightarrow \Crle{k}{n},
  \qquad
  \overline\Gamma_k([X])=XX^\top.
\end{equation}
The following parametrization supplies both the local inverse of the Gram
map and the product charts used in the topological arguments of
\cref{sec:topology-fixed-rank-strata}. For \(C\in\R^{n\times n}\) and index sets
\(I,J\subseteq\{1,\ldots,n\}\), write
\(C_{IJ}\coloneq (C_{ij})_{i\in I,\,j\in J}\), with indices listed in
increasing order. In particular, \(C_{II}\) is the principal
submatrix indexed by \(I\).

\begin{lemma}[Cholesky--Schur parametrization]
\label{lem:cholesky-schur-charts}
Fix \(1\le s\le n\). For \(|I|=s\), set \(J\coloneq I^c\) and
\begin{equation}
\label{eq:VI-definition}
  V_I\coloneq \{C\in\Crk{s}{n}:\det(C_{II})>0\}.
\end{equation}
These sets form an open cover of \(\Crk{s}{n}\). Write
\(A=L(A)L(A)^\top\) for the Cholesky factorization, with \(L(A)\)
lower triangular and with positive diagonal. Then
\begin{equation}
\label{eq:VI-chart-map}
  \Phi_I:V_I\longrightarrow
  \Crk{s}{s}\times(\mathbb S^{s-1})^{\,n-s},
  \qquad
  \Phi_I(C)\coloneq \bigl(C_{II},C_{JI}L(C_{II})^{-\top}\bigr)
\end{equation}
is a diffeomorphism. In the block order \(I,J\), its inverse is
\begin{equation}
\label{eq:VI-chart-inverse}
  \Psi_I(A,Y)\coloneq
  \begin{pmatrix}
    A & L(A)Y^\top\\
    YL(A)^\top & YY^\top
  \end{pmatrix},
\end{equation}
where \(Y\) is an \((n-s)\times s\) matrix with unit rows.
In particular, the smooth map \(Z_I:V_I\to M_s\) given in this
block order by
\[
  Z_I(C)\coloneq
  \begin{pmatrix}
    L(C_{II})\\ C_{JI}L(C_{II})^{-\top}
  \end{pmatrix}
\]
has rank \(s\) and satisfies \(Z_I(C)Z_I(C)^\top=C\).
The smooth structure here is the embedded one in the space of symmetric
matrices; when \(n=s\), the empty spherical product is a point.
\end{lemma}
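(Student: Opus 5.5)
The plan is to reduce everything to the Schur complement identity for positive semidefinite matrices, and then to obtain smoothness by extending \(\Phi_I\) and \(\Psi_I\) to smooth maps on open subsets of ambient matrix spaces.

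\textbf{Covering.} Let \(C\in\Crk{s}{n}\). Write \(C=XX^\top\) with \(X\in\R^{n\times s}\) of rank \(s\); since \(C_{ii}=1\), the rows of \(X\) are unit vectors. Some \(s\) rows of \(X\), indexed by \(I\), are linearly independent. Then \(C_{II}=X_IX_I^\top\) is positive definite, so \(C\in V_I\). Each \(V_I\) is relatively open in \(\Crk{s}{n}\) because \(\det\) is continuous.

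\textbf{Well-definedness of \(\Phi_I\).} For \(C\in V_I\), the matrix \(C_{II}\) is a positive definite correlation matrix, so \(C_{II}\in\Crk{s}{s}\). By rank additivity of the Schur complement,
\[
\rank C=\rank C_{II}+\rank\bigl(C_{JJ}-C_{JI}C_{II}^{-1}C_{IJ}\bigr).
\]
Since \(\rank C=s=\rank C_{II}\), the Schur complement vanishes. With \(Y\coloneq C_{JI}L(C_{II})^{-\top}\) this gives
\[
YY^\top=C_{JI}C_{II}^{-1}C_{IJ}=C_{JJ}.
\]
Because \(C_{JJ}\) has unit diagonal, the rows of \(Y\) are unit vectors.

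\textbf{Mutual inverses.} Conversely, take \(A\in\Crk{s}{s}\) and \(Y\) with unit rows, and set \(Z\coloneq\begin{pmatrix}L(A)\\ Y\end{pmatrix}\). Then \(\Psi_I(A,Y)=ZZ^\top\) is positive semidefinite with unit diagonal. It has rank exactly \(s\) because \(L(A)\) is invertible, and \(\det A>0\). Hence \(\Psi_I\) maps into \(V_I\).

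The identities \(\Phi_I\circ\Psi_I=\mathrm{id}\) and \(\Psi_I\circ\Phi_I=\mathrm{id}\) follow by direct substitution, using the Schur identity above for the \(JJ\) block. The same computation shows \(Z_I(C)Z_I(C)^\top=C\), and \(Z_I(C)\) has rank \(s\) because its top block \(L(C_{II})\) is invertible.

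\textbf{Smoothness.} The Cholesky map \(L\) is smooth (indeed real-analytic) on the open cone of positive definite matrices. This follows either from the explicit recursive formulas, which involve only square roots of positive pivots, or from the inverse function theorem applied to \(L\mapsto LL^\top\) on lower-triangular matrices with positive diagonal. Consequently:
\begin{itemize}
\item \(\Phi_I\) is the restriction of a smooth map defined on the open set \(\{C\in\Sym(n):\det C_{II}>0\}\), once we also use positive definiteness of \(C_{II}\) there (equivalently, we restrict to the open set where \(C_{II}\) is positive definite);
\item \(\Psi_I\) is the restriction of a smooth map defined on \(\{\text{positive definite }A\}\times\R^{(n-s)\times s}\);
\item \(Z_I\) is smooth for the same reason as \(\Phi_I\).
\end{itemize}
The target \(\Crk{s}{s}\times(\mathbb S^{s-1})^{n-s}\) is a manifold: the first factor is the open subset of positive definite matrices in the affine space of unit-diagonal symmetric \(s\times s\) matrices, and the second is a product of spheres.

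\textbf{Main obstacle: the embedded structure on \(V_I\).} The delicate point is to make sense of ``diffeomorphism'' for the embedded smooth structure on \(V_I\subset\Sym(n)\), i.e.\ to know that \(V_I\) is an embedded submanifold. I would argue as follows.
\begin{enumerate}
\item \(\Psi_I\) is an immersion. Indeed, \(\widetilde\Phi_I\circ\Psi_I=\mathrm{id}\), where \(\widetilde\Phi_I\) denotes the smooth ambient extension of \(\Phi_I\); differentiating gives injectivity of \(d\Psi_I\).
\item \(\Psi_I\) is a homeomorphism onto its image \(V_I\), since its inverse is \(\restr{\widetilde\Phi_I}{V_I}\), which is continuous.
\item An injective immersion that is a homeomorphism onto its image is an embedding. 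Hence \(V_I\) is an embedded submanifold of \(\Sym(n)\), and \(\Phi_I\) is smooth on it as the restriction of a smooth ambient map.
\end{enumerate}
Because all the \(\Phi_I\) are restrictions of ambient smooth maps, the structures obtained on overlaps \(V_I\cap V_{I'}\) coincide with the one induced from \(\Sym(n)\). Therefore \(\Crk{s}{n}\) is an embedded submanifold and each \(\Phi_I\) is a diffeomorphism.

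In the case \(n=s\), the index set \(J\) is empty, the spherical factor reduces to a point, and the statement reads \(\Phi_I=\mathrm{id}\), which is trivial.
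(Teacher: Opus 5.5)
Your proposal is correct and follows the paper's proof essentially step for step. It covers $\Crk{s}{n}$ using $s$ independent rows of a factor and shows the Schur complement vanishes; you cite rank additivity where the paper writes out the block elimination, which is the same fact. It builds the inverse from the stacked factor $\bigl(L(A)^\top,Y^\top\bigr)^\top$, and it gets the embedding from the fact that the smooth ambient extension of $\Phi_I$ is a left inverse of $\Psi_I$, so $\Psi_I$ is an immersion with continuous inverse. You correctly restrict that extension to the open set where $C_{II}$ is positive definite, rather than merely $\det C_{II}>0$, which is also the domain the paper uses.
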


\begin{proof}
Each \(V_I\) is relatively open by continuity of the determinant.
For \(C=XX^\top\) of rank \(s\), choose \(s\) independent rows of
\(X\in\R^{n\times s}\); the corresponding principal block
\(C_{II}=X_I X_I^\top\) is positive definite. Thus the sets cover.

Fix \(C\in V_I\), and write \(A\coloneq C_{II}\), \(B\coloneq C_{JI}\), and \(D\coloneq C_{JJ}\)
in the block order \(I,J\). Block elimination gives
\[
  \begin{pmatrix}\I_s&0\\-BA^{-1}&\I_{n-s}\end{pmatrix}
  C
  \begin{pmatrix}\I_s&-A^{-1}B^\top\\0&\I_{n-s}\end{pmatrix}
  =
  \begin{pmatrix}A&0\\0&D-BA^{-1}B^\top\end{pmatrix}.
\]
Both multiplying matrices are invertible and
\(\rank(C)=\rank(A)=s\), so \(D=BA^{-1}B^\top\).
Consequently \(Y\coloneq BL(A)^{-\top}\) satisfies \(YY^\top=D\).
Since \(C\) has unit diagonal, \(A\in\Crk{s}{s}\) and every row
of \(Y\) has norm one. This proves that \(\Phi_I\) is well-defined
and that \(Z_I(C)Z_I(C)^\top=C\).

Conversely, for \(A\in\Crk{s}{s}\) and unit-row \(Y\), the matrix
\(Z\coloneq \bigl(L(A)^\top,Y^\top\bigr)^\top\) has unit rows and rank \(s\).
Hence \(ZZ^\top=\Psi_I(A,Y)\) belongs to \(V_I\).
The block formulas now give
\(\Phi_I\circ\Psi_I=\operatorname{Id}\) and
\(\Psi_I\circ\Phi_I=\operatorname{Id}\).
Cholesky factorization and inversion are smooth on the positive-definite
cone, so both formulas are smooth. More precisely, the formula for
\(\Phi_I\) extends smoothly to symmetric matrices with \(C_{II}\)
positive definite. Its composition with \(\Psi_I\) is the identity,
so \(\Psi_I\) is an immersion with continuous inverse onto \(V_I\),
hence an embedding. The cover therefore gives the asserted embedded
smooth structure without using the quotient identification.
\end{proof}

The following proposition makes the quotient interpretation exact.

\begin{proposition}[Quotient model for low-rank correlations]
\label{prop:quotient-model-low-rank-correlations}
The descended Gram map in \cref{eq:descended-gram-map} is a homeomorphism
\[
  \overline\Gamma_k:Q_k\xrightarrow{\ \cong\ }
  \Crle{k}{n}.
\]
Moreover, for every $s\in\{1,\ldots,r\}$, it restricts to a diffeomorphism
\[
  \overline\Gamma_k:
  \Qks
  \xrightarrow{\ \cong\ }
  \Crk{s}{n},
\]
where $\Qks$ is equipped with its orbit-type smooth structure and
$\Crk{s}{n}$ with its standard smooth structure as a fixed-rank correlation stratum. Consequently, $Q_k=M_k/O(k)$ may be identified up to homeomorphism with the space of correlation matrices of rank at most $k$, and the rank of a point $[X]\in Q_k$ is the rank of $X$, equivalently the rank of $XX^\top$.
\end{proposition}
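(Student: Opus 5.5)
We first prove the homeomorphism statement and then the smooth statement on each stratum.

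\emph{Homeomorphism.} We check that \(\overline\Gamma_k\) is well defined, bijective and continuous; compactness then finishes the argument.
\begin{itemize}
\item Well-definedness and continuity follow from \(O(k)\)-invariance of \(\Gamma_k\) and the universal property of the quotient topology.
\item For \(X\in M_k\), the matrix \(XX^\top\) is positive semidefinite, has unit diagonal because the rows have unit norm, and has rank \(\rank X\leq k\). So the image lies in \(\Crle{k}{n}\).
\item Surjectivity: given \(C\in\Crle{k}{n}\) of rank \(s\leq r\), write \(C=U\Lambda U^\top\) with \(U\in\R^{n\times s}\) orthonormal and \(\Lambda\) positive diagonal. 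Set \(X\coloneq(U\Lambda^{1/2},0)\in\R^{n\times k}\). Its rows have norm \(\sqrt{C_{ii}}=1\), and \(XX^\top=C\).
\item Injectivity is the classical statement that \(XX^\top=YY^\top\) forces \(Y=XO\) for some \(O\in O(k)\). The linear map \(X^\top v\mapsto Y^\top v\) is well defined and isometric from \(\im X^\top\) onto \(\im Y^\top\), since \(\|X^\top v\|^2=v^\top C v=\|Y^\top v\|^2\). These two subspaces of \(\R^k\) have the same dimension, so the map extends to an orthogonal map of \(\R^k\); transposing gives \(Y=XO\).
\end{itemize}
Since \(Q_k\) is compact (a continuous image of the compact \(M_k\)) and \(\Crle{k}{n}\) is Hausdorff, the continuous bijection \(\overline\Gamma_k\) is a homeomorphism. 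The same computation shows \(\rank[X]=\rank X=\rank XX^\top\), which is well defined on orbits. Hence \(\overline\Gamma_k\) maps \(\Qks\) bijectively onto \(\Crk{s}{n}\).

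\emph{Orbit types.} The isotropy group of \(X\) is \(\{O\in O(k): XO=X\}\). This is the set of orthogonal maps fixing the row space \(W=\im X^\top\) pointwise, i.e.\ the subgroup \(O(W^\perp)\cong O(k-s)\). Two such groups are conjugate exactly when the ranks agree, so \(\Mks\) is precisely the orbit-type piece \(M_k^{(O(k-s))}\). Consequently the orbit-type smooth structure on \(\Qks\) is the manifold structure of \(\Mks/O(k)\).

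\emph{Diffeomorphism.} This is the main step.
\begin{itemize}
\item Smoothness of \(\overline\Gamma_k\) on \(\Qks\): the composite \(\Gamma_k|_{\Mks}\) is polynomial, hence smooth into \(\Sym(n)\), and it lands in the embedded submanifold \(\Crk{s}{n}\) of \cref{lem:cholesky-schur-charts}. Because \(\pi_k:\Mks\to\Qks\) is a smooth submersion, the descended map is smooth.
\item Smoothness of the inverse: on each chart \(V_I\), \cref{lem:cholesky-schur-charts} supplies the smooth map \(Z_I:V_I\to M_s\) with \(Z_I(C)Z_I(C)^\top=C\) and \(\rank Z_I(C)=s\). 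Padding gives \(C\mapsto(Z_I(C),0)\in\Mks\), which is smooth. Composing with \(\pi_k\) yields a smooth local section whose value at \(C\) is \(\overline\Gamma_k^{-1}(C)\), by injectivity.
\end{itemize}
These local inverses agree on overlaps and the \(V_I\) cover \(\Crk{s}{n}\), so \(\overline\Gamma_k^{-1}\) is smooth and the restriction is a diffeomorphism.

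The obstacle I expect is making sure the orbit-type smooth structure on \(\Qks\) is the one for which \(\pi_k|_{\Mks}\) is a submersion. This requires \(\Mks\) to be a smooth submanifold of \(M_k\), which is the slice-theorem fact recalled in \cref{subsec:orbit-type-conventions}. To avoid depending on that, I would alternatively check directly that \(\Mks\) is a submanifold: it is the intersection of the rank-\(s\) matrix manifold in \(\R^{n\times k}\) with \(M_k\). Transversality holds because scaling the rows of \(X\) preserves rank while changing the row norms, so the row-norm constraints remain submersive on rank-\(s\) matrices. The quotient by the free action of \(O(k)/O(k-s)\)-type, via the Stiefel-frame description, then gives a submersion.
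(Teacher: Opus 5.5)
Your proposal is correct and follows the paper's proof essentially step for step. The homeomorphism comes from a continuous bijection from the compact $Q_k$ onto a Hausdorff space, with injectivity obtained by extending the isometry between row spaces to an element of $O(k)$. The smooth part descends $\Gamma_k|_{\Mks}$ through the submersion $\pi_k$, and inverts it locally by $C\mapsto[(Z_I(C),0)]$ on the Cholesky--Schur charts $V_I$.

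Your explicit spectral factorization for surjectivity and your isotropy computation identifying $\Mks$ as an orbit-type piece are small additions. The paper leaves the orbit-type identification to \cref{thm:orbit-type-equals-rank-stratification}.
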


\begin{proof}
The map is well-defined because \((XO)(XO)^\top=XX^\top\) for every $O\in O(k)$. It is surjective: if $C\in\Crle{k}{n}$, choose a factorization $C=XX^\top$ with $X\in\R^{n\times k}$, after padding by zero columns if necessary. Since $C_{ii}=1$, each row of $X$ has Euclidean norm one, hence $X\in M_k$. It remains to identify the fibers. Suppose $X,Y\in M_k$ satisfy $XX^\top=YY^\top$. Let
\[
  R_X\coloneq \spanv\{X_{1\cdot},\ldots,X_{n\cdot}\}\subset\R^k,
  \qquad
  R_Y\coloneq \spanv\{Y_{1\cdot},\ldots,Y_{n\cdot}\}\subset\R^k.
\]
The equality of Gram matrices says that the assignment
$X_{i\cdot}\mapsto Y_{i\cdot}$ preserves all pairwise inner products. In particular, \(\dim R_X=\dim R_Y=\rank(XX^\top)\). Hence the assignment extends to a linear isometry $A:R_X\to R_Y$. Extending $A$ to an element $O\in O(k)$ gives $XO=Y$. Therefore the fibers of $\Gamma_k$ are precisely the $O(k)$-orbits. Since $M_k$ is compact and $\Crle{k}{n}$ is Hausdorff, the continuous bijection \(\overline\Gamma_k:Q_k\to\Crle{k}{n}\) is a homeomorphism.

For the rankwise statement,
\(\Gamma_k:\Mks\to\Crk{s}{n}\), \(X\mapsto XX^\top\),
is smooth and \(O(k)\)-invariant, hence descends smoothly to \(\Qks\).
On each \(V_I\), \cref{lem:cholesky-schur-charts} provides the smooth
full-rank factor \(Z_I(C)\). Padding by \(k-s\) zero columns and
projecting to the quotient gives the smooth map
\[
  V_I\longrightarrow\Qks,\qquad C\longmapsto[(Z_I(C),0)].
\]
Its Gram matrix is \(C\), so it is the inverse of \(\overline\Gamma_k\)
on \(V_I\). These sets cover \(\Crk{s}{n}\), proving that the
rankwise Gram identification is a diffeomorphism.
\end{proof}

\begin{remark}[Smooth structure on the rank pieces]
\label{rem:smooth-structure-rank-pieces}
The quotient pieces $\Qks$ are smooth manifolds by the orbit-type theorem. Via \cref{prop:quotient-model-low-rank-correlations}, we shall freely regard $\Crk{s}{n}$ as the corresponding smooth quotient piece. In other words, the smooth structure used on a fixed-rank correlation stratum is the one transported via diffeomorphism from the orbit-type manifold $\Qks$.
\end{remark}

\subsection{Isotropy groups and rank orbit types}
\label{subsec:isotropy-rank-orbit-types}

We now compute the isotropy groups of the right action
\[
M_k\times O(k)\to M_k,\qquad (X,O)\mapsto XO,
\]
where \(M_k\coloneq (\mathbb S^{k-1})^n\). This computation is the key point: the conjugacy class of the isotropy subgroup depends only on the rank. When \(X\in M_k\) has full column rank \(k\), this action is free at \(X\). Indeed, if \(XO=X\), then \(X(O-\I_k)=0\) and since \(X\) has trivial kernel, it follows that \(O=\I_k\). In general,
however, matrices \(X\in M_k\) have nontrivial isotropy:
\[
O(k)_X\coloneq \{O\in O(k): XO=X\}.
\]

As observed in \cite{Thanwerdas2023,Chen2025}, the next proposition computes the isotropy subgroup explicitly and shows that it is determined, up to conjugacy, by the rank. 

\begin{proposition}[Isotropy group]
\label{prop:isotropy-group-rank}
Let \(X\in M_k\) and set \(s\coloneq \rank(X)\). Then \(O(k)_X\) is conjugate in \(O(k)\)
to the standard subgroup
\begin{equation}
\label{eq:standard-isotropy-rank-s}
H_s\coloneq \I_s\oplus O(k-s)\subset O(k).
\end{equation}
More precisely, if \(X=U\Sigma V^\top\), \(U\in O(n)\), \(V\in O(k)\) is a singular value decomposition of \(X\), with
\[
\Sigma=
\begin{pmatrix}
D & 0\\
0 & 0
\end{pmatrix},
\qquad
D=\diag(\sigma_1,\dots,\sigma_s),
\qquad
\sigma_i>0,
\]
then \(O(k)_X=V\,(\I_s\oplus O(k-s))\,V^\top\).
\end{proposition}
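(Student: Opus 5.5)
The plan is to reduce to the diagonal case using the singular value decomposition and then compute the stabilizer directly. First, note that the isotropy group transforms by conjugation under the action: for any \(V\in O(k)\),
\[
O(k)_{XV}=V^\top O(k)_X V .
\]
This holds because \(XVO=XV\) if and only if \((VOV^\top)\) fixes \(X\). Left multiplication by \(U\in O(n)\) does not change the stabilizer, since \(XO=X\) if and only if \(U^\top XO=U^\top X\). Here we only use the linear equation \(XO=X\), not the unit-row constraint. Hence
\[
O(k)_X=\{O: \Sigma V^\top O=\Sigma V^\top\}=V\,\{O': \Sigma O'=\Sigma\}\,V^\top,
\]
after the substitution \(O'=V^\top O V\).

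It therefore suffices to show that \(\{O'\in O(k):\Sigma O'=\Sigma\}=\I_s\oplus O(k-s)\). Write \(O'\) in blocks of sizes \(s\) and \(k-s\):
\[
O'=\begin{pmatrix}P&Q\\R&S\end{pmatrix}.
\]
Then
\[
\Sigma O'=\begin{pmatrix}DP&DQ\\0&0\end{pmatrix}.
\]
Because \(D\) is invertible, the equation \(\Sigma O'=\Sigma\) is equivalent to \(P=\I_s\) and \(Q=0\).

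The one point that needs an argument is that orthogonality then forces \(R=0\) and \(S\in O(k-s)\). The first \(s\) rows of \(O'\) are \((\I_s,0)\), which are orthonormal. The columns of an orthogonal matrix are unit vectors, and the \(j\)-th column for \(j\le s\) already has unit norm coming from \(P=\I_s\). So the remaining entries \(R_{\cdot j}\) must vanish, that is, \(R=0\). Then \(O'=\I_s\oplus S\), and \(O'^\top O'=\I\) gives \(S^\top S=\I_{k-s}\). Conversely, every such block matrix fixes \(\Sigma\).

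Combining the two steps gives \(O(k)_X=V(\I_s\oplus O(k-s))V^\top\), which is conjugate to \(H_s\). Since \(s=\rank X\) is the number of positive singular values, the conjugacy class depends only on the rank. Beyond this bookkeeping the argument has no real obstacle. If \(s=k\), the factor \(O(0)\) is trivial, and this recovers the freeness already noted before the statement.
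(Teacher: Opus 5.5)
Your proposal is correct and matches the paper's proof. Both arguments conjugate by the right singular factor $V$, write $V^\top OV$ in blocks, and use invertibility of $D$ to force the top row of blocks to be $(\I_s,0)$; orthogonality then kills the lower-left block and places the lower-right block in $O(k-s)$, and your column-norm step is just the diagonal of the upper-left block of $W^\top W=\I_k$ that the paper uses.
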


\begin{proof}
Let \(O\in O(k)\). Then
\[
O\in O(k)_X
\quad\Longleftrightarrow\quad
XO=X
\quad\Longleftrightarrow\quad
U\Sigma V^\top O=U\Sigma V^\top
\quad\Longleftrightarrow\quad
\Sigma W=\Sigma,
\]
where we set \(W\coloneq V^\top O V\in O(k)\). Writing \(W\) in blocks compatible with \(\Sigma\),
\[
W=
\begin{pmatrix}
A & B\\
C & E
\end{pmatrix},
\qquad
A\in\R^{s\times s},\quad
B\in\R^{s\times (k-s)},
\]
the identity \(\Sigma W=\Sigma\) becomes
\[
\begin{pmatrix}
D & 0\\
0 & 0
\end{pmatrix}
\begin{pmatrix}
A & B\\
C & E
\end{pmatrix}
=
\begin{pmatrix}
D & 0\\
0 & 0
\end{pmatrix}.
\]
Hence \(DA=D\), \(DB=0\). Since \(D\) is invertible, this forces \(A=\I_s\), \(B=0\). Therefore \(W\) has the form
\[
W=
\begin{pmatrix}
\I_s & 0\\
C & E
\end{pmatrix}.
\]
Using the orthogonality relation \(W^\top W=\I_k\), the upper-left block gives \(\I_s+C^\top C = \I_s\), hence \(C=0\), and the lower-right block gives \(E\in O(k-s)\). Thus
\[
W=\I_s\oplus E = \begin{pmatrix}
\I_s & 0\\
0 & E
\end{pmatrix}
\text{ with }E\in O(k-s).
\]
Undoing the change of basis \(W=V^\top O V\), we obtain
\[
O(k)_X
=
\{\,V(\I_s\oplus E)V^\top : E\in O(k-s)\,\}
=
V\,(\I_s\oplus O(k-s))\,V^\top.
\]
This proves the claim.
\end{proof}

In view of \cref{prop:isotropy-group-rank}, the isotropy subgroup of a point
\(X\in M_k\) is determined, up to conjugacy in \(O(k)\), by the rank of \(X\). This already suggests that the orbit type decomposition of \(M_k\) should coincide with its rank decomposition. The next theorem makes this precise, both upstairs on \(M_k\) and downstairs on the quotient \(Q_k\coloneq M_k/O(k)\). It also shows that, through the Gram-map identification, the same decomposition corresponds exactly to the fixed-rank subsets of the space of low-rank correlation matrices.

\begin{theorem}[Orbit type stratification equals rank stratification]
\label{thm:orbit-type-equals-rank-stratification}
For the right action of \(O(k)\) on \(M_k=(\mathbb S^{k-1})^n\), the
orbit-type pieces are exactly the rank level sets
\begin{equation}
\label{eq:rank-pieces-upstairs}
\Mks
\coloneq 
\{X\in M_k:\rank(X)=s\},
\qquad
s=1,\dots,r,
\end{equation}
where \(r=\min(n,k)\). The induced orbit-type pieces in the quotient \(Q_k\coloneq M_k/O(k)\) are therefore
\begin{equation}
\label{eq:rank-pieces-downstairs}
\Qks
\coloneq 
\pi_k(\Mks)
=
\{[X]\in Q_k:\rank(X)=s\},
\qquad
s=1,\dots,r.
\end{equation}
Under the identification \(\overline\Gamma_k:Q_k\longrightarrow \Crle{k}{n}\),  these pieces correspond exactly to the fixed-rank correlation strata \(\Qks\cong \Crk{s}{n}\). In particular,
\[
Q_k=\bigsqcup_{s=1}^r \Qks,
\qquad
\Crle{k}{n}=\bigsqcup_{s=1}^r \Crk{s}{n},
\]
and these two stratifications coincide through the quotient model.
\end{theorem}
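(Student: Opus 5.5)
The plan is to derive everything from \cref{prop:isotropy-group-rank} together with \cref{prop:quotient-model-low-rank-correlations}. First, I will show that the orbit-type pieces upstairs are exactly the rank level sets.

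For \(X\in M_k\) of rank \(s\), \cref{prop:isotropy-group-rank} gives \((O(k)_X)=(H_s)\) with \(H_s=\I_s\oplus O(k-s)\). Hence \(\Mks\subseteq M_k^{(H_s)}\). For the reverse inclusion, and to see that distinct ranks give distinct orbit types, I need the classes \((H_s)\) to be pairwise distinct for \(s=1,\dots,r\). I will separate them by a conjugacy invariant. The dimension \(\dim H_s=(k-s)(k-s-1)/2\) fails when \(k-s\in\{0,1\}\), so instead I will use the fixed-point subspace \(\{v\in\R^k: vh=v\ \forall h\in H_s\}\). This subspace is \(\R^s\oplus 0\) whenever \(k-s\geq1\), since \(O(k-s)\) fixes only \(0\) in \(\R^{k-s}\). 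When \(k=s\) it is all of \(\R^k\). Its dimension is therefore \(s\) in every case, and it is invariant under conjugation (it transforms by \(g\)). Thus \((H_s)=(H_{s'})\) forces \(s=s'\), so \(M_k^{(H_s)}=\Mks\).

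It remains to check that these are all the orbit types. Every \(X\in M_k\) has rank between \(1\) and \(r\): rows are unit vectors, so \(X\neq0\), and \(\rank X\le\min(n,k)\). Hence \(M_k=\bigsqcup_{s=1}^r\Mks\), and no other isotropy classes occur. I expect this bookkeeping to be the main, though mild, obstacle. It amounts to choosing an invariant that distinguishes the edge cases \(s=k\) and \(s=k-1\), where \(H_s\) is trivial or \(\{\pm1\}\)-like; the fixed-subspace dimension handles both uniformly.

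Downstairs, the orbit-type pieces are by definition the images \(\pi_k(M_k^{(H)})\). This gives \(\Qks=\pi_k(\Mks)\). Since rank is \(O(k)\)-invariant, \(\pi_k^{-1}(\Qks)=\Mks\), so the pieces are disjoint, cover \(Q_k\), and equal \(\{[X]:\rank X=s\}\). Finally, \(\rank(XX^\top)=\rank X\), so \(\overline\Gamma_k\) maps \(\Qks\) onto \(\Crk{s}{n}\). By \cref{prop:quotient-model-low-rank-correlations} this restriction is a diffeomorphism, and the global map is a homeomorphism. Transporting the disjoint union \(Q_k=\bigsqcup_s\Qks\) then yields \(\Crle{k}{n}=\bigsqcup_{s=1}^r\Crk{s}{n}\) and the coincidence of the two stratifications. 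As in the paper's convention, the connectedness caveat (strata being the components of these pieces) plays no role here.
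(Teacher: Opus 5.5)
Your proposal is correct and follows essentially the same route as the paper. Both arguments obtain the isotropy classes \((H_s)\) from \cref{prop:isotropy-group-rank}, separate them by the dimension of the fixed-point subspace, and then pass to the quotient and through \(\overline\Gamma_k\) using \(\rank(XX^\top)=\rank X\); your remarks on the case \(s=k\) and on every rank lying in \(\{1,\dots,r\}\) only make explicit what the paper leaves implicit.
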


\begin{proof}
Let \(X,Y\in M_k\), with \(\rank(X)=s\) and \(\rank(Y)=t\). By
\cref{prop:isotropy-group-rank}, the isotropy subgroups \(O(k)_X\) and \(O(k)_Y\)
are conjugate respectively to
\[
H_s\coloneq \I_s\oplus O(k-s),
\qquad
H_t\coloneq \I_t\oplus O(k-t).
\]
If \(s=t\), then both isotropy subgroups are conjugate to \(H_s\), hence \(X\) and \(Y\) have the same orbit type. Conversely, suppose that \(O(k)_X\) and \(O(k)_Y\) are conjugate. Then \(H_s\) and \(H_t\) are conjugate in \(O(k)\). Conjugate subgroups have fixed-point subspaces of the same dimension. However,
\[
  \operatorname{Fix}(H_s)
  \coloneq \{v\in\R^k:h v=v\text{ for every }h\in H_s\}
  =\R^s\oplus\{0\},
\]
so \(\dim\operatorname{Fix}(H_s)=s\). Similarly, \(\dim\operatorname{Fix}(H_t)=t\). Hence \(s=t\). Therefore two points of \(M_k\) have the same orbit type if and only if they have the same rank, which proves that the orbit-type pieces upstairs are
exactly the sets in \cref{eq:rank-pieces-upstairs}. Passing to the quotient gives \cref{eq:rank-pieces-downstairs}. The final assertion follows from \cref{prop:quotient-model-low-rank-correlations} together with \(\rank(XX^\top)=\rank(X)\).
\end{proof}

For simplicity, we shall systematically write \(\Mks\) and \(\Qks\) for these orbit-type strata.

\begin{definition}[Rank-bounded pieces]
\label{def:rank-bounded-pieces}
For \(1\le s\le r\), we define
\[
M_k^{\le s}\coloneq \{X\in M_k:\rank(X)\le s\},
\qquad
\Qkles\coloneq \pi_k(M_k^{\le s})
=\bigcup_{j=1}^s \Qkj.
\]
On the correlation side, this corresponds to
\[
\Crle{s}{n}
\coloneq 
\{C\in\Corr(n):\rank(C)\le s\}
=
\bigcup_{j=1}^s \Crk{j}{n}.
\]
When \(s\le k\), the identification \(Q_k\cong \Crle{k}{n}\) restricts to \(\Qkles\cong \Crle{s}{n}\).
\end{definition}

\begin{corollary}[Frontier relation]
\label{cor:rank-frontier-relation}
For every \(s\in\{1,\dots,r\}\),
\[
\overline{\Mks}=M_k^{\le s},
\qquad
\overline{\Qks}=\Qkles,
\qquad
\overline{\Crk{s}{n}}=\Crle{s}{n},
\]
where topological closures are taken respectively in \(M_k\), \(Q_k\), and
\(\Crle{k}{n}\). In particular, the principal orbit-type stratum is the maximal-rank stratum
\[
\Mrk{k}{r}\subset M_k,
\qquad
\Qrk{k}{r}\subset Q_k,
\]
and it is open and dense.
\end{corollary}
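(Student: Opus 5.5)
The three closure identities follow from the first one, so the plan is to prove \(\overline{\Mks}=M_k^{\le s}\) directly in \(M_k\) and then push it down. The inclusion \(\overline{\Mks}\subseteq M_k^{\le s}\) holds because rank is lower semicontinuous. Concretely, \(\{X:\rank X\le s\}\) is closed in \(\R^{n\times k}\), being the common zero set of all \((s+1)\times(s+1)\) minors.

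For the reverse inclusion, fix \(X\in M_k^{\le s}\) with \(\rank X=j<s\). The plan is to perturb \(X\) to rank exactly \(s\) inside \(M_k\).

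1. Choose \(j\) rows of \(X\) spanning its row space \(R_X\), and pick \(s-j\) other rows \(i_1,\dots,i_{s-j}\). There are enough of them because \(s\le r\le n\).
2. Since \(s\le k\), there are orthonormal vectors \(e_1,\dots,e_{s-j}\in R_X^\perp\). Set
   \[
     X_{i_m\cdot}(t)\coloneq \frac{X_{i_m\cdot}+t e_m}{\|X_{i_m\cdot}+t e_m\|},
   \]
   and leave all other rows unchanged. The denominator is \(\sqrt{1+t^2}\neq0\), so each row stays on the unit sphere and \(X(t)\in M_k\).
3. For small \(t\neq0\), the row space of \(X(t)\) contains \(R_X\): the \(j\) spanning rows are unchanged. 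Each modified row \(X_{i_m\cdot}(t)\) has component proportional to \(t e_m\) in \(R_X^\perp\). These \(s-j\) components are independent, so \(\rank X(t)\ge j+(s-j)=s\).
4. All rows of \(X(t)\) lie in \(R_X\oplus\spanv\{e_m\}\), which has dimension \(s\). Hence \(\rank X(t)=s\).
5. Since \(X(t)\to X\) as \(t\to0\), we get \(X\in\overline{\Mks}\).

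For the quotient, note that \(\pi_k\) is continuous, closed (\(M_k\) is compact and \(Q_k\) is Hausdorff), and surjective. Therefore
\[
  \overline{\pi_k(A)}=\pi_k(\overline A),
\]
which gives \(\overline{\Qks}=\pi_k(M_k^{\le s})=\Qkles\). The correlation statement then follows by transporting through the homeomorphism \(\overline\Gamma_k\) of \cref{prop:quotient-model-low-rank-correlations}, using \cref{thm:orbit-type-equals-rank-stratification} to match \(\Qks\) with \(\Crk{s}{n}\).

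For the final claim, take \(s=r\). Then \(M_k^{\le r}=M_k\), so \(\Mrk{k}{r}\) is dense in \(M_k\) by the identity just proved. It is open because \(\{\rank X\ge r\}\) is open, namely the complement of the common zero set of the \(r\times r\) minors. By \cref{prop:isotropy-group-rank}, rank-\(r\) points have the smallest isotropy, \(\I_r\oplus O(k-r)\), which is trivial when \(r=k\). So \(\Mrk{k}{r}\) is indeed the principal orbit type.

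In the quotient, \(\Qrk{k}{r}=\pi_k(\Mrk{k}{r})\) is open because \(\pi_k\) is an open map for group actions. It is dense because it is the continuous surjective image of a dense set.

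The only step needing care is the perturbation in steps 2–4. One must keep the rows normalized, check that the \(s-j\) auxiliary rows exist (this uses \(s\le n\)), and check that \(R_X^\perp\) has dimension at least \(s-j\) (this uses \(s\le k\)). Once these are verified, the rest is formal.
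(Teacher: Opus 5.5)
Your proposal is correct and follows essentially the same route as the paper: the upper inclusion from closedness of the rank bound, a perturbation inside an \(s\)-dimensional subspace containing the row space followed by unit-row normalization, and then pushing down through the closed map \(\pi_k\) and the homeomorphism \(\overline\Gamma_k\). Your orthogonal perturbation, with row norms exactly \(\sqrt{1+t^2}\), is a concrete instance of the paper's appeal to the density of rank-\(s\) matrices in \(\R^{n\times s}\). Your isotropy-containment argument for principality fills in a step that the paper cites only briefly.
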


The proof is explicit.

\begin{proof}
The rank bound is closed, so
\(\overline{\Mks}\subset M_k^{\le s}\).
Conversely, fix \(X\in M_k^{\le s}\) and choose an
\(s\)-dimensional subspace \(W\subset\R^k\)
containing its row space. Since \(s\le n\), rank-\(s\)
matrices are dense in \(\R^{n\times s}\).
Approximate \(X\) by such matrices and normalize their
rows, which are nonzero for sufficiently close
approximations. Normalization preserves rank and is
continuous near \(X\), proving
\(\overline{\Mks}=M_k^{\le s}\).

The other closure identities follow because \(\pi_k\)
is continuous and closed and \(\overline\Gamma_k\) is
a homeomorphism. The maximal-rank strata are dense by
the case \(s=r\), and open because their complements
are closed lower-rank loci. By
\cref{thm:orbit-type-equals-rank-stratification},
they are the principal orbit-type strata.
\end{proof}

\section{Rank reduction and metric reduction}
\label{sec:rank-reduction-metric-reduction}

The quotient model of \cref{sec:quotient-model-rank-stratification}
comes with two reduction principles of different nature. The first
one is a \emph{smooth} statement: every fixed-rank stratum is a
Riemannian quotient manifold isometric to a full-rank quotient in the
smaller ambient dimension determined by the rank. The second one is a
\emph{singular} statement: the whole rank-bounded subset, endowed
with its intrinsic
path metric, is isometric to the ambient quotient space in that
smaller dimension. These two theorems will play different roles
throughout the paper. The fixed-rank reduction will be the basic
device for all local
differential-geometric computations, whereas the rank-bounded
reduction will be the basic device for the metric theory of rank constraints.

\subsection{Metric conventions and padding}
\label{subsec:metric-conventions}

Throughout this section, let
\[
  1\le s\le r=\min(n,k),
\]
and consider the padding map
\begin{equation}
  \label{eq:padding_map_section2}
  \iks:\R^{n\times s}\longrightarrow \R^{n\times k},
  \qquad
  \iks(Z)\coloneq(Z,0),
\end{equation}
obtained by appending $k-s$ zero columns. Since row norms are preserved,
\(\iks(M_s)\subset M_k\), and \cref{eq:padding_map_section2} induces
a map on orbit spaces
\begin{equation}
  \label{eq:Xi_ks_section2}
  \Xks:Q_s\longrightarrow Q_k,
  \qquad
  \Xks([Z])\coloneq[(Z,0)].
\end{equation}
Because padding does not change rank, one has
\[
  \Xks(\Qrk{s}{j})\subset \Qkj,
  \qquad
  \Xks(\Qle{s}{j})\subset \Qle{k}{j},
  \qquad 1\le j\le s.
\]

\begin{remark}[Geometric meaning of padding]
  \label{rem:padding-normalizer-interpretation}
  At rank $s$, padding realizes the standard normalizer reduction
  of the $O(k)$-action: its image is the locus with stabilizer
  exactly $H_s=\I_s\oplus O(k-s)$, and the residual group
  $N_{O(k)}(H_s)/H_s\simeq O(s)$ acts freely on this locus.
  The resulting principal bundle and its mechanical connection
  identify with the reduced full-rank bundle and the reduced
  mechanical connection; see \cref{subsec:padding-normalizer-reduction},
  where the normalizer and this construction are made explicit.
\end{remark}

\begin{definition}[Product metric and quotient distance]
  \label{def:factor-quotient-metrics}
  For \(m\ge2\), the product-of-spheres metric on \(M_m\) is
  \[
    (g_{M_m})_X(V,W)\coloneq \Frob{V}{W},
    \qquad V,W\in\Tz{X}M_m.
  \]
  We denote by $\dM{m}$ the product Riemannian distance
  and by $\dQ{m}$ the quotient distance on $Q_m=M_m/O(m)$. Concretely,
  \begin{equation}
    \label{eq:factor-and-quotient-distances}
    \begin{aligned}
      \dM{m}(X,Y)^2
      &=\sum_{i=1}^n
      \arccos^2\!\bigl(\langle X_{i\cdot},Y_{i\cdot}\rangle\bigr),\\
      \dQ{m}([X],[Y])&=\min_{R\in O(m)}\dM{m}(X,YR).
    \end{aligned}
  \end{equation}
\end{definition}
The minimum exists by compactness of $O(m)$; see also
\cref{prop:reduced-quotient-global-metric-geometry}.

\begin{definition}[Curve length and rectifiability]
  \label{def:curve-lengths}
  A continuous curve $\alpha:[0,1]\to(Y,d)$ in a metric
  space is called \emph{rectifiable} if its length
  \[
    \Length_d(\alpha)
    \coloneq\sup_{\mathcal P}
    \sum_{i=1}^{m}
    d\bigl(\alpha(t_{i-1}),\alpha(t_i)\bigr)
  \]
  is finite, where the supremum runs over all finite partitions
  $\mathcal P=\{0=t_0<t_1<\cdots<t_m=1\}$.
  On a connected Riemannian manifold $(M,g)$, we use the associated
  Riemannian distance $\dg{g}$ and write
  $\Length_M(\alpha)\coloneq\Length_{\dg{g}}(\alpha)$.
  For a continuous piecewise $C^1$ curve, this agrees with its
  Riemannian length:
  \[
    \Length_M(\alpha)=\Length_g(\alpha)
    \coloneq\int_0^1
    \sqrt{g_{\alpha(t)}
    \bigl(\dot\alpha(t),\dot\alpha(t)\bigr)}\dd t.
  \]
\end{definition}

\begin{lemma}[Padding is a totally geodesic isometric embedding]
  \label{lem:padding-isometric-embedding}
  Let $2\le s\le k$. The padding map
  \[
    \iks:(M_s,g_{M_s})\longrightarrow(M_k,g_{M_k}),
    \qquad
    \iks(Z)=(Z,0),
  \]
  is a totally geodesic Riemannian isometric embedding.
  Consequently, for every rectifiable curve
  $\alpha:[0,1]\to M_s$,
  \[
    \Length_{M_k}(\iks\circ\alpha)
    =
    \Length_{M_s}(\alpha).
  \]
\end{lemma}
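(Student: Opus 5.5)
The plan is to check the three properties of \(\iks\) in turn: that it is an isometric immersion, that it is an embedding, and that it is totally geodesic. The length identity then comes from the distance identity.

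\emph{Isometric embedding.} The map \(\iks\) is the restriction to \(M_s\) of a linear map \(\R^{n\times s}\to\R^{n\times k}\). This linear map preserves the Frobenius inner product and maps \(M_s\) into \(M_k\), since row norms are unchanged. Its differential at \(Z\) is therefore \(V\mapsto(V,0)\), which maps \(\Tz{Z}M_s\) into \(\Tz{(Z,0)}M_k\) by \cref{eq:tangent-space-product-spheres}. Because \(\Frob{(U,0)}{(V,0)}=\Frob{U}{V}\), the pullback of \(g_{M_k}\) is \(g_{M_s}\). The map \(\iks\) is injective, continuous and defined on a compact space, so it is a closed topological embedding; together with the previous point it is a smooth isometric embedding.

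\emph{Totally geodesic, row by row.} Since the metric is a product, its geodesics are products of great-circle arcs. Take \(Z\in M_s\) and \(V\in\Tz{Z}M_s\). The geodesic of \(M_s\) has rows \(\cos(t\|V_{i\cdot}\|)Z_{i\cdot}+\sin(t\|V_{i\cdot}\|)V_{i\cdot}/\|V_{i\cdot}\|\). Padding it gives exactly the great circles in \(\mathbb S^{k-1}\) with initial data \(((Z_{i\cdot},0),(V_{i\cdot},0))\), which is the geodesic of \(M_k\) with initial velocity \(\mathrm d\iks(V)\). So geodesics of \(M_s\) map to geodesics of \(M_k\). A cleaner alternative is to note that \(\iks(M_s)\) is the fixed-point set of the isometry \(X\mapsto X(\I_s\oplus(-\I_{k-s}))\), and fixed-point components of isometries are totally geodesic.

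\emph{Length identity.} This is the step needing the most care, because curves are only assumed rectifiable, so I must compare distances rather than integrals. I will show \(\dM{k}(\iks Z,\iks W)=\dM{s}(Z,W)\) by the explicit formula \cref{eq:factor-and-quotient-distances}: each term is \(\arccos\langle Z_{i\cdot},W_{i\cdot}\rangle\), and padding leaves every inner product unchanged. Hence \(\iks\) is a metric isometry onto its image, a stronger statement than a Riemannian isometric immersion alone gives. Every partition sum for \(\alpha\) then equals the corresponding sum for \(\iks\circ\alpha\), and taking suprema in \cref{def:curve-lengths} gives \(\Length_{M_k}(\iks\circ\alpha)=\Length_{M_s}(\alpha)\).
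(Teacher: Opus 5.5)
Your proof is correct and follows the paper's argument: the paper also checks that the pullback of $g_{M_k}$ is $g_{M_s}$. It proves total geodesicity via the fixed-point set of the reflection $X\mapsto X\,\diag(\I_s,-\I_{k-s})$, which is your ``cleaner alternative'' (your explicit row-wise great-circle check works equally well), and it obtains the length identity from the arccos distance formula and partition sums, as you do; the only cosmetic difference is that the paper gets the embedding property from the continuous inverse given by projection onto the first $s$ columns rather than from compactness.
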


\begin{proof}
  The padding map is a smooth embedding: its differential
  \[
    \dd\iks(Z)[V]=(V,0)
  \]
  is injective, and projection onto the first $s$ columns gives
  a continuous inverse on its image. Moreover, for
  $V,W\in\Tz{Z}M_s$,
  \[
    (g_{M_k})_{\iks(Z)}
    \bigl(\dd\iks(Z)[V],\dd\iks(Z)[W]\bigr)
    =\Frob{(V,0)}{(W,0)}
    =(g_{M_s})_Z(V,W).
  \]
  Thus $(\iks)^*g_{M_k}=g_{M_s}$. To prove total geodesicity,
  consider the orthogonal reflection
  \[
    J\coloneq\diag(\I_s,-\I_{k-s})\in O(k).
  \]
  The map $\sigma:M_k\to M_k$, $\sigma(X)\coloneq XJ$, is an isometry,
  and its fixed-point set is precisely $\iks(M_s)$.
  Let $\gamma$ be a geodesic in $M_k$ whose initial point belongs
  to $\iks(M_s)$ and whose initial velocity is tangent to this
  submanifold. Since $\sigma$ fixes $\iks(M_s)$ pointwise,
  $\gamma$ and $\sigma\circ\gamma$ have the same initial point
  and velocity. Uniqueness of geodesics (with prescribed initial
  point and velocity) therefore gives
  $\sigma\circ\gamma=\gamma$, so $\gamma$ remains in $\iks(M_s)$.
  Hence $\iks(M_s)$ is totally geodesic. Finally, the distance
  formula for a product of unit spheres gives,
  for every $Z,W\in M_s$,
  \begin{align*}
    \dM{k}\bigl(\iks(Z),\iks(W)\bigr)^2
    &=
    \sum_{i=1}^n
    \arccos^2\!\bigl(
      \langle(Z_{i\cdot},0),(W_{i\cdot},0)\rangle
    \bigr)\\
    &=
    \sum_{i=1}^n
    \arccos^2\!\bigl(
      \langle Z_{i\cdot},W_{i\cdot}\rangle
    \bigr)
    =\dM{s}(Z,W)^2.
  \end{align*}
  Thus padding preserves every partition sum in the definition
  of curve length. Taking the supremum over partitions yields
  \(\Length_{M_k}(\iks\circ\alpha) =\Length_{M_s}(\alpha)\) for every
  rectifiable curve $\alpha:[0,1]\to M_s$.
\end{proof}

\begin{figure}[h!]
  \centering
  \includegraphics[width=.72\linewidth]{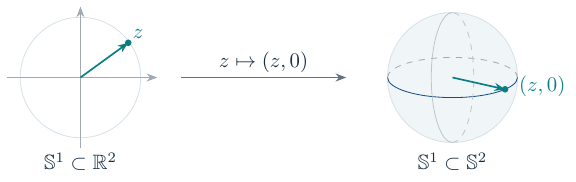}
  \caption{Padding one row for $s=2$ and $k=3$:
    the map $z\mapsto(z,0)$ embeds $\mathbb S^1$
  isometrically as an equator of $\mathbb S^2$.}
  \label{fig:padding-factors}
\end{figure}

\subsection{Fixed-rank Riemannian reduction}
\label{subsec:fixed-rank-riemannian-reduction}

Equip each rank piece $\Mks\subset M_k$ with the induced
Riemannian metric. By
\cref{thm:orbit-type-equals-rank-stratification}, the restricted
$O(k)$-action is smooth, proper, isometric, and has a single
orbit type. Consequently, $\Qks$ is a smooth manifold and
the quotient projection is a smooth submersion
\cite[\S2.2]{AKLM03}.

\begin{definition}[Quotient Riemannian metric and distance]
  \label{def:fixed-rank-quotient-metric-distance}
  The quotient Riemannian metric,
  denoted by $\gQks$, is the unique metric making
  \[
    \pi_k:
    \bigl(\Mks,\restr{g_{M_k}}{\Mks}\bigr)
    \longrightarrow
    \bigl(\Qks,\gQks\bigr)
  \]
  a Riemannian submersion. Explicitly, the metric-horizontal space at
  $X\in\Mks$ is
  \[
    \Hz{X}
    \coloneq\Tz{X}\Mks\cap
    \bigl(X\so(k)\bigr)^{\perp_F}.
  \]
  For $u,v\in\Tz{[X]}\Qks$, let
  $u_X^\#,v_X^\#\in\Hz{X}$ be their unique horizontal lifts
  under $\dd\pi_k(X)$. Then
  \begin{equation}
    \label{eq:fixed-rank-quotient-metric}
    (\gQks)_{[X]}(u,v)
    =(g_{M_k})_X(u_X^\#,v_X^\#)
    =\Frob{u_X^\#}{v_X^\#}.
  \end{equation}
  The associated intrinsic Riemannian distance is
  \begin{equation}
    \label{eq:fixed-rank-quotient-distance}
    \dQks(p,q)
    \coloneq\inf_{\gamma}
    \int_0^1
    \sqrt{(\gQks)_{\gamma(t)}
    \bigl(\dot\gamma(t),\dot\gamma(t)\bigr)}\dd t,
  \end{equation}
  where the infimum runs over all continuous piecewise $C^1$ curves
  $\gamma:[0,1]\to\Qks$ with $\gamma(0)=p$ and $\gamma(1)=q$,
  with the convention $\inf\varnothing=+\infty$. The same
  construction defines $\gQss$ and $\dQss$ on $\Qss$.
\end{definition}

As in \cref{def:curve-lengths}, we write
$\Length_{M_k}\coloneq\Length_{\dM{k}}$ for lengths
computed using the product Riemannian distance on $M_k$,
and $\Length_{\dQ{k}}$ for lengths computed using the quotient
distance $\dQ{k}$ on $Q_k$.
The first reduction theorem shows that the Riemannian geometry
of the rank-$s$ stratum depends only on the pair $(n,s)$.

\begin{theorem}[Fixed-rank Riemannian reduction]
  \label{thm:fixed-rank-riemannian-reduction}
  Let \(1\le s\le r\). The map \(\Xks\) takes \(\Qss\) onto \(\Qks\) and
  restricts to a Riemannian isometry
  \[
    \Xks:\bigl(\Qss,\gQss\bigr)
    \xrightarrow{\ \cong\ }
    \bigl(\Qks,\gQks\bigr).
  \]
  Equivalently, it is an isometry of the corresponding Riemannian
  distance spaces:
  \[
    \Xks:\bigl(\Qss,\dQss\bigr)
    \xrightarrow{\ \cong\ }
    \bigl(\Qks,\dQks\bigr).
  \]
\end{theorem}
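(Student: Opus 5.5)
The plan is to realize \(\Xks\) on the rank-\(s\) pieces as a composition of the two Gram diffeomorphisms, and then verify the metric statement on horizontal spaces. The intertwining relation \(\pi_k\circ\iks=\Xks\circ\pi_s\) on \(\Mss\) is what makes the horizontal-space comparison sufficient.

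\emph{Step 1: \(\Xks\) is a diffeomorphism \(\Qss\to\Qks\).} Padding preserves Gram matrices, since \((Z,0)(Z,0)^\top=ZZ^\top\). Hence on \(\Qss\) we have \(\overline\Gamma_k\circ\Xks=\overline\Gamma_s\). By \cref{prop:quotient-model-low-rank-correlations}, both \(\overline\Gamma_s:\Qss\to\Crk{s}{n}\) and \(\overline\Gamma_k:\Qks\to\Crk{s}{n}\) are diffeomorphisms. Therefore \(\Xks|_{\Qss}=\overline\Gamma_k^{-1}\circ\overline\Gamma_s\) is a diffeomorphism onto \(\Qks\). In particular it is onto: any rank-\(s\) \(X\in M_k\) can be rotated so that its row space is \(\R^s\oplus 0\), giving \(X\in[(Z,0)]\). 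Since \(\Xks\) commutes with the projections, it suffices to show that \(\dd\iks(Z)\) maps the horizontal space at \(Z\in\Mss\) isometrically onto \(\Hz{(Z,0)}\subset \Tz{(Z,0)}\Mks\). Indeed, if this holds, the horizontal lift of \(\dd\Xks(u)\) at \((Z,0)\) is \((u^\#_Z,0)\), and \cref{eq:fixed-rank-quotient-metric} gives \(\gQks(\dd\Xks u,\dd\Xks v)=\Frob{(u^\#_Z,0)}{(v^\#_Z,0)}=\gQss(u,v)\). A Riemannian isometry that is a diffeomorphism then maps admissible piecewise \(C^1\) curves bijectively with equal lengths, which yields the distance statement.

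\emph{Step 2: the tangent and vertical spaces at \((Z,0)\).} This is the main obstacle. I would first use the classical description of the tangent space to the rank-\(s\) matrix manifold at \(X\): it is \(\{\Delta: P^\perp\Delta Q^\perp=0\}\), where \(P,Q\) are the projectors onto the column and row spaces of \(X\). For \(X=(Z,0)\) the row space is \(\R^s\oplus0\), so \(\Delta=(V,W)\) must satisfy \(P^\perp W=0\), i.e. every column of \(W\) lies in \(\im Z\). Intersecting with the unit-row constraint, whose rows of \(W\) are already orthogonal to the rows \((Z_{i\cdot},0)\), gives
\[
  \Tz{(Z,0)}\Mks\subseteq \Tz{Z}M_s\times\{W\in\R^{n\times(k-s)}:\ \operatorname{col}(W)\subseteq\im Z\}.
\]
Here \(\Tz{Z}\Mss=\Tz ZM_s\), since \(\Mss\) is open in \(M_s\). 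The vertical space is \((Z,0)\so(k)=\{(Z\Omega_{11},ZB):\Omega_{11}\in\so(s),\ B\in\R^{s\times(k-s)}\}\). Because \(Z\) has rank \(s\), the matrices \(ZB\) exhaust \(\{W:\operatorname{col}(W)\subseteq\im Z\}\). Consequently \(\{0\}\times\{ZB\}\) is vertical, and the displayed inclusion is in fact an equality: the right side is spanned by tangent vectors \((V,0)\), which are tangent to \(\iks(\Mss)\), together with vertical vectors.

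\emph{Step 3: horizontal spaces correspond.} Let \((V,W)\in\Hz{(Z,0)}\). Orthogonality to every \((0,ZB)\) forces the columns of \(W\) to be orthogonal to \(\im Z\). Combined with \(\operatorname{col}(W)\subseteq\im Z\) from Step 2, this gives \(W=0\). Orthogonality to \((Z\Omega_{11},0)\) then says exactly that \(V\perp Z\so(s)\), i.e. \(V\in\Hz Z\). Conversely, every \((V,0)\) with \(V\in\Hz Z\) is tangent and orthogonal to the whole vertical space. Hence \(\Hz{(Z,0)}=\dd\iks(Z)[\Hz Z]\), and \(\dd\iks\) is Frobenius-isometric by \cref{lem:padding-isometric-embedding}. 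This completes the argument. The degenerate case \(s=1\) is consistent: both strata are finite sets and the metrics are zero-dimensional.
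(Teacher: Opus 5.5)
Your proof is correct, and its metric core is the paper's. You pad the horizontal lift \(u^\#_Z\) to \((u^\#_Z,0)\), use \(\pi_k\circ\iks=\Xks\circ\pi_s\) to see that it lifts \(\dd\Xks([Z])(u)\), and note that padding preserves Frobenius norms. Your route differs from the paper's in two places.

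In Step~1, the paper proves bijectivity by hand. It rotates the row span into \(\R^s\oplus 0\) for surjectivity, and for injectivity it writes \(O\in O(k)\) in blocks and uses \(ZB=0\Rightarrow B=0\). It then gets smoothness from the equivariance \(\iks(ZA)=\iks(Z)(A\oplus\I_{k-s})\). Your factorization \(\Xks|_{\Qss}=\overline\Gamma_k^{-1}\circ\overline\Gamma_s\) through \cref{prop:quotient-model-low-rank-correlations} gives bijectivity and smoothness of the inverse at once. The paper leaves this point implicit: passing from a bijective, metric-preserving immersion to a diffeomorphism needs \(\dim\Qss=\dim\Qks\), and that is exactly what the Gram identification supplies.

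In Steps~2--3, the paper proves only the forward inclusion. Writing \(W=(W_1,W_2)\), horizontality at \((Z,0)\) means \(W^\top(Z,0)\) is symmetric, i.e.\ \(W_1^\top Z=Z^\top W_1\) and \(W_2^\top Z=0\). The vector \((u^\#_Z,0)\) visibly satisfies this, and uniqueness of horizontal lifts finishes the argument. You additionally prove the reverse inclusion, using the tangent space of the rank-\(s\) matrix manifold to get \(\operatorname{col}(W_2)\subseteq\im Z\), hence \(W_2=0\).

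Once your Step~1 is in hand, this reverse inclusion is not needed, so the part you flagged as the main obstacle can be dropped. It is nonetheless correct. It gives the sharper, directly verified statement \(\Hz{(Z,0)}=\dd\iks(Z)[\Hz{Z}]\): every horizontal vector at a padded factor is itself padded. The paper's argument yields this only indirectly, through the dimension count.
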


\begin{proof}
  We proceed in three steps.

  \smallskip
  \noindent\emph{Step 1: \(\Xks\) is well-defined and bijective from
    \(\Qss\) onto
  \(\Qks\).}

  Consider the map
  \begin{equation*}
    \phi \coloneq \pi_k\circ \iks : X \in \Mss \longmapsto [\iks(X)]\in \Qks.
  \end{equation*}
  Since $O(s)\oplus \left\{ \I_{k-s} \right\} \leq O(k)$,
  the function $\phi$ is $O(s)$-invariant. In particular it descends
  to a well-defined quotient map $\bar \phi \colon \Mss /  O(s) \to
  \Qks$. Let \([X]\in \Qks\) and choose a representative \(X\in \Mks\).
  Its row span
  \(R_X\coloneq\spanv\{X_{1\cdot},\ldots,X_{n\cdot}\}\subset
  \R^k\) has dimension \(s\). Choose an orthonormal basis of
  \(R_X\), extend it to an orthonormal basis of \(\R^k\), and
  let \(U\in O(k)\) be the orthogonal matrix whose first \(s\)
  columns are the chosen basis of \(R_X\); equivalently,
  \(\spanv(U_{\cdot\{1,\ldots,s\}})=R_X\). Then \(XU\) has last
  \(k-s\) columns equal to zero, so we may write
  \(XU=(Z,0)\) with \(Z\in \R^{n\times s}\). Since right
  multiplication by \(U\) preserves row norms, \((Z,0)\in
  M_k\), and since \(\rank(XU)=\rank(X)=s\)
  we have \(\rank(Z)=s\), i.e., \(Z\in \Mss\).
  Therefore every class in \(\Qks\) is of the form \(\Xks([Z])\), so
  \(\Xks\) is surjective onto \(\Qks\).

  For injectivity, assume \(\Xks([Z])=\Xks([Z'])\) with \(Z,Z'\in
  \Mss\). Then there exists \(O\in O(k)\) such that
  \((Z,0)O=(Z',0)\). Write \(O\) in blocks relative to \(\R^k
  =\R^s\oplus \R^{k-s}\):
  \[
    O=
    \begin{pmatrix}
      A & B\\
      C & D
    \end{pmatrix}.
  \]
  Then \((Z,0)O=(ZA,ZB)=(Z',0)\), so \(ZB=0\). Since \(Z\) has full
  column rank \(s\), this forces \(B=0\). Orthogonality of \(O\) then
  implies \(A\in O(s)\), and therefore \(Z'=ZA\). Thus \([Z']=[Z]\)
  in \(\Qss=\Mss/O(s)\). Hence \(\Xks\) is bijective from \(\Qss\)
  onto \(\Qks\).

  Finally, \(\iks\) is smooth and equivariant in the sense that for every
  \(A\in O(s)\),
  \[
    \iks(ZA)=\iks(Z)(A\oplus \I_{k-s}).
  \]
  Therefore \(\Xks\) is the induced smooth map on quotient manifolds.

  \smallskip
  \noindent\emph{Step 2: preservation of the quotient Riemannian metrics.}
  Let \(Z\in \Mss\) and set \(\widehat Z\coloneq\iks(Z)=(Z,0)\in \Mks\).
  Since we have:
  \[
    \pi_k\circ \iks=\Xks\circ \pi_s,
  \]
  differentiation yields
  \begin{equation}
    \label{eq:chain_rule_fixed_rank_reduction}
    \dd \pi_k(\widehat Z)\circ \dd \iks(Z)
    =
    \dd \Xks([Z])\circ \dd \pi_s(Z).
  \end{equation}
  Let \(u\in \Tz{[Z]}\Qss\), and let
  \(u_Z^\#\in \Tz{Z}\Mss\) be its unique horizontal lift at \(Z\) for
  \(\pi_s\). Define \(\widehat u\coloneq\dd \iks(Z)(u_Z^\#)=(u_Z^\#,0)\).
  Then \(\widehat u\in \Tz{\widehat Z}\Mks\), and by
  \cref{eq:chain_rule_fixed_rank_reduction}, \(\dd \pi_k(\widehat
  Z)(\widehat u)=\dd \Xks([Z])(u)\). Thus \(\widehat u\) is a lift of
  \(\dd \Xks({[Z]})(u)\). We now
  check that \(\widehat u\) is horizontal. A vector \(W\in
  \Tz{\widehat Z}\Mks\) is horizontal for the quotient map
  \(\pi_k:\Mks\to \Qks\) if and only if it is Frobenius orthogonal to
  \(\widehat Z\,\so(k)\), equivalently if and only if
  \(W^\top \widehat Z\) is symmetric. Writing \(W=(W_1,W_2)\) with
  \(W_1\in \R^{n\times s}\) and \(W_2\in \R^{n\times
  (k-s)}\), one has
  \[
    W^\top\widehat Z=
    \begin{pmatrix}
      W_1^\top Z & 0\\
      W_2^\top Z & 0
    \end{pmatrix}.
  \]
  Hence symmetry is equivalent to
  \[
    W_1^\top Z=Z^\top W_1,
    \qquad
    W_2^\top Z=0.
  \]
  For \(W=\widehat u=(u_Z^\#,0)\), the second condition is automatic,
  and the first is
  exactly the horizontality condition of \(u_Z^\#\) for the principal
  $O(s)$-bundle \( \pi_s : \Mss\to \Qss\). Thus
  \(\widehat u\) is horizontal at \(\widehat Z\), and by
  uniqueness of horizontal lifts it is the horizontal lift of
  \(\dd\Xks([Z])(u)\).

  Finally, \(\iks\) is an isometric embedding for the Frobenius
  metric, hence \(\Frob{\widehat u}{\widehat u} = \Frob{
  u_Z^\#}{u_Z^\#}\). By the defining property of the quotient metric,
  \[
    (\gQks)_{\Xks([Z])}\bigl(\dd \Xks([Z])(u),\dd \Xks([Z])(u)\bigr)
    =
    \Frob{\widehat u}{\widehat u}
    =
    \Frob{u_Z^\#}{u_Z^\#}
    =
    \gQss(u,u).
  \]
  Therefore \(\dd \Xks\) is an isometry on each tangent space, so
  \(\Xks\) is a Riemannian isometry.

  \smallskip
  \noindent\emph{Step 3: conclusion at the level of distances.}
  A Riemannian isometry preserves lengths of smooth curves and
  therefore preserves the
  induced Riemannian distances. In particular,
  \[
    \Xks:\bigl(\Qss,\dQss\bigr)
    \xrightarrow{\ \cong\ }
    \bigl(\Qks,\dQks\bigr)
  \]
  is an isometry of metric spaces.
\end{proof}

The fixed-rank reduction immediately transfers to correlation
matrices of fixed rank through
\cref{prop:quotient-model-low-rank-correlations}.

\begin{corollary}[Fixed-rank reduction on correlation strata]
  \label{cor:fixed-rank-reduction-correlation}
  Let \(1\le s\le r\). Via the diffeomorphisms
  \[
    \Qss\cong \Crk{s}{n}
    \qquad\text{and}\qquad
    \Qks\cong \Crk{s}{n},
  \]
  the quotient metrics induced from \(\Qss\) and \(\Qks\) coincide.
  In particular, every fixed-rank geometric computation on
  \(\Crk{s}{n}\) may be performed isometrically in the reduced
  full-rank model \(\Qss\).
\end{corollary}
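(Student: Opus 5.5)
The corollary follows by composing the fixed-rank reduction with the Gram-map identifications. Write \(\overline\Gamma_s:\Qss\to\Crk{s}{n}\) and \(\overline\Gamma_k:\Qks\to\Crk{s}{n}\) for the restricted descended Gram maps; by \cref{prop:quotient-model-low-rank-correlations} both are diffeomorphisms. The first step checks the compatibility identity
\[
  \overline\Gamma_k\circ\Xks=\overline\Gamma_s
  \qquad\text{on }\Qss .
\]
This is immediate from \((Z,0)(Z,0)^\top=ZZ^\top\) for every \(Z\in\Mss\), together with the fact that \(\Xks\) maps \(\Qss\) into \(\Qks\) (\cref{thm:fixed-rank-riemannian-reduction}, Step 1).

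The second step compares the two transported metrics on \(\Crk{s}{n}\): \(g^{(s)}\coloneq(\overline\Gamma_s^{-1})^*\gQss\) and \(g^{(k)}\coloneq(\overline\Gamma_k^{-1})^*\gQks\). From the identity above, \(\overline\Gamma_s^{-1}=\Xks^{-1}\circ\overline\Gamma_k^{-1}\), where \(\Xks:\Qss\to\Qks\) is a diffeomorphism. It is a bijective smooth map whose differential is injective (isometric, by Step 2 of the theorem), hence bijective by the dimension count, so its inverse is smooth. This gives
\[
  g^{(s)}=(\overline\Gamma_k^{-1})^*(\Xks^{-1})^*\gQss
  =(\overline\Gamma_k^{-1})^*\gQks=g^{(k)},
\]
using \(\Xks^*\gQks=\gQss\) from \cref{thm:fixed-rank-riemannian-reduction}. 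Consequently the Riemannian distances also agree, \(\dCorr{k}{s}=\dCorr{s}{s}\), since they are defined from the same metric over the same class of piecewise \(C^1\) curves in \(\Crk{s}{n}\).

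The final sentence of the corollary is the interpretive consequence. Any quantity defined from the Riemannian metric — geodesics, curvature, volume, connection data transported via \(\overline\Gamma_s\) — may be computed on \(\Qss\) with the principal \(O(s)\)-bundle \(\Mss\to\Qss\). There is no substantial obstacle here. The only point that needs care is that the transported smooth structures on \(\Crk{s}{n}\) coincide, which holds because both \(\overline\Gamma\) maps are diffeomorphisms onto the embedded structure of \cref{lem:cholesky-schur-charts}.
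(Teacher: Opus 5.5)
Your proposal is correct and follows the paper's own argument, which simply transports the Riemannian isometry \(\Xks\) of \cref{thm:fixed-rank-riemannian-reduction} through the rankwise Gram diffeomorphisms of \cref{prop:quotient-model-low-rank-correlations}; you make explicit the compatibility \(\overline\Gamma_k\circ\Xks=\overline\Gamma_s\) and the resulting pullback identity, which the paper leaves implicit. As a small simplification, that same compatibility gives \(\Xks=\overline\Gamma_k^{-1}\circ\overline\Gamma_s\) on \(\Qss\), so the smoothness of \(\Xks^{-1}\) follows at once without the dimension count.
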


\begin{proof}
  This is just \cref{thm:fixed-rank-riemannian-reduction} transported
  through the
  rankwise Gram diffeomorphisms of
  \cref{prop:quotient-model-low-rank-correlations}.\qedhere
\end{proof}

We also record the following stratumwise compatibility, which is an
immediate consequence of
\cref{thm:fixed-rank-riemannian-reduction} and will be useful
throughout the sequel.

\begin{lemma}[Stratumwise compatibility of the padding map]
  \label{lem:stratumwise-compatibility-padding}
  Let \(1\le j\le s\le r\). Then
  \[
    \restr{\Xks}{\Qrk{s}{j}}
    =
    \padXi{j}{k}\circ \padXi{j}{s}^{-1}:
    \Qrk{s}{j}\longrightarrow \Qkj.
  \]
  In particular, the restriction of \(\Xks\) to \(\Qrk{s}{j}\) is a Riemannian
  isometry.
\end{lemma}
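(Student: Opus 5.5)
The plan is to verify the identity of maps pointwise, using the fact that padding composes. For \(j\le s\le k\), appending \(s-j\) zero columns and then \(k-s\) more gives \(\iks\circ\padIota{j}{s}=\padIota{j}{k}\). Passing to orbit classes then yields
\[
  \Xks\circ\padXi{j}{s}=\padXi{j}{k}
  \quad\text{on }Q_j,
\]
and in particular on \(\Qrk{j}{j}\).

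Next we apply \cref{thm:fixed-rank-riemannian-reduction} twice. With ambient dimension \(s\) it shows that \(\padXi{j}{s}:\Qrk{j}{j}\to\Qrk{s}{j}\) is a bijective Riemannian isometry, so its inverse is defined on \(\Qrk{s}{j}\). With ambient dimension \(k\) it shows the same for \(\padXi{j}{k}:\Qrk{j}{j}\to\Qkj\). Both uses require \(j\le\min(n,s)\) and \(j\le\min(n,k)\), which follow from \(j\le s\le r\).

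Now take \(p\in\Qrk{s}{j}\) and write \(p=\padXi{j}{s}(q)\) with \(q\in\Qrk{j}{j}\). The composition identity gives
\[
  \Xks(p)=\padXi{j}{k}(q)=\padXi{j}{k}\circ\padXi{j}{s}^{-1}(p),
\]
which is the asserted formula. Since \(\padXi{j}{k}\) and \(\padXi{j}{s}^{-1}\) are both Riemannian isometries, so is their composition, and hence so is the restriction of \(\Xks\) to \(\Qrk{s}{j}\).

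The only point needing care is the case \(j=1\). There \cref{thm:fixed-rank-riemannian-reduction} still applies as stated, since it allows \(1\le s\), and the strata are discrete point sets, so the isometry statement holds trivially. No step is a real obstacle; at most one should note that the smooth structures on \(\Qrk{s}{j}\) and \(\Qkj\) are the orbit-type ones, which is the setting of the theorem.
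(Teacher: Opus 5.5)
Your proof is correct and follows the paper's argument. You write a point of \(\Qrk{s}{j}\) as \(\padXi{j}{s}([Z])\) using \cref{thm:fixed-rank-riemannian-reduction}, use that successive zero-paddings compose to get \(\Xks\circ\padXi{j}{s}=\padXi{j}{k}\), and conclude from the fact that \(\padXi{j}{s}\) and \(\padXi{j}{k}\) are Riemannian isometries; your explicit checks of the index constraints and of the discrete case \(j=1\) are details the paper leaves implicit.
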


\begin{proof}
  Let \([Y]\in \Qrk{s}{j}\). By
  \cref{thm:fixed-rank-riemannian-reduction}, there exists a
  unique \([Z]\in \Qrk{j}{j}\) such that
  \[
    [Y]=\padXi{j}{s}([Z])=[(Z,0)]\in \Qrk{s}{j}.
  \]
  Then
  \[
    \Xks([Y])
    =
    \Xks(\padXi{j}{s}([Z]))
    =
    [(Z,0,0)]
    =
    \padXi{j}{k}([Z]).
  \]
  This proves the identity. The Riemannian isometry statement follows because
  \(\padXi{j}{s}\) and \(\padXi{j}{k}\) are Riemannian isometries.
\end{proof}

\subsection{Global rank-bounded metric reduction}
\label{subsec:global-rank-bounded-metric-reduction}

We now pass from the smooth fixed-rank reduction to its singular
metric counterpart.
\begin{definition}[Intrinsic distances under rank constraints]
  \label{def:intrinsic-rank-constraint-distances}
  Let \(1\le s\le r\). The intrinsic length distances induced by
  \(\dQ{k}\) on \(\Qkles\) and \(\Qks\) are, respectively,
  \begin{equation}
    \label{eq:intrinsic-rank-bounded-distance}
    \dkles(\bar p,\bar q)
    \coloneq\inf_{\substack{\bar\gamma:[0,1]\to\Qkles\\
    \bar\gamma(0)=\bar p,\ \bar\gamma(1)=\bar q}}
    \Length_{\dQ{k}}(\bar\gamma)
  \end{equation}
  and
  \begin{equation}
    \label{eq:intrinsic-stratum-distance-induced-by-dk}
    \dInt{k}{s}(\bar p,\bar q)
    \coloneq\inf_{\substack{\bar\gamma:[0,1]\to\Qks\\
    \bar\gamma(0)=\bar p,\ \bar\gamma(1)=\bar q}}
    \Length_{\dQ{k}}(\bar\gamma).
  \end{equation}
  Both infima range over continuous \(\dQ{k}\)-rectifiable curves with
  the indicated endpoints and images, with the convention
  \(\inf\varnothing=+\infty\). In each case, length is measured using
  the ambient quotient distance \(\dQ{k}\); the constraint specifies
  which curves are admissible.
\end{definition}

The second reduction theorem identifies \((\Qkles,\dkles)\)
with the quotient space \((Q_s,\dQ{s})\). The proof uses two standard
external metric inputs on submetries and semialgebraic curves, which we state
explicitly. We recall that a map is called a \emph{local submetry}
if, around every point, it maps sufficiently small metric balls onto
metric balls of the same radius centered at the image point.

\begin{theorem}[Length-preserving lifts for quotient submetries]
  \label{thm:length-preserving-lifts-submetry}
  Let \(P:X\to Y\) be a local submetry between proper metric spaces.
  Then every rectifiable curve \(\eta:[0,1]\to Y\) admits a lift with
  prescribed initial point and the same length: for every \(x_0\in
  X\) with \(P(x_0)=\eta(0)\), there exists a rectifiable curve
  \(\widetilde\eta:[0,1]\to X\) such that
  \[
    \widetilde\eta(0)=x_0,
    \qquad
    P\circ\widetilde\eta=\eta,
  \]
  and
  \[
    \Length_X(\widetilde\eta)
    =
    \Length_Y(\eta).
  \]
  In particular, if a compact Lie group \(G\) acts isometrically on a
  compact Riemannian manifold \(M\), then the quotient projection
  \[
    \pi:M\to M/G
  \]
  has this lifting property.
\end{theorem}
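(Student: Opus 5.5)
The statement is quoted from the literature \cite{KapovitchLytchak2022}, so the plan is to reconstruct a self-contained argument in two parts. First, verify that the quotient projection of a compact isometric action is a submetry. Second, prove the lifting property for a general local submetry by discrete lifting followed by a compactness limit.

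\emph{Quotient maps are submetries.} Suppose \(G\) is compact and acts isometrically on \(M\), and equip \(M/G\) with \(d([x],[y])=\min_{g\in G}d(x,gy)\), as in \cref{eq:factor-and-quotient-distances}. This formula shows that \(\pi\) is \(1\)-Lipschitz, so \(\pi(B(x,\rho))\subset B(\pi(x),\rho)\). Conversely, if \(d([x],[y])<\rho\), choose a minimizing \(g\). Then \(gy\in B(x,\rho)\) and \(\pi(gy)=[y]\). Hence \(\pi(B(x,\rho))=B(\pi(x),\rho)\) for every \(\rho\), so \(\pi\) is a global submetry between compact, hence proper, spaces. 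The ``in particular'' clause therefore follows from the general statement.

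\emph{Discrete lifts.} Let \(P\) be a local submetry and \(\eta\) rectifiable with length \(L\). Reparametrize \(\eta\) proportionally to arc length, so that it is \(L\)-Lipschitz. The image \(\eta([0,1])\) is compact, and I would like a uniform radius \(\rho>0\) such that for every \(x\) with \(P(x)\) within \(\rho\) of the image, \(P(B(x,r))=B(P(x),r)\) for all \(r\le\rho\). The lift of \(\eta\) should stay in a compact set, since its length is at most \(L\) and \(X\) is proper. So a covering and Lebesgue-number argument on the closed ball \(\bar B(x_0,L+1)\) should give such a \(\rho\).

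Now take the dyadic partitions \(t_i=i2^{-m}\) with \(L2^{-m}<\rho\). Starting from \(x_0\), choose points \(y^m_i\) inductively with \(P(y^m_i)=\eta(t_i)\) and \(d(y^m_{i-1},y^m_i)\le d(\eta(t_{i-1}),\eta(t_i))+2^{-2m}\); the submetry property on small balls makes each step possible. The lifted points then satisfy an \(L\)-Lipschitz bound up to an error of \(2^{-m}\) on the dyadic grid.

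\emph{Passage to the limit.} All \(y^m_i\) lie in \(\bar B(x_0,L+1)\), which is compact because \(X\) is proper. A diagonal argument over the countable set of dyadic rationals produces a limit map defined on dyadics that is \(L\)-Lipschitz, with \(P\) of each limit point equal to the corresponding \(\eta(t)\). Extend it by uniform continuity to \(\widetilde\eta:[0,1]\to X\). By continuity of \(P\) we get \(P\circ\widetilde\eta=\eta\) and \(\widetilde\eta(0)=x_0\). For the length, the partition sums over dyadic partitions are limits of sums bounded by \(\sum_i d(\eta(t_{i-1}),\eta(t_i))+2^{-m}\le L+2^{-m}\), which gives \(\Length(\widetilde\eta)\le L\). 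The reverse inequality \(\Length(\widetilde\eta)\ge\Length(\eta)\) holds because \(P\) is locally \(1\)-Lipschitz and length is additive, so lengths can be computed on fine partitions.

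The main obstacle I expect is the uniformity step. A local submetry gives a radius that depends on the point, so the crucial point is obtaining one radius valid along all candidate lifts, including points of the fibres where the lift might wander. For the quotient case this issue disappears, since the map is a global submetry. In general I would handle it with properness and the a priori confinement of lifts to \(\bar B(x_0,L+1)\).
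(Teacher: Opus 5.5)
Your argument is correct, but it is not the paper's route. The paper gives no argument of its own: it cites \cite[Lemma~2.8]{KapovitchLytchak2022} for the general lifting statement, and \cite[Example~2.5]{KapovitchLytchak2022} for the fact that the quotient map of an isometric action with closed orbits is a submetry. You instead verify the submetry property directly from the orbital distance formula. You then reprove the lifting lemma by discrete lifting on dyadic grids, followed by a diagonal (Arzel\`a--Ascoli) limit in the compact ball \(\bar B(x_0,L+1)\). The citation gives brevity and Kapovitch--Lytchak's full generality. Your route shows that the only case the paper actually uses, the projections \(\pi_s,\pi_k\) of compact quotients of products of spheres, is a global submetry of a compact space, where the whole proof is elementary.

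Two details should be stated explicitly.

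\begin{itemize}
\item The uniform radius on \(\bar B(x_0,L+1)\) needs the neighborhood form of the definition of a local submetry: one radius valid for all centers near each point, which is the natural reading of ``around every point''. Finitely many such neighborhoods then cover the compact ball, and the minimum of their radii works, with no Lebesgue number needed. A radius known only at each individual point would let you lift only from the finitely many centers, not from the arbitrary points \(y^m_{i-1}\). Properness alone would not close that step.
\item You lift the constant-speed reparametrization of \(\eta\), not \(\eta\) itself. You must compose that lift with the nondecreasing surjection \(t\mapsto\Length(\eta|_{[0,t]})/L\) to get a lift of \(\eta\) with the same initial point and the same length. The case \(L=0\) is trivial.
\end{itemize}
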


\begin{proof}
  This is \cite[Lemma~2.8]{KapovitchLytchak2022}. The last assertion
  follows from the fact that the metric quotient map of an isometric
  action with closed orbits is a submetry; see
  \cite[Example~2.5]{KapovitchLytchak2022}.
\end{proof}

\begin{theorem}[Semialgebraic curves compute inner distance]
  \label{thm:semialgebraic-curves-compute-inner-distance}
  Let \(A\subset \R^N\) be a connected semialgebraic set,
  endowed with its inner distance
  \[
    \dIntrinsic{A}(p,q)
    \coloneq
    \inf\bigl\{\Length(\gamma):
      \gamma:[0,1]\to A \text{ rectifiable},\
    \gamma(0)=p,\ \gamma(1)=q\bigr\}.
  \]
  Then, for every \(p,q\in A\),
  \[
    \dIntrinsic{A}(p,q)
    =
    \inf\bigl\{\Length(\gamma):
      \gamma:[0,1]\to A \text{ continuous semialgebraic},\
    \gamma(0)=p,\ \gamma(1)=q\bigr\}.
  \]
  Equivalently, for every rectifiable curve \(\gamma:[0,1]\to A\) and every
  \(\varepsilon>0\), there exists a continuous semialgebraic curve
  \(\eta:[0,1]\to A\) with the same endpoints and
  \[
    \Length(\eta)
    \le
    \Length(\gamma)+\varepsilon.
  \]
  Moreover, there is a finite subdivision
  \[
    0=t_0<t_1<\cdots<t_m=1
  \]
  such that $\restr{\eta}{(t_{j-1},t_j)}$ is $C^1$ for every
  $j=1,\ldots,m$. The curve $\eta$ is continuous on $[0,1]$.
\end{theorem}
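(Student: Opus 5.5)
We would not reprove this statement from scratch. It is the theorem of Kurdyka and Orro \cite{KurdykaOrro1997} on the inner metric of semialgebraic sets, so the proof will consist of a citation together with a short explanation of how their statement yields the form used here. The plan has three steps: show that semialgebraic curves are rectifiable and piecewise \(C^1\); approximate a rectifiable curve by semialgebraic pieces whose lengths nearly match consecutive inner distances; and concatenate the pieces.

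\emph{Step 1: regularity of semialgebraic curves.} By the monotonicity theorem for semialgebraic functions of one variable, each coordinate of a continuous semialgebraic curve \(\eta:[0,1]\to\R^N\) is \(C^1\) and monotone off a finite set of points. This gives the subdivision \(0=t_0<\cdots<t_m=1\) in the statement. The length of \(\eta\) is then the sum of \(\int_{t_{j-1}}^{t_j}\|\dot\eta\|\), which is finite because each monotone coordinate has bounded variation. Since semialgebraic curves are admissible competitors, the infimum over them is at least \(\dIntrinsic{A}\).

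\emph{Step 2: the core input from \cite{KurdykaOrro1997}.} For every \(\varepsilon>0\) there is a semialgebraic distance \(d_\varepsilon\) on \(A\) satisfying
\[
\dIntrinsic{A}\le d_\varepsilon\le(1+\varepsilon)\,\dIntrinsic{A},
\]
with \(d_\varepsilon(p,q)\) realized as the length of a continuous semialgebraic arc in \(A\) joining \(p\) to \(q\). Their construction decomposes \(A\) into finitely many \(L\)-regular cells (in Kurdyka's sense). On each cell the inner distance is comparable, with constant close to \(1\), to the length of explicit semialgebraic arcs, namely straight segments in cell coordinates lifted to the cell. Arcs in adjacent cells are then glued along a finite chain of cells. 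This step is the only substantive obstacle, and we would simply invoke it.

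\emph{Step 3: concatenation.} Let \(\gamma\) be rectifiable in \(A\) and choose any partition \(0=s_0<\cdots<s_m=1\). Since \(\gamma\) stays in \(A\), each consecutive inner distance is bounded by the length of the corresponding piece:
\[
\dIntrinsic{A}(\gamma(s_{i-1}),\gamma(s_i))\le\Length(\gamma|_{[s_{i-1},s_i]}).
\]
Join \(\gamma(s_{i-1})\) to \(\gamma(s_i)\) by the semialgebraic arc from Step 2, whose length is at most \((1+\varepsilon)\) times this inner distance. The concatenation \(\eta\) is continuous and semialgebraic, being a finite union of semialgebraic pieces, and it satisfies
\[
\Length(\eta)\le(1+\varepsilon)\Length(\gamma).
\]
Replacing \(\varepsilon\) by \(\varepsilon/\Length(\gamma)\) gives the additive bound \(\Length(\gamma)+\varepsilon\) stated in the theorem. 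In fact the partition can be the trivial one \(\{0,1\}\), since Step 2 already applies to the endpoints directly. Finally, Step 1 applied to \(\eta\) supplies the piecewise \(C^1\) subdivision.
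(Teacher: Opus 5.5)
Your proposal is correct and is essentially the paper's proof. The paper also cites Kurdyka--Orro for the equality between the inner distance and the infimum over continuous semialgebraic arcs, and it gets the finite subdivision from the piecewise $C^1$ regularity of semialgebraic maps, which is what your monotonicity-theorem argument in Step~1 provides; your Step~3 concatenation and the $L$-regular-cell sketch are not needed (as you note, the trivial partition suffices), and you only need arcs whose lengths approach the bound, not an arc realizing $d_\varepsilon(p,q)$ exactly.
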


\begin{proof}
  The equality between the inner distance and the infimum over
  continuous semialgebraic curves is due to Kurdyka--Orro
  \cite{KurdykaOrro1997}; see also the formulation recalled in
  \cite[\S5.1] {CampesatoFichouKurdykaParusinski2017}. The final
  assertion follows from the piecewise $C^1$
  regularity of semialgebraic maps; see for instance
  \cite{BochnakCosteRoy}. Applied to
  $\eta:[0,1]\to\R^N$, this gives a finite
  semialgebraic partition of $[0,1]$ into points and open
  intervals such that $\eta$ is $C^1$ on each interval.
  The point cells and the interval endpoints determine the
  required subdivision. Continuity at the subdivision points
  follows from the continuity of $\eta$ on $[0,1]$.
\end{proof}

\begin{proposition}[Global metric geometry of the reduced quotient]
  \label{prop:reduced-quotient-global-metric-geometry}
  Let \(m\ge2\).  The quotient
  \(Q_m=M_m/O(m)\) is a compact geodesic Alexandrov space with curvature
  bounded below by zero
  in the sense of \cref{eq:alexandrov-midpoint-comparison}.
  Its distance satisfies
  \begin{equation}
    \label{eq:reduced-quotient-orbital-distance}
    \dQ{m}([X],[Y])
    =
    \min_{R\in O(m)}\dM{m}(X,YR),
    \qquad X,Y\in M_m,
  \end{equation}
  and
  \[
    \diam(Q_m)
    \le
    \diam(M_m)
    =
    \pi\sqrt n.
  \]
  If \(R_*\) realizes the minimum in
  \cref{eq:reduced-quotient-orbital-distance}, every minimizing geodesic in
  \(M_m\) from \(X\) to \(YR_*\) projects to a minimizing geodesic in
  \(Q_m\).
\end{proposition}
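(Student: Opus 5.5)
The plan is to deduce everything from standard facts about quotients of compact Riemannian manifolds by compact groups acting isometrically. The proposition has four parts: the orbital distance formula, the diameter bound, the geodesic and Alexandrov property, and the projection of minimizers.

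\emph{Orbital distance formula.} For a compact group \(O(m)\) acting isometrically on the compact manifold \(M_m\), the metric quotient distance is
\[
  \dQ{m}([X],[Y]) = \inf_{R,R'\in O(m)}\dM{m}(XR',YR).
\]
Invariance of \(\dM{m}\) under the diagonal action reduces this to \(\inf_R \dM{m}(X,YR)\). The function \(R\mapsto\dM{m}(X,YR)\) is continuous on the compact group, so the infimum is attained, which gives \cref{eq:reduced-quotient-orbital-distance}. The explicit expression for \(\dM{m}\) is the product-of-spheres distance of \cref{def:factor-quotient-metrics}. Two consequences follow. First, \(\pi_m\) is \(1\)-Lipschitz, so \(\diam(Q_m)\le\diam(M_m)\). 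Second, \(\diam(M_m)^2=\sum_i\pi^2=n\pi^2\), attained at \(Y=-X\).

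\emph{Geodesic property and projection of minimizers.} Let \(R_*\) be a minimizer and let \(\gamma\) be a minimizing geodesic in \(M_m\) from \(X\) to \(YR_*\); it exists by Hopf--Rinow on the compact manifold. Then \(\pi_m\circ\gamma\) has \(\dQ{m}\)-length at most \(\Length_{M_m}(\gamma)=\dM{m}(X,YR_*)=\dQ{m}([X],[Y])\), because \(\pi_m\) is \(1\)-Lipschitz. It joins \([X]\) to \([Y]\), so its length is at least \(\dQ{m}([X],[Y])\) by the triangle inequality on partition sums. Hence it is a minimizing curve. Applying the same argument to sub-arcs shows it is a geodesic (a constant-speed minimizer). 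This proves both that \(Q_m\) is geodesic and the final claim. Compactness holds because \(Q_m\) is the continuous image of \(M_m\).

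\emph{Curvature bound.} This is the step I expect to need the most care, though it is classical. \(M_m\) is a product of round unit spheres, so its sectional curvatures lie in \([0,1]\) and it is a compact Riemannian manifold with \(\sec\ge0\), hence an Alexandrov space of curvature \(\ge0\). The quotient of an Alexandrov space with curvature \(\ge\kappa\) by an isometric action of a compact group is again Alexandrov with curvature \(\ge\kappa\) (Burago--Gromov--Perelman; see \cite{BuragoBuragoIvanov2001}, Proposition 10.2.4, or \cite{KapovitchLytchak2022}). Here \(\kappa=0\), which gives the midpoint inequality \cref{eq:alexandrov-midpoint-comparison}.

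For a direct check, I would lift a triangle \([A],[B],[C]\) as follows. Align representatives so that \(B\) and \(C\) realize \(\dQ{m}([B],[C])\), with \(M\) the midpoint of the lifted geodesic; it projects to the chosen midpoint by the previous step. Apply the Riemannian comparison in \(M_m\) to \(A'\), an orbit representative closest to \(M\). The left side of \cref{eq:alexandrov-midpoint-comparison} is then exact, the right side can only decrease under projection for the \(A\)-terms, and the \(B\)\(C\)-term is exact. This yields the inequality downstairs.

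One subtlety is that the chosen minimizing side \(BC\) in \(Q_m\) need not be the projection of a geodesic. However, every minimizer in \(Q_m\) has a horizontal lift of equal length by \cref{thm:length-preserving-lifts-submetry}. That lift is then minimizing between its endpoint orbits, so it is a geodesic in \(M_m\), which closes the argument. Finite dimensionality holds since the Hausdorff dimension is at most \(\dim M_m\).
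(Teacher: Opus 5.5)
Your proof is correct. For the orbital distance formula, the diameter bound and the projection of minimizers it coincides with the paper's argument: the classical orbit-distance formula plus compactness of $O(m)$, the $1$-Lipschitz projection, and the sandwich $\dQ{m}\le\Length(\pi_m\circ\gamma)\le\Length_{M_m}(\gamma)=\dQ{m}$.

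The difference lies in the two structural claims.
\begin{itemize}
\item \emph{Geodesicity.} The paper obtains it abstractly: the orbit map is a submetry, submetries preserve the length-space property \cite{KapovitchLytchak2022}, and a compact length space is geodesic. You instead observe that the sandwich argument, applied to an optimal alignment $R_*$ and a Hopf--Rinow minimizer, already gives a shortest path between any two classes. That minimizer exists because $M_m$ is compact and connected for $m\ge2$. So geodesicity comes for free from the final claim.
\item \emph{Curvature bound.} The paper cites that submetries preserve Alexandrov lower bounds. You add a hand verification of the midpoint inequality: lift the chosen side via \cref{thm:length-preserving-lifts-submetry}, take the representative of $A$ closest to the lifted midpoint, and apply Toponogov in $M_m$.
\end{itemize}

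Your route is more self-contained. It also makes explicit that every quotient minimizer lifts to a minimizing geodesic between aligned representatives, which is the mechanism the paper uses later. The paper's route is shorter and applies verbatim to any submetry.

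One presentational point: your direct check is only valid when it starts from the lift of the \emph{chosen} side $BC$, as in your last paragraph. It is not valid from an arbitrary aligned pair $B,C$, so that paragraph should come first.
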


\begin{proof}
  The orbital distance formula is classical
  \cite[Lemma~3.3.6]{BuragoBuragoIvanov2001}; compactness of \(O(m)\)
  turns the infimum there into a minimum.  The orbit projection is a
  submetry, and submetries preserve the length-space property
  \cite[Lemmas~2.1 and~2.3]{KapovitchLytchak2022}.  Since \(M_m\) is
  compact, so is \(Q_m\), and a compact length space is geodesic
  \cite[Theorem~2.5.23]{BuragoBuragoIvanov2001}.  Moreover, \(M_m\) has
  sectional curvature bounded below by zero, and a submetry preserves an
  Alexandrov lower-curvature bound
  \cite[Proposition~3.1]{KapovitchLytchak2022}; see also
  \cite[Corollary~8.34]{AlexanderKapovitchPetrunin2024}.  The quotient map
  is \(1\)-Lipschitz, which gives the diameter bound.  Finally, if \(\alpha\)
  is a minimizing geodesic from \(X\) to \(YR_*\), then
  \[
    \dQ{m}([X],[Y])
    \le
    \Length_{\dQ{m}}(\pi_m\circ\alpha)
    \le
    \Length_{M_m}(\alpha)
    =
    \dQ{m}([X],[Y]),
  \]
  so \(\pi_m\circ\alpha\) is minimizing.
\end{proof}

\begin{theorem}[Global rank-bounded metric reduction]
  \label{thm:global-rank-bounded-metric-reduction}
  Let \(1\le s\le r\). The map \(\Xks\) is a bijection from \(Q_s\) onto
  \(\Qkles\). Moreover, for \(s\ge2\) it is an isometry of length spaces:
  \[
    \Xks:\bigl(Q_s,\dQ{s}\bigr)
    \xrightarrow{\ \cong\ }
    \bigl(\Qkles,\dkles\bigr).
  \]
  In particular, when \(s\ge2\), minimizing geodesic segments in
  \(\bigl(\Qkles,\dkles\bigr)\) correspond
  via \(\Xks\) exactly to minimizing geodesic segments in \((Q_s,\dQ{s})\).
\end{theorem}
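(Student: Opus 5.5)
Bijectivity comes first. Injectivity of \(\Xks\) on each \(\Qrk{s}{j}\) follows from \cref{lem:stratumwise-compatibility-padding}: its restriction is \(\padXi{j}{k}\circ\padXi{j}{s}^{-1}\), a composition of bijections by \cref{thm:fixed-rank-riemannian-reduction}. Since padding preserves rank, classes of different ranks have different images, so \(\Xks\) is injective on \(Q_s=\bigsqcup_{j\le s}\Qrk{s}{j}\). Surjectivity onto \(\Qkles\) uses the row-span rotation from Step~1 of \cref{thm:fixed-rank-riemannian-reduction}. Choose \(U\in O(k)\) whose first \(s\) columns span an \(s\)-dimensional subspace containing the row space of \(X\). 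Then \(XU=(Z,0)\) with \(Z\in M_s\).

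For the isometry, I will prove two inequalities. For the \textbf{easy direction} \(\dkles(\Xks p,\Xks q)\le\dQ{s}(p,q)\), I take a \(\dQ{s}\)-minimizing geodesic in \(Q_s\). By \cref{prop:reduced-quotient-global-metric-geometry} it is the projection of a minimizing geodesic \(\alpha\) in \(M_s\) from \(X\) to \(YR_*\). Then \(\pi_k\circ\iks\circ\alpha\) lies in \(\Qkles\) and equals \(\Xks\circ\pi_s\circ\alpha\). Since \(\pi_k\) is \(1\)-Lipschitz, its \(\dQ{k}\)-length is at most \(\Length_{M_k}(\iks\circ\alpha)\), which equals \(\Length_{M_s}(\alpha)=\dQ{s}(p,q)\) by \cref{lem:padding-isometric-embedding}.

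The \textbf{hard direction} is \(\dQ{s}(p,q)\le\Length_{\dQ{k}}(\bar\gamma)\) for every rectifiable \(\bar\gamma\) in \(\Qkles\) from \(\Xks p\) to \(\Xks q\). Transporting through \(\bar\Gamma_k\), the set \(\Qkles\) becomes the semialgebraic set \(\Crle{s}{n}\). I would therefore first reduce to semialgebraic, piecewise \(C^1\) competitors using \cref{thm:semialgebraic-curves-compute-inner-distance}. There is a gap here: that theorem is stated for the Euclidean inner distance, not for \(\dQ{k}\). I would bridge it by noting that both are length structures whose lengths are computed from local data. The plan is to show that \(\Length_{\dQ{k}}\) of a curve is the integral of a metric derivative bounded in terms of the Euclidean speed, so that Kurdyka--Orro approximations, which converge uniformly with controlled Euclidean length, have \(\dQ{k}\)-lengths converging from above. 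If that fails, I would rerun the Kurdyka--Orro argument directly in the definable setting.

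Next I lift \(\bar\gamma\) by \cref{thm:length-preserving-lifts-submetry} to a curve \(\widetilde\gamma\) in \(M_k^{\le s}\) of the same length. The rank of \(\widetilde\gamma\) is finitely piecewise constant, since the pieces are semialgebraic. On each open interval where the rank equals \(j\le s\), I choose a continuous, piecewise smooth moving orthonormal frame \(F(t)\in\R^{k\times s}\) whose span contains the row space. I then set \(Z(t)\coloneq\widetilde\gamma(t)F(t)\in M_s\), so that \(\widetilde\gamma=ZF^\top\). Differentiating gives \(\dot{\widetilde\gamma}=\dot ZF^\top+Z\dot F^\top\). The frame can be chosen so that the second term is vertical: the derivative of a frame adapted to the row space contributes a term of the form \(\widetilde\gamma\,\Omega\) with \(\Omega\in\so(k)\). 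Consequently \(\|\dot Z\|_F\) is bounded by the horizontal speed of \(\widetilde\gamma\), which is the \(\dQ{k}\)-speed of \(\bar\gamma\). Hence \(\Length_{M_s}(Z)\le\Length_{\dQ{k}}(\bar\gamma)\) on each interval. At rank-change points I glue the pieces using the \(O(s)\) freedom in \(Z\), which is harmless in \(Q_s\). Projecting to \(Q_s\) yields a curve from \(p\) to \(q\) with \(\dQ{s}\)-length at most \(\Length_{\dQ{k}}(\bar\gamma)\). Continuity across the breakpoints follows because the \(\pi_s\)-images are determined by the Gram matrix \(ZZ^\top=\bar\Gamma_k(\bar\gamma)\).

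\textbf{The main obstacle} is this frame step near rank changes together with the passage from Euclidean to \(\dQ{k}\) lengths. I would handle it by working in \(Q_s\) directly: since \(\bar\Gamma_s^{-1}\circ\bar\Gamma_k\circ\bar\gamma\) is a curve in \(Q_s\), it suffices to compare \(\dQ{s}\) and \(\dQ{k}\) locally at nearby points of rank \(\le s\). The final statement about minimizing segments then follows at once, since an isometry between geodesic spaces carries minimizers to minimizers.
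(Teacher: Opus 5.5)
Your bijectivity argument and the easy inequality $\dkles(\Xks p,\Xks q)\le\dQ{s}(p,q)$ are correct. The paper argues injectivity through Gram matrices and uses a near-minimizing curve with a submetry lift, but your variants work equally well.

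The gap is in the hard inequality, exactly where you flag it, and your proposed bridge does not close it. Applying \cref{thm:semialgebraic-curves-compute-inner-distance} downstairs on $\Crle{s}{n}$ controls only the Euclidean length of Gram-matrix curves, and that does not control quotient length. On horizontal vectors one only has $\lambda_s(C)\|H\|_F^2\le\|\dd\Gamma_s(X)[H]\|_{\mathrm E}^2$ (\cref{prop:ambient-quotient-volume-bounds}). Already for $n=s=k=2$, $g_Q=(\dd c)^2/(2(1-c^2))$ against $g_{\mathrm E}=(\dd c)^2$ (\cref{ex:volume-section-two-by-two}), so the $\dQ{k}$-speed is not bounded by any multiple of Euclidean speed near lower rank. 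Even a bi-Lipschitz comparison would only give $\Length_{\dQ{k}}(\eta)\le L\,(\Length_{\mathrm E}(\bar\gamma)+\varepsilon)$, not $\Length_{\dQ{k}}(\bar\gamma)+\varepsilon$. Uniform convergence yields only lower semicontinuity of length, which is the wrong direction.

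The frame step has further problems. The Kapovitch--Lytchak lift of $\bar\gamma$ is merely rectifiable, so a piecewise smooth frame and the differentiation of $\widetilde\gamma=ZF^\top$ are not available. Also, $\|\dot Z\|_F$ (hence $\Length_{M_s}(Z)$) is not bounded by the horizontal speed. Only the quotient speed $\min_{A\in\so(s)}\|\dot Z-ZA\|_F$ is, via $\dot Z-Z(F^\top\dot F+F^\top\Omega F)=(\dot{\widetilde\gamma}-\widetilde\gamma\Omega)F$ for $\Omega\in\so(k)$.

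The missing idea is to reverse the order of lifting and approximating, as the paper does. First lift $\bar\eta$ by \cref{thm:length-preserving-lifts-submetry} to a curve $X$ in $M_k^{\le s}$ with $\Length_{M_k}(X)=\Length_{\dQ{k}}(\bar\eta)$. Only then apply Kurdyka--Orro upstairs, in the compact semialgebraic set $M_k^{\le s}\subset\R^{n\times k}$. There $M_k$-length is Frobenius length, so the theorem applies verbatim. The approximant $\widetilde X$ need not lift $\bar\eta$; only its endpoints and length are used. It has finitely many constant-rank $C^1$ pieces by \cref{lem:rank-constant-subdivision-semialgebraic-curve}, so your frame computation applies to it. The globally defined curve $\bar z=\bar\Gamma_s^{-1}(\widetilde X\widetilde X^\top)$ then has quotient speed at most $\|\widetilde X'\|_F$ on each piece, and continuity of $\bar z$ handles the breakpoints.

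Alternatively, your fallback of a local comparison of $\dQ{s}$ and $\dQ{k}$ can be made rigorous using lemmas the paper proves independently of this theorem. \Cref{lem:chordal-comparison-padded-factors}, the Procrustes formula, and \cref{lem:sharp-product-arcsine-bound-excess} together give $\dQ{s}(\Xks^{-1}x,\Xks^{-1}y)\le2\arcsin\bigl(\dQ{k}(x,y)/2\bigr)$ for $x,y\in\Qkles$ with $\dQ{k}(x,y)\le2$. Since $2\arcsin(t/2)\le(1+o(1))\,t$ as $t\to0$, refining partitions of a $\dQ{k}$-rectifiable $\bar\gamma$ gives $\Length_{\dQ{s}}(\Xks^{-1}\circ\bar\gamma)\le\Length_{\dQ{k}}(\bar\gamma)$ directly, with no semialgebraic approximation or lifting. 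As written, however, you carry out neither route.
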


Before giving the proof, let us describe the main idea.
Padding preserves product-of-spheres lengths, but ambient quotient
distances may decrease after padding. We must show that the extra
\(k-s\) coordinates provide no shorter intrinsic path once curves
are constrained to remain in \(\Qkles\).

The proof is carried out upstairs. A rectifiable curve
\(\bar\eta:[0,1]\to\Qkles\) admits a length-preserving lift
\(X:[0,1]\to M_k^{\le s}\).
For \(\varepsilon>0\), replace \(X\) by a continuous semialgebraic
curve \(\widetilde X\) in the same set, with the same endpoints and after a finite subdivision, \(\widetilde X\) is smooth, has
constant rank \(m\le s\) on each open subinterval, and
\[
  \Length_{M_k}(\widetilde X)
  \le \Length_{M_k}(X)+\varepsilon, \qquad
\Gamma_k(\widetilde X)
\in\Crle{s}{n}
\subseteq\Crle{k}{n}.
\]
The reduced quotient curve is defined globally by
\[
  \bar z(t):=\bar\Gamma_s^{-1}
  \bigl(\widetilde X(t)\widetilde X(t)^\top\bigr)\in Q_s.
\]
There we choose a \(C^1\) orthonormal \(s\)-frame
\(U(t)\in\R^{k\times s}\) containing its row span and set
\[
  Z:=\widetilde XU\in \Mrk{s}{m},
  \qquad
  ZZ^\top=\widetilde X\widetilde X^\top,
  \qquad [Z]=\bar z.
\]
Since \(\widetilde X=ZU^\top\), differentiation gives
\[
  Z'=\widetilde X'U+Z\Omega,
  \qquad
  \Omega:=U^\top U'\in\so(s),
\]
where skew-symmetry follows by differentiating
\(U^\top U=\I_s\).
The term \(Z\Omega\) is vertical and therefore disappears under
the quotient differential. Consequently,
\[
  \|\bar z'(t)\|_{\gQrk{s}{m}}
  \le \|\widetilde X'(t)U(t)\|_F
  \le \|\widetilde X'(t)\|_F,
\]
because \(\|U(t)\|_{\mathrm{op}}=1\).

Integrating on compact subintervals, extending to their endpoints
by continuity, and summing the finitely many pieces yields
\[
  \Length_{\dQ{s}}(\bar z)
  \le \Length_{M_k}(\widetilde X)
  \le \Length_{M_k}(X)+\varepsilon
  = \Length_{\dQ{k}}(\bar\eta)+\varepsilon.
\]
Taking infima over admissible curves and letting
\(\varepsilon\downarrow0\) proves the nontrivial distance inequality.
The opposite inequality follows by padding curves in \(M_s\),
which preserves their product-of-spheres length.

\smallskip
The proof of the global rank-bounded reduction theorem makes use of
the following lemma.

\begin{lemma}[Rank-constant subdivision of semialgebraic curves]
  \label{lem:rank-constant-subdivision-semialgebraic-curve}
  Let $X:[0,1]\to \R^{n\times k}$ be a continuous
  semialgebraic curve. Then there exists a finite subdivision of $[0,1]$
  \[
    0=t_0<t_1<\cdots<t_N=1
  \]
  such that \(X\) is smooth on each
  open interval \((t_{j-1},t_j)\), and
  $\rank(X(t))$ is constant on each such open interval.
\end{lemma}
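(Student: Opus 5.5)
The plan is to combine two standard facts from real semialgebraic geometry: piecewise smoothness of semialgebraic maps of one variable, and the finiteness of the connected components of semialgebraic sets.

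\emph{Step 1: a smoothness partition.} Since \(X\) is a continuous semialgebraic map \([0,1]\to\R^{n\times k}\), each entry \(X_{ab}\) is a semialgebraic function of one variable. By the monotonicity theorem for semialgebraic functions \cite{BochnakCosteRoy}, each entry is \(C^\infty\) (indeed analytic, by Nash regularity) off a finite set of points. I will take the union of these finite sets over the \(nk\) entries. Together with \(0\) and \(1\), this gives a subdivision on whose open intervals \(X\) is smooth.

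\emph{Step 2: a rank partition.} For each \(m\in\{0,\ldots,\min(n,k)\}\), the set
\[
  T_m\coloneq\{t\in[0,1]:\rank X(t)=m\}
\]
is semialgebraic. Indeed, the condition \(\rank X(t)=m\) is a Boolean combination of polynomial conditions on minors: some \(m\times m\) minor is nonzero and all \((m+1)\times(m+1)\) minors vanish. Since \(X\) is semialgebraic, each \(T_m\) is the preimage of a semialgebraic set under a semialgebraic map, hence semialgebraic.

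A semialgebraic subset of \(\R\) is a finite union of points and open intervals. Hence the sets \(T_m\) together induce a finite partition of \([0,1]\) into points and open intervals, each contained in a single \(T_m\). I will add the endpoints of all these cells to the subdivision from Step 1. On every resulting open interval, \(X\) is then smooth and of constant rank, which gives the required subdivision.

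\emph{Expected obstacle.} There is no real obstacle here. The only point needing care is justifying that a semialgebraic function of one variable is smooth off finitely many points, and this is exactly the regularity statement already invoked in the proof of \cref{thm:semialgebraic-curves-compute-inner-distance}.

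An alternative to Step 2 would use analyticity. On each smooth interval the entries are Nash, hence real analytic, so every minor is either identically zero or has isolated zeros, and only finitely many of them by semialgebraicity. The rank is then constant off finitely many points of each interval. I will use the first argument, via the sets \(T_m\), since it does not depend on analyticity.
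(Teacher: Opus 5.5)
Your proof is correct and follows essentially the same route as the paper. You take a finite subdivision on whose open intervals \(X\) is smooth, then add the endpoints of the rank level sets \(\{t:\rank X(t)=m\}\), which are semialgebraic and hence finite unions of points and intervals; the paper justifies their semialgebraicity by Tarski--Seidenberg applied to the graph of \(X\), which is exactly what your preimage argument uses, and obtains the smoothness subdivision from smooth semialgebraic cell decomposition \cite[Proposition~2.9.10]{BochnakCosteRoy}. One citation point: the monotonicity theorem alone gives only continuity and monotonicity on finitely many pieces, so your \(C^\infty\) claim rests on the Nash refinement you also mention, not on monotonicity itself.
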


\begin{proof}
  By smooth semialgebraic cell decomposition
  \cite[Proposition~2.9.10]{BochnakCosteRoy}, there exists a finite
  set \(S\subset[0,1]\) such that \(X\)
  is smooth on every connected component of \([0,1]\setminus S\).

  For each \(0\le j\le\min(n,k)\), let
  \[
    \mathcal R^{(j)}
    \coloneq
    \{A\in\R^{n\times k}:\rank(A)=j\}.
  \]
  The set \(\mathcal R^{(j)}\) is semialgebraic, since it is
  characterized by the vanishing of all \((j+1)\times(j+1)\) minors
  and the nonvanishing of at least one \(j\times j\) minor. Since
  \(X\) is semialgebraic, its graph
  \(\Gamma_X\coloneq\{(t,X(t)):t\in[0,1]\}\) is semialgebraic. Moreover,
  \[
    X^{-1}\bigl(\mathcal R^{(j)}\bigr)
    =
    \operatorname{pr}_1\!\left(
      \Gamma_X\cap
      \bigl([0,1]\times\mathcal R^{(j)}\bigr)
    \right).
  \]
  The Tarski--Seidenberg theorem
  \cite[Theorem~2.2.1]{BochnakCosteRoy} therefore implies that
  \(X^{-1}(\mathcal R^{(j)})\) is a semialgebraic subset of
  \([0,1]\). Hence it is a finite union of points and intervals.
  Let \(S_j\) be the finite set consisting of its isolated points and
  the endpoints of its interval components. The ordered elements of
  the finite set
  \[
    \{0,1\}\cup S\cup
    \bigcup_{j=0}^{\min(n,k)}S_j
  \]
  give a subdivision \(0=t_0<t_1<\cdots<t_N=1\). On every interval
  \((t_{j-1},t_j)\), the map \(X\) is smooth and the rank of \(X(t)\)
  is constant.
\end{proof}

Let us now prove \cref{thm:global-rank-bounded-metric-reduction}.

\begin{proof}
  For \(1\le s\le r\), every \(X\in M_k^{\le s}\) admits
  \(O\in O(k)\) such that \(XO=(Z,0)\) with \(Z\in M_s\);
  hence \(\Xks\) is surjective onto \(\Qkles\).
  If \(\Xks([Z_1])=\Xks([Z_2])\), then
  \(Z_1Z_1^\top=Z_2Z_2^\top\), so
  \cref{prop:quotient-model-low-rank-correlations} gives
  \([Z_1]=[Z_2]\) in \(Q_s\). Thus
  \(\Xks:Q_s\to\Qkles\) is bijective.

  Assume now that \(s\ge2\). Fix
  \(\bar p=[Z_0]\), \(\bar q=[Z_1]\in Q_s\), and
  \(\varepsilon>0\).

  \smallskip
  \noindent\emph{First inequality.}
  Choose a rectifiable curve \(\bar\alpha:[0,1]\to Q_s\)
  joining \(\bar p\) to \(\bar q\), with
  \[
    \Length_{\dQ{s}}(\bar\alpha)
    \le \dQ{s}(\bar p,\bar q)+\varepsilon.
  \]
  By \cref{thm:length-preserving-lifts-submetry}, it has a
  length-preserving lift \(\alpha:[0,1]\to M_s\) starting
  at \(Z_0\). Set \(\widehat\alpha\coloneq (\alpha,0)\).
  Its projection \(\pi_k\circ\widehat\alpha\) lies in
  \(\Qkles\) and joins \(\Xks(\bar p)\) to \(\Xks(\bar q)\).
  Since \(\pi_k\) is \(1\)-Lipschitz and padding preserves
  lengths by \cref{lem:padding-isometric-embedding},
  \[
    \begin{aligned}
      \dkles\bigl(\Xks(\bar p),\Xks(\bar q)\bigr)
      &\le \Length_{\dQ{k}}(\pi_k\circ\widehat\alpha)
      \le \Length_{M_k}(\widehat\alpha)\\
      &= \Length_{M_s}(\alpha)
      = \Length_{\dQ{s}}(\bar\alpha)
      \le \dQ{s}(\bar p,\bar q)+\varepsilon.
    \end{aligned}
  \]
  Letting \(\varepsilon\to0\) proves the first inequality.
  In particular, \(\dkles\) is finite between these endpoints.

  \smallskip
  \noindent\emph{Reverse inequality.}
  Choose a rectifiable curve
  \(\bar\eta:[0,1]\to\Qkles\) joining
  \(\Xks(\bar p)\) to \(\Xks(\bar q)\), with
  \[
    \Length_{\dQ{k}}(\bar\eta)
    \le
    \dkles\bigl(\Xks(\bar p),\Xks(\bar q)\bigr)
    +\varepsilon.
  \]
  By \cref{thm:length-preserving-lifts-submetry}, viewed
  as a curve in \(Q_k\), it has a lift
  \(X:[0,1]\to M_k\) satisfying
  \(X(0)=(Z_0,0)\), \(\pi_k\circ X=\bar\eta\), and
  \begin{equation}
    \label{eq:length_preserving_lift}
    \Length_{M_k}(X)=\Length_{\dQ{k}}(\bar\eta).
  \end{equation}
  Rank is constant along each \(O(k)\)-orbit, so
  \(X(t)\in M_k^{\le s}\) for every \(t\).
  Moreover, \(X(1)=(Z_1,0)O\) for some \(O\in O(k)\). The set
  \(M_k^{\le s}\) is compact and semialgebraic.
  By \cref{thm:semialgebraic-curves-compute-inner-distance},
  there is a continuous semialgebraic curve
  \(\widetilde X:[0,1]\to M_k^{\le s}\) with the same
  endpoints as \(X\), such that
  \[
    \Length_{M_k}(\widetilde X)
    =
    \Length_F(\widetilde X)
    \le \Length_{M_k}(X)+\varepsilon,
  \]
  where the equality of lengths follows from the restriction
  of the Frobenius metric to \(M_k\). Observe that the curve
  \(\widetilde X\) need not lift
  \(\bar\eta\); only its endpoints and this length bound
  will be used. By \cref{lem:rank-constant-subdivision-semialgebraic-curve},
  choose a finite subdivision
  \[
    0=t_0<t_1<\cdots<t_N=1
  \]
  such that \(\widetilde X\) is \(C^1\) and has constant
  rank on each \(I_i=(t_{i-1},t_i)\). Using the Gram homeomorphism from
  \cref{prop:quotient-model-low-rank-correlations}, define
  the global curve
  \[
    \bar z(t)
    \coloneq
    \overline\Gamma_s^{-1}
    \bigl(\widetilde X(t)\widetilde X(t)^\top\bigr)
    \in Q_s.
  \]
  This curve is continuous, including at the subdivision
  points, and satisfies
  \[
    \bar z(0)=\bar p,\qquad \bar z(1)=\bar q.
  \]
  Indeed, its endpoint Gram matrices are
  \(Z_0Z_0^\top\) and \(Z_1Z_1^\top\).
  We will prove that
  \(\Length_{\dQ{s}}(\bar z)\le
  \Length_{M_k}(\widetilde X)\). To obtain this estimate, we work
  separately on each constant-rank
  interval and choose a \(C^1\) lift \(Z\) of \(\bar z\) into
  \(\Mrk{s}{m}\), where \(m\) is the rank on that interval.
  The quotient metric \(\gQrk{s}{m}\) then allows us to compute
  the speed of \(\bar z\) from the velocity of this lift and
  bound its \(\dQ{s}\)-length on compact subintervals.

  Fix \(I=(a,b)\) an open interval on which
  \(\widetilde X\) is \(C^1\) and has constant rank and set
  \(m\coloneq\rank \widetilde X(t)\le s\), \(t\in I\). At each $t$, let
  \(E(t)\coloneq\spanv\{\widetilde X_{1\cdot}(t),\ldots,
  \widetilde X_{n\cdot}(t)\}\subset\R^k\). Since \(\widetilde
  X\) is \(C^1\) and has constant rank \(m\) on \(I\), the row spaces
  \(E(t)\) form a \(C^1\) rank-\(m\) subbundle of \(I\times\R^k
  \). Indeed, locally in \(t\), one may choose \(m\) rows of
  \(\widetilde X(t)\) which remain linearly independent and hence
  give a \(C^1\) frame of \(E(t)\). Since \(I\) is an interval, this
  bundle is trivial; after applying Gram--Schmidt and extending the resulting
  orthonormal \(m\)-frame inside the trivial bundle, we obtain a
  \(C^1\) orthonormal \(s\)-frame
  \[
    U(t)\in\R^{k\times s},
    \qquad
    U(t)^\top U(t)=\I_s,
  \]
  such that \(E(t)\subset \im U(t)\), \(t\in I\). Note
  that the continuity of $t \mapsto U(t)$ follows because the chosen
  row frame is \(C^1\), and the Gram--Schmidt process is smooth on
  the open set of linearly independent frames. Define the curve
  \[
    Z(t)\coloneq\widetilde X(t)U(t)\in\R^{n\times s}.
  \]
  Since every row of \(\widetilde X(t)\) lies in
  \(\im U(t)\), multiplication
  by \(U(t)\) preserves the norm of each row, hence, $Z(t)\in M_s$. Moreover,
  \[
    Z(t)Z(t)^\top
    =
    \widetilde X(t)U(t)U(t)^\top\widetilde X(t)^\top
    =
    \widetilde X(t)\widetilde X(t)^\top,
  \]
  because \(U(t)U(t)^\top\) is the orthogonal projector onto
  \(\im U(t)\), which contains the row span of \(\widetilde X(t)\).
  Therefore, on \(I\), $\bar z(t)=[Z(t)]$ and \(\bar z(I)\subset
  \Qrk{s}{m}\). Thus \(Z:I\to \Mrk{s}{m}\) is a \(C^1\) lift of
  \(\restr{\bar z}{I}\).

  Set \(A\coloneq U^\top U'\). Differentiating
  \(U^\top U=\I_s\) gives \(A^\top=-A\), while
  differentiating \(Z=\widetilde XU\) yields
  \begin{equation}
    \label{eq:reduced-velocity-identity}
    Z'=\widetilde X'U+ZA,
    \qquad A^\top=-A.
  \end{equation}
  The vector \(ZA\) is vertical for the \(O(s)\)-action.
  By \cref{prop:quotient-speed-formula},
  \[
    \begin{aligned}
      \|\dot{\bar z}\|_{\gQrk{s}{m}}
      =
      \min_{\Omega\in\so(s)}
      \|Z'-Z\Omega\|_F
      \le \|Z'-ZA\|_F
      =\|\widetilde X'U\|_F
      \le\|\widetilde X'\|_F.
    \end{aligned}
  \]

  For every compact subinterval \([u,v]\subset I\),
  the length upper bound in
  \cref{prop:quotient-speed-formula} therefore gives
  \[
    \begin{aligned}
      \Length_{\dQ{s}}(\restr{\bar z}{[u,v]})
      \le
      \Length_{\gQrk{s}{m}}(\restr{\bar z}{[u,v]})
      =\int_u^v
      \|\dot{\bar z}(t)\|_{\gQrk{s}{m}}\dd t
      \le\int_u^v\|\widetilde X'(t)\|_F\dd t.
    \end{aligned}
  \]
  In particular, the same integral bounds
  \(\dQ{s}(\bar z(u),\bar z(v))\).
  By continuity of \(\bar z\), this endpoint-distance
  bound extends to \(a\le u<v\le b\).
  Applying it to consecutive points of an arbitrary
  finite partition of \([a,b]\), and taking the supremum
  over partitions, gives
  \[
    \Length_{\dQ{s}}(\restr{\bar z}{[a,b]})
    \le
    \int_a^b\|\widetilde X'(t)\|_F\dd t
    =
    \Length_{M_k}(\restr{\widetilde X}{[a,b]}),
  \]
  with the integral understood as an improper integral
  at the endpoints.
  No compatibility between the frames on adjacent
  intervals is required, since \(\bar z\) is defined
  globally and is continuous across rank changes.
  Finite additivity of length now yields
  \begin{equation}
    \label{eq:vertical_part_vanishes}
    \Length_{\dQ{s}}(\bar z)
    \le
    \sum_{i=1}^N
    \Length_{M_k}
    \bigl(\restr{\widetilde X}{[t_{i-1},t_i]}\bigr)
    =
    \Length_{M_k}(\widetilde X).
  \end{equation}

  Consequently,
  \[
    \begin{aligned}
      \dQ{s}(\bar p,\bar q)
      &\le \Length_{\dQ{s}}(\bar z)
      &&\text{(by definition of \(\dQ{s}\) )}
      \\
      &\le \Length_{M_k}(\widetilde X)
      &&\text{(by \cref{eq:vertical_part_vanishes})}
      \\
      &\le \Length_{M_k}(X)+\varepsilon
      &&\text{(by the choice of }\widetilde X\text{)}
      \\
      &= \Length_{\dQ{k}}(\bar\eta)+\varepsilon
      &&\text{(by \cref{eq:length_preserving_lift})}
      \\
      &\le
      \dkles\bigl(\Xks(\bar p),\Xks(\bar q)\bigr)
      +2\varepsilon
      &&\text{(by the choice of }\bar\eta\text{).}
    \end{aligned}
  \]
  Letting \(\varepsilon\to0\) proves the reverse inequality.
  Together with the first inequality and bijectivity,
  this shows that \(\Xks\) is an isometry from
  \((Q_s,\dQ{s})\) onto \((\Qkles,\dkles)\).
\end{proof}

\begin{corollary}[Global Alexandrov geometry of rank-bounded spaces]
  \label{cor:rank-bounded-global-alexandrov-geometry}
  Let \(2\le s\le\min(n,k)\).  Then
  \((\Qkles,\dkles)\) is a compact geodesic Alexandrov space with curvature
  bounded below by zero, and
  \[
    \diam(\Qkles,\dkles)
    \le
    \pi\sqrt n.
  \]
  Through the Gram homeomorphism, the same statement holds for
  \(\Crle{s}{n}\) equipped with the transported intrinsic
  rank-bounded metric.
\end{corollary}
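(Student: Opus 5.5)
The corollary should follow by transport of structure: the previous theorem gives an isometry from $(Q_s,\dQ{s})$, and \cref{prop:reduced-quotient-global-metric-geometry} already supplies the needed geometry of that space. Since \(2\le s\le\min(n,k)\), \cref{thm:global-rank-bounded-metric-reduction} applies. It says that
\[
  \Xks:(Q_s,\dQ{s})\longrightarrow(\Qkles,\dkles)
\]
is a bijective isometry of metric spaces. In \cref{prop:reduced-quotient-global-metric-geometry}, take \(m=s\ge2\). This shows that \((Q_s,\dQ{s})\) is compact and geodesic, with nonnegative Alexandrov curvature in the sense of \cref{eq:alexandrov-midpoint-comparison}, and that \(\diam(Q_s)\le\pi\sqrt n\).

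Each of these properties is preserved under a surjective isometry.

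\begin{itemize}
  \item \emph{Compactness} passes through because an isometry is a homeomorphism.
  \item \emph{Geodesic} passes through because isometries map minimizing segments to minimizing segments.
  \item \emph{Completeness and finite dimension} pass through: completeness follows from compactness, and Hausdorff dimension is an isometry invariant.
  \item \emph{The midpoint comparison} \cref{eq:alexandrov-midpoint-comparison} involves only distances between the vertices of geodesic triangles and midpoints of chosen minimizing sides. Geodesic triangles and their midpoints correspond bijectively under \(\Xks\), as the last clause of \cref{thm:global-rank-bounded-metric-reduction} records.
  \item \emph{The diameter bound} passes through because diameters are equal under an isometry.
\end{itemize}

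Hence \((\Qkles,\dkles)\) is a compact geodesic Alexandrov space of curvature \(\ge0\) with \(\diam\le\pi\sqrt n\).

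For the correlation-matrix statement, \cref{def:rank-bounded-pieces} gives that \(\overline\Gamma_k\) restricts to a homeomorphism \(\Qkles\cong\Crle{s}{n}\). The distance \(\dCorrle{k}{s}\) is defined as the transport of \(\dkles\), so \(\overline\Gamma_k\) is an isometry by construction, and the same argument applies.

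The one point needing care is the hypothesis, in the definition of Alexandrov curvature, that the space be a complete geodesic length space of finite dimension. Here \(\dkles\) is a priori only an intrinsic length metric, possibly infinite on some pairs. This is settled because the isometry with \(Q_s\) forces \(\dkles\) to be finite and geodesic. Beyond this there is no real obstacle: all the work is already contained in \cref{thm:global-rank-bounded-metric-reduction}.
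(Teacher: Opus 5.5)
Your proposal is correct and follows the paper's own proof: the paper transports \cref{prop:reduced-quotient-global-metric-geometry} (with \(m=s\)) through the isometry \(\Xks\) of \cref{thm:global-rank-bounded-metric-reduction}, and then through the Gram homeomorphism of \cref{prop:quotient-model-low-rank-correlations}. Your itemized check that compactness, the geodesic property, the midpoint comparison, finiteness of \(\dkles\), and the diameter bound are all invariant under surjective isometries spells out what the paper leaves implicit.
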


\begin{proof}
  Transport \cref{prop:reduced-quotient-global-metric-geometry} through the
  isometry of
  \cref{thm:global-rank-bounded-metric-reduction}, and then through the Gram
  homeomorphism of
  \cref{prop:quotient-model-low-rank-correlations}.
\end{proof}

As before, the quotient-side statement immediately translates to the
correlation side.

\begin{corollary}[Global rank-bounded reduction on correlation matrices]
  \label{cor:global-rank-bounded-reduction-correlation}
  Let \(2\le s\le r\). Via the homeomorphisms
  \[
    Q_s\cong \Crle{s}{n}
    \qquad\text{and}\qquad
    \Qkles\cong \Crle{s}{n},
  \]
  the metric induced from \((Q_s,\dQ{s})\) coincides with the intrinsic
  rank-bounded metric
  induced from \((Q_k,\dQ{k})\). In particular, every metric problem on
  \(\Crle{s}{n}\) depending only on the intrinsic rank-bounded
  distance may be
  reduced canonically to the smaller quotient space \((Q_s,\dQ{s})\).
\end{corollary}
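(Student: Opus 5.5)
This corollary is a transport statement: I will obtain it by composing \cref{thm:global-rank-bounded-metric-reduction} with the Gram homeomorphisms of \cref{prop:quotient-model-low-rank-correlations}. The only point needing care is that the two homeomorphisms onto \(\Crle{s}{n}\) are compatible with the padding map \(\Xks\).

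The first step is to note the identity
\[
  \overline\Gamma_k\circ\Xks=\overline\Gamma_s
  \quad\text{on } Q_s .
\]
This holds because \((Z,0)(Z,0)^\top=ZZ^\top\) for every \(Z\in M_s\). By \cref{prop:quotient-model-low-rank-correlations} applied with \(m=s\), the map \(\overline\Gamma_s:Q_s\to\Crle{s}{n}\) is a homeomorphism. By the same proposition together with \cref{def:rank-bounded-pieces}, the restriction \(\overline\Gamma_k:\Qkles\to\Crle{s}{n}\) is a homeomorphism, since it is the restriction of the homeomorphism \(Q_k\cong\Crle{k}{n}\) to the preimage of \(\Crle{s}{n}\). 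The identity above then says that, under these two identifications of \(\Crle{s}{n}\), the map \(\Xks\) becomes the identity of \(\Crle{s}{n}\).

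The second step transports the metrics. Write \(\overline\Gamma_{s,*}\dQ{s}\) for the push-forward of \(\dQ{s}\) to \(\Crle{s}{n}\), and similarly for \(\dkles\). For \(C,D\in\Crle{s}{n}\) put \(p=\overline\Gamma_s^{-1}(C)\) and \(q=\overline\Gamma_s^{-1}(D)\). The identity from the first step gives \(\overline\Gamma_k^{-1}(C)=\Xks(p)\) and \(\overline\Gamma_k^{-1}(D)=\Xks(q)\). Hence
\[
  (\overline\Gamma_{k,*}\dkles)(C,D)
  =\dkles\bigl(\Xks(p),\Xks(q)\bigr)
  =\dQ{s}(p,q)
  =(\overline\Gamma_{s,*}\dQ{s})(C,D),
\]
where the middle equality is \cref{thm:global-rank-bounded-metric-reduction}, which applies because \(s\ge2\). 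So the two transported metrics coincide.

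The ``in particular'' clause then needs no further argument. Any metric notion on \(\Crle{s}{n}\) that depends only on the intrinsic rank-bounded distance, such as geodesics, midpoints, Fréchet means or curvature comparisons, is computed identically through the canonical isometry \(\overline\Gamma_s^{-1}\) into \((Q_s,\dQ{s})\). There is no real obstacle here. The only thing to verify is the compatibility identity, and with it the fact that the identification is canonical: it involves no choice of frame and no choice of orthogonal alignment.
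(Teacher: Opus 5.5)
Your proposal is correct and follows the paper's own proof, which simply transports \cref{thm:global-rank-bounded-metric-reduction} through the Gram homeomorphisms of \cref{prop:quotient-model-low-rank-correlations}. The one thing you add is the explicit compatibility identity \(\overline\Gamma_k\circ\Xks=\overline\Gamma_s\), which the paper leaves implicit; it is a worthwhile addition.
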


\begin{proof}
  This is \cref{thm:global-rank-bounded-metric-reduction} transported
  through the Gram homeomorphisms of
  \cref{prop:quotient-model-low-rank-correlations}. \qedhere
\end{proof}

\section{Topology of fixed-rank correlation strata}
\label{sec:topology-fixed-rank-strata}

The rankwise Gram identification of
\cref{prop:quotient-model-low-rank-correlations} and the metric reduction
of \cref{sec:rank-reduction-metric-reduction} let us study the topology
of the fixed rank locus \(\Crk{s}{n}\).
For \(1\le s\le n\), we will use the principal-minor cover
\(V_I\) of \cref{lem:cholesky-schur-charts}, with its product model
\[
  V_I\simeq\Crk{s}{s}\times(\mathbb S^{s-1})^{n-s}.
\]
We derive the dimension and path connectedness of each stratum from this model.

\begin{remark}
  Since \(\Crk{s}{s}\) is the full-rank correlation manifold in
  dimension \(s\), the Cholesky decomposition gives a particularly
  simple parametrization. Namely, each \(C\in\Crk{s}{s}\) has a
  unique Cholesky factorization \(C=LL^\top\) where \(L\) is lower
  triangular with positive diagonal entries. The condition
  \(\diag(C)=\mathbf 1\) means that each row of \(L\)
  has Euclidean norm one. Thus the \(i\)-th row of \(L\) lies on
  \(\mathbb S^{i-1}\), and the Cholesky condition \(L_{ii}>0\) yields
  the open hemisphere
  \[
    \mathbb S^{i-1}_+
    \coloneq
    \{x=(x_1,\ldots,x_i)\in\mathbb S^{i-1}:x_i>0\}.
  \]
  Therefore \(\Crk{s}{s} \simeq \prod_{i=1}^s \mathbb S^{i-1}_+\)
  and each open hemisphere \(\mathbb S^{i-1}_+\) is diffeomorphic to
  the open ball \(\mathbb B^{i-1}\subset\R^{i-1}\). Consequently,
  \[
    \Crk{s}{s}
    \simeq
    \prod_{i=1}^s \mathbb B^{i-1}
    \simeq
    \R^{s(s-1)/2}.
  \]
  Indeed, stereographic projection from the opposite pole identifies each
  \(\mathbb S^{i-1}_{+}\) diffeomorphically with \(\mathbb B^{i-1}\),
  while the radial map \(x \longmapsto \frac{x}{\sqrt{1-\lVert
  x\rVert^{2}}}\) identifies \(\mathbb B^{i-1}\) diffeomorphically
  with \(\R^{i-1}\).
  Thus each principal-minor chart may also be written as \(V_I \simeq
  \R^{s(s-1)/2}\times(\mathbb S^{s-1})^{n-s}\).
\end{remark}

\begin{corollary}[Dimension of the fixed-rank stratum]
  \label{cor:dimension-fixed-rank-stratum}
  For every \(1\le s\le n\), the smooth manifold \(\Crk{s}{n}\)
  has dimension
  \[
    \dim \Crk{s}{n}
    =
    \frac{s(s-1)}{2}+(n-s)(s-1)
    =
    n(s-1)-\frac{s(s-1)}{2}.
  \]
  Equivalently, for every \(k\ge s\), the quotient stratum \(\Qks\) has the same
  dimension.
\end{corollary}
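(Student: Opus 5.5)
The plan is to read the dimension directly off the Cholesky--Schur charts of \cref{lem:cholesky-schur-charts}. Since the sets \(V_I\), \(|I|=s\), form an open cover of \(\Crk{s}{n}\), and each is open, the dimension of the manifold is the common dimension of the charts. The chart \(\Phi_I\) identifies \(V_I\) diffeomorphically with \(\Crk{s}{s}\times(\mathbb S^{s-1})^{n-s}\). By the preceding remark, \(\Crk{s}{s}\simeq\prod_{i=1}^s\mathbb S^{i-1}_+\simeq\R^{s(s-1)/2}\), so this factor contributes \(\sum_{i=1}^s(i-1)=s(s-1)/2\). Each sphere factor \(\mathbb S^{s-1}\) contributes \(s-1\), giving \((n-s)(s-1)\) from the second factor. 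Adding these yields \(s(s-1)/2+(n-s)(s-1)\), and the elementary rearrangement \((n-s)(s-1)+s(s-1)/2=n(s-1)-s(s-1)/2\) gives the second form.

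One point needs care. The dimension must be well defined independently of the index set \(I\), and every nonempty \(V_I\) gives the same number, so there is no ambiguity. Also, every \(V_I\) is nonempty: for instance, pad a rank-\(s\) unit-row factor whose rows indexed by \(I\) form a Cholesky factor. In the degenerate cases \(s=1\) and \(s=n\), the formula still reads correctly. For \(s=1\) it gives \(0\), matching the discrete set of sign patterns. For \(s=n\) it gives \(n(n-1)/2\), with the empty spherical product being a point, as noted in the lemma.

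For the quotient-side statement, I invoke \cref{prop:quotient-model-low-rank-correlations}. For \(k\ge s\), it says that \(\overline\Gamma_k\colon\Qks\to\Crk{s}{n}\) is a diffeomorphism, and diffeomorphisms preserve dimension.

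The only mild subtlety, which I expect to be the sole "obstacle", is that the smooth structure on \(\Crk{s}{n}\) used in \cref{rem:smooth-structure-rank-pieces} is the one transported from \(\Qks\). One must check that it agrees with the embedded structure in which the charts \(\Phi_I\) are diffeomorphisms. This is exactly the content of the second part of the proof of \cref{prop:quotient-model-low-rank-correlations}: the local inverses \(C\mapsto[(Z_I(C),0)]\) are smooth for the embedded structure. Hence the two structures coincide, and the dimension count is unambiguous.
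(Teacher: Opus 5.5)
Your proposal is correct and follows the paper's route. You read the dimension off the Cholesky--Schur product charts \(V_I\simeq\Crk{s}{s}\times(\mathbb S^{s-1})^{n-s}\), then transfer it to \(\Qks\) through the rankwise Gram diffeomorphism of \cref{prop:quotient-model-low-rank-correlations}. Your extra remarks are accurate but add nothing beyond what the lemma and proposition already supply: nonemptiness of the charts, the cases \(s=1\) and \(s=n\), and agreement of the transported and embedded smooth structures.
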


\begin{proof}
  By \cref{lem:cholesky-schur-charts}, the dimension is
  \(\dim\Crk{s}{s}+(n-s)\dim\mathbb S^{s-1}
  =s(s-1)/2+(n-s)(s-1)\).
  The quotient statement follows from
  \cref{prop:quotient-model-low-rank-correlations}.
\end{proof}

To pass from the product charts to connectedness of the whole stratum,
we first exhibit a point common to every \(V_I\).

\begin{lemma}[A common point of all principal-minor charts]
  \label{lem:common-point-all-VI}
  Assume \(2\le s\le n\).  There exists a matrix
  \(C_\ast\in\Crk{s}{n}\) such that \(C_\ast\in \bigcap_{|I|=s}
  V_I\), equivalently
  \[
    \det((C_\ast)_{II})>0
    \qquad
    \text{for every } I\subset\{1,\ldots,n\},\ |I|=s.
  \]
\end{lemma}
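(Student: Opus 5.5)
We will construct \(C_\ast=XX^\top\) from an explicit factor \(X\in\Mss\) whose rows are in general position, meaning that every \(s\) of them are linearly independent. The reason this suffices is that for \(|I|=s\) we have \((C_\ast)_{II}=X_IX_I^\top\), where \(X_I\) is the \(s\times s\) submatrix of rows indexed by \(I\). Hence
\[
  \det\bigl((C_\ast)_{II}\bigr)=\det(X_I)^2,
\]
and this is positive exactly when those \(s\) rows are linearly independent. If every \(s\)-subset of rows is independent, then in particular \(\rank X=s\), because \(n\ge s\). Thus \(C_\ast\in\Crk{s}{n}\), and \(C_\ast\in V_I\) for every \(I\).

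For the construction we use moment-curve rows. Choose distinct reals \(0<x_1<\cdots<x_n\) and set
\[
  v_i\coloneq(1,x_i,x_i^2,\ldots,x_i^{s-1})\in\R^s,
  \qquad
  X_{i\cdot}\coloneq v_i/\|v_i\|.
\]
Each \(v_i\) is nonzero since its first coordinate is \(1\), so normalization is legitimate and produces unit rows, giving \(X\in M_s\). For any \(I=\{i_1<\cdots<i_s\}\), the matrix with rows \(v_{i_1},\ldots,v_{i_s}\) is a Vandermonde matrix with determinant \(\prod_{a<b}(x_{i_b}-x_{i_a})\neq0\). Row normalization multiplies this determinant by the nonzero factor \(\prod_a\|v_{i_a}\|^{-1}\), so \(\det X_I\neq0\).

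An alternative is a genericity argument: the set of \(X\in M_s\) with \(\det X_I=0\) for a fixed \(I\) is a proper real-algebraic subset, so a finite union of such sets cannot exhaust \(M_s\). The explicit Vandermonde choice avoids this measure-theoretic step.

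The only point needing care is the observation that normalizing rows preserves linear independence of every \(s\)-subset. This holds because it is just multiplication by a positive diagonal matrix. The hypothesis \(s\ge2\) is not really needed for the argument; it is presumably imposed for the path-connectedness application. I expect no serious obstacle.
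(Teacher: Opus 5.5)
Your proof is correct and is essentially the paper's proof: the paper also takes normalized moment-curve rows \((1,a_i,\ldots,a_i^{s-1})/\|\cdot\|\) with pairwise distinct nodes, uses the Vandermonde determinant to get \(\det(X_I)\neq 0\), and concludes \(\det((C_\ast)_{II})=\det(X_I)^2>0\). You also spell out why \(\rank C_\ast=s\) (any nonzero \(s\times s\) principal minor gives rank at least \(s\)), a point the paper leaves implicit when it places \(C_\ast\) in each \(V_I\subset\Crk{s}{n}\).
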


\begin{proof}
  Choose pairwise distinct real numbers $a_1,\ldots,a_n\in\R,$ and define
  \[
    v_i
    \coloneq
    \frac{(1,a_i,a_i^2,\ldots,a_i^{s-1})}
    {\|(1,a_i,a_i^2,\ldots,a_i^{s-1})\|_{\R^s}}
    \in\mathbb S^{s-1},
    \qquad i=1,\ldots,n.
  \]
  Let \(X_\ast\in\R^{n\times s}\) be the matrix with rows
  \(v_i\), and set \(C_\ast\coloneq X_\ast X_\ast^\top\). Then
  \(C_\ast\in\Crle{s}{n}\). Now let \(I=\{i_1,\ldots,i_s\}\) be
  any subset of cardinality \(s\). The submatrix \(X_{\ast,I}\) is
  obtained from the Vandermonde matrix
  \(\bigl(a_{i_\alpha}^{\,\beta-1}\bigr)_{1\le\alpha,\beta\le s}\) by
  nonzero row rescalings. Then, computing the Vandermonde determinant:
  \[
    \det(X_{\ast,I})
    \overset{\mathrm{rescaling}}{\propto}
    \det\bigl(a_{i_\alpha}^{\,\beta-1}\bigr)_{1\le\alpha,\beta\le s}
    =
    \prod_{1\le\alpha<\beta\le s}(a_{i_\beta}-a_{i_\alpha})
    \overset{\mathrm{pairwise\ distinct}}{\neq} 0.
  \]
  Therefore
  \[
    \det((C_\ast)_{II})
    =\det(X_{\ast,I}X_{\ast,I}^\top)
    =\det(X_{\ast,I})^2
    >0,
  \]
  and in particular $C_\ast \in V_I$.
  Since \(I\) was arbitrary, \(C_\ast\in\bigcap_{|I|=s}V_I\).
\end{proof}

\begin{theorem}[Path connectedness of fixed-rank strata]
  \label{thm:path-connectedness-fixed-rank-strata}
  For every \(2\le s\le n\), the fixed-rank correlation stratum
  \(\Crk{s}{n}\) is path connected. Consequently, for every
  \(k\ge s\), the quotient stratum $\Qks$ is path connected.
\end{theorem}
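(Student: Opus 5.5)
The plan is to combine the principal-minor cover of \cref{lem:cholesky-schur-charts} with the common point of \cref{lem:common-point-all-VI}. The sets \(V_I\), \(|I|=s\), cover \(\Crk{s}{n}\), and all of them contain \(C_\ast\). So it suffices to show that each \(V_I\) is path connected. Then any two points \(C,D\in\Crk{s}{n}\) can be joined, since \(C\in V_I\) and \(D\in V_J\) for some \(I,J\), and we may go from \(C\) to \(C_\ast\) inside \(V_I\) and from \(C_\ast\) to \(D\) inside \(V_J\). Concatenating the two paths gives a path in \(\Crk{s}{n}\).

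To see that a single \(V_I\) is path connected, use the diffeomorphism \(\Phi_I:V_I\to\Crk{s}{s}\times(\mathbb S^{s-1})^{n-s}\). By the remark preceding \cref{cor:dimension-fixed-rank-stratum}, the first factor \(\Crk{s}{s}\) is diffeomorphic to \(\prod_{i=1}^s\mathbb S^{i-1}_+\cong\R^{s(s-1)/2}\), which is path connected. (Alternatively, \(\Crk{s}{s}\) is the convex interior of the elliptope, so straight segments already stay inside it.) Because \(s\ge2\), each sphere factor \(\mathbb S^{s-1}\) has dimension at least one and is therefore path connected. A finite product of path-connected spaces is path connected, and \(\Phi_I\) is a homeomorphism, so \(V_I\) is path connected.

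The quotient statement follows by transport. For \(k\ge s\), \cref{prop:quotient-model-low-rank-correlations} gives a diffeomorphism \(\overline\Gamma_k:\Qks\to\Crk{s}{n}\), so \(\Qks\) is path connected as well.

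The only delicate point is the role of \(s\ge2\). It enters precisely in the connectedness of \(\mathbb S^{s-1}\): for \(s=1\) the sphere factors are \(\mathbb S^0=\{\pm1\}\), and the argument correctly fails, consistent with \cref{rem:rank-one-stratum}. The existence of a common chart point is what keeps the gluing step trivial, and that is supplied by \cref{lem:common-point-all-VI}. I therefore expect no substantive obstacle beyond checking that the chart identification of \(\Crk{s}{s}\) is indeed connected.
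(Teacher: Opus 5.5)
Your proposal is correct and follows essentially the same route as the paper. Each chart \(V_I\cong\Crk{s}{s}\times(\mathbb S^{s-1})^{n-s}\) is path connected because the first factor is convex and the spheres are connected for \(s\ge2\); the charts are glued through the common point \(C_\ast\) of \cref{lem:common-point-all-VI}, and the result is transported to \(\Qks\) via the rankwise Gram diffeomorphism.
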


\begin{proof}
  By \cref{lem:cholesky-schur-charts}, the sets \(V_I\) cover
  \(\Crk{s}{n}\) and are diffeomorphic to
  \(\Crk{s}{s}\times(\mathbb S^{s-1})^{n-s}\).
  The first factor is convex, and the spheres
  are path connected for \(s\ge2\). Thus each \(V_I\) is path connected.
  They have a common point by \cref{lem:common-point-all-VI}, so their union
  is path connected. The same holds for \(\Qks\) by
  \cref{prop:quotient-model-low-rank-correlations}.
\end{proof}

\begin{remark}[The rank-one stratum]
  \label{rem:rank-one-stratum}
  The restriction \(s\ge2\) in
  \cref{thm:path-connectedness-fixed-rank-strata} is necessary.
  Indeed, if \(C\in\Crk{1}{n}\), then \(C=vv^\top\) for some
  \(v\in\{\pm1\}^n\), and the choices \(v\) and \(-v\) give the same
  correlation matrix. Thus
  \[
    \Crk{1}{n}
    =
    \{vv^\top:v\in\{\pm1\}^n\}
  \]
  has \(2^{n-1}\) points. In particular, for \(n\ge2\), the rank-one
  stratum is not path connected.
\end{remark}

\begin{remark}[Scope of the topological model]
  \label{rem:scope-topological-model}
  The product charts \(V_I\) provide an explicit local model for the
  fixed-rank correlation strata:
  \[
    \Crk{s}{n}
    =
    \bigcup_{|I|=s} V_I,
    \qquad
    V_I\simeq \Crk{s}{s}\times(\mathbb S^{s-1})^{n-s}.
  \]
  This local description is sufficient for the arguments used later: it gives
  path connectedness for \(s\ge2\), and it supplies explicit principal-minor
  coordinates around every point of \(\Crk{s}{n}\). A full
  topological classification of \(\Crk{s}{n}\), however, depends
  on the global gluing pattern of the charts \(V_I\) along their
  overlaps. This lies beyond the scope of the present paper.
\end{remark}

\section{Intrinsic stratum lengths and the Sylvester mechanical connection}
\label{sec:quotient-metric-sylvester-connection}

The preceding sections introduced the quotient model for low-rank
correlation matrices, its rank strata and the two reduction theorems,
with padding isometry
\[
  Q_k=M_k/O(k), \quad \Qks=\Mks/O(k), \quad \Xks:\Qss\longrightarrow \Qks.
\]
The goal of this section is twofold. First, we give explicit formulas
for the horizontal distribution on the reduced full-rank model
\[
  \pi_s:\Mss\longrightarrow \Qss=\Mss/O(s),
\]
where the action is free. These formulas are written in terms of a
nonsingular Sylvester operator and identify the mechanical connection
associated with the product-of-spheres metric. Second, we identify
the intrinsic length geometry of a fixed-rank stratum as a
consequence of the fixed-rank Riemannian reduction, the global
rank-bounded metric reduction, and the compatibility of the quotient
distance with the Riemannian distance on the reduced full-rank stratum.

The metric point is the following. There are two different distances
naturally associated with \(\Qks\). The first is the pointwise
restriction of the ambient quotient distance \(\dQ{k}\) to pairs of
points in \(\Qks\). The second is the intrinsic length distance
obtained by measuring only curves that remain inside \(\Qks\). These
two distances need not coincide in general, because the ambient
quotient distance is computed in the whole quotient \(Q_k\), and
paths realizing or approximating it need not stay in the fixed-rank stratum.

We prove below that the intrinsic length distance induced by \(\dQ{k}\) on
\(\Qks\) is exactly the Riemannian geodesic distance associated with
the quotient Riemannian metric \(\gQks\). We recall this quotient
metric is characterized by the fact that
\[
  \pi_k:\Mks\longrightarrow \Qks
\]
is a Riemannian submersion: if \(u,v\in\Tz{[X]}\Qks\), and if
\(u_X^\#,v_X^\#\in\Hz{X}\coloneq \Vz{X}^\perp\cap\Tz{X}\Mks\) denote their
unique horizontal lifts at \(X\), then
\[
  \gQks(u,v)
  \coloneq 
  \Frob{u_X^\#}{v_X^\#}.
\]
We denote by \(\dQks\) the corresponding Riemannian geodesic distance.

\subsection{The reduced principal bundle and horizontal distribution}
\label{subsec:reduced-principal-bundle-horizontal-distribution}

Fix \(1\le s\le r=\min(n,k)\). The full-rank piece
\[
  \Mss=\{Z\in M_s:\rank(Z)=s\}
\]
is an open submanifold of \(M_s\), since its complement is the
rank-deficient locus cut out by the vanishing of all \(s\times s\)
minors. On \(\Mss\), the right action of \(O(s)\) is free. Hence
\[
  \pi_s:\Mss\longrightarrow \Qss
\]
is a principal \(O(s)\)-bundle. The case \(s=1\) is zero-dimensional;
all formulas below remain formally valid with
\(\so(1)=\{0\}\), while the subsequent
differential-geometric applications concern \(s\ge2\).

\begin{definition}[Vertical and horizontal spaces]
  \label{def:vertical-horizontal-spaces}
  For \(Z\in\Mss\), the \emph{vertical space} at $Z$ is the tangent
  space to the orbit \(Z\cdot O(s)\) (see, e.g., \cite[Chap.~II,
  \S1]{KobayashiNomizu1963})
  \begin{equation}
    \label{eq:vertical-space-reduced-model}
    \Vz{Z}
    \coloneq 
    \ker \dd\pi_s(Z)
    =
    \{Z\Omega:\Omega\in\so(s)\}.
  \end{equation}
  The metric-horizontal space at $Z$ is the Frobenius-orthogonal
  complement of \(\Vz{Z}\) inside \(\Tz{Z}\Mss\):
  \begin{equation}
    \label{eq:horizontal-space-reduced-model}
    \Hz{Z}
    \coloneq 
    \Vz{Z}^\perp
    \subset \Tz{Z}\Mss.
  \end{equation}
\end{definition}

The horizontal space is characterized by the following lemma.

\begin{lemma}[Horizontality criterion]
  \label{lem:horizontality-criterion}
  Let \(Z\in \Mss\) and \(W\in\Tz{Z}\Mss\).  Then
  \[
    W\in\Hz{Z}
    \qquad\Longleftrightarrow\qquad
    Z^\top W \text{ is symmetric}
    \qquad\Longleftrightarrow\qquad
    Z^\top W - W^\top Z = 0.
  \]
\end{lemma}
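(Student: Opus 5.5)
The plan is a short direct computation from \cref{def:vertical-horizontal-spaces}. By \cref{eq:vertical-space-reduced-model}, the vertical space is \(\Vz{Z}=\{Z\Omega:\Omega\in\so(s)\}\). Every \(Z\Omega\) is tangent to \(\Mss\), since the orbit curve \(t\mapsto Z\exp(t\Omega)\) stays in \(\Mss\). Hence \(W\in\Tz{Z}\Mss\) lies in \(\Hz{Z}\) if and only if \(\Frob{W}{Z\Omega}=0\) for every \(\Omega\in\so(s)\).

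The key step rewrites this pairing with the trace:
\[
  \Frob{W}{Z\Omega}=\tr(W^\top Z\Omega)=\Frob{Z^\top W}{\Omega}.
\]
So \(W\) is horizontal exactly when \(Z^\top W\in\R^{s\times s}\) is Frobenius-orthogonal to all of \(\so(s)\). We then use the orthogonal decomposition \(\R^{s\times s}=\Sym(s)\oplus\so(s)\) for the Frobenius inner product. The orthogonality \(\tr(S^\top\Omega)=0\) for \(S\) symmetric and \(\Omega\) skew follows from \(\tr(S\Omega)=\tr((S\Omega)^\top)=-\tr(\Omega S)\). It follows that \(\so(s)^{\perp}=\Sym(s)\), which gives the first equivalence.

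Concretely, write \(Z^\top W=S+A\) with symmetric part \(S\) and skew part \(A=\tfrac12(Z^\top W-W^\top Z)\). Taking \(\Omega=A\) gives \(\Frob{Z^\top W}{A}=\|A\|_F^2\). This vanishes for all test elements if and only if \(A=0\). The second equivalence is then immediate, since \(A=0\) means \(Z^\top W-(Z^\top W)^\top=Z^\top W-W^\top Z=0\).

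There is no real obstacle here. The only point needing care is that the test vectors \(Z\Omega\) span the whole vertical space and lie in the tangent space, and that is exactly \cref{eq:vertical-space-reduced-model}. Full column rank of \(Z\) is not needed for the equivalence itself; it only ensures \(\Omega\mapsto Z\Omega\) is injective, which is irrelevant here. The argument is the same as the one already used in Step 2 of the proof of \cref{thm:fixed-rank-riemannian-reduction}.
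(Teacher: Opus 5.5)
Your proof is correct and follows essentially the same route as the paper: you pair \(W\) with the vertical vectors \(Z\Omega\), rewrite the Frobenius pairing as a trace so that horizontality becomes Frobenius-orthogonality of \(Z^\top W\) to \(\so(s)\), and conclude from \(\so(s)^{\perp}=\Sym(s)\). Your explicit split of \(Z^\top W\) into symmetric and skew parts simply spells out that orthogonal-complement fact, which the paper takes as known.
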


\begin{proof}
  For \(\Omega\in\so(s)\),
  \[
    \Frob{W}{Z\Omega}
    =
    \tr(W^\top Z\Omega)
    =
    \tr(\Omega W^\top Z).
  \]
  Thus \(W\) is orthogonal to every vertical vector \(Z\Omega\) if and only if
  \(W^\top Z\) is Frobenius-orthogonal to \(\so(s)\).  Since the
  Frobenius-orthogonal complement of \(\so(s)\) in
  \(\R^{s\times s}\) is \(\Sym(s)\), this is equivalent to
  \(W^\top Z\) being symmetric, or equivalently to \(Z^\top W\) being symmetric.
\end{proof}

\subsection{The Sylvester operator and the mechanical connection}
\label{subsec:sylvester-mechanical-connection}

For \(Z\in \Mss\), set the symmetric, positive definite matrix
\begin{equation}
  \label{eq:gram-factor-EZ}
  E_Z\coloneq Z^\top Z\in\SPD(s).
\end{equation}
We will use the Sylvester operator
\begin{equation}
  \label{eq:sylvester-operator}
  \mathcal L_{E_Z}:\so(s)\longrightarrow\so(s),
  \qquad
  \mathcal L_{E_Z}(\Omega)\coloneq E_Z\Omega+\Omega E_Z.
\end{equation}

\begin{lemma}[Invertibility of the Sylvester operator]
  \label{lem:sylvester-invertible}
  Let \(E\in\SPD(s)\).  Then
  \[
    \mathcal L_E:\so(s)\to\so(s),
    \qquad
    \mathcal L_E(\Omega)=E\Omega+\Omega E,
  \]
  is a linear automorphism.
\end{lemma}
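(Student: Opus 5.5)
First I check that \(\mathcal L_E\) is well defined on \(\so(s)\): for \(\Omega^\top=-\Omega\) and \(E^\top=E\),
\[
(E\Omega+\Omega E)^\top=\Omega^\top E+E\Omega^\top=-(\Omega E+E\Omega).
\]
So \(\mathcal L_E\) maps \(\so(s)\) into itself. It is clearly linear, and \(\so(s)\) is finite-dimensional, so it suffices to prove injectivity.

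For injectivity I diagonalize \(E\). Write \(E=P\Lambda P^\top\) with \(P\in O(s)\) and \(\Lambda=\diag(\lambda_1,\dots,\lambda_s)\), where each \(\lambda_a>0\). Conjugation \(\Omega\mapsto P^\top\Omega P\) is a linear automorphism of \(\so(s)\), and
\[
P^\top\mathcal L_E(\Omega)P=\mathcal L_\Lambda(P^\top\Omega P).
\]
It therefore suffices to treat \(E=\Lambda\) diagonal. In that case the operator acts entrywise by \((\mathcal L_\Lambda\Omega)_{ab}=(\lambda_a+\lambda_b)\Omega_{ab}\). If \(\mathcal L_\Lambda\Omega=0\), then \(\Omega_{ab}=0\) for all \(a,b\), because \(\lambda_a+\lambda_b>0\). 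Hence \(\Omega=0\).

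This computation also records the spectrum mentioned in the introduction. In the basis \(P(e_ae_b^\top-e_be_a^\top)P^\top\) with \(a<b\), the operator \(\mathcal L_E\) is diagonal with eigenvalues \(\lambda_a+\lambda_b\). Consequently
\[
\det\mathcal L_E=\prod_{a<b}(\lambda_a+\lambda_b)>0,
\qquad
\bigl\|\mathcal L_E^{-1}\bigr\|_{\mathrm{op}}=\frac{1}{\min_{a<b}(\lambda_a+\lambda_b)}
\]
with respect to the Frobenius norm, which will be used later.

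A basis-free alternative is to take the Frobenius inner product with \(\Omega\):
\[
\langle\mathcal L_E\Omega,\Omega\rangle_F=2\tr(\Omega^\top E\Omega)=2\sum_j \Omega_{\cdot j}^\top E\,\Omega_{\cdot j}.
\]
The right-hand side is strictly positive for \(\Omega\neq0\). So \(\mathcal L_E\) is positive definite, in particular injective, and also self-adjoint. The self-adjointness follows from \(\tr((E\Omega+\Omega E)^\top\Theta)=\tr(\Omega^\top(E\Theta+\Theta E))\).

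No step is a genuine obstacle. The only care needed is to check that conjugation by \(P\) preserves \(\so(s)\) and intertwines \(\mathcal L_E\) with \(\mathcal L_\Lambda\).
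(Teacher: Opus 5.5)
Your proof is correct and is essentially the paper's argument: check that \(\mathcal L_E\) preserves \(\so(s)\), conjugate by an orthogonal diagonalizer of \(E\) to reduce to \(\mathcal L_\Lambda\), read off injectivity from \((\lambda_i+\lambda_j)\Omega_{ij}=0\), and conclude by finite dimensionality. Your additional remarks are also correct: the eigenbasis, the identity \(\det\mathcal L_E=\prod_{a<b}(\lambda_a+\lambda_b)\), the norm of \(\mathcal L_E^{-1}\), and the positivity identity \(\langle\mathcal L_E\Omega,\Omega\rangle_F=2\tr(\Omega^\top E\Omega)\). The first three of these reappear later in the paper, in \cref{lem:volume-section-orbit-sylvester} and \cref{rem:sylvester-rankwise}.
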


\begin{proof}
  If \(\Omega\in\so(s)\), then
  \[
    (E\Omega+\Omega E)^\top
    =
    \Omega^\top E+E\Omega^\top
    =
    -\Omega E-E\Omega
    =
    -(E\Omega+\Omega E),
  \]
  so \(\mathcal L_E\) maps \(\so(s)\) into itself.
  Diagonalize \(E=RDR^\top\), with \(R\in O(s)\) and
  \(D=\diag(\lambda_1,\ldots,\lambda_s)\) with
  \(\lambda_i>0\). Conjugation by \(R\) identifies \(\mathcal L_E\)
  with \(\mathcal L_D\) as follows. If \(\widetilde\Omega=R^\top\Omega R\), then
  \[
    R^\top\mathcal L_E(\Omega)R
    =
    D\widetilde\Omega+\widetilde\Omega D
    =
    \mathcal L_D(\widetilde\Omega).
  \]
  Since \(\Omega\mapsto R^\top\Omega R\) is an automorphism of
  \(\so(s)\), \(\mathcal L_E\) is injective if and only if
  \(\mathcal L_D\) is injective. For
  \(\Omega\in\so(s)\),
  \[
    \bigl(\mathcal L_D(\Omega)\bigr)_{ij}
    =
    (\lambda_i+\lambda_j)\Omega_{ij}.
  \]
  If $\Omega$ is such that \(\mathcal L_D(\Omega)=0\), then for every
  \(i,j\), we have
  \((\lambda_i+\lambda_j)\Omega_{ij}=0\), i.e., \(\ker\mathcal L_D=\{0\}\).
  Hence \(\mathcal L_D\), and therefore \(\mathcal L_E\), is injective.
  Since \(\so(s)\) is finite-dimensional, the rank-nullity
  theorem ensures that
  \(\mathcal L_E\) is an automorphism.
\end{proof}

We use the following convention for connection forms on right
principal bundles, following
\cite[Chap.~II, Proposition~1.1]{KobayashiNomizu1963}. Let \(P\) be a right
principal bundle, with structure group \(G\), and let \(\mathfrak g\)
be the Lie algebra of \(G\). For \(\xi\in\mathfrak g\), the
associated fundamental vector field on \(P\)
is defined by
\[
  \xi_P(p)
  \coloneq
  \frac{\dd}{\dd t}\Big|_{t=0} p\cdot \exp(t\xi),
  \qquad p\in P.
\]
A principal connection \(1\)-form is a \(\mathfrak g\)-valued \(1\)-form
\[
  \omega\in\Omega^1(P;\mathfrak g)
\]
satisfying, for every \(p\in P\), every \(\xi\in\mathfrak g\), every
\(g\in G\), and every \(v\in\Tz{p}P\), the following two conditions:
\[
  \begin{aligned}
    \omega_p(\xi_P(p))
    &=
    \xi,
    &&\text{(reproduction)}
    \\
    \omega_{p\cdot g}\bigl(\dd R_g(p)[v]\bigr)
    &=
    \operatorname{Ad}_{g^{-1}}\bigl(\omega_p(v)\bigr),
    &&\text{(equivariance)}
  \end{aligned}
\]
where \(R_g:P\to P\) denotes right multiplication by \(g\).
Its kernel defines the
corresponding horizontal distribution,
\[
  \Hz{p}\coloneq \ker\omega_p\subset\Tz{p}P.
\]
For instance, in the case \(G=O(s)\), with Lie algebra
\(\so(s)\), the equivariance
condition becomes
\begin{equation}\label{eq:equivariance-condition}
  \omega_{ZR}(WR)
  =
  R^\top\omega_Z(W)R,
  \qquad R\in O(s).
\end{equation}

\begin{definition}[Reduced mechanical connection]
  \label{def:reduced-mechanical-connection}
  The \emph{reduced mechanical connection} is the principal
  \(O(s)\)-connection \(\omega\) on
  \[
    \pi_s:
    (\Mss,\restr{g_{M_s}}{\Mss})
    \longrightarrow
    (\Qss,\gQss)
  \]
  whose horizontal spaces are the metric-horizontal spaces
  \[
    \ker\omega_Z=\Hz{Z}
    =\bigl\{W\in\Tz{Z}\Mss:
      \Frob{W}{Z\Omega}=0
    \text{ for every }\Omega\in\so(s)\bigr\}.
  \]
  Equivalently, for \(W\in\Tz{Z}\Mss\), the matrix
  \(\omega_Z(W)\in\so(s)\) is uniquely characterized by
  \[
    W-Z\omega_Z(W)\in\Hz{Z}.
  \]
  Here \emph{reduced} refers to the full-column-rank factor model,
  whose factor dimension equals the rank \(s\).
\end{definition}

\begin{proposition}[Sylvester formula for the mechanical connection]
  \label{prop:sylvester-mechanical-connection}
  Consider the principal \(O(s)\)-bundle \(\pi_s:\Mss\longrightarrow
  \Qss\). For \(Z\in \Mss\), set \(E_Z\coloneq Z^\top Z\in\SPD(s)\), and define
  \[
    J_Z : W\in\Tz{Z}\Mss\longmapsto J_Z(W)\coloneq Z^\top W-W^\top Z\in\so(s).
  \]
  Then the \(\so(s)\)-valued \(1\)-form
  \[
    \omega_Z(W)
    \coloneq 
    \mathcal L_{E_Z}^{-1}\bigl(J_Z(W)\bigr)
  \]
  is a principal connection \(1\)-form on \(\pi_s:\Mss\to \Qss\).  Its
  horizontal distribution is
  \[
    \ker\omega_Z
    =
    \{W\in\Tz{Z}\Mss:Z^\top W=W^\top Z\}
    =
    \Vz{Z}^\perp.
  \]
  Thus \(\omega\) is the reduced mechanical connection of
  \cref{def:reduced-mechanical-connection}.  Moreover, for every
  \(W\in\Tz{Z}\Mss\), the vertical and horizontal components of \(W\)
  are written, respectively
  \[
    W^{\Vbundle}=Z\omega_Z(W),
    \qquad
    W^{\Hbundle}=W-Z\omega_Z(W).
  \]
\end{proposition}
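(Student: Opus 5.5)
The plan is to verify the defining properties of a principal connection directly from the formula, and then identify the kernel using \cref{lem:horizontality-criterion}. First, \(\omega\) is well defined and smooth. \(J_Z(W)\) is skew-symmetric by construction, so it lies in \(\so(s)\). \Cref{lem:sylvester-invertible} makes \(\mathcal L_{E_Z}^{-1}\) a well-defined linear map on \(\so(s)\). Its dependence on \(Z\) is smooth, because \(E_Z\) varies smoothly in \(\SPD(s)\) and inversion of the linear operator \(\mathcal L_{E}\) is smooth in \(E\) (Cramer's rule on a fixed basis of \(\so(s)\)).

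For the \textbf{reproduction} property, I take \(\xi=\Omega\in\so(s)\). The fundamental field is \(\xi_P(Z)=Z\Omega\). Then
\[
J_Z(Z\Omega)=Z^\top Z\Omega-\Omega^\top Z^\top Z=E_Z\Omega+\Omega E_Z=\mathcal L_{E_Z}(\Omega),
\]
so \(\omega_Z(Z\Omega)=\Omega\).

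For \textbf{equivariance}, I take \(R\in O(s)\). Then \(E_{ZR}=R^\top E_ZR\) and \(J_{ZR}(WR)=R^\top J_Z(W)R\). Conjugation by \(R\) intertwines the Sylvester operators: \(\mathcal L_{R^\top E R}(R^\top\Omega R)=R^\top\mathcal L_E(\Omega)R\). The same intertwining holds for their inverses, which gives \cref{eq:equivariance-condition}.

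For the kernel, injectivity of \(\mathcal L_{E_Z}^{-1}\) gives \(\omega_Z(W)=0\) if and only if \(J_Z(W)=0\). By \cref{lem:horizontality-criterion}, this is exactly \(W\in\Vz{Z}^\perp=\Hz{Z}\). So \(\omega\) is the connection of \cref{def:reduced-mechanical-connection}. Uniqueness of that connection is clear: a connection form is determined by its kernel together with reproduction, since \(\Tz{Z}\Mss=\Vz{Z}\oplus\Hz{Z}\).

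For the decomposition, I set \(W^{\Vbundle}=Z\omega_Z(W)\), which is vertical by \cref{eq:vertical-space-reduced-model}. Reproduction applied to \(Z\omega_Z(W)\) gives \(\omega_Z(W-Z\omega_Z(W))=\omega_Z(W)-\omega_Z(W)=0\), so \(W-Z\omega_Z(W)\) is horizontal. This also uses that the vertical vector \(Z\omega_Z(W)\) is tangent to \(\Mss\), hence so is the difference. The splitting \(W=W^{\Vbundle}+W^{\Hbundle}\) is then the orthogonal decomposition.

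I expect no real obstacle here. The only point needing care is the equivariance bookkeeping with the conjugation of \(\mathcal L\) and its inverse, together with the sign convention \(\operatorname{Ad}_{R^{-1}}\Omega=R^\top\Omega R\) for right actions.
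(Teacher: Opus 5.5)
Your proposal is correct and follows essentially the same route as the paper. Both arguments obtain reproduction from $J_Z(Z\Omega)=\mathcal L_{E_Z}(\Omega)$ and equivariance by conjugating $E_Z$, $J_Z$ and $\mathcal L_{E_Z}^{-1}$ by $R$; both get the kernel from invertibility of the Sylvester operator plus the horizontality criterion, and the vertical component as the unique $Z\Omega$ making $W-Z\Omega$ horizontal. Your added remarks on the smooth dependence of $\mathcal L_{E_Z}^{-1}$ on $Z$ and on the tangency of $Z\omega_Z(W)$ fill in small points that the paper leaves implicit.
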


\begin{proof}
  Since \(Z\) has rank \(s\), the matrix \(E_Z=Z^\top Z\) is positive
  definite. Hence \(\mathcal L_{E_Z}\) is invertible by
  \cref{lem:sylvester-invertible}, so \(\omega\) is well-defined. Let
  \(\Omega\in\so(s)\). The corresponding fundamental
  vertical vector at \(Z\) is \(Z\Omega\), and
  \[
    J_Z(Z\Omega)
    =
    Z^\top Z\Omega-(Z\Omega)^\top Z
    =
    E_Z\Omega+\Omega E_Z
    =
    \mathcal L_{E_Z}(\Omega).
  \]
  Thus the reproduction condition is satisfied:
  \(\omega_Z(Z\Omega)=\Omega\). Next let \(R\in O(s)\).  For
  \(\widehat Z\coloneq ZR\) and \(\widehat W\coloneq WR\), we have
  \[
    E_{\widehat Z}=R^\top E_ZR,
    \qquad
    J_{\widehat Z}(\widehat W)=R^\top J_Z(W)R.
  \]
  Moreover, \(\mathcal L_{R^\top E_ZR}^{-1}(R^\top A R) =
  R^\top\mathcal L_{E_Z}^{-1}(A)R\) for every
  \(A\in\so(s)\). Therefore
  \[
    \omega_{ZR}(WR)
    =
    R^\top\omega_Z(W)R,
  \]
  which is the required equivariance condition for a right principal
  \(O(s)\)-bundle,  cf.
  \cref{eq:equivariance-condition}. It remains to identify the
  kernel. Since \(\mathcal L_{E_Z}\) is invertible,
  \(\ker\omega_Z=\ker J_Z\). Hence,
  \[
    W\in\ker\omega_Z
    \quad\Longleftrightarrow\quad
    Z^\top W-W^\top Z=0
    \quad\Longleftrightarrow\quad
    Z^\top W \text{ is symmetric}.
  \]
  This concludes \(\ker\omega_Z = \Vz{Z}^\perp\), by
  \cref{lem:horizontality-criterion}. Finally, the vertical component
  of \(W\) is the unique vector \(Z\Omega\in\Vz{Z}\) such that
  \(W-Z\Omega\) is horizontal.  The equation above gives
  \(\Omega=\omega_Z(W)\), and therefore
  \begin{equation}
    W^{\Vbundle}=Z\omega_Z(W),
    \qquad
    W^{\Hbundle}=W-Z\omega_Z(W).
    \label{eq:vertical-decomposition}
  \end{equation}
\end{proof}

\begin{remark}[Why the Sylvester formula is rankwise]
  \label{rem:sylvester-rankwise}
  On the reduced full-rank model \(\Mss\), the matrix
  \(E_Z=Z^\top Z\) is positive definite, which guarantees
  invertibility of \(\mathcal L_{E_Z}\) on
  \(\so(s)\). Positive definiteness is a
  sufficient condition, but is not necessary. Indeed, let \(E\) be
  symmetric positive semidefinite,
  with eigenvalues $0\le\lambda_1\le\cdots\le\lambda_s$. In an
  orthonormal eigenbasis, the Sylvester operator
  acts on the skew-symmetric matrix entries by
  \[
    \bigl(\mathcal L_E(\Omega)\bigr)_{ij}
    =
    (\lambda_i+\lambda_j)\Omega_{ij},
    \qquad i<j.
  \]
  Consequently,
  \[
    \restr{\mathcal L_E}{\so(s)}
    \text{ is invertible}
    \quad\Longleftrightarrow\quad
    \dim\ker E\le1.
  \]
  In particular, the operator remains invertible when
  exactly one eigenvalue vanishes. For \(s\ge2\), whenever
  it is invertible, its operator norm with respect to the
  Frobenius norm satisfies
  \[
    \|\mathcal L_E^{-1}\|_{F\to F}
    =
    \frac{1}{\lambda_1+\lambda_2}.
  \]
  Thus a single eigenvalue tending to zero does not cause
  this inverse to become unbounded if the remaining
  eigenvalues stay bounded away from zero. The principal-bundle
  interpretation imposes a separate
  condition. At a factor \(Z\in M_s\) of rank \(j<s\),
  the stabilizer of the \(O(s)\)-action is conjugate to
  \(O(s-j)\), and the action is therefore no longer free.
  This already occurs for \(j=s-1\), when the stabilizer
  is the nontrivial discrete group \(O(1)\), even though
  the Sylvester operator remains invertible. Accordingly, the formula
  for \(\omega\) remains
  algebraically meaningful at corank one, but the free
  principal \(O(s)\)-bundle description does not extend
  across that rank change. On a rank-\(j\) stratum, we
  instead use the reduced principal bundle $\Mrk{j}{j}\longrightarrow
  \Qrk{j}{j}$ and transport its geometry by the fixed-rank reduction
  isometry.
\end{remark}

\subsection{Padding as normalizer reduction}
\label{subsec:padding-normalizer-reduction}

Padding realizes the standard normalizer reduction of the rank-$s$
orbit type. This identifies the reduced full-rank model and its
mechanical connection with the corresponding constructions for
proper group actions.

\begin{definition}[Normalizer reduction]
  \label{def:normalizer-reduction}
  Let a Lie group $G$ act smoothly and properly on the right on a
  manifold $P$, with all stabilizers conjugate to a closed subgroup $H$.
  The \emph{normalizer} of $H$ and the \emph{exact-stabilizer locus} are
  \[
    N_G(H)\coloneq \{g\in G:gHg^{-1}=H\},
    \qquad P_H\coloneq \{p\in P:G_p=H\}.
  \]
  The \emph{normalizer reduction} is the principal $N_G(H)/H$-bundle
  \[
    \pi_H:P_H\longrightarrow P/G,
    \qquad p\longmapsto[p]_G,
  \]
  with \emph{residual action}
  \[
    p\cdot(gH)\coloneq p\cdot g,
    \qquad p\in P_H,\quad g\in N_G(H).
  \]
\end{definition}

The slice theorem implies that $P_H$ is a smooth submanifold.
The residual action is proper and free, and the canonical map
\[
  P_H/(N_G(H)/H)\longrightarrow P/G,
  \qquad [p]_{N_G(H)/H}\longmapsto[p]_G,
\]
is a diffeomorphism; see \cite[\S29.21]{Michor2008}.
Indeed, every $G$-orbit meets $P_H$, and two points of $P_H$
belong to the same $G$-orbit exactly when they belong to the same
$N_G(H)$-orbit, since $G_{p\cdot g}=g^{-1}G_pg$.
The two quotients have the same local slice coordinates.
For an isometric action, the restricted metric on $P_H$ defines
a mechanical connection by orthogonality to the residual orbits.

\begin{definition}[Reduced principal bundle over a rank stratum]
  \label{def:normalizer-reduced-principal-bundle}
  For $1\le s\le r$, set $H_s\coloneq \I_s\oplus O(k-s)$ and
  \begin{equation}
    \label{eq:normalizer-reduced-principal-bundle}
    \begin{gathered}
      P_{k,s}\coloneq \{X\in\Mks:O(k)_X=H_s\},\\
      \pi_{k,s}^{\mathrm{red}}
      \coloneq \restr{\pi_k}{P_{k,s}}:P_{k,s}\longrightarrow\Qks.
    \end{gathered}
  \end{equation}
  This is the normalizer reduction of the $O(k)$-action on $\Mks$,
  which has a single orbit type. We call it the \emph{reduced
  principal bundle}, with structure group $N_{O(k)}(H_s)/H_s$.
  We equip $P_{k,s}$ with the induced metric $\restr{g_{M_k}}{P_{k,s}}$
  and $\Qks$ with its quotient Riemannian metric $\gQks$.
  We use the convention $O(0)=\{e\}$.
\end{definition}

This is the standard construction on an orbit-type stratum;
see \cite[\S29.23]{Michor2008}. The following proposition identifies
it explicitly with padding.

\begin{proposition}[Padding realizes normalizer reduction]
  \label{prop:padding-normalizer-reduction}
  For $1\le s\le r$,
  \begin{equation}
    \label{eq:padding-normalizer-identifications}
    P_{k,s}=\iks(\Mss),
    \qquad N_{O(k)}(H_s)=O(s)\oplus O(k-s).
  \end{equation}
  The map
  \[
    N_{O(k)}(H_s)/H_s\longrightarrow O(s),
    \qquad \diag(R,S)H_s\longmapsto R,
  \]
  is a Lie-group isomorphism. Under this identification, the
  residual action is $(Z,0)\cdot R=(ZR,0)$.

  The following diagram commutes:
  \[
    \begin{tikzcd}[column sep=1.5em, row sep=large]
      {\bigl(\Mss,\restr{g_{M_s}}{\Mss}\bigr)}
      \arrow[r, "\iks"]
      \arrow[d, "\pi_s"']
      &
      {\bigl(P_{k,s},\restr{g_{M_k}}{P_{k,s}}\bigr)}
      \arrow[r, hook]
      \arrow[d, "\pi_{k,s}^{\mathrm{red}}"]
      &
      {\bigl(\Mks,\restr{g_{M_k}}{\Mks}\bigr)}
      \arrow[d, "\pi_k"]
      \\
      {\bigl(\Qss,\gQss\bigr)}
      \arrow[r, "\Xks"']
      &
      {\bigl(\Qks,\gQks\bigr)}
      \arrow[r, equal]
      &
      {\bigl(\Qks,\gQks\bigr)} .
    \end{tikzcd}
  \]
  Here $\iks$ and $\Xks$ are Riemannian isometries, the upper
  right arrow is the inclusion, and all vertical arrows are
  Riemannian submersions. After the above identification of structure
  groups, the left square is an isometric principal-bundle
  isomorphism covering $\Xks$.
  In particular, the canonical map
  \[
    P_{k,s}/(N_{O(k)}(H_s)/H_s)\longrightarrow\Qks,
    \qquad [X]_{N_{O(k)}(H_s)/H_s}\longmapsto[X]_{O(k)},
  \]
  is a Riemannian isometry for the quotient metric induced by
  $\restr{g_{M_k}}{P_{k,s}}$ and the metric $\gQks$.

  Using the induced Lie-algebra isomorphism, regard the mechanical
  connection $\omega^{\mathrm{red}}$ of
  $\pi_{k,s}^{\mathrm{red}}$ as an $\so(s)$-valued form.
  Then, for $Z\in\Mss$ and $W\in\Tz{Z}\Mss$,
  \begin{equation}
    \label{eq:normalizer-mechanical-connection}
    \omega^{\mathrm{red}}_{(Z,0)}\bigl((W,0)\bigr)
    =\omega_Z(W)
    =\mathcal L_{E_Z}^{-1}\bigl(Z^\top W-W^\top Z\bigr).
  \end{equation}
  Consequently, $(\iks)^*\omega^{\mathrm{red}}=\omega$.
\end{proposition}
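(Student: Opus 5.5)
I will prove the statement in four steps: identify the exact-stabilizer locus, compute the normalizer, check that the left square is an isometric bundle isomorphism, and compare the connection forms.

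\emph{Step 1: the locus.} By \cref{prop:isotropy-group-rank}, a point $X\in\Mks$ with SVD $X=U\Sigma V^\top$ has $O(k)_X=V(\I_s\oplus O(k-s))V^\top$. We determine when this equals $H_s$. The fixed-point subspace of $O(k)_X$ is the row span $R_X=V(\R^s\oplus 0)$, and that of $H_s$ is $\R^s\oplus 0$. Equality of the groups therefore forces $R_X=\R^s\oplus 0$, so the last $k-s$ columns of $X$ vanish and $X=(Z,0)$ with $Z$ of rank $s$, hence $Z\in\Mss$. Conversely, for $X=(Z,0)$ with $Z\in\Mss$, the injectivity argument in Step~1 of \cref{thm:fixed-rank-riemannian-reduction} gives $O(k)_X=H_s$ exactly. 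This proves $P_{k,s}=\iks(\Mss)$.

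\emph{Step 2: the normalizer.} A matrix $g\in N_{O(k)}(H_s)$ preserves $\operatorname{Fix}(H_s)=\R^s\oplus0$, since $\operatorname{Fix}(gHg^{-1})=g\operatorname{Fix}(H)$. Being orthogonal, $g$ also preserves the orthogonal complement, so $g=\diag(R,S)$ with $R\in O(s)$ and $S\in O(k-s)$. Conversely, every such block-diagonal matrix normalizes $H_s$. The map $\diag(R,S)H_s\mapsto R$ is then a surjective homomorphism with kernel $H_s/H_s$, hence a Lie-group isomorphism. Moreover $(Z,0)\diag(R,S)=(ZR,0)$, which is the stated residual action.

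\emph{Step 3: the diagram.} Commutativity follows from $\pi_k\circ\iks=\Xks\circ\pi_s$. The isometry properties are supplied by \cref{lem:padding-isometric-embedding} and \cref{thm:fixed-rank-riemannian-reduction}. The map $\iks$ is equivariant with respect to the isomorphism of Step~2, so the left square is an isomorphism of principal bundles covering the diffeomorphism $\Xks$. The vertical arrow $\pi_{k,s}^{\mathrm{red}}$ is a Riemannian submersion because it equals $\Xks\circ\pi_s\circ\iks^{-1}$. Under an isometric equivariant identification, a Riemannian submersion is carried to a Riemannian submersion. The isometry of the canonical map $P_{k,s}/(N/H)\to\Qks$ then follows from the isometry $\Xks$.

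\emph{Step 4: the connection form.} The main point to verify is that horizontality for $\pi_{k,s}^{\mathrm{red}}$ (orthogonality to residual orbits inside $P_{k,s}$) matches horizontality in the reduced model. The residual fundamental fields at $(Z,0)$ are $(Z\Omega,0)$ for $\Omega\in\so(s)$. The tangent space is $\Tz{(Z,0)}P_{k,s}=\{(W,0): W\in\Tz{Z}\Mss\}$. Orthogonality of $(W,0)$ to $(Z\Omega,0)$ is exactly orthogonality of $W$ to $Z\Omega$ in $\Tz{Z}\Mss$. Hence $\iks$ carries vertical and horizontal spaces to vertical and horizontal spaces, and the decomposition $(W,0)=(W-Z\omega_Z(W),0)+(Z\omega_Z(W),0)$ identifies $\omega^{\mathrm{red}}_{(Z,0)}((W,0))=\omega_Z(W)$. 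Combined with \cref{prop:sylvester-mechanical-connection}, this gives \cref{eq:normalizer-mechanical-connection} and $(\iks)^*\omega^{\mathrm{red}}=\omega$.

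The only delicate bookkeeping is that the Lie-algebra identification sends the fundamental field of $\Omega$ to $(Z\Omega,0)$. This holds because $\exp(t\,\diag(\Omega,0))$ acts on $(Z,0)$ as $\exp(t\Omega)$ acts on $Z$.
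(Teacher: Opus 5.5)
Your proposal is correct and follows essentially the same route as the paper. You identify $P_{k,s}$ through the fixed subspace $\R^s\oplus 0$ of $H_s$, and you obtain the block-diagonal normalizer because it preserves that subspace and its orthogonal complement. You get the Riemannian-submersion property from $\pi^{\mathrm{red}}_{k,s}=\Xks\circ\pi_s\circ(\iks)^{-1}$, and you derive $(\iks)^*\omega^{\mathrm{red}}=\omega$ by matching vertical and horizontal spaces under $\dd\iks$.

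The differences are cosmetic. You go through the SVD isotropy formula where the paper argues directly from the row span. You also use a bijective-homomorphism argument where the paper writes down the smooth inverse $R\mapsto\diag(R,\I_{k-s})H_s$.
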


\begin{proof}
  The fixed subspace of $H_s$ in $\R^k$ is
  $\R^s\oplus\{0\}$. Thus every $X\in P_{k,s}$ has the
  form $(Z,0)$ with $Z\in\Mss$. Conversely, the rows of a rank-$s$
  factor $(Z,0)$ span this subspace, so its stabilizer is exactly
  $H_s$. Hence $P_{k,s}=\iks(\Mss)$: it consists of the factors
  whose row space is this fixed coordinate $s$-plane.
  An orthogonal matrix normalizing $H_s$ preserves its fixed subspace
  and its orthogonal complement, and is therefore block diagonal.
  Conversely, every element of $O(s)\oplus O(k-s)$ normalizes $H_s$.
  This proves \cref{eq:padding-normalizer-identifications}.

  The smooth homomorphism $\diag(R,S)\mapsto R$
  has kernel $H_s$. Its induced map on the quotient has smooth
  inverse $R\mapsto\diag(R,\I_{k-s})H_s$, so it is
  a Lie-group isomorphism. Moreover,
  \[
    \iks(ZR)
    =(ZR,0)
    =(Z,0)\diag(R,\I_{k-s}),
    \qquad
    \pi_{k,s}^{\mathrm{red}}\circ\iks=\Xks\circ\pi_s.
  \]
  These identities prove equivariance and commutativity of the left
  square; the right square commutes by the definition of
  $\pi_{k,s}^{\mathrm{red}}$.

  Projection onto the first $s$ columns is the smooth inverse of
  $\iks:\Mss\to P_{k,s}$, and padding preserves the induced
  Frobenius metrics. The base map $\Xks$ is the Riemannian
  isometry of \cref{thm:fixed-rank-riemannian-reduction}.
  Since $\pi_s$ is a Riemannian submersion, the commuting left
  square shows that $\pi_{k,s}^{\mathrm{red}}$ is also a Riemannian
  submersion for the displayed metrics. Its quotient metric is
  therefore $\gQks$, proving the assertion about the canonical
  quotient identification.

  Finally, the residual vertical space at $(Z,0)$ is
  \[
    \{(Z\Omega,0):\Omega\in\so(s)\}.
  \]
  The differential $\dd\iks(Z)[W]=(W,0)$ preserves the metrics
  and maps vertical spaces onto vertical spaces. It therefore
  identifies their orthogonal complements in the respective tangent
  spaces, and hence the mechanical horizontal distributions.
  Together with equivariance and the reproduction property, this
  gives $(\iks)^*\omega^{\mathrm{red}}=\omega$.
  \cref{eq:normalizer-mechanical-connection} follows from
  \cref{prop:sylvester-mechanical-connection}.
\end{proof}

\begin{remark}[Role of the reduction]
  The normalizer reduction represents the same quotient Riemannian
  geometry by a free $O(s)$-action on the open submanifold $\Mss$
  of $(\mathbb S^{s-1})^n$. Its Levi--Civita connection and curvature
  are restrictions of those of the product of spheres, and its
  mechanical connection is expressed through an invertible
  $s\times s$ Sylvester operator.
  Horizontal lifting and O'Neill's formulas already apply to the
  Riemannian submersion $\pi_k:\Mks\to\Qks$; the reduction
  provides the simpler total space and an ordinary principal
  connection. The distinction between freeness and invertibility
  of the Sylvester operator is explained in
  \cref{rem:sylvester-rankwise}.
\end{remark}

\subsection{Quotient speeds and horizontal lifts}
\label{subsec:quotient-speeds-horizontal-lifts}

The connection form gives an explicit formula for the Riemannian
speed of a curve in the reduced quotient.

\begin{proposition}[Quotient speed on a fixed-rank stratum]
  \label{prop:quotient-speed-formula}
  Let \(1\le m\le s\), let \(I\subset\R\) be an interval, and
  let \(Z:t\in I\longmapsto Z(t)\in \Mrk{s}{m}\) be a continuous
  piecewise \(C^1\) curve. Set \(\bar z(t)\coloneq [Z(t)]\in \Qrk{s}{m}\). At
  every time where \(Z'(t)\) exists, one has
  \[
    \|\dot{\bar z}(t)\|_{\gQrk{s}{m}}
    =
    \|Z'(t)^{\Hbundle}\|_F,
  \]
  where \(Z'(t)^{\Hbundle}\) denotes the Frobenius-orthogonal
  projection of \(Z'(t)\) onto the horizontal space
  \[
    \Hz{Z(t)}
    \coloneq 
    \Vz{Z(t)}^\perp
    \cap \Tz{Z(t)}\Mrk{s}{m}.
  \]
  Equivalently,
  \[
    \|\dot{\bar z}(t)\|_{\gQrk{s}{m}}
    =
    \min_{\Omega\in\so(s)}
    \|Z'(t)-Z(t)\Omega\|_F.
  \]
  The minimizing vertical vector \(Z(t)\Omega\) is unique. The matrix
  \(\Omega\) itself is determined only modulo the Lie algebra of the
  isotropy group of \(Z\), that is
  \[
    \so(s)_{Z(t)}
    \coloneq 
    \{\Lambda\in\so(s):Z(t)\Lambda=0\}.
  \]
  In particular, when \(m=s\), the action is free and the minimizing
  matrix is unique. It is given by
  \[
    \Omega=\omega_{Z(t)}(Z'(t)), \quad \textnormal{so that} \quad
    Z'(t)^{\Hbundle} = Z'(t)-Z(t)\omega_{Z(t)}(Z'(t)).
  \]
  If \(I=[a,b]\), then
  \begin{equation}
    \label{eq:quotient-length-upper-bound}
    \Length_{\dQ{s}}(\bar z)
    \le
    \Length_{\gQrk{s}{m}}(\bar z)
    =
    \int_a^b\|Z'(t)^{\Hbundle}\|_F\dd t.
  \end{equation}
\end{proposition}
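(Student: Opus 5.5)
The plan is to treat $\pi_s:\Mrk{s}{m}\to\Qrk{s}{m}$ as a Riemannian submersion and read off the speed from the horizontal part of the velocity. By \cref{def:fixed-rank-quotient-metric-distance}, with $k$ replaced by $s$, this map is a Riemannian submersion onto $(\Qrk{s}{m},\gQrk{s}{m})$. At a time $t$ where $Z'(t)$ exists, differentiating $\bar z=\pi_s\circ Z$ gives $\dot{\bar z}(t)=\dd\pi_s(Z(t))[Z'(t)]$. We decompose $Z'(t)=Z'(t)^{\Vbundle}+Z'(t)^{\Hbundle}$ orthogonally in $\Tz{Z(t)}\Mrk{s}{m}$. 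The vertical space is $\ker\dd\pi_s(Z(t))=Z(t)\so(s)$, which lies inside $\Tz{Z(t)}\Mrk{s}{m}$ because the orbit stays in the stratum. Hence $\dot{\bar z}(t)=\dd\pi_s(Z(t))[Z'(t)^{\Hbundle}]$, so $Z'(t)^{\Hbundle}$ is the horizontal lift of $\dot{\bar z}(t)$, and \cref{eq:fixed-rank-quotient-metric} gives $\|\dot{\bar z}\|_{\gQrk{s}{m}}=\|Z'^{\Hbundle}\|_F$.

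Next comes the min formula. Since $Z'-Z\Omega$ ranges over $Z'$ minus the vertical space, Pythagoras gives $\|Z'-Z\Omega\|_F^2=\|Z'^{\Hbundle}\|_F^2+\|Z'^{\Vbundle}-Z\Omega\|_F^2$. The minimum is therefore attained exactly when $Z\Omega=Z'^{\Vbundle}$, which makes the minimizing vertical vector unique. Two matrices $\Omega,\Omega'$ give the same vector iff $Z(\Omega-\Omega')=0$, i.e.\ iff they agree modulo $\so(s)_{Z}$. This set is the Lie algebra of the isotropy group $\{O\in O(s):ZO=Z\}$, obtained by differentiating $Z\exp(t\Lambda)=Z$. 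When $m=s$, $Z$ has trivial kernel, so $\so(s)_Z=0$. In that case \cref{prop:sylvester-mechanical-connection} identifies $Z'^{\Vbundle}=Z\omega_Z(Z')$, so $\Omega=\omega_Z(Z')$.

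For the length statement on $I=[a,b]$, the equality $\Length_{\gQrk{s}{m}}(\bar z)=\int_a^b\|Z'^{\Hbundle}\|_F\dd t$ follows by integrating the pointwise identity over the finitely many $C^1$ pieces. Here $\bar z$ is continuous piecewise $C^1$ because $\pi_s$ is smooth on the stratum. For the inequality, on each compact subinterval $[u,v]$ of a $C^1$ piece we take the horizontal lift $H$ of $\restr{\bar z}{[u,v]}$ starting at $Z(u)$, which lies in $\Mrk{s}{m}\subset M_s$ and has $\Length_{M_s}(H)=\int_u^v\|Z'^{\Hbundle}\|_F$. Alternatively we can skip the lift and use $\dQ{s}(\bar z(u),\bar z(v))\le\dM{s}(Z(u),Z(v)O)$ along such a curve. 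Since $\pi_s:M_s\to Q_s$ is $1$-Lipschitz for $\dM{s}$ and $\dQ{s}$, we get $\dQ{s}(\bar z(u),\bar z(v))\le\Length_{M_s}(H)$. Summing over any partition and taking the supremum gives $\Length_{\dQ{s}}(\bar z)\le\Length_{\gQrk{s}{m}}(\bar z)$.

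The main obstacle is this last comparison when $m<s$. Horizontal lifts for the non-free but single-orbit-type submersion on $\Mrk{s}{m}$ exist locally, since the Riemannian submersion has horizontal lifts by ODE on the quotient manifold. Alternatively, one avoids lifts altogether: apply the definition of $\gQrk{s}{m}$ length as the infimum of Riemannian lengths inside the stratum, and note that any curve in $\Qrk{s}{m}$ has $\dQ{s}$-length at most its $\gQrk{s}{m}$-length. This holds because locally $\dQ{s}\le\dQrk{s}{m}$, as curves in the stratum are admissible for $\dQ{s}$ via lifts. I would try the explicit horizontal-lift argument first.
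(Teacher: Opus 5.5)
Your proposal is correct and follows essentially the same route as the paper. It identifies the speed with the norm of the horizontal component via the Riemannian submersion $\pi_s:\Mrk{s}{m}\to\Qrk{s}{m}$. It uses the Pythagorean splitting for the minimization, with non-uniqueness exactly modulo $\so(s)_{Z(t)}$ and $\omega$ giving the minimizer when $m=s$. It then combines local horizontal lifts with the $1$-Lipschitz property of $\pi_s:(M_s,\dM{s})\to(Q_s,\dQ{s})$ to get the length inequality.

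The lift-free alternatives you sketch at the end are not needed, and as written they still rely implicitly on lifts. The explicit horizontal-lift argument you intend to try first is the one the paper uses. If lifts are only available locally, subdivide $[a,b]$ into finitely many pieces by compactness.
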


\begin{proof}
  The rank-\(m\) locus \(\Mrk{s}{m}\) is a single orbit-type
  manifold for the \(O(s)\)-action and the restricted quotient map
  \[
    \pi_s^{(m)}
    \coloneq 
    \restr{\pi_s}{\Mrk{s}{m}}:
    \Mrk{s}{m}
    \longrightarrow
    \Qrk{s}{m}
  \]
  is a Riemannian submersion for the quotient metric
  \(\gQrk{s}{m}\), hence,
  \[
    \|\dot{\bar z}(t)\|_{\gQrk{s}{m}}
    =
    \|Z'(t)^{\Hbundle}\|_F.
  \]
  Write the orthogonal decomposition \(Z'(t)=Z'(t)^{\Hbundle}+Z'(t)^{\Vbundle}\). For every \(\Omega\in\so(s)\),
  the vector \(Z(t)\Omega\) is vertical, and hence
  \[
    Z'(t)-Z(t)\Omega
    =
    Z'(t)^{\Hbundle}
    +
    \bigl(Z'(t)^{\Vbundle}-Z(t)\Omega\bigr)
  \]
  is an orthogonal decomposition. Consequently,
  \[
    \|Z'(t)-Z(t)\Omega\|_F^2
    =
    \|Z'(t)^{\Hbundle}\|_F^2
    +
    \|Z'(t)^{\Vbundle}-Z(t)\Omega\|_F^2.
  \]
  Since \(Z'(t)^{\Vbundle}\in\Vz{Z(t)}\), there exists
  \(\Omega_0\in\so(s)\) such that \(Z'(t)^{\Vbundle}=Z(t)\Omega_0\). It follows that
  \[
    \min_{\Omega\in\so(s)}
    \|Z'(t)-Z(t)\Omega\|_F
    =
    \|Z'(t)^{\Hbundle}\|_F.
  \]
  The minimizing vertical vector is necessarily
  \(Z'(t)^{\Vbundle}\), and is therefore unique (as orthogonal
  projections onto a linear subspace are unique). Two minimizing
  matrices \(\Omega_1\) and \(\Omega_2\) represent the same vertical
  vector if and only if \(Z(t)(\Omega_1-\Omega_2)=0\), that is, if
  and only if \(\Omega_1-\Omega_2\in\so(s)_{Z(t)}\), and
  all minimizers are of the form \(\Omega = \Omega_1 + \Lambda \)
  with \( \Lambda \in \so(s)_{Z(t)}\).  When \(m=s\), the
  matrix \(Z(t)\) has full column rank, so its isotropy algebra is
  trivial. The minimizing matrix is therefore unique, and
  \cref{prop:sylvester-mechanical-connection} gives
  \(\Omega_0=\omega_{Z(t)}(Z'(t))\). Hence
  \[
    Z'(t)^{\Hbundle}
    =
    Z'(t)-Z(t)\omega_{Z(t)}(Z'(t)).
  \]

  Finally, the Riemannian submersion \(\pi_s^{(m)}\) admits local
  horizontal lifts of \(\bar z\), each having the same Riemannian
  length as the corresponding curve segment. Since
  \(\pi_s:(M_s,\dM{s})\to(Q_s,\dQ{s})\) is \(1\)-Lipschitz, the
  \(\dQ{s}\)-length of each segment is at most the length of its
  horizontal lift in \(M_s\). Subdividing \([a,b]\) into finitely many
  such segments and adding their lengths proves
  \cref{eq:quotient-length-upper-bound}.
\end{proof}

\begin{definition}[Horizontal lifts and parallel transport]
  \label{def:horizontal-lifts-parallel-transport}
  Let \(\bar z:[a,b]\to\Qss\) be a continuous piecewise \(C^1\)
  curve. A \emph{horizontal lift} of \(\bar z\) is a continuous
  piecewise \(C^1\) curve \(Z:[a,b]\to\Mss\) such that
  \[
    \pi_s(Z(t))=\bar z(t),
    \qquad Z'(t)\in\Hz{Z(t)}
    \quad\text{at every differentiability time}.
  \]
  Equivalently, \(\omega_{Z(t)}(Z'(t))=0\). The endpoint of the
  unique horizontal lift with prescribed initial point defines
  \emph{parallel transport} along \(\bar z\):
  \[
    \operatorname{PT}_{\bar z}:
    \pi_s^{-1}(\bar z(a))\longrightarrow\pi_s^{-1}(\bar z(b)),
    \qquad Z_a\longmapsto Z(b),\quad Z(a)=Z_a.
  \]
  Existence and uniqueness are established in
  \cref{lem:horizontal-lifts-exist} below.
\end{definition}

\begin{lemma}[Horizontal lifts]
  \label{lem:horizontal-lifts-exist}
  Let \(\bar z:[a,b]\to\Qss\) be a continuous piecewise
  \(C^1\) curve. For every prescribed initial point
  \(Z_a\in\pi_s^{-1}(\bar z(a))\), there exists a unique
  continuous piecewise \(C^1\) horizontal lift
  \(Z:[a,b]\to\Mss\) satisfying \(Z(a)=Z_a\).
  Moreover,
  \[
    \Length_{\gQss}(\bar z)
    =
    \Length_{M_s}(Z)
    =
    \int_a^b \|Z'(t)\|_F\,\dd t.
  \]
\end{lemma}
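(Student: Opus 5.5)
The plan is to construct the horizontal lift by an ODE in the structure group. Start from an arbitrary continuous piecewise \(C^1\) lift of \(\bar z\). Then correct it by a curve in \(O(s)\) solving a linear equation driven by the connection form of \cref{prop:sylvester-mechanical-connection}.

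First step: build some lift. Since \(\pi_s:\Mss\to\Qss\) is a principal \(O(s)\)-bundle, it admits local smooth sections. One can also use the explicit factors \(Z_I\) of \cref{lem:cholesky-schur-charts}, transported through the Gram diffeomorphism. By compactness of \([a,b]\), cover the image by finitely many section domains and take a subdivision adapted to both these domains and the break points of \(\bar z\). On each piece, compose \(\bar z\) with a local section. At each junction, multiply the next piece on the right by a constant element of \(O(s)\) so that the pieces agree. Right multiplication preserves \(\Mss\) and smoothness, so this yields a continuous piecewise \(C^1\) lift \(Y\) with \(Y(a)=Z_a\); the initial point can be arranged by one more constant right factor.

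Second step: seek \(Z=YR\) with \(R:[a,b]\to O(s)\) and \(R(a)=\I_s\). Differentiating gives \(Z'=Y'R+YR'\). By equivariance \eqref{eq:equivariance-condition} and reproduction,
\[
\omega_Z(Z')=R^\top\bigl(\omega_Y(Y')+R'R^\top\bigr)R .
\]
Horizontality is therefore equivalent to the linear ODE \(R'=-\omega_{Y(t)}(Y'(t))\,R\). Its coefficient \(t\mapsto\omega_{Y}(Y')\) is piecewise continuous, because \(E_Y\) stays positive definite and \(\mathcal L_{E_Y}^{-1}\) depends smoothly on \(E_Y\) by \cref{lem:sylvester-invertible}. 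The coefficient is also \(\so(s)\)-valued. Hence the linear ODE has a unique continuous piecewise \(C^1\) solution on all of \([a,b]\), with no blow-up since it is linear. Moreover, \((R^\top R)'=0\), so \(R\) stays in \(O(s)\).

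Third step: uniqueness. Suppose \(Z\) and \(\widetilde Z\) are two horizontal lifts with the same initial point. Freeness of the action gives a unique \(g(t)\in O(s)\) with \(\widetilde Z=Zg\), and \(g=\mathcal L\)-independent formula \(g=E_Z^{-1}Z^\top\widetilde Z\) shows it is piecewise \(C^1\). The computation above gives \(0=\omega_{\widetilde Z}(\widetilde Z')=g^\top(g'g^\top)g\), so \(g'=0\) and \(g\equiv\I_s\).

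Length identity: \(\pi_s\) is a Riemannian submersion and \(Z'\) is horizontal. Hence \(\|\dot{\bar z}\|_{\gQss}=\|Z'\|_F\) at all differentiability times, by \cref{prop:quotient-speed-formula} with \(m=s\). Integrating over the finitely many pieces gives \(\Length_{\gQss}(\bar z)=\int_a^b\|Z'\|_F\,\dd t\). This integral equals \(\Length_{M_s}(Z)\) by \cref{def:curve-lengths}.

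The main obstacle is the first step. Its purely technical part is gluing local sections into a global piecewise \(C^1\) lift compatible with the break points of \(\bar z\). The remaining steps are formal consequences of the principal connection axioms.
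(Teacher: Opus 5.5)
Your proposal is correct and follows the same route as the paper. The paper also lifts $\bar z$ piece by piece over a partition into $C^1$ segments, concatenates the pieces, and gets the length identity from the Riemannian-submersion equality $\|\dot{\bar z}\|_{\gQss}=\|Z'\|_F$ for horizontal $Z'$; the only difference is that it simply invokes the standard existence and uniqueness of horizontal lifts for a principal connection on each piece, whereas you prove that fact directly (an arbitrary lift from local sections, then the gauge ODE $R'=-\omega_{Y}(Y')R$ in $O(s)$, and uniqueness via $g=E_Z^{-1}Z^\top\widetilde Z$), which is the textbook argument behind the cited result.
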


\begin{proof}
  Choose a partition \(a=t_0<\cdots<t_N=b\) such that
  \(\restr{\bar z}{[t_{\ell-1},t_\ell]}\) is \(C^1\) for every
  \(\ell\). Since
  \(\pi_s:\Mss\to\Qss\) is a principal \(O(s)\)-bundle
  endowed with the mechanical connection, each restriction
  admits a unique horizontal lift with prescribed initial
  point. Starting at \(Z_a\) and using the endpoint of
  each lifted piece as the initial point of the next
  produces a continuous piecewise \(C^1\) horizontal lift
  \(Z:[a,b]\to\Mss\). Uniqueness on each piece implies
  uniqueness on the whole interval.

  Since \(\pi_s\) is a Riemannian submersion and \(Z'\)
  is horizontal, on each \(C^1\) piece we have
  \[
    \|\dot{\bar z}(t)\|_{\gQss}
    =
    \|Z'(t)\|_F.
  \]
  Integrating over the pieces gives
  \[
    \Length_{\gQss}(\bar z)
    =
    \sum_{\ell=1}^N
    \int_{t_{\ell-1}}^{t_\ell}\|Z'(t)\|_F\,\dd t
    =
    \Length_{M_s}(Z)
    =
    \int_a^b\|Z'(t)\|_F\,\dd t.
  \]
\end{proof}

\begin{lemma}[Padding preserves horizontality]
  \label{lem:padding-preserves-horizontal-lifts}
  Let \(Z\in \Mss\), and write
  \[
    X\coloneq \iks(Z)=(Z,0)\in \Mks.
  \]
  If \(W\in\Tz{Z}\Mss\) is horizontal for the \(O(s)\)-action, then
  $\iks(W) = \dd\iks(Z)[W]
  =(W,0)\in\Tz{X}\Mks$ is horizontal for the \(O(k)\)-action.
  Moreover, \(\|(W,0)\|_F=\|W\|_F\).
\end{lemma}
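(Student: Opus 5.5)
The plan is to verify two things: that \((W,0)\) is tangent to \(\Mks\) at \(X\), and that it meets the horizontality criterion for the \(O(k)\)-action, namely Frobenius orthogonality to \(X\,\so(k)\). The norm identity is immediate, since appending zero columns leaves every entry sum unchanged: \(\|(W,0)\|_F^2=\|W\|_F^2\).

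\emph{Tangency.} Because \(W\in\Tz{Z}\Mss\) and \(\Mss\) is open in \(M_s\), each row satisfies \(\langle W_{i\cdot},Z_{i\cdot}\rangle=0\). Hence \((W,0)\) satisfies the row-orthogonality conditions of \cref{eq:tangent-space-product-spheres} at \(X=(Z,0)\). More precisely, take a \(C^1\) curve \(Z(t)\) in \(\Mss\) with \(Z(0)=Z\) and \(Z'(0)=W\). Then \(\iks(Z(t))\) stays in \(\Mks\), since padding preserves rank, and its velocity at \(t=0\) is \((W,0)\). So \((W,0)\in\Tz{X}\Mks\).

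\emph{Horizontality.} I will reuse the block computation from Step 2 of the proof of \cref{thm:fixed-rank-riemannian-reduction}. A vector \(V=(V_1,V_2)\in\Tz{X}\Mks\) is orthogonal to \(X\,\so(k)\) if and only if \(V^\top X\) is symmetric; this is the argument of \cref{lem:horizontality-criterion} run with \(\so(k)\) in place of \(\so(s)\). For \(X=(Z,0)\), the matrix \(V^\top X\) has blocks \(V_1^\top Z\) and \(V_2^\top Z\) in its first block column and zeros in its second. Symmetry is therefore equivalent to two conditions: \(V_1^\top Z\) is symmetric, and \(V_2^\top Z=0\). With \(V_1=W\) and \(V_2=0\), the second condition holds trivially. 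The first is exactly the \(O(s)\)-horizontality of \(W\) given by \cref{lem:horizontality-criterion}.

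There is no real obstacle. The only point needing care is the general criterion "orthogonal to \(X\so(k)\) \(\Leftrightarrow\) \(V^\top X\) symmetric" at a non-full-rank \(X\). That criterion holds for any \(X\): the trace identity \(\Frob{V}{X\Omega}=\tr(\Omega V^\top X)\) and the fact that \(\so(k)^{\perp}=\Sym(k)\) never use the rank of \(X\).
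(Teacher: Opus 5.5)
Your proof is correct and is essentially the paper's argument. The paper writes $\Omega\in\so(k)$ in blocks and computes $\Frob{(W,0)}{X\Omega}=\tr(AW^\top Z)=0$ directly, whereas you write $V^\top X$ in blocks and use the criterion that $V^\top X$ must be symmetric; that is the same computation as in Step~2 of the proof of \cref{thm:fixed-rank-riemannian-reduction}, and your explicit tangency check is a harmless addition that the paper leaves implicit.
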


\begin{proof}
  Let \(\Omega\in\so(k)\), and write it in block form according to
  \(\R^k=\R^s\oplus\R^{k-s}\):
  \[
    \Omega=
    \begin{pmatrix}
      A & B \\
      -B^\top & C
    \end{pmatrix},
    \qquad
    A\in\so(s).
  \]
  Then
  \[
    X\Omega
    =
    (Z,0)
    \begin{pmatrix}
      A & B \\
      -B^\top & C
    \end{pmatrix}
    =
    (ZA,ZB).
  \]
  Therefore
  \[
    \Frob{(W,0)}{X\Omega}
    =
    \Frob{W}{ZA}
    =
    \tr(W^\top ZA)
    =
    \tr(A W^\top Z).
  \]
  Since \(W\) is horizontal in the reduced model, \(Z^\top W\) is
  symmetric, equivalently \(W^\top Z\) is symmetric. Hence
  \(\tr(A W^\top Z)=0\) for every
  \(A\in\so(s)\). Thus \((W,0)\) is orthogonal to every
  vertical vector \(X\Omega\), and hence is horizontal for the
  \(O(k)\)-action. The equality of norms is immediate.
\end{proof}

\subsection{Intrinsic length on a fixed-rank stratum}
\label{subsec:quotient-distance}

We use the metric length $\Length_d$ and the term ``rectifiable'' as
defined in \cref{def:curve-lengths}. In particular,
$\Length_{M_k}$ is length for the product-sphere metric, while
$\Length_{\dQ{k}}$ is length in the ambient quotient. In particular,
$\dQ{k}$ permits paths through other ranks even when both endpoints lie
in $\Qks$.
The intrinsic distance \(\dInt{k}{s}\) is defined in
\cref{def:intrinsic-rank-constraint-distances}, by
\cref{eq:intrinsic-stratum-distance-induced-by-dk}.
As there, the infimum over an empty curve class is \(+\infty\);
Riemannian distances between distinct connected components are
interpreted in the same way. For \(s\ge2\), the strata are path connected by
\cref{thm:path-connectedness-fixed-rank-strata}.

The following is a consequence of the two reduction theorems and the
compatibility of the ambient quotient distance with the Riemannian
distance on the reduced full-rank stratum
\cite[Proposition~3.5]{Chen2025}.

\begin{corollary}[Intrinsic distance on a fixed-rank stratum]
  \label{thm:ambient-quotient-length-on-strata}
  Let \(1\le s\le r=\min(n,k)\). The intrinsic length distance induced
  by \(\dQ{k}\) on \(\Qks\) is the Riemannian distance of
  \((\Qks,\gQks)\):
  \[
    \dInt{k}{s}=\dQks.
  \]
  More precisely, every continuous \(\dQ{k}\)-rectifiable curve
  \(\bar\gamma:[a,b]\to\Qks\) satisfies
  \[
    \Length_{\dQ{k}}(\bar\gamma)
    =\Length_{\dQks}(\bar\gamma).
  \]
  In particular, every continuous piecewise \(C^1\) curve in \(\Qks\)
  satisfies
  \[
    \Length_{\dQ{k}}(\bar\gamma)
    =\Length_{\gQks}(\bar\gamma).
  \]
\end{corollary}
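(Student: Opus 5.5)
The plan is to prove the length identity first and then take infima. For \(s=1\) the stratum \(\Qrk{k}{1}\) is finite (\cref{rem:rank-one-stratum}), so every continuous curve in it is constant and both sides vanish or are \(+\infty\). Assume from now on that \(2\le s\le r\). Fix a continuous \(\dQ{k}\)-rectifiable curve \(\bar\gamma:[a,b]\to\Qks\). By \cref{thm:fixed-rank-riemannian-reduction} we may write \(\bar\gamma=\Xks\circ\bar\gamma_s\) with \(\bar\gamma_s\coloneq\Xks^{-1}\circ\bar\gamma\), a curve in \(\Qss\).

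I will establish the chain
\[
  \Length_{\dQ{k}}(\bar\gamma)
  =\Length_{\dkles}(\bar\gamma)
  =\Length_{\dQ{s}}(\bar\gamma_s)
  =\Length_{\dQss}(\bar\gamma_s)
  =\Length_{\dQks}(\bar\gamma).
\]

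\emph{First equality.} By \cref{def:intrinsic-rank-constraint-distances}, \(\dkles\) is the length metric induced by \(\dQ{k}\) on the subset \(\Qkles\), and \(\bar\gamma\) lies in this subset. The standard fact applies here: passing to the induced intrinsic metric does not change lengths of curves (Burago--Burago--Ivanov, Prop.~2.3.12). One direction is immediate, since \(\dQ{k}\le\dkles\) termwise in partition sums. For the other, each \(\dkles(\bar\gamma(t_{i-1}),\bar\gamma(t_i))\) is at most \(\Length_{\dQ{k}}(\restr{\bar\gamma}{[t_{i-1},t_i]})\), and these lengths add up by additivity of length. This gives \(\Length_{\dkles}(\bar\gamma)\le\Length_{\dQ{k}}(\bar\gamma)\).

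\emph{Second equality.} This holds because \(\Xks:(Q_s,\dQ{s})\to(\Qkles,\dkles)\) is an isometry by \cref{thm:global-rank-bounded-metric-reduction}, so it preserves every partition sum.

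\emph{Third equality.} Here I use Chen's compatibility \cite[Proposition~3.5]{Chen2025}: on the principal stratum, \(\dQ{s}\) and \(\dQss\) agree pointwise on \(\Qss\times\Qss\). Since \(\bar\gamma_s\) lies in \(\Qss\), the two partition sums coincide term by term, and so the lengths coincide.

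\emph{Fourth equality.} This is again an isometry, namely the distance form of \cref{thm:fixed-rank-riemannian-reduction}.

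Taking the infimum of the chain over all admissible \(\bar\gamma\) in \(\Qks\) joining \(p\) to \(q\) gives \(\dInt{k}{s}(p,q)=\inf\Length_{\dQks}(\bar\gamma)\). It remains to see that this infimum equals \(\dQks(p,q)\). A Riemannian distance is a length metric, so the infimum of its metric lengths over continuous rectifiable curves equals the distance. Indeed, piecewise \(C^1\) curves are among the competitors, which gives \(\le\). Conversely, any curve has metric length at least the distance between its endpoints, which gives \(\ge\). The last displayed claim follows from the length identity combined with \cref{def:curve-lengths}, since for a continuous piecewise \(C^1\) curve \(\Length_{\dQks}=\Length_{\gQks}\).

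The step I expect to be most delicate is the third equality, because it relies on a global, not merely local, identity \(\dQ{s}=\dQss\) on \(\Qss\), quoted from Chen. If only a local version were available, I would instead argue that the two metric lengths agree. Each point of \(\Qss\) has a neighbourhood where \(\dQ{s}\) and \(\dQss\) are bi-Lipschitz with constants tending to \(1\), since the submersion is Riemannian and the stratum is open in \(Q_s\). I would then subdivide \(\bar\gamma_s\) finely, using compactness of its image. This local route would suffice because length is determined by small-scale distances.
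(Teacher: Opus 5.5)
Your proof is correct and follows essentially the paper's route. The paper first establishes the pointwise identity \(\dkles=\dQ{s}=\dQss=\dQks\) on rank-\(s\) endpoints via the two reduction theorems, then uses invariance of curve length under passage to the induced intrinsic metric, which is exactly your chain. It differs only in re-proving the full-rank compatibility \(\dQ{s}=\dQss\) on \(\Qss\) in place rather than quoting Chen (minimizing segments between free orbits stay in \(\Qss\) by \cite[Lemma~3.5]{AKLM03}, with horizontal lifting for the reverse inequality), and in stating explicitly that every \(\dQks\)-rectifiable curve in \(\Qks\) is \(\dQ{k}\)-rectifiable because \(\dQ{k}\le\dkles\); your claim that piecewise \(C^1\) curves are admissible competitors uses this implicitly, and it follows from your chain since each equality there holds in \([0,\infty]\).
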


\begin{proof}
  Assume first that \(s\ge2\). Given \(\bar p,\bar q\in\Qks\), set
  \(\bar p_s\coloneq \Xks^{-1}(\bar p)\), \(\bar q_s\coloneq \Xks^{-1}(\bar q)\).
  We first verify $\dQ{s}(\bar p_s,\bar q_s)=\dQss(\bar p_s,\bar q_s)$.
  By compactness of \(O(s)\), we may choose representatives
  \(Z_0,Z_1\in\Mss\) of \(\bar p_s,\bar q_s\) such that
  \(\dM{s}(Z_0,Z_1)=\dQ{s}(\bar p_s,\bar q_s)\). Let \(Z:[0,1]\to M_s\)
  be a minimizing product-sphere geodesic
  joining \(Z_0\) to \(Z_1\). Since it minimizes the distance between
  these two orbits, it is normal to every orbit it meets, and its
  projection is a \(\dQ{s}\)-minimizing segment
  \cite[Proposition~3.1]{AKLM03}.
  By \cite[Lemma~3.5]{AKLM03}, the stabilizer of every interior
  point of \(Z\) is contained in the endpoint stabilizers.
  The latter are trivial because \(Z_0\) and \(Z_1\) have full
  column rank. Hence \(Z(t)\in\Mss\) for every \(t\in[0,1]\).
  The Riemannian-submersion property therefore gives
  \[
    \dQss(\bar p_s,\bar q_s)
    \le
    \Length_{\gQss}(\pi_s\circ Z)
    =
    \Length_{M_s}(Z)
    =
    \dQ{s}(\bar p_s,\bar q_s).
  \]
  Conversely, horizontal lifting
  (\cref{lem:horizontal-lifts-exist}) and the \(1\)-Lipschitz
  property of \(\pi_s:(M_s,\dM{s})\to(Q_s,\dQ{s})\) give
  \[
    \dQ{s}(\bar p_s,\bar q_s)
    \le \Length_{\gQss}(\bar\eta)
  \]
  for every continuous piecewise \(C^1\) curve \(\bar\eta\) in
  \(\Qss\) joining \(\bar p_s\) to \(\bar q_s\).
  Taking the infimum over these curves proves the reverse
  inequality and establishes full-rank compatibility. Combining this
  identity with
  \cref{thm:global-rank-bounded-metric-reduction,thm:fixed-rank-riemannian-reduction}
  gives
  \begin{equation}
    \label{eq:rank-bounded-distance-on-top-stratum}
    \begin{aligned}
      \dkles(\bar p,\bar q)
      =\dQ{s}(\bar p_s,\bar q_s)
      =\dQss(\bar p_s,\bar q_s)
      =\dQks(\bar p,\bar q).
    \end{aligned}
  \end{equation}
  Passing from a metric to its intrinsic length distance does not
  change the length of a rectifiable curve. Indeed, if
  \(\bar\gamma\) is a continuous \(\dQ{k}\)-rectifiable curve in
  \(\Qkles\), then for \(u<v\),
  \[
    \dQ{k}\bigl(\bar\gamma(u),\bar\gamma(v)\bigr)
    \le
    \dkles\bigl(\bar\gamma(u),\bar\gamma(v)\bigr)
    \le
    \Length_{\dQ{k}}\bigl(\restr{\bar\gamma}{[u,v]}\bigr).
  \]
  The right-hand side also shows that \(\bar\gamma\) is continuous
  for \(\dkles\). Summing over a partition and taking the supremum
  gives
  \[
    \Length_{\dQ{k}}(\bar\gamma)
    =
    \Length_{\dkles}(\bar\gamma).
  \]
  For curves in \(\Qks\),
  \cref{eq:rank-bounded-distance-on-top-stratum} therefore gives
  the claimed equality of metric lengths.
  Conversely, since \(\dQ{k}\le\dkles\), every continuous
  \(\dQks\)-rectifiable curve in \(\Qks\) is
  \(\dQ{k}\)-rectifiable.
  Taking infima over rectifiable curves with fixed endpoints proves
  \[
    \dInt{k}{s}=\dQks.
  \]
  For piecewise \(C^1\) curves, metric length for \(\dQks\) is the
  usual Riemannian length for \(\gQks\), which proves the final
  assertion.

  If \(s=1\), the stratum is finite and discrete. Every continuous
  curve in it is constant, and both distances are zero on the
  diagonal and \(+\infty\) between distinct points.
  All the length identities are then immediate.
\end{proof}

\begin{remark}[Restricted distance versus intrinsic stratum distance]
  \label{rem:restricted-versus-intrinsic-stratum-distance}
  For \(s\ge2\) and \(\bar p,\bar q\in\Qks\), the proof gives
  \[
    \dQ{k}(\bar p,\bar q)
    \le\dkles(\bar p,\bar q)
    =\dQks(\bar p,\bar q).
  \]
  Thus allowing curves to pass through lower-rank strata within
  \(\Qkles\) does not decrease the distance between rank-\(s\) endpoints.
  By contrast, the unrestricted ambient quotient distance \(\dQ{k}\) can
  use higher-rank points, so its pointwise restriction to \(\Qks\) need
  not equal \(\dQks\); compare \cite[Proposition~3.6]{Chen2025}.
  When \(s=k\le n\), one has \(\Qkles=Q_k\), and the equality reduces
  to the full-rank compatibility used above.
\end{remark}

\subsection{Chordal quotient comparisons}
\label{subsec:chordal-quotient-comparisons}

For use in \cref{sec:excess-rank-constrained-geodesics}, we record
two comparisons involving the following distance.

\begin{definition}[Chordal quotient distance]
  \label{def:chordal-quotient-distance}
  For \(m\ge1\), the \emph{chordal quotient distance} on \(Q_m\) is
  \[
    \dChord{m}([A],[B])
    \coloneq 
    \min_{R\in O(m)}\|A-BR\|_F,
    \qquad A,B\in M_m.
  \]
  It minimizes Frobenius chord lengths between factor representatives.
  For \(m\ge2\), the angular quotient distance \(\dQ{m}\) instead
  minimizes product-of-spheres distances, as in
  \cref{def:factor-quotient-metrics}.
\end{definition}

The next lemma relates the Procrustes distance, the chordal quotient
distance and the nuclear norm.

\begin{lemma}[Procrustes and chordal distance]
  \label{lem:procrustes-chordal}
  For \(A,B\in M_m\), one has \cite{Schonemann1966}
  \[
    \dChord{m}([A],[B])^2
    =
    \min_{R\in O(m)}\|A-BR\|_F^2
    =
    \|A\|_F^2+\|B\|_F^2-2\|A^\top B\|_*.
  \]
  Moreover, if $A^\top B=U\Sigma V^\top$ is a singular value
  decomposition, with \(U,V\in O(m)\) and \(\Sigma\) diagonal with
  nonnegative entries, then the minimum is attained at \(R=VU^\top\).
  If furthermore, \(A^\top B\) is invertible, this minimizer is
  unique and can be written as
  \[
    R
    =
    \big((A^\top B)^\top(A^\top B)\big)^{-1/2}(A^\top B)^\top.
  \]
  In particular, on the open set where \(A^\top B\) is invertible,
  the minimizing orthogonal factor depends smoothly, indeed
  analytically, on the entries of \(A^\top B\).
\end{lemma}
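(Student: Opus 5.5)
The proof is the classical orthogonal Procrustes argument; I will carry it out in four steps.

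First, I expand the square. Since right multiplication by \(R\in O(m)\) preserves the Frobenius norm, \(\|BR\|_F=\|B\|_F\), and so
\[
  \|A-BR\|_F^2=\|A\|_F^2+\|B\|_F^2-2\tr(A^\top BR).
\]
Minimizing over \(R\) is therefore the same as maximizing the linear functional \(R\mapsto\tr(A^\top BR)\) over the compact group \(O(m)\). The minimum exists by compactness, as already noted after \cref{def:factor-quotient-metrics}. The first equality in the statement is then just the definition in \cref{def:chordal-quotient-distance}.

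Second, I identify the maximum of the trace. Write \(A^\top B=U\Sigma V^\top\) and set \(W\coloneq V^\top RU\in O(m)\). Then
\[
  \tr(A^\top BR)=\tr(\Sigma W)=\sum_i\sigma_iW_{ii}\le\sum_i\sigma_i=\|A^\top B\|_*,
\]
because every entry of an orthogonal matrix satisfies \(|W_{ii}|\le1\). Equality holds at \(W=\I_m\), that is at \(R=VU^\top\). Substituting this maximum gives the nuclear-norm formula and shows that \(R=VU^\top\) attains the minimum.

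Third, I prove uniqueness when \(A^\top B\) is invertible. Then every \(\sigma_i>0\), so equality in \(\sum_i\sigma_iW_{ii}\le\sum_i\sigma_i\) forces \(W_{ii}=1\) for all \(i\). Each column of \(W\) is a unit vector, so this forces \(W=\I_m\), and hence \(R=VU^\top\) is the only minimizer. I then verify the closed form \(R=(M^\top M)^{-1/2}M^\top\) with \(M\coloneq A^\top B\). We have \(M^\top M=V\Sigma^2V^\top\), so \((M^\top M)^{-1/2}=V\Sigma^{-1}V^\top\). It follows that \((M^\top M)^{-1/2}M^\top=V\Sigma^{-1}V^\top V\Sigma U^\top=VU^\top\). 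This formula also shows that the minimizer does not depend on which singular value decomposition is chosen.

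Finally, smoothness. The map \(M\mapsto M^\top M\) sends invertible matrices to \(\SPD(m)\), and the inverse square root is real-analytic on \(\SPD(m)\), for instance through a holomorphic functional calculus or a locally convergent power series. Composing these with the polynomial map \(M\mapsto M^\top\) shows that \(R\) depends analytically on \(M\). The only step that needs any real care is the equality case in the trace inequality: the argument must use that equality in \(|W_{ii}|\le1\), together with orthonormality of the columns, pins \(W\) down completely. Everything else is routine computation.
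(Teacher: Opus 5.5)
Your proof is correct and follows the paper's route: expand the square, bound the trace term \(\operatorname{tr}(A^\top BR)\) by the nuclear norm, attain the bound at \(VU^\top\), and identify the minimizer with the orthogonal polar factor. The differences are small. You prove the needed special case of von Neumann's trace inequality directly via \(|W_{ii}|\le 1\) rather than citing it. You also supply the uniqueness argument and the check of the closed-form formula, both of which the paper only asserts.
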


\begin{proof}
  For any \(R\in O(m)\),
  \[
    \|A-BR\|_F^2
    =
    \|A\|_F^2+\|B\|_F^2
    -2\tr(A^\top B R).
  \]
  By von Neumann's trace inequality, $\tr(A^\top B
  R)\le \|A^\top B\|_*.$ For \(R=VU^\top\), we have
  \[
    \tr(A^\top B R)
    =
    \tr(U\Sigma V^\top VU^\top)
    =
    \tr(U\Sigma U^\top)
    =
    \tr\Sigma
    =
    \|A^\top B\|_*.
  \]
  Thus the upper bound is attained, and hence
  \[
    \min_{R\in O(m)}\|A-BR\|_F^2
    =
    \|A\|_F^2+\|B\|_F^2-2\|A^\top B\|_*.
  \]
  When \(A^\top B\) is invertible, the minimizer is unique and equals
  the orthogonal
  polar factor
  \[
    R
    =
    \big((A^\top B)^\top(A^\top B)\big)^{-1/2}(A^\top B)^\top.
  \]
  This formula is analytic on the invertible locus, hence smooth.
\end{proof}

\begin{lemma}[Chordal comparison for padded factors]
  \label{lem:chordal-comparison-padded-factors}
  Let \(Z_0,Z_1\in M_s\), and set \(X_i\coloneq \iks(Z_i)=(Z_i,0)\in M_k\),
  \(i=0,1\). Then \(\dQ{k}([X_0],[X_1])\ge \dChord{s}([Z_0],[Z_1])\).
\end{lemma}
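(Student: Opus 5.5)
The plan is to compare arc lengths with chord lengths rowwise, and then to reduce the chordal quotient distance in dimension \(k\) to the Procrustes formula in dimension \(s\). By \cref{prop:reduced-quotient-global-metric-geometry}, \(\dQ{k}([X_0],[X_1])=\dM{k}(X_0,X_1R_*)\) for some \(R_*\in O(k)\). For unit vectors \(x,y\), the arc satisfies \(\arccos\langle x,y\rangle\ge\|x-y\|\), because the chord is never longer than the great-circle arc. Summing over rows gives
\[
  \dQ{k}([X_0],[X_1])^2
  =\sum_{i}\arccos^2\langle (X_0)_{i\cdot},(X_1R_*)_{i\cdot}\rangle
  \ge\|X_0-X_1R_*\|_F^2
  \ge\dChord{k}([X_0],[X_1])^2 .
\]
So it suffices to show \(\dChord{k}([X_0],[X_1])=\dChord{s}([Z_0],[Z_1])\).

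For this I would use \cref{lem:procrustes-chordal} in both dimensions. The squared norms are unchanged by padding: \(\|X_i\|_F^2=\|Z_i\|_F^2=n\). Moreover, \(X_0^\top X_1\) is the \(k\times k\) matrix \(\diag(Z_0^\top Z_1,0)\), obtained by appending zero blocks. Appending zeros adds only zero singular values, so \(\|X_0^\top X_1\|_*=\|Z_0^\top Z_1\|_*\). The Procrustes formula then gives equality of the two chordal quotient distances. One subtlety is that \cref{lem:procrustes-chordal} is stated for \(A,B\in M_m\), but its von Neumann argument only uses that \(A,B\) are \(n\times m\) matrices, so applying it with \(m=k\) to padded factors is legitimate.

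No step is a real obstacle. The only point to state carefully is the elementary inequality \(\theta\ge 2\sin(\theta/2)=\|x-y\|\) for \(\theta\in[0,\pi]\), which follows from \(\sin u\le u\). An alternative to the nuclear-norm computation is to note directly that \(\dChord{k}\le\dChord{s}\) (take block-diagonal \(R\)) and that \(\tr(X_0^\top X_1R)=\tr(Z_0^\top Z_1 R_{11})\le\|Z_0^\top Z_1\|_*\), since \(R_{11}\) is a contraction. Either route gives the inequality we need, which is \(\dChord{k}\ge\dChord{s}\).
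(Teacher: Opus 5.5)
Your proposal is correct and follows the paper's proof. Both arguments bound $\dM{k}$ below by the Frobenius chord, minimize over $O(k)$ to reach $\dChord{k}$, and then use the Procrustes nuclear-norm formula with $X_0^\top X_1=\diag(Z_0^\top Z_1,0)$ to identify $\dChord{k}$ with $\dChord{s}$. The only difference is cosmetic: you get $\dM{k}\ge\|\cdot\|_F$ row by row from $\theta\ge 2\sin(\theta/2)$, whereas the paper compares curve lengths in $M_k$ with straight segments in $\R^{n\times k}$. The ``subtlety'' you raise does not arise, since the padded factors already lie in $M_k$.
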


\begin{proof}
  For all \(P,Q\in M_k\), the Riemannian distance on \(M_k\) satisfies
  \[
    \dM{k}(P,Q)
    =
    \inf_{\substack{\gamma\subset M_k\\ \gamma(0)=P,\,\gamma(1)=Q}}
    \Length_{M_k}(\gamma).
  \]
  Since the product metric on \(M_k\) is induced by the ambient
  Frobenius inner product, every piecewise \(C^1\) curve
  \(\gamma\subset M_k\) has length
  \(\Length_{M_k}(\gamma)=\Length_{F}(\gamma)\). Therefore
  \[
    \begin{aligned}
      \dM{k}(P,Q)
      =
      \inf_{\substack{\gamma\subset M_k\\ \gamma(0)=P,\,\gamma(1)=Q}}
      \Length_{F}(\gamma)
      \ge
      \inf_{\substack{\gamma\subset \R^{n\times
      k}\\ \gamma(0)=P,\,\gamma(1)=Q}}
      \Length_{F}(\gamma)
      =
      \|P-Q\|_F.
    \end{aligned}
  \]
  Applying this with \(P=X_0\) and \(Q=X_1R\), and then taking the infimum over
  \(R\in O(k)\), gives
  \[
    \begin{aligned}
      \dQ{k}([X_0],[X_1])
      =
      \inf_{R\in O(k)} \dM{k}(X_0,X_1R)
      \ge
      \inf_{R\in O(k)} \|X_0-X_1R\|_F
      =
      \dChord{k}([X_0],[X_1]).
    \end{aligned}
  \]
  It remains to compute the chordal quotient distance of the padded
  factors. By \cref{lem:procrustes-chordal},
  \(\dChord{m}([A],[B])^2=\|A\|_F^2+\|B\|_F^2-2\|A^\top B\|_*\). For
  \(A,B\in M_m=(\mathbb S^{m-1})^n\), one has $\|A\|_F^2=\|B\|_F^2=n$
  and therefore $  \dChord{m}([A],[B])^2 = 2n-2\|A^\top B\|_*$. Now since
  \[
    X_0^\top X_1
    =
    \begin{pmatrix}
      Z_0^\top Z_1 & 0 \\
      0 & 0
    \end{pmatrix},
  \]
  we have \(\|X_0^\top X_1\|_*= \|Z_0^\top Z_1\|_*\). Hence
  \(\dChord{k}([X_0],[X_1])= \dChord{s}([Z_0],[Z_1])\). Combining this
  with the previous inequality gives \(\dQ{k}([X_0],[X_1]) \ge
  \dChord{s}([Z_0],[Z_1])\) as claimed.
\end{proof}

\subsection{Computing fixed-rank geodesics by horizontal lifts}
\label{subsec:fixed-rank-geodesics-horizontal-lifts}

On \(\Crk sn\), write
\[
  g_Q\coloneq (\bar\Gamma_k)_*\gQks
  \]
for the quotient Riemannian metric from $\Qks$ transported by the
Gram diffeomorphism
$\bar \Gamma_k \colon [X] \in \Qks \longmapsto XX^\top \in\Crk sn$.
We now explain how geodesics of
the fixed-rank Riemannian stratum
\((\Qks,\gQks)\) are computed in
practice, using horizontal lifts in the
reduced full-rank model, or equivalently, by
\cref{thm:ambient-quotient-length-on-strata},
the geodesics for the intrinsic fixed-rank
distance \(\dInt{k}{s}\). As
stated in
\cref{rem:restricted-versus-intrinsic-stratum-distance},
they should not be confused with geodesics
for the ambient quotient distance \(\dQ{k}\),
even when both endpoints lie in
\(\Qks\). Let us first recall the
ambient quotient geodesics. The product
metric on \(M_k=(\mathbb S^{k-1})^n\) is the
metric induced by the Frobenius inner product
on \(\R^{n\times k}\). Thus geodesics
in \(M_k\) are computed row by row as
great-circle geodesics on \(\mathbb
S^{k-1}\). Given \([X_0],[X_1]\in Q_k\),
choose an optimal alignment \(R_*\in O(k)\)
such that
\[
  \dQ{k}([X_0],[X_1])
  =
  \dM{k}(X_0,X_1R_*).
\]
Writing row-wise
\[
  x_i^0\coloneq (X_0)_{i\cdot},
  \qquad
  x_i^1\coloneq (X_1R_*)_{i\cdot},
\]
and setting \(\theta_i\coloneq \arccos\langle x_i^0,x_i^1\rangle\), a
minimizing geodesic \(X(t)\) from \(X_0\) to \(X_1R_*\) is obtained
row-wise via \(x_i(t)\coloneq X(t)_{i\cdot}\) as follows:
\[
  x_i(t)
  =
  \frac{\sin((1-t)\theta_i)}{\sin\theta_i}\,x_i^0
  +
  \frac{\sin(t\theta_i)}{\sin\theta_i}\,x_i^1,
  \qquad 0<\theta_i<\pi.
\]
If \(\theta_i=0\), the row is constant. If \(\theta_i=\pi\), one
chooses any unit vector \(w_i\in \Tz{x_i^0}\mathbb S^{k-1}\) and takes
\[
  x_i(t)=\cos(\pi t)x_i^0+\sin(\pi t)w_i.
\]
The corresponding ambient quotient geodesic is \(t\longmapsto
[X(t)]\in Q_k\). Even if \([X_0]\) and \([X_1]\) belong to
\(\Qks\), this ambient geodesic need not remain in the
fixed-rank stratum. Fixed-rank geodesics are instead computed in the
reduced full-rank model, where the Sylvester operator is nonsingular. Let
\[
  x_0\in \Qks,
  \qquad
  \bar v_0\in\Tz{x_0}\Qks.
\]
By the fixed-rank reduction isometry, write
\[
  x_0=\Xks([Z_0]),
  \qquad
  Z_0\in \Mss,
\]
and define
\[
  \bar u_0
  \coloneq 
  \dd\Xks^{-1}(x_0)[\bar v_0]
  \in\Tz{[Z_0]}\Qss.
\]
Choose any lift
\[
  W_0\in\Tz{Z_0}\Mss
  \qquad\text{such that}\qquad
  \dd\pi_s(Z_0)[W_0]=\bar u_0.
\]
The horizontal representative of \(\bar u_0\) is
\[
  W_0^{\Hbundle}\coloneq W_0-Z_0\Omega_0,
\]
where \(\Omega_0\in\so(s)\) is the unique solution of the
Sylvester equation
\[
  E_{Z_0}\Omega_0+\Omega_0E_{Z_0}
  =
  Z_0^\top W_0-W_0^\top Z_0,
  \qquad
  E_{Z_0}\coloneq Z_0^\top Z_0\in\SPD(s).
\]
Thus \(W_0^{\Hbundle}\) is the unique horizontal lift at \(Z_0\) of
\(\bar u_0\):
\[
  W_0^{\Hbundle}\in\Hz{Z_0},
  \qquad
  \dd\pi_s(Z_0)[W_0^{\Hbundle}]=\bar u_0.
\]
The geodesic in the total space \(M_s=(\mathbb S^{s-1})^n\) with initial data
\[
  Z(0)=Z_0,
  \qquad
  Z'(0)=W_0^{\Hbundle}
\]
is explicit row by row. Writing \(z_i\) and \(w_i\) for the \(i\)-th
rows of \(Z_0\) and \(W_0^{\Hbundle}\), and setting
\(\alpha_i\coloneq \|w_i\|\), one has
\begin{equation}
  \label{eq:fixed-rank-rowwise-exponential}
  \begin{aligned}
    & z_i(t)
    =
    \begin{cases}
      \displaystyle
      \cos(\alpha_i t)\,z_i
      +
      \sin(\alpha_i t)\,\dfrac{w_i}{\alpha_i},
      & \alpha_i\ne0,\\[2mm]
      z_i,
      & \alpha_i=0,
    \end{cases}\\
    & Z(t)_{i\cdot}=z_i(t),\qquad i=1,\ldots,n.
  \end{aligned}
\end{equation}
Since \(\Mss\subset M_s\) is open, this curve remains in
\(\Mss\) for \(|t|\) sufficiently small.  Moreover, because
\(\pi_s:\Mss\to \Qss\) is a Riemannian submersion, a
geodesic with horizontal initial velocity remains horizontal for as
long as it is defined in \(\Mss\), and its projection
\(\gamma_s(t)\coloneq \pi_s(Z(t))\) is the geodesic of
\((\Qss,\gQss)\) with initial data
\(\gamma_s(0)=[Z_0],\;\dot\gamma_s(0)=\bar u_0\). Transporting by the
fixed-rank reduction isometry gives the corresponding local geodesic
in \(\Qks\):
\[
  \gamma_k(t)\coloneq \Xks(\gamma_s(t)).
\]
\begin{theorem}[Fixed-rank correlation geodesics by horizontal lifts]
  \label{thm:fixed-rank-correlation-geodesic}
  Let \(2\le s\le\min(n,k)\), \(Z_0\in \Mss\), and
  \(W_0\in\Tz{Z_0}\Mss\). Set
  \[
    E_0\coloneq Z_0^\top Z_0,\qquad
    \Omega_0\coloneq \mathcal L_{E_0}^{-1}
    \bigl(Z_0^\top W_0-W_0^\top Z_0\bigr),
    \qquad H_0\coloneq W_0-Z_0\Omega_0.
  \]
  Consider \(Z(t)\coloneq \Exp^{M_s}_{Z_0}(tH_0)\)
  using \cref{eq:fixed-rank-rowwise-exponential}, and let \(I\) be
  the connected component containing zero
  of \(\{t\in\R:\rank Z(t)=s\}\).
  Then
  \[
    C(t)\coloneq Z(t)Z(t)^\top,\qquad t\in I,
  \]
  is the unique geodesic in \((\Crk{s}{n},g_Q)\) with initial condition
  \[
    C(0)=Z_0Z_0^\top,\qquad
    \dot C(0)=H_0Z_0^\top+Z_0H_0^\top
    =W_0Z_0^\top+Z_0W_0^\top.
  \]
\end{theorem}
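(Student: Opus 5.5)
The plan is to verify three things in order: that \(H_0\) is the horizontal lift of the prescribed initial velocity, that the row-wise exponential stays horizontal on \(I\) and projects to a \(\gQss\)-geodesic, and that transporting through the Gram diffeomorphism gives the stated geodesic in \((\Crk{s}{n},g_Q)\). Uniqueness will then follow from uniqueness of Riemannian geodesics with given initial data.

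\emph{Step 1: horizontal initial velocity.} By \cref{prop:sylvester-mechanical-connection}, \(\Omega_0=\omega_{Z_0}(W_0)\) and \(H_0=W_0^{\Hbundle}\in\Hz{Z_0}\). Since \(Z_0\Omega_0\) is vertical, \(\dd\pi_s(Z_0)[H_0]=\dd\pi_s(Z_0)[W_0]\). For the Gram form of the velocity, differentiate \(\Gamma_s\) at \(Z_0\) to get \(W\mapsto WZ_0^\top+Z_0W^\top\). The vertical part contributes
\[
  Z_0\Omega_0Z_0^\top+Z_0\Omega_0^\top Z_0^\top=0,
\]
so \(H_0Z_0^\top+Z_0H_0^\top=W_0Z_0^\top+Z_0W_0^\top\).

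\emph{Step 2: geodesic in the quotient.} Since \(\Mss\) is open in \(M_s\), its Levi--Civita connection is the restriction of the product-of-spheres one. Hence \(Z(t)\) given by \cref{eq:fixed-rank-rowwise-exponential} is an \(M_s\)-geodesic, and it lies in \(\Mss\) exactly for \(t\in I\). I would invoke the standard fact for Riemannian submersions (O'Neill): a geodesic of the total space that is horizontal at one time stays horizontal, and its projection is a geodesic of the base. This is local, but \(I\) is connected and contains \(0\). The set of \(t\in I\) where \(Z'(t)\) is horizontal is closed, because horizontality is the closed condition \(Z^\top Z'\) symmetric by \cref{lem:horizontality-criterion}. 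It is also open, by the local statement applied around each such time. It is nonempty, so it equals \(I\).

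To avoid relying on the local fact, one could alternatively verify directly that \(\frac{d}{dt}(Z^\top Z'-Z'^\top Z)=Z^\top Z''-Z''^\top Z\). For the row-wise great circles \(Z''=-\diag(\alpha_i^2)Z\), and this expression vanishes. Combined with the symmetry at \(t=0\), this gives horizontality on all of \(I\). This clean computation is the step I would actually write out, since \(\alpha_i=\|w_i\|\) is constant in \(t\).

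Horizontal geodesics project to geodesics: the projected curve \(\gamma_s=\pi_s\circ Z\) has horizontal lift \(Z\), and \(\nabla^{\Qss}_{\dot\gamma}\dot\gamma=\dd\pi_s\bigl((\nabla_{Z'}Z')^{\Hbundle}\bigr)=0\). So \(\gamma_s\) is the \(\gQss\)-geodesic with the data from Step 1.

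\emph{Step 3: transport to correlations.} The rankwise Gram map \(\bar\Gamma_s:\Qss\to\Crk{s}{n}\) is a diffeomorphism by \cref{prop:quotient-model-low-rank-correlations}. By \cref{thm:fixed-rank-riemannian-reduction,cor:fixed-rank-reduction-correlation}, \(g_Q\) coincides with \((\bar\Gamma_s)_*\gQss\), since \(\bar\Gamma_k\circ\Xks=\bar\Gamma_s\). So \(C=\bar\Gamma_s\circ\gamma_s\) is a \(g_Q\)-geodesic with the stated initial conditions. Uniqueness is the standard uniqueness of geodesics on the interval \(I\).

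The main obstacle is the global-in-\(I\) persistence of horizontality; the direct derivative computation in Step 2 resolves it. A second point needs care: ``unique'' should be read as uniqueness on \(I\) among geodesics with these initial data.
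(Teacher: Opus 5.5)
Your proposal is correct and follows essentially the same route as the paper. You project $W_0$ to its horizontal part via the Sylvester connection form, take the row-wise $M_s$-geodesic with horizontal initial velocity so that it projects to a $\gQss$-geodesic, and transport it through the Gram diffeomorphism using $g_Q=(\bar\Gamma_s)_*\gQss$. The paper just cites the standard Riemannian-submersion fact that horizontal geodesics stay horizontal and leaves the velocity identity implicit. Your conservation computation $\frac{d}{dt}\bigl(Z^\top Z'-Z'^\top Z\bigr)=0$ along the row-wise great circles, and your check that the vertical part $Z_0\Omega_0$ drops out of $\dot C(0)$, make those two steps self-contained.
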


\begin{proof}
  By \cref{prop:sylvester-mechanical-connection}, \(H_0\) is horizontal.
  The rowwise formula in \cref{eq:fixed-rank-rowwise-exponential}
  therefore gives the
  quotient geodesic and its Gram image in \((\Crk{s}{n},g_Q)\).
\end{proof}
Thus the fixed-rank geodesic computation is carried out in the
reduced full-rank model \(\Mss\), using the nonsingular
\(s\times s\) Sylvester equation. This keeps the computation on the
free \(O(s)\)-bundle with \(Z^\top Z\) positive definite. For padded
factors in \(\Mks\), the \(k\times k\) Sylvester operator on
\(\so(k)\) is singular precisely when \(k-s\ge2\); see
\cref{rem:sylvester-rankwise}.

\smallskip
We conclude this section by providing an elementary example that
shows that a fixed-rank stratum is not geodesically complete.

\begin{theorem}[Finite-time rank degeneration]
  \label{thm:finite-time-rank-degeneration}
  Let \(2\le s\le \min(n,k)\). Then the fixed-rank Riemannian stratum
  $(\Qks,\gQks)$ is not geodesically complete.
\end{theorem}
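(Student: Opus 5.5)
By \cref{thm:fixed-rank-riemannian-reduction} it suffices to work in the reduced model \((\Qss,\gQss)\). There I will exhibit a unit-speed geodesic that reaches rank \(s-1\) in finite time and cannot be extended inside the stratum. The plan is to build a horizontal factor curve explicitly from \cref{eq:fixed-rank-rowwise-exponential}. Only one row moves, along a great circle, and at time \(t_*\) it becomes linearly dependent on the others.

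\emph{Construction.} Let \(e_1,\dots,e_s\) be the standard basis of \(\R^s\). Define \(Z_0\in\Mss\) by taking rows \(z_a=e_a\) for \(a=1,\dots,s-1\). Take \(z_s=(e_{s-1}+e_s)/\sqrt2\). Fill any remaining rows \(z_i\), \(i>s\), with \(e_1\). Then \(\rank Z_0=s\), since the last row has a nonzero \(e_s\)-component. Let \(H_0\) be zero except in row \(s\), where it equals \(w=(e_s-e_{s-1})/\sqrt2\), a unit vector tangent to the sphere at \(z_s\).

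\emph{Horizontality.} I check horizontality through \cref{lem:horizontality-criterion}. We have \(Z_0^\top H_0=z_s^\top w\), a rank-one matrix. Symmetry of a matrix \(u^\top v\) with \(u,v\) nonzero forces \(u\parallel v\), and \(z_s\perp w\), so \(Z_0^\top H_0\) is not symmetric and \(H_0\) is not horizontal as chosen. I will repair this by choosing the moving row's velocity orthogonal to \(\spanv\{z_s\}\) while keeping \(Z_0^\top H_0=z_s^\top w\) symmetric. That is impossible for a single row, so instead I will move two rows symmetrically.

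A cleaner choice is the following. Let the moving row be \(z_s\) with velocity \(w\), and choose a fixed row \(z_j=w\) (with \(j<s\)) together with velocity \(z_s\) in row \(j\). Then \(Z_0^\top H_0=z_s^\top w+w^\top z_s\), which is symmetric. The fix is to take rows \(s-1\) and \(s\) as \(u=\cos\beta\,e_{s-1}+\sin\beta\,e_s\) and \(v=\cos\beta\,e_{s-1}-\sin\beta\,e_s\). Give them velocities \(u'=-\sin\beta\,e_{s-1}+\cos\beta\,e_s\) and \(v'=-\sin\beta\,e_{s-1}-\cos\beta\,e_s\), i.e.\ rotate them toward each other. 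Then \(u^\top u'+v^\top v'\) is symmetric by direct computation: the off-diagonal entries are \(\cos^2\beta-\sin^2\beta\) and \(-\cos^2\beta+\sin^2\beta\) combined appropriately. I will fix the signs so that this holds.

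\emph{Degeneration.} The rows then follow \eqref{eq:fixed-rank-rowwise-exponential}: \(u(t)\) has \(e_s\)-component \(\sin(\beta-t)\) and \(v(t)\) has \(-\sin(\beta-t)\). The other rows are \(e_1,\dots,e_{s-2}\) and copies of \(e_1\). Hence \(\rank Z(t)=s\) for \(0\le t<\beta\) and \(\rank Z(\beta)=s-1\), because both rows become \(e_{s-1}\).

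By \cref{thm:fixed-rank-correlation-geodesic}, \(\gamma(t)=\pi_s(Z(t))\) is a geodesic on \([0,\beta)\) of speed \(\|H_0\|_F=\sqrt2\). Its lift converges in \(M_s\) as \(t\to\beta\), so the Gram matrices \(C(t)\) converge to a rank-\((s-1)\) correlation matrix outside \(\Crk sn\).

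\emph{Non-extendability.} If the geodesic extended in \(\Qss\) past \(\beta\), then \(\gamma(t)\) would converge in \(\Qss\), and by continuity of \(\bar\Gamma_s\) its Gram matrix would have rank \(s\), a contradiction. The geodesic therefore has maximal interval bounded above, and Hopf--Rinow gives incompleteness.

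The main obstacle is getting horizontality exactly right. Here I will verify symmetry of \(Z_0^\top H_0\) by explicit \(2\times2\) block computation in the \(e_{s-1},e_s\) plane. The remaining rows contribute nothing, since their velocities vanish.
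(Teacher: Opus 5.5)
Your final two-row construction is correct and is essentially the paper's proof: there too two rows rotate toward each other in a coordinate \(2\)-plane (rows \(e_1,e_2\) with velocities \(e_2,e_1\), so \(Z_0^\top W_0=E_{12}+E_{21}\) is symmetric) and coincide at \(t=\pi/4\), which is your configuration with \(\beta=\pi/4\) up to an orthogonal change of coordinates, while your Gram-limit argument for non-extendability spells out a step the paper leaves implicit. Two bookkeeping fixes: \(Z_0^\top H_0\) is in fact diagonal on the \((e_{s-1},e_s)\) block (it equals \(\diag(-\sin 2\beta,\sin 2\beta)\) there), and the velocities as you wrote them rotate both rows away from \(e_{s-1}\), so use \(-H_0\) (still horizontal) with \(\beta\in(0,\pi/2)\) to get \(u(t),v(t)\to e_{s-1}\) at \(t=\beta\).
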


\begin{proof}
  By the fixed-rank reduction isometry, it is enough to work in the
  reduced model \(\Qss\).  We construct a horizontal geodesic in
  \(M_s=(\mathbb S^{s-1})^n\) which reaches \(\Mrk{s}{s-1}\) in finite time.

  \smallskip
  First assume \(s=2\). Let \((e_1,e_2)\) be the standard basis of
  \(\R^2\) and set
  \[
    u\coloneq \frac{e_1+e_2}{\sqrt2},
  \]
  and take
  \[
    Z_0\coloneq
    \begin{pmatrix}
      e_1\\
      e_2\\
      u\\
      \vdots\\
      u
    \end{pmatrix}
    \in \Mrk{2}{2},
    \qquad
    W_0\coloneq
    \begin{pmatrix}
      e_2\\
      e_1\\
      0\\
      \vdots\\
      0
    \end{pmatrix}
    \in\Tz{Z_0}M_2.
  \]
  The vector $W_0$ is tangent because each row is orthogonal to the
  corresponding row of $Z_0$. It is horizontal because \(Z_0^\top
  W_0=W_0^\top Z_0=E_{12}+E_{21}\), where
  \(E_{ij}\coloneq(\delta_{ik}\delta_{jl})_{k,l}\), so $Z_0^\top W_0$
  is symmetric.
  The row-wise geodesic with initial data \((Z_0,W_0)\) satisfies
  \(z_1(t)=\cos t\,e_1+\sin t\,e_2\), \(z_2(t)=\sin t\,e_1+\cos
  t\,e_2\) and all remaining rows are constant equal to \(u\). At
  \(t=\pi/4\), all rows are equal to \(u\), hence \(\rank Z(\pi/4)=1\).

  \smallskip
  Now assume \(s\ge3\).  Take
  \[
    Z_0\coloneq
    \begin{pmatrix}
      e_1\\
      e_2\\
      e_3\\
      \vdots\\
      e_s\\
      e_3\\
      \vdots\\
      e_3
    \end{pmatrix}
    \in \Mss,
    \qquad
    W_0\coloneq
    \begin{pmatrix}
      e_2\\
      e_1\\
      0\\
      \vdots\\
      0\\
      0\\
      \vdots\\
      0
    \end{pmatrix}.
  \]
  Again \(W_0\) is tangent and horizontal, because each row is
  tangent to the corresponding sphere factor and \(Z_0^\top
  W_0=W_0^\top Z_0=E_{12}+E_{21}\). The corresponding row-wise
  geodesic has the same first two rows as above, while all other rows
  remain fixed. At \(t=\pi/4\), the first two rows coincide, so the row span is
  \[
    \spanv\left\{
      \frac{e_1+e_2}{\sqrt2},e_3,\ldots,e_s
    \right\},
  \]
  which has dimension \(s-1\). Hence \(\rank Z(\pi/4)=s-1\).

  \smallskip
  In both cases, a horizontal geodesic in \(\Mss\) reaches the
  lower-rank locus in finite time. Its projection is therefore a
  geodesic of \(\Qss\) and reaches lower rank strata in finite
  time as well. Transporting the example by $\Xks$ gives the same
  conclusion for \(\Qks\). Thus \((\Qks,\gQks)\) is
  not geodesically complete.
\end{proof}

\section{Curvature of fixed-rank strata via Sylvester equations}
\label{sec:curvature-fixed-rank-strata}

The previous section introduced the Sylvester mechanical connection
$\omega$ on the reduced full-rank model
\[
  \pi_s:\bigl(\Mss,\restr{g_{M_s}}{\Mss}\bigr)
  \longrightarrow\bigl(\Qss,\gQss\bigr).
\]
In this section we use that connection $1$-form to compute the
curvature $2$-form and \emph{intrinsic} sectional,
Ricci, scalar curvatures of each fixed-rank stratum. By the
fixed-rank reduction isometry
\(\Xks:(\Qss,\gQss)\longrightarrow(\Qks,\gQks)\),
it is enough to work on the reduced full-rank model \(\Mss\),
where the \(O(s)\)-action is free, turning
\(\pi_s:\Mss\longrightarrow\Qss\) into a principal bundle
with structure group \(O(s)\). Throughout this section we fix
\[
  2\le s\le \min(n,k),
\]
take \(Z\in \Mss\), \(E_Z\coloneq Z^\top Z\in\SPD(s)\) and denote as before
the vertical and (metric-, mechanical-) horizontal spaces
\(\Vz{Z}=\{Z\Omega:\Omega\in\so(s)\}\),
\(\Hz{Z}=\Vz{Z}^\perp\subset\Tz{Z}\Mss\), respectively. We recall
from \cref{prop:sylvester-mechanical-connection} that the mechanical
connection form is given pointwise by solving a Sylvester equation
\[
  \omega_Z(W)
  =
  \mathcal L_{E_Z}^{-1}\bigl(Z^\top W-W^\top Z\bigr),
  \qquad
  \mathcal L_{E_Z}(\Omega)=E_Z\Omega+\Omega E_Z.
\]
All curvature formulas below are written in this reduced model and
are then transported to \(\Qks\) by the Riemannian isometry \(\Xks\).
The resulting nonpolarity criterion closes this section; the holonomy
classification follows in \cref{sec:holonomy-mechanical-connection}.

\subsection{Curvature form and O'Neill tensor}
\label{subsec:curvature-oneill-tensor}

We first introduce the curvature \(2\)-form of the mechanical
connection and record the corresponding O'Neill tensor formula. We
follow O'Neill's convention for Riemannian submersions \cite{ONeill1966}.

\begin{proposition}[Curvature form, integrability, and O'Neill tensor]
  \label{prop:curvature-oneill-tensor}
  Let
  \[
    \Omega^\omega
    \coloneq 
    \dd \omega+\frac12[\omega\wedge\omega]
    \in\Omega^2(\Mss,\so(s))
  \]
  be the curvature \(2\)-form of the mechanical connection $\omega$.
  If \(U,V\) are horizontal vector fields on \(\Mss\), then
  \[
    \Omega^\omega(U,V)
    =
    -\omega([U,V]).
  \]
  Consequently, the horizontal distribution is integrable if and only if
  \(\Omega^\omega\) vanishes on horizontal pairs. Moreover, let \(A\)
  denote O'Neill's tensor $A$ of the Riemannian submersion
  \(\pi_s:\Mss\to \Qss\). For horizontal vector fields \(U,V\), one
  has (see \cite[Lemma~2]{ONeill1966})
  \[
    A_UV
    =
    \frac12[U,V]^{\Vbundle}
    =
    -\frac12 Z\,\Omega^\omega(U,V).
  \]
  In particular, if \(\Theta\coloneq \Omega^\omega(U,V)\in\so(s)\), then
  \[
    \|A_UV\|_F^2
    =
    \frac14
    \tr\bigl(\Theta^\top E_Z\Theta\bigr).
  \]
\end{proposition}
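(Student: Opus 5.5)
The plan is to work with the standard structure equation and Cartan's formula, then feed the result into O'Neill's identification of the $A$-tensor.

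\textbf{Curvature on horizontal pairs.} For horizontal vector fields $U,V$ on $\Mss$, Cartan's formula for the exterior derivative of a vector-valued $1$-form gives
\[
  \dd\omega(U,V)=U(\omega(V))-V(\omega(U))-\omega([U,V]).
\]
Since $\omega(U)=\omega(V)=0$ identically, the first two terms vanish. The bracket term satisfies
\[
  \tfrac12[\omega\wedge\omega](U,V)=[\omega(U),\omega(V)]=0,
\]
with the factor convention that makes this equality hold, as in \cite{KobayashiNomizu1963}. Hence $\Omega^\omega(U,V)=-\omega([U,V])$.

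\textbf{Integrability.} Because $\ker\omega=\Hbundle$, the vector field $[U,V]$ is horizontal if and only if $\omega([U,V])=0$. Frobenius' theorem then yields the integrability equivalence. Pointwise values of horizontal fields exhaust $\Hz{Z}$, and $\Omega^\omega$ is tensorial, so vanishing on horizontal pairs is a pointwise condition.

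\textbf{The O'Neill tensor.} I would use O'Neill's Lemma~2: for horizontal fields, $A_UV=\tfrac12[U,V]^{\Vbundle}$. By \cref{prop:sylvester-mechanical-connection}, every vector $W$ satisfies $W^{\Vbundle}=Z\omega_Z(W)$. Substituting gives
\[
  A_UV=\tfrac12 Z\,\omega([U,V])=-\tfrac12 Z\,\Omega^\omega(U,V).
\]
The norm then follows by direct computation:
\[
  \|Z\Theta\|_F^2=\tr(\Theta^\top Z^\top Z\Theta)=\tr(\Theta^\top E_Z\Theta),
\]
which gives the factor $\tfrac14$.

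\textbf{Main point of care.} The only delicate step is the sign and normalization convention. Both $\tfrac12[\omega\wedge\omega](U,V)=[\omega(U),\omega(V)]$ and the convention $\dd\omega(U,V)=U\omega(V)-V\omega(U)-\omega([U,V])$, without a factor $\tfrac12$, must be fixed consistently with \cite{KobayashiNomizu1963}. I would state these conventions explicitly before the computation. Apart from that, everything reduces to quoting the established vertical-projection formula.
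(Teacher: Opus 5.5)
Your argument is correct and follows the paper's proof step for step: Cartan's formula on horizontal fields, then Frobenius, then O'Neill's $A_UV=\tfrac12[U,V]^{\Vbundle}$ combined with $W^{\Vbundle}=Z\omega_Z(W)$, and finally $\|Z\Theta\|_F^2=\tr(\Theta^\top E_Z\Theta)$. One correction to your convention remark: Kobayashi--Nomizu normalize $\dd\omega(X,Y)=\tfrac12\{X\omega(Y)-Y\omega(X)-\omega([X,Y])\}$, which gives $\omega([X,Y])=-2\Omega(X,Y)$ for horizontal $X,Y$, so the unnormalized convention that you and the paper need for $\Omega^\omega(U,V)=-\omega([U,V])$ should be stated as your own choice rather than attributed to them.
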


\begin{proof}
  The first part of the proof is classical, nevertheless we write it
  for completeness. For horizontal vector fields \(U,V\), one has
  \(\omega(U)=\omega(V)=0\). Therefore
  \[
    \dd \omega(U,V)
    =
    U(\omega(V))-V(\omega(U))-\omega([U,V])
    =
    -\omega([U,V]),
  \]
  and the term \([\omega\wedge\omega](U,V)\) vanishes. Hence
  \(\Omega^\omega(U,V)=-\omega([U,V])\). By Frobenius' theorem (see
  for instance \cite[Chap.~I, Proposition~1.2]{KobayashiNomizu1963}),
  the horizontal distribution is integrable
  if and only if the bracket of any two horizontal vector fields is
  again horizontal. This is equivalent to \([U,V]^{\Vbundle}=0\)
  for all horizontal \(U,V\). Since the vertical projection is
  \(W^{\Vbundle}=Z\omega(W)\), we get
  \[
    [U,V]^{\Vbundle}
    =
    Z\omega([U,V])
    =
    -Z\Omega^\omega(U,V).
  \]
  Thus integrability is equivalent to the vanishing of
  \(\Omega^\omega\) on horizontal pairs. For horizontal vector
  fields, O'Neill's tensor $A$ satisfies
  \(A_UV=\frac12[U,V]^{\Vbundle}\). Combining this with the
  previous identity gives
  \[
    A_UV=-\frac12Z\Omega^\omega(U,V).
  \]
  Finally, for \(\Theta\in\so(s)\),
  \[
    \|Z\Theta\|_F^2
    =
    \tr\bigl((Z\Theta)^\top Z\Theta\bigr)
    =
    \tr\bigl(\Theta^\top Z^\top Z\Theta\bigr)
    =
    \tr\bigl(\Theta^\top E_Z\Theta\bigr),
  \]
  which gives the norm formula.
\end{proof}

\subsection{The Sylvester sectional-curvature formula}
\label{subsec:sylvester-sectional-curvature}

We now compute the curvature $2$-form and the sectional curvature
explicitly. Define the $\so(s)$-valued $1$-form
\[
  J_Z(W)\coloneq Z^\top W-W^\top Z,
  \qquad Z\in\Mss,\quad W\in\Tz{Z}\Mss,
\]
so that \(\omega_Z=\mathcal L_{E_Z}^{-1}\circ J_Z\).
The Frobenius-orthogonal decomposition into horizontal and vertical
components is
\[
  W=W^{\Hbundle}+W^{\Vbundle},
  \qquad
  W^{\Vbundle}=Z\Omega,
  \qquad
  \Omega\coloneq \omega_Z(W)
  =\mathcal L_{E_Z}^{-1}\bigl(J_Z(W)\bigr).
\]
Here \(\Omega\in\so(s)\) represents the common infinitesimal
rotation generating the vertical component: \(J_Z(W)\)
determines this rotation through the Sylvester equation. We also
introduce the alternating $\so(s)$-valued
bilinear map
\[
  S_Z(U,V)\coloneq U^\top V-V^\top U,
  \qquad U,V\in\Tz{Z}\Mss.
\]
The next lemma shows that
\[
  S_Z(U,V)=\frac12(\dd J)_Z(U,V).
\]

\begin{lemma}[Exterior derivative of \(J\)]
  \label{lem:dJ-explicit}
  For all \(Z\in \Mss\) and all \(U,V\in\Tz{Z}\Mss\),
  \[
    (\dd J)_Z(U,V)
    =
    2\bigl(U^\top V-V^\top U\bigr).
  \]
  Consequently, for horizontal \(U,V\in\Hz{Z}\),
  \begin{equation}
    \label{eq:curvature-form-sylvester-horizontal}
    \Theta_Z(U,V)
    \coloneq 
    (\Omega^\omega)_Z(U,V)
    =
    2\mathcal L_{E_Z}^{-1}\bigl(U^\top V-V^\top U\bigr) = 2\mathcal
    L_{E_Z}^{-1}\bigl(S_Z(U,V)\bigr).
  \end{equation}
\end{lemma}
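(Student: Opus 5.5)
The plan is to compute \(\dd J\) directly, treating \(J\) as the restriction to \(\Mss\) of the matrix-valued \(1\)-form \(\widetilde J_Z(W)=Z^\top W-W^\top Z\) on the open set \(\R^{n\times s}\). Since \(\Mss\) is a submanifold of \(\R^{n\times s}\) and exterior derivative commutes with pullback, we have \((\dd J)_Z(U,V)=(\dd\widetilde J)_Z(U,V)\) for tangent vectors \(U,V\). To evaluate \(\dd\widetilde J\), I will use constant vector fields \(U,V\) on the vector space \(\R^{n\times s}\). These have vanishing bracket, so
\[
  \dd\widetilde J(U,V)=U\bigl(\widetilde J(V)\bigr)-V\bigl(\widetilde J(U)\bigr).
\]
Differentiating \(Z\mapsto Z^\top V-V^\top Z\) in the direction \(U\) gives \(U^\top V-V^\top U\). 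Differentiating \(Z\mapsto Z^\top U-U^\top Z\) in the direction \(V\) gives \(V^\top U-U^\top V\). Subtracting yields \(2(U^\top V-V^\top U)=2S_Z(U,V)\), which is the first claim. An equivalent way to see this is that \(\widetilde J=\dd Z^\top\wedge\dd Z\)-type bilinear expression in the coordinate \(1\)-forms.

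For the curvature formula I will start from the Cartan structure equation \(\Omega^\omega=\dd\omega+\frac12[\omega\wedge\omega]\) and write \(\omega=\mathcal L_{E}^{-1}\circ J\), where \(E=E_Z\) depends on \(Z\). The product rule gives
\[
  \dd\omega(U,V)=\mathcal L_E^{-1}\bigl(\dd J(U,V)\bigr)+\bigl(D_U\mathcal L_E^{-1}\bigr)\bigl(J(V)\bigr)-\bigl(D_V\mathcal L_E^{-1}\bigr)\bigl(J(U)\bigr).
\]
If \(U,V\) are horizontal at \(Z\), then \(J_Z(U)=J_Z(V)=0\), so the two derivative terms of the operator vanish. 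Horizontality also gives \(\omega_Z(U)=\omega_Z(V)=0\), so the bracket term \([\omega\wedge\omega](U,V)\) vanishes as well. Only \(\mathcal L_{E_Z}^{-1}(2S_Z(U,V))\) survives.

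The tensorial step is where care is needed. The cleanest route is to extend \(U,V\) to arbitrary local vector fields; no horizontal extension is required, because both \(\dd\omega\) and \([\omega\wedge\omega]\) are tensorial. I will then apply the pointwise product rule to the \(\so(s)\)-valued form \(\omega=\mathcal L^{-1}\cdot J\), viewed as a function with values in linear operators paired with a \(1\)-form. This gives \(\dd\omega=\dd(\mathcal L^{-1})\wedge J+\mathcal L^{-1}\dd J\). Evaluated at \((U,V)\), the first term is a combination of \(J(U)\) and \(J(V)\), and both vanish. I expect no serious obstacle beyond keeping the sign and normalization conventions for \(\dd\) and \([\omega\wedge\omega]\) consistent with \cref{prop:curvature-oneill-tensor}. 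As a cross-check, I will compare with \(\Omega^\omega(U,V)=-\omega([U,V])\) using horizontal extensions.
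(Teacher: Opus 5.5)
Your proposal is correct and follows the paper's proof essentially step for step. Like the paper, you restrict the ambient form $Y^\top\dd Y-(\dd Y)^\top Y$ and evaluate $\dd J$ on constant extensions with vanishing bracket. You then apply the product rule to $\omega=\mathcal L_{E_Z}^{-1}\circ J$ and kill the extra terms and the bracket term using $J_Z(U)=J_Z(V)=0$ and $\omega_Z(U)=\omega_Z(V)=0$ for horizontal $U,V$. One minor slip in your parenthetical remark: it is $\dd\widetilde J$, not $\widetilde J$ itself, that equals the wedge expression $2\,\dd Z^\top\wedge\dd Z$.
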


\begin{proof}
  The form \(J\) is the restriction of the ambient \(1\)-form
  \(Y^\top\dd Y-(\dd Y)^\top Y\). Since exterior differentiation
  commutes with restriction, we may compute using constant ambient
  extensions of \(U,V\), whose bracket vanishes. Thus
  \[
    \begin{aligned}
      (\dd J)_Z(U,V)
      &=U[J(V)]_Z-V[J(U)]_Z\\
      &=(U^\top V-V^\top U)-(V^\top U-U^\top V)
      =2(U^\top V-V^\top U).
    \end{aligned}
  \]

  For the curvature formula, apply the product rule to
  \(\omega_Z=\mathcal L_{E_Z}^{-1}\circ J_Z\):
  \[
    \begin{aligned}
      (\dd\omega)_Z(U,V)
      &=\mathcal L_{E_Z}^{-1}\bigl((\dd J)_Z(U,V)\bigr)\\
      &\quad+
      \bigl(\dd(\mathcal L_{E_\bullet}^{-1})\bigr)_Z[U]
      \bigl(J_Z(V)\bigr)\\
      &\quad-
      \bigl(\dd(\mathcal L_{E_\bullet}^{-1})\bigr)_Z[V]
      \bigl(J_Z(U)\bigr).
    \end{aligned}
  \]
  Here \(\dd(\mathcal L_{E_\bullet}^{-1})\) denotes the differential
  of the operator-valued map \(Y\mapsto\mathcal L_{E_Y}^{-1}\).
  If \(U,V\) are horizontal, then \(J_Z(U)=J_Z(V)=0\), so the
  last two terms vanish. Moreover,
  \(\omega_Z(U)=\omega_Z(V)=0\), hence
  \([\omega\wedge\omega]_Z(U,V)=0\). Consequently,
  \[
    \begin{aligned}
      (\Omega^\omega)_Z(U,V)
      =(\dd\omega)_Z(U,V)
      =\mathcal L_{E_Z}^{-1}\bigl((\dd J)_Z(U,V)\bigr)
      =2\mathcal L_{E_Z}^{-1}\bigl(S_Z(U,V)\bigr).
    \end{aligned}
  \]
\end{proof}
We now apply O'Neill's formula to the Riemannian submersion
\[
  \pi_s:
  \bigl(\Mss,\restr{g_{M_s}}{\Mss}\bigr)
  \longrightarrow
  \bigl(\Qss,\gQss\bigr).
\]
where we recall \(g_{M_s}\) is the restriction of the ambient
Frobenius metric to the tangent bundle \(\Tbundle M_s\), and
\(\gQss\) is the quotient metric. The curvature computed
below is the sectional curvature of the fixed-rank Riemannian stratum
\((\Qks,\gQks)\), or equivalently, via the fixed-rank reduction
isometry \(\Xks\), that of the reduced model $(\Qss,\gQss)$.

\begin{theorem}[Sectional curvature of a fixed-rank stratum]
  \label{thm:sectional-curvature-fixed-rank-stratum}
  Let \(x\in \Qks\), and choose \(Z\in \Mss\) such that
  $\Xks([Z])=x$. Let \(\bar\sigma\subset\Tz{x}\Qks\) be a \(2\)-plane, and let
  \(U,V\in\Hz{Z}\) be horizontal lifts, through the isometry \(\Xks\), of a
  basis of \(\bar\sigma\).  Write row-wise
  \(z_i\coloneq Z_{i\cdot}\in\R^{1\times s}\) and
  \[
    u_i\coloneq U_{i\cdot},
    \qquad
    v_i\coloneq V_{i\cdot},
    \qquad
    u_i,v_i\in\Tz{z_i}\mathbb S^{s-1}\subset\R^{1\times s}.
  \]
  Then the sectional curvature of $\gQks$ is
  \begin{equation}
    \label{eq:sectional-curvature-fixed-rank-general}
    \begin{aligned}
      \sec_{\Qks}(\bar\sigma)
      =
      \frac{
        \displaystyle
        \sum_{i=1}^n
        \bigl(
          \|u_i\|^2\|v_i\|^2-\langle u_i,v_i\rangle^2
        \bigr)
      }{
        \|U\wedge V\|_F^2
      }
      +
      \frac34\,
      \frac{
        \tr\bigl(\Theta_Z(U,V)^\top E_Z\Theta_Z(U,V)\bigr)
      }{
        \|U\wedge V\|_F^2
      },
    \end{aligned}
  \end{equation}
  where
  \[
    \|U\wedge V\|_F^2
    \coloneq 
    \|U\|_F^2\|V\|_F^2-\Frob{U}{V}^2
  \]
  and
  \[
    \Theta_Z(U,V)
    =
    2\mathcal L_{E_Z}^{-1}\bigl(U^\top V-V^\top U\bigr).
  \]
  In particular, if \(U,V\) are horizontal lifts of an orthonormal
  basis of \(\bar\sigma\), then
  \begin{equation}
    \label{eq:sectional-curvature-fixed-rank-orthonormal}
    \sec_{\Qks}(\bar\sigma)
    =
    \sum_{i=1}^n
    \bigl(
      \|u_i\|^2\|v_i\|^2-\langle u_i,v_i\rangle^2
    \bigr)
    +
    \frac34\,
    \tr\bigl(\Theta_Z(U,V)^\top E_Z\Theta_Z(U,V)\bigr).
  \end{equation}
\end{theorem}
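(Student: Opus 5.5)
First, by the fixed-rank reduction isometry (\cref{thm:fixed-rank-riemannian-reduction}), sectional curvatures of $(\Qks,\gQks)$ and $(\Qss,\gQss)$ agree under $\Xks$. So the statement reduces to the Riemannian submersion $\pi_s:(\Mss,\restr{g_{M_s}}{\Mss})\to(\Qss,\gQss)$. Here $U,V$ are the horizontal lifts at $Z$ of the pulled-back plane. We then apply O'Neill's formula \cite{ONeill1966} for horizontal vectors:
\[
  \sec_{\Qss}(\bar\sigma)\,\|U\wedge V\|_F^2
  = K_{M_s}(U,V)\,\|U\wedge V\|_F^2 + 3\|A_UV\|_F^2,
\]
where $K_{M_s}$ denotes the sectional curvature of the total space. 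Since $\Mss$ is open in $M_s$, its curvature tensor is that of the product of unit spheres.

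The first step computes the ambient term. On $M_s=(\mathbb S^{s-1})^n$ the curvature tensor is the sum of the row-wise round-sphere tensors, so
\[
  \langle R(U,V)V,U\rangle=\sum_i\bigl(\|u_i\|^2\|v_i\|^2-\langle u_i,v_i\rangle^2\bigr),
\]
using that $u_i,v_i$ are tangent to $\mathbb S^{s-1}$ at $z_i$ and that each factor has constant curvature one. Dividing by $\|U\wedge V\|_F^2$ gives the first summand of \cref{eq:sectional-curvature-fixed-rank-general}.

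The second step computes the O'Neill term. We extend $U,V$ to horizontal vector fields near $Z$, for instance by horizontally projecting constant ambient fields, using $W\mapsto W-Y\omega_Y(W)$ followed by tangential projection. This is needed because O'Neill's tensor is tensorial and \cref{prop:curvature-oneill-tensor} applies to fields. That proposition gives $\|A_UV\|_F^2=\tfrac14\tr(\Theta^\top E_Z\Theta)$ with $\Theta=\Omega^\omega(U,V)$. \Cref{lem:dJ-explicit} identifies $\Theta=\Theta_Z(U,V)=2\mathcal L_{E_Z}^{-1}(U^\top V-V^\top U)$. Since the curvature form is tensorial, $\Theta$ depends only on the pointwise values, so the choice of extension does not matter. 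Substituting, we get $3\|A_UV\|^2=\tfrac34\tr(\Theta^\top E_Z\Theta)$, which is the second summand. For an orthonormal basis $\|U\wedge V\|_F^2=1$, because horizontal lifts are isometric, and \cref{eq:sectional-curvature-fixed-rank-orthonormal} follows.

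The main obstacle is bookkeeping of conventions rather than new ideas. O'Neill's formula reads $K_B=K_M+3|A_XY|^2$ for orthonormal horizontal $X,Y$, and $A_XY=\tfrac12[X,Y]^{\Vbundle}$ as stated in \cref{prop:curvature-oneill-tensor}. I would double-check the factor $2$ in $\Theta$ against the sign and normalization of $\dd\omega(U,V)$ used in \cref{lem:dJ-explicit}. Because only $\|A_UV\|^2$ enters, signs are irrelevant, but the factor $\tfrac14\cdot 3=\tfrac34$ must be consistent. For a non-orthonormal basis both sides are homogeneous of the same degree in $U\wedge V$: the numerator terms are alternating bilinear, so they scale by the determinant squared. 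This justifies the general formula.
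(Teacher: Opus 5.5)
Your proposal is correct and follows the paper's proof essentially step for step. You reduce to $(\Qss,\gQss)$ via the isometry $\Xks$, apply O'Neill's formula, and identify the total-space term with the product-of-spheres curvature (since $\Mss$ is open in $M_s$). You then evaluate $\|A_UV\|_F^2=\tfrac14\tr(\Theta^\top E_Z\Theta)$ through the curvature-form/O'Neill-tensor proposition and the lemma computing $\dd J$. Your remarks on tensoriality of $A$ and homogeneity in $U\wedge V$ are harmless elaborations of points the paper handles implicitly by using O'Neill's formula with the $\|U\wedge V\|_F^2$ denominator.
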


\begin{proof}
  It is enough to compute the sectional curvature on the reduced
  quotient \(\Qss\), because \(\Xks:\Qss\to \Qks\) is a Riemannian
  isometry. Denote \(\bar U\coloneq \dd \pi_s(Z)[U]\), \(\bar V\coloneq \dd
  \pi_s(Z)[V]\). O'Neill's sectional curvature formula for a
  Riemannian submersion
  (see \cite[Thm.~1]{ONeill1966}) gives
  \[
    \sec_{\Qss}(\bar U,\bar V)
    =
    \sec_{\Mss}(U,V)
    +
    \frac{3\|A_UV\|_F^2}{\|U\wedge V\|_F^2}.
  \]
  Since \(\Mss\) is an open submanifold of \(M_s=(\mathbb S^{s-1})^n\), its
  Levi--Civita connection and curvature tensor are the restrictions
  of those of \(M_s\). Thus the upstairs sectional curvature is the
  usual product-of-spheres curvature. Hence
  \[
    \sec_{\Mss}(U,V)
    =
    \frac{
      \displaystyle\sum_{i=1}^n
      \bigl(
        \|u_i\|^2\|v_i\|^2-\langle u_i,v_i\rangle^2
      \bigr)
    }{\|U\wedge V\|_F^2}.
  \]
  By \cref{prop:curvature-oneill-tensor},
  \[
    \|A_UV\|_F^2
    =
    \frac14
    \tr\bigl(\Theta_Z(U,V)^\top E_Z\Theta_Z(U,V)\bigr).
  \]
  Substituting this into O'Neill's formula proves
  \cref{eq:sectional-curvature-fixed-rank-general}. The orthonormal
  formula follows immediately.
\end{proof}

\begin{corollary}[Nonnegative sectional curvature and independence of \(k\)]
  \label{cor:nonnegative-sectional-curvature}
  Every fixed-rank stratum \((\Qks,\gQks)\) has nonnegative sectional
  curvature. Moreover, its curvature tensor is transported by the
  fixed-rank reduction
  isometry from the reduced model \((\Qss,\gQss)\). In particular, all
  intrinsic curvature invariants of \(\Qks\) depend only on
  \((n,s)\), not on \(k\).
\end{corollary}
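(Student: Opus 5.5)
The plan is to read both claims off \cref{thm:sectional-curvature-fixed-rank-stratum} together with \cref{thm:fixed-rank-riemannian-reduction}.

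\emph{Nonnegativity.} Fix \(x\in\Qks\) and a \(2\)-plane \(\bar\sigma\subset\Tz{x}\Qks\). Choose \(Z\in\Mss\) with \(\Xks([Z])=x\), and take horizontal lifts \(U,V\) of an orthonormal basis of \(\bar\sigma\). I will then use \cref{eq:sectional-curvature-fixed-rank-orthonormal} and bound its two terms separately.
\begin{enumerate}
\item The first term is a sum of Gram determinants \(\|u_i\|^2\|v_i\|^2-\langle u_i,v_i\rangle^2\). Each of these is nonnegative by Cauchy--Schwarz.
\item The second term is \(\tfrac34\tr(\Theta^\top E_Z\Theta)\), and this equals \(\tfrac34\|Z\Theta\|_F^2\ge0\). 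The identity is the same one used at the end of the proof of \cref{prop:curvature-oneill-tensor}. Equivalently, \(E_Z\) is positive definite.
\end{enumerate}
Hence \(\sec_{\Qks}(\bar\sigma)\ge0\). Alternatively, O'Neill's formula expresses \(\sec_{\Qss}\) as \(\sec_{\Mss}\ge0\) plus the nonnegative quantity \(3\|A_UV\|^2/\|U\wedge V\|^2\). The upstairs curvature is nonnegative because \(\Mss\) is open in a product of round spheres.

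\emph{Transport and independence of \(k\).} By \cref{thm:fixed-rank-riemannian-reduction}, \(\Xks:(\Qss,\gQss)\to(\Qks,\gQks)\) is a Riemannian isometry. A Riemannian isometry intertwines the Levi--Civita connections, so it pulls back the Riemann curvature tensor: \(\Xks^*R_{\gQks}=R_{\gQss}\). The same holds for every invariant built naturally from the metric and curvature, such as sectional, Ricci and scalar curvature, and covariant derivatives of \(R\). The domain \((\Qss,\gQss)\) is defined using only \(n\) and \(s\). Therefore every such invariant of \(\Qks\) depends only on \((n,s)\).

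I expect no real obstacle. The only point to watch is that \cref{thm:sectional-curvature-fixed-rank-stratum} applies only for \(2\le s\). For \(s=1\) the stratum is zero-dimensional, and the curvature statements are vacuous.
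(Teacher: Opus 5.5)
Your proposal is correct and follows the paper's proof: nonnegativity is read off the two terms of \cref{eq:sectional-curvature-fixed-rank-general}, namely the Cauchy--Schwarz defects in the sphere factors and the O'Neill term, which is nonnegative since $E_Z\in\SPD(s)$; independence of $k$ follows from the Riemannian isometry of \cref{thm:fixed-rank-riemannian-reduction}. Your remark on $s=1$ is consistent with the standing assumption $2\le s$ of that section.
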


\begin{proof}
  Both terms in \cref{eq:sectional-curvature-fixed-rank-general} are
  nonnegative. The first is a sum of Cauchy--Schwarz defects in the
  sphere factors. The second is nonnegative because
  \(E_Z\in\SPD(s)\). The statement about independence of \(k\)
  follows from the Riemannian isometry \(\Xks\).
\end{proof}

\begin{remark}[Alexandrov curvature of the rank-bounded quotient]
  \label{rem:rank-bounded-alexandrov-curvature}
  The statement in
  \cref{prop:reduced-quotient-global-metric-geometry} is more global than the
  sign assertion in \cref{cor:nonnegative-sectional-curvature}: it gives an
  Alexandrov lower-curvature bound on the whole singular quotient
  \(M_s/O(s)\), {in the sense of
  \cref{eq:alexandrov-midpoint-comparison}}. On its smooth principal
  stratum this already implies
  nonnegative sectional curvature, and fixed-rank reduction transports that
  sign to every \(\Qks\).  The contribution of
  \cref{thm:sectional-curvature-fixed-rank-stratum} is the finer explicit
  formula, including the two nonnegative terms that support the
  zero-curvature characterization and quantitative curvature bounds below.
\end{remark}

\subsection{Zero sectional curvature directions}
\label{subsec:zero-sectional-curvature}

The sectional curvature formula in
\cref{eq:sectional-curvature-fixed-rank-general} has a remarkable
consequence: the quotient introduces no cancellation: a plane has
zero curvature downstairs if and only if its horizontal lift already
has zero curvature upstairs. Thus flat directions in the quotient are
completely detected by the row-wise geometry of the horizontal lift.

\begin{theorem}[Zero-curvature planes]
  \label{thm:zero-sectional-curvature-characterization}
  Let \(x\in \Qks\), choose \(Z\in \Mss\) such that \(\Xks([Z])=x\),
  and let \(\bar\sigma\subset\Tz{x}\Qks\) be a \(2\)-plane. Let
  \(U,V\in\Hz{Z}\) be horizontal lifts of a basis of \(\bar\sigma\), with rows
  \[
    z_i\coloneq Z_{i\cdot},\qquad u_i\coloneq U_{i\cdot},\qquad v_i\coloneq V_{i\cdot}.
  \]
  Then the following conditions are equivalent:
  \begin{enumerate}
    \item \(\sec_{\Qks}(\bar\sigma)=0\);
    \item \(\sec_{M_s}(U,V)=0\);
    \item for every \(i=1,\ldots,n\), the vectors \(u_i\) and \(v_i\)
      are linearly dependent in \(\Tz{z_i}\mathbb S^{s-1}\).
  \end{enumerate}
  When these equivalent conditions hold, the O'Neill tensor vanishes
  automatically:
  \[
    U^\top V=V^\top U,
    \qquad
    \Theta_Z(U,V)=0,
    \qquad
    A_UV=0.
  \]
\end{theorem}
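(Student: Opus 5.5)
The proof runs through the explicit formula of \cref{thm:sectional-curvature-fixed-rank-stratum}. There, $\sec_{\Qks}(\bar\sigma)$ is written as the sum of two nonnegative terms divided by $\|U\wedge V\|_F^2>0$. The first term is $\|U\wedge V\|_F^2\,\sec_{M_s}(U,V)$, the row-wise Cauchy--Schwarz defect sum. The second is $\tfrac34\tr(\Theta^\top E_Z\Theta)$ with $\Theta=\Theta_Z(U,V)$.

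\emph{(1)$\Leftrightarrow$(2)$\Leftrightarrow$(3).} Since both terms are nonnegative (\cref{cor:nonnegative-sectional-curvature}), (1) holds if and only if both terms vanish. In particular (1) implies (2). The equivalence (2)$\Leftrightarrow$(3) is immediate: each summand $\|u_i\|^2\|v_i\|^2-\langle u_i,v_i\rangle^2$ is nonnegative and vanishes exactly when $u_i,v_i$ are linearly dependent.

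\emph{(3)$\Rightarrow$(1) and the vanishing of $A$.} This is the only nontrivial step. We need that row-wise dependence kills the O'Neill term, i.e.\ that (3) forces $U^\top V=V^\top U$. Write
\[
U^\top V-V^\top U=\sum_{i=1}^n\bigl(u_i^\top v_i-v_i^\top u_i\bigr),
\]
viewing rows as $1\times s$ matrices so that each $u_i^\top v_i$ is an $s\times s$ outer product. If $u_i$ and $v_i$ are dependent, say $v_i=c\,u_i$ or $u_i=0$, then $u_i^\top v_i=c\,u_i^\top u_i$ is symmetric. Each summand therefore vanishes, and so $S_Z(U,V)=0$. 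By \cref{lem:dJ-explicit} this gives $\Theta_Z(U,V)=2\mathcal L_{E_Z}^{-1}(0)=0$, and \cref{prop:curvature-oneill-tensor} gives $A_UV=-\tfrac12 Z\Theta=0$. Both terms of the formula then vanish, which yields (1). We expect no real obstacle here; the key observation is simply this term-by-term symmetry, which shows there is no cancellation between the two terms.

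\emph{Final claims.} The identities $U^\top V=V^\top U$, $\Theta_Z(U,V)=0$ and $A_UV=0$ were just derived from (3), so they hold whenever the equivalent conditions hold. As a consistency check in the other direction, (1) gives $\tr(\Theta^\top E_Z\Theta)=\|Z\Theta\|_F^2=0$. Since $Z$ has full column rank, this forces $\Theta=0$, and invertibility of $\mathcal L_{E_Z}$ (\cref{lem:sylvester-invertible}) then gives $U^\top V=V^\top U$.

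Finally, conditions (2) and (3) do not depend on the chosen basis of $\bar\sigma$. A change of basis replaces $(u_i,v_i)$ by an invertible linear combination of the same pair, and this preserves linear dependence in every row. It also rescales $\|U\wedge V\|_F^2$ and the defect sum by the same positive factor. So the statement is well posed for any horizontal lifts of any basis.
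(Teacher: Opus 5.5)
Your proof is correct and follows essentially the same route as the paper. Nonnegativity of the two O'Neill terms gives (1)$\Rightarrow$(2), and the row-wise Cauchy--Schwarz defects give (2)$\Leftrightarrow$(3). Row-wise symmetry of $u_i^\top v_i$ then forces $S_Z(U,V)=0$, hence $\Theta_Z(U,V)=0$ and $A_UV=0$, which closes (3)$\Rightarrow$(1). Your added consistency check via $\|Z\Theta\|_F^2$ and the basis-independence remark are correct, though the paper does not need them.
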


\begin{proof}
  By \cref{thm:sectional-curvature-fixed-rank-stratum}, the quotient
  sectional curvature is the sum of two nonnegative terms:
  \[
    \sec_{\Qks}(\bar\sigma)
    =
    \sec_{M_s}(U,V)
    +
    \frac{3\|A_UV\|_F^2}{\|U\wedge V\|_F^2}.
  \]
  Thus \(\sec_{\Qks}(\bar\sigma)=0\) if and only if
  \(\sec_{M_s}(U,V)=0\) and \(A_UV=0\). The product-of-spheres
  sectional curvature formula gives
  \[
    \sec_{M_s}(U,V)
    =
    \frac{
      \displaystyle\sum_{i=1}^n
      \bigl(\|u_i\|^2\|v_i\|^2-\langle u_i,v_i\rangle^2\bigr)
    }{
      \|U\wedge V\|_F^2
    }.
  \]
  Each summand is nonnegative, and it vanishes exactly when \(u_i\)
  and \(v_i\) are linearly dependent. Hence conditions \((2)\) and
  \((3)\) are equivalent. It remains to show that, under condition
  \((3)\), the O'Neill term vanishes automatically. If \(u_i\) and
  \(v_i\) are linearly dependent, then $u_i^\top v_i=v_i^\top u_i$.
  Summing over \(i\) gives
  \[
    U^\top V=V^\top U.
  \]
  Therefore
  \[
    \Theta_Z(U,V)
    =
    2\mathcal L_{E_Z}^{-1}(U^\top V-V^\top U)
    =0.
  \]
  By \cref{prop:curvature-oneill-tensor}, this is equivalent to
  \(A_UV=0\).  Hence
  \((2)\) implies \((1)\), and the theorem follows.
\end{proof}

\subsection{Ricci and scalar curvature}
\label{subsec:ricci-scalar-curvature}

The Ricci and scalar curvatures are obtained by contracting the
sectional curvature formula. Let
\[
  m_s\coloneq \dim \Qss=\dim \Qks.
\]

\begin{corollary}[Ricci curvature]
  \label{cor:ricci-curvature-fixed-rank-stratum}
  Let \(x\in \Qks\), and choose \(Z\in \Mss\) such that
  \(\Xks([Z])=x\). Let \(\bar u\in\Tz{x}\Qks\) be a unit vector, and
  let \(U\in\Hz{Z}\) be the unique horizontal lift of \(\dd
  \Xks^{-1}(\bar u)\). Choose horizontal vectors
  \[
    V_1,\ldots,V_{m_s-1}\in\Hz{Z}\cap U^\perp
  \]
  such that their projections form an orthonormal basis of \(\bar
  u^\perp\). Write
  \[
    U=(u_1,\ldots,u_n),
    \qquad
    V_\alpha=(v_{\alpha,1},\ldots,v_{\alpha,n}),
    \qquad \alpha=1, \ldots, m_s-1.
  \]
  Then the Ricci curvature of $\gQks$ is
  \[
    \begin{aligned}
      \Ric_{\Qks}(\bar u,\bar u)
      =
      \sum_{\alpha=1}^{m_s-1}
      \Bigg[
        \sum_{i=1}^n
        \bigl(
          \|u_i\|^2\|v_{\alpha,i}\|^2
          -\langle u_i,v_{\alpha,i}\rangle^2
        \bigr)
        +\frac34
        \tr\bigl(
          \Theta_Z(U,V_\alpha)^\top E_Z\Theta_Z(U,V_\alpha)
        \bigr)
      \Bigg].
    \end{aligned}
  \]
  In particular,
  \[
    \Ric_{\Qks}(\bar u,\bar u)\ge0.
  \]
\end{corollary}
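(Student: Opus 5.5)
The Ricci formula will follow by contracting the sectional-curvature formula of \cref{thm:sectional-curvature-fixed-rank-stratum} over an orthonormal frame. I will first use the fixed-rank reduction isometry \(\Xks\) of \cref{thm:fixed-rank-riemannian-reduction} to move everything to the reduced model \((\Qss,\gQss)\). Since \(\Xks\) is a Riemannian isometry, it preserves the curvature tensor, and therefore Ricci curvature is preserved: \(\Ric_{\Qks}(\bar u,\bar u)=\Ric_{\Qss}(\dd\Xks^{-1}\bar u,\dd\Xks^{-1}\bar u)\). Next I need an orthonormal basis of \(\Tz{x}\Qks\) of the form \(\{\bar u,\bar v_1,\ldots,\bar v_{m_s-1}\}\). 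Its elements lift uniquely to horizontal vectors at \(Z\). Because \(\dd\pi_s(Z)\) restricted to \(\Hz{Z}\) is a linear isometry onto \(\Tz{[Z]}\Qss\), the lifts \(U,V_1,\ldots,V_{m_s-1}\) are Frobenius-orthonormal. In particular, each \(V_\alpha\) lies in \(\Hz{Z}\cap U^\perp\), which is consistent with the hypotheses of the statement.

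Recall the standard identity \(\Ric(\bar u,\bar u)=\sum_{\alpha}\sec(\bar u,\bar v_\alpha)\), valid for a unit vector \(\bar u\) and an orthonormal basis \(\{\bar v_\alpha\}\) of \(\bar u^\perp\). For each \(\alpha\), the pair \((U,V_\alpha)\) is the horizontal lift of an orthonormal basis of the plane \(\spanv\{\bar u,\bar v_\alpha\}\). I will then apply the orthonormal version \cref{eq:sectional-curvature-fixed-rank-orthonormal}: it contributes the row-wise Cauchy--Schwarz defects \(\|u_i\|^2\|v_{\alpha,i}\|^2-\langle u_i,v_{\alpha,i}\rangle^2\) plus the O'Neill term \(\tfrac34\tr(\Theta_Z(U,V_\alpha)^\top E_Z\Theta_Z(U,V_\alpha))\). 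Summing over \(\alpha\) gives exactly the displayed formula.

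Nonnegativity follows term by term, using the same reasoning as \cref{cor:nonnegative-sectional-curvature}. Each defect is nonnegative by Cauchy--Schwarz. Each trace term satisfies \(\tr(\Theta^\top E_Z\Theta)=\|Z\Theta\|_F^2\ge0\), so it is nonnegative. Alternatively, one can simply note that \(\Ric\) is a sum of nonnegative sectional curvatures.

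The only point needing care is the interpretation of the hypothesis ``projections form an orthonormal basis of \(\bar u^\perp\)''. The projection involved is \(\dd\pi_s\) composed with \(\dd\Xks\), and I must make sure the perpendicular complement is taken in \(\gQks\) rather than in the Frobenius inner product upstairs. Horizontal isometry makes these two notions agree, so I do not expect any real obstacle here; the rest is a direct contraction.
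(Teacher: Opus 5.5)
Your proposal is correct and follows the paper's proof. The paper likewise writes \(\Ric(\bar u,\bar u)=\sum_\alpha\sec(\bar u,\bar v_\alpha)\) for an orthonormal basis of \(\bar u^\perp\), applies \cref{thm:sectional-curvature-fixed-rank-stratum} to each plane \(\spanv\{\bar u,\bar v_\alpha\}\), and gets nonnegativity from \cref{cor:nonnegative-sectional-curvature}. Your remarks on the isometric horizontal lifts and on \(\tr(\Theta^\top E_Z\Theta)=\|Z\Theta\|_F^2\) only spell out details the paper leaves implicit.
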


\begin{proof}
  By definition,
  \[
    \Ric(\bar u,\bar u)
    =
    \sum_{\alpha=1}^{m_s-1}\sec(\bar u,\bar v_\alpha),
  \]
  where \((\bar v_\alpha)\) is any orthonormal basis of \(\bar
  u^\perp\).  Applying
  \cref{thm:sectional-curvature-fixed-rank-stratum} to each plane
  \(\spanv\{\bar u,\bar v_\alpha\}\) gives the formula.
  Nonnegativity follows from \cref{cor:nonnegative-sectional-curvature}.
\end{proof}

\begin{corollary}[Scalar curvature]
  \label{cor:scalar-curvature-fixed-rank-stratum}
  Let \(x\in \Qks\), and choose \(Z\in \Mss\) such that
  \(\Xks([Z])=x\). Let $U_1,\ldots,U_{m_s}\in\Hz{Z}$ be a horizontal
  orthonormal basis projecting, through \(\Xks\), to an orthonormal
  basis of \(\Tz{x}\Qks\). Write $U_a=(u_{a,1},\ldots,u_{a,n})$. Then
  the scalar curvature of $\gQks$ is
  \[
    \begin{aligned}
      \Scal_{\Qks}(x)
      =
      2\sum_{1\le a<b\le m_s}
      \Bigg[
        \sum_{i=1}^n
        \bigl(
          \|u_{a,i}\|^2\|u_{b,i}\|^2
          -\langle u_{a,i},u_{b,i}\rangle^2
        \bigr)\\
        +\frac34
        \tr\bigl(
          \Theta_Z(U_a,U_b)^\top E_Z\Theta_Z(U_a,U_b)
        \bigr)
      \Bigg].
    \end{aligned}
  \]
  In particular,
  \[
    \Scal_{\Qks}(x)\ge0.
  \]
\end{corollary}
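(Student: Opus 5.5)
The scalar curvature is the trace of the Ricci form over an orthonormal basis, so the plan is to contract \cref{cor:ricci-curvature-fixed-rank-stratum} once more, or equivalently to sum \cref{thm:sectional-curvature-fixed-rank-stratum} directly over coordinate planes.

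First, transport everything to the reduced model. Because \(\Xks\) is a Riemannian isometry (\cref{thm:fixed-rank-riemannian-reduction}), and \(\pi_s\) is a Riemannian submersion, the map \(\dd\pi_s(Z)\) restricted to \(\Hz{Z}\) is a linear isometry onto \(\Tz{[Z]}\Qss\). Consequently the horizontal orthonormal vectors \(U_1,\ldots,U_{m_s}\) project to an orthonormal basis \(\bar e_1,\ldots,\bar e_{m_s}\) of \(\Tz{x}\Qks\). This also justifies that such a basis exists and has exactly \(m_s=\dim\Qks\) elements (\cref{cor:dimension-fixed-rank-stratum}).

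Next, use the definition \(\Scal(x)=\sum_{a}\Ric(\bar e_a,\bar e_a)=\sum_{a\ne b}\sec(\bar e_a,\bar e_b)\). The sectional curvature is symmetric in the two vectors, so this equals \(2\sum_{a<b}\sec(\bar e_a,\bar e_b)\). For each pair, the lifts \(U_a,U_b\) are horizontal lifts of an orthonormal basis of the plane \(\spanv\{\bar e_a,\bar e_b\}\). Hence \(\|U_a\wedge U_b\|_F^2=1\), and the orthonormal formula \eqref{eq:sectional-curvature-fixed-rank-orthonormal} applies verbatim. Substituting gives the displayed bracket.

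Nonnegativity follows because each bracket is a sum of Cauchy--Schwarz defects and a term \(\tr(\Theta^\top E_Z\Theta)\ge0\), which holds since \(E_Z\in\SPD(s)\). Alternatively, it follows directly from \cref{cor:nonnegative-sectional-curvature}.

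No step is a real obstacle. The only point needing care is checking that the lifts remain orthonormal, which is what lets the denominators in \eqref{eq:sectional-curvature-fixed-rank-general} drop out.
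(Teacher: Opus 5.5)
Your proposal is correct and is essentially the paper's proof: write \(\Scal_{\Qks}(x)=2\sum_{a<b}\sec(\bar e_a,\bar e_b)\) for the orthonormal basis \(\bar e_a\) obtained from the horizontal lifts, apply the orthonormal formula of \cref{thm:sectional-curvature-fixed-rank-stratum} to each pair, and get nonnegativity termwise (or from \cref{cor:nonnegative-sectional-curvature}). Your remarks that \(\dd\pi_s(Z)\) is an isometry on \(\Hz{Z}\) and that \(\|U_a\wedge U_b\|_F^2=1\) only make explicit what the paper leaves implicit.
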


\begin{proof}
  The scalar curvature is
  \[
    \Scal(x)
    =
    2\sum_{1\le a<b\le m_s}\sec(\bar e_a,\bar e_b)
  \]
  for any orthonormal basis \((\bar e_a)\) of \(\Tz{x}\Qks\).  Applying
  \cref{thm:sectional-curvature-fixed-rank-stratum} to each pair
  gives the formula.
\end{proof}

\subsection{Spectral and thick-part estimates}
\label{subsec:curvature-thick-part-estimates}

The thick parts are compact subsets of the full-rank stratum \(\Qss\)
on which the smallest eigenvalue is uniformly bounded below; they
exhaust \(\Qss\) as \(\varepsilon\downarrow0\).

\begin{definition}[Thick parts]
  For \(\varepsilon>0\), the \(\varepsilon\)-thick part of
  \(\Qss\) is
  \[
    \Qrk{s,\varepsilon}{s}
    \coloneq 
    \bigl\{[Z]\in\Qss:
    \lambda_{\min}(Z^\top Z)\ge\varepsilon\bigr\}.
  \]
  This definition is independent of the representative, since
  changing \(Z\) within its orbit conjugates \(Z^\top Z\) by an
  orthogonal matrix.
\end{definition}

The Sylvester formula gives quantitative curvature estimates
on these thick parts. The exact spectral expression below shows that
the relevant
denominators are the pairwise sums of eigenvalues of
\(E_Z=Z^\top Z\). In particular, if
\[
  0<\lambda_1\le\lambda_2\le\cdots\le\lambda_s,
\]
then the norm of the inverse Sylvester operator on
\(\so(s)\) is governed by
\((\lambda_1+\lambda_2)^{-1}\). Thus it can blow up only
when at least two eigenvalues of \(E_Z\) become small. The following
coarser estimates in terms of \(\lambda_{\min}(E_Z)\) nevertheless
provide convenient uniform curvature bounds on thick parts.

\begin{lemma}[Spectral form of the Sylvester curvature term]
  \label{lem:spectral-sylvester-curvature-term}
  Let \(E\in\SPD(s)\), let \(Y\in\so(s)\), and set \(\Theta\coloneq \mathcal
  L_E^{-1}(Y)\). Let
  \[
    E=P\diag(\lambda_1,\ldots,\lambda_s)P^\top,
    \qquad
    \lambda_i>0,
  \]
  and write \(\widetilde Y\coloneq P^\top YP\),
  \(\widetilde\Theta\coloneq P^\top\Theta P\). Then, for all \(1\le i,j\le
  s\), \(\widetilde\Theta_{ij}=\frac{\widetilde
  Y_{ij}}{\lambda_i+\lambda_j}\). In particular,
  \begin{equation}
    \label{eq:spectral-sylvester-curvature-term}
    \tr(\Theta^\top E\Theta)
    =
    \sum_{1\le i<j\le s}
    \frac{\widetilde Y_{ij}^{\,2}}{\lambda_i+\lambda_j}.
  \end{equation}
  Consequently,
  \begin{equation}
    \label{eq:spectral-sylvester-curvature-bounds}
    \frac{1}{4\lambda_{\max}(E)}\|Y\|_F^2
    \le
    \tr(\Theta^\top E\Theta)
    \le
    \frac{1}{4\lambda_{\min}(E)}\|Y\|_F^2.
  \end{equation}
\end{lemma}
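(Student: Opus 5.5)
The plan is to reduce everything to the diagonal case by orthogonal conjugation, as in the proof of \cref{lem:sylvester-invertible}, and then read off the formulas entrywise. Write $D\coloneq\diag(\lambda_1,\ldots,\lambda_s)$, so that $E=PDP^\top$. Conjugating the defining identity $E\Theta+\Theta E=Y$ by $P$ gives
\[
  D\widetilde\Theta+\widetilde\Theta D=\widetilde Y ,
\]
and $\widetilde\Theta,\widetilde Y$ are still skew-symmetric. Entrywise this equation reads $(\lambda_i+\lambda_j)\widetilde\Theta_{ij}=\widetilde Y_{ij}$. Since $\lambda_i+\lambda_j>0$, the first claim follows, including the diagonal case, where both sides vanish.

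For the trace identity, I will use orthogonal invariance of the trace: $\tr(\Theta^\top E\Theta)=\tr(\widetilde\Theta^\top D\widetilde\Theta)$. Expanding the right-hand side gives $\sum_{i,j}\lambda_i\widetilde\Theta_{ij}^2$. Skew-symmetry gives $\widetilde\Theta_{ij}^2=\widetilde\Theta_{ji}^2$ and $\widetilde\Theta_{ii}=0$. Pairing the $(i,j)$ and $(j,i)$ terms therefore yields $\sum_{i<j}(\lambda_i+\lambda_j)\widetilde\Theta_{ij}^2$. Substituting $\widetilde\Theta_{ij}=\widetilde Y_{ij}/(\lambda_i+\lambda_j)$ then gives \cref{eq:spectral-sylvester-curvature-term}.

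For the bounds \cref{eq:spectral-sylvester-curvature-bounds}, I will use $2\lambda_{\min}(E)\le\lambda_i+\lambda_j\le2\lambda_{\max}(E)$ together with the identity $\|Y\|_F^2=\|\widetilde Y\|_F^2=2\sum_{i<j}\widetilde Y_{ij}^2$. The latter follows from skew-symmetry and invariance of the Frobenius norm under conjugation. Bounding each term of \cref{eq:spectral-sylvester-curvature-term} then gives
\[
  \frac{1}{2\lambda_{\max}}\cdot\frac{\|Y\|_F^2}{2}
  \le\tr(\Theta^\top E\Theta)\le
  \frac{1}{2\lambda_{\min}}\cdot\frac{\|Y\|_F^2}{2},
\]
which is the claim.

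There is no real obstacle here. The only point requiring care is the bookkeeping of the factor $2$ when passing between sums over ordered pairs, unordered pairs and the Frobenius norm of a skew-symmetric matrix, so I will keep that step explicit.
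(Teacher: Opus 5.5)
Your proposal is correct and follows the paper's proof essentially step for step. Both arguments conjugate by \(P\) to obtain \((\lambda_i+\lambda_j)\widetilde\Theta_{ij}=\widetilde Y_{ij}\), pair the \((i,j)\) and \((j,i)\) terms in \(\sum_{i,j}\lambda_i\widetilde\Theta_{ij}^2\), and derive the bounds from \(2\lambda_{\min}(E)\le\lambda_i+\lambda_j\le2\lambda_{\max}(E)\) together with \(\|Y\|_F^2=2\sum_{i<j}\widetilde Y_{ij}^2\).
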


\begin{proof}
  After conjugating by \(P\), the Sylvester equation $E\Theta+\Theta
  E=Y$ becomes
  \[
    \Lambda\widetilde\Theta+\widetilde\Theta\Lambda=\widetilde Y,
    \qquad
    \Lambda=\diag(\lambda_1,\ldots,\lambda_s),
  \]
  where \(\widetilde\Theta=P^\top\Theta P\). Thus
  $(\lambda_i+\lambda_j)\widetilde\Theta_{ij}=\widetilde Y_{ij}$. Furthermore,
  \[
    \tr(\Theta^\top E\Theta)
    =
    \tr(\widetilde\Theta^\top\Lambda\widetilde\Theta)
    =
    \sum_{i,j=1}^s \lambda_i\widetilde\Theta_{ij}^{\,2}.
  \]
  Pairing the terms \((i,j)\) and \((j,i)\), and using skew-symmetry
  of $\widetilde \Theta$, gives
  \[
    \sum_{i,j=1}^s \lambda_i\widetilde\Theta_{ij}^{\,2}
    =
    \sum_{1\le i<j\le s}
    (\lambda_i+\lambda_j)\widetilde\Theta_{ij}^{\,2}
    =
    \sum_{1\le i<j\le s}
    \frac{\widetilde Y_{ij}^{\,2}}{\lambda_i+\lambda_j}.
  \]
  Since \(2\lambda_{\min}(E)
    \le
    \lambda_i+\lambda_j
    \le
  2\lambda_{\max}(E)\) and \(\|Y\|_F^2
    =
    \|\widetilde Y\|_F^2
    =
  2\sum_{1\le i<j\le s}\widetilde Y_{ij}^{\,2}\), the bounds follow.
\end{proof}

We now apply the spectral estimates to the thick parts defined above.

\begin{corollary}[Curvature bounds on thick parts]
  \label{cor:curvature-bounds-thick-parts}
  Let \(\varepsilon>0\), let \([Z]\in \Qrk{s,\varepsilon}{s}\),
  and let \(U,V\in\Hz{Z}\) be linearly independent horizontal
  vectors. Set as before $S_Z(U,V)\coloneq U^\top V-V^\top U$. Then
  \begin{equation}
    \label{eq:curvature-bounds-thick-parts}
    \begin{aligned}
      \sec_{M_s}(U,V)
      +
      \frac{3}{2\bigl(n-(s-2)\varepsilon\bigr)}
      \frac{\|S_Z(U,V)\|_F^2}{\|U\wedge V\|_F^2}
      &\le
      \sec_{\Qss}\bigl(\dd\pi_s(Z)[U],\dd\pi_s(Z)[V]\bigr) \\
      &\le
      \sec_{M_s}(U,V)
      +
      \frac{3}{4\varepsilon}
      \frac{\|S_Z(U,V)\|_F^2}{\|U\wedge V\|_F^2}.
    \end{aligned}
  \end{equation}
  In particular, every sectional curvature on
  \(\Qrk{s,\varepsilon}{s}\) satisfies
  \[
    0\le \sec_{\Qss}\le 1+\frac{3}{2\varepsilon}.
  \]
  The same estimates hold on the corresponding thick part of \(\Qks\)
  through the isometry \(\Xks\).
\end{corollary}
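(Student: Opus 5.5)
The plan is to start from the general formula of \cref{thm:sectional-curvature-fixed-rank-stratum}, written for the reduced model $\Qss$, and bound only its O'Neill term; the sphere term $\sec_{M_s}(U,V)$ stays as it is. With $\Theta_Z(U,V)=2\mathcal L_{E_Z}^{-1}(S_Z(U,V))$, set $\Theta'\coloneq\mathcal L_{E_Z}^{-1}(S_Z(U,V))$. Then the second term of \cref{eq:sectional-curvature-fixed-rank-general} equals
\[
  \frac34\cdot 4\,\frac{\tr(\Theta'^\top E_Z\Theta')}{\|U\wedge V\|_F^2}
  =3\,\frac{\tr(\Theta'^\top E_Z\Theta')}{\|U\wedge V\|_F^2}.
\]
I apply \cref{lem:spectral-sylvester-curvature-term} with $Y=S_Z(U,V)$. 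This gives
\[
  \tr(\Theta'^\top E_Z\Theta')=\sum_{i<j}\frac{\widetilde Y_{ij}^2}{\lambda_i+\lambda_j},
  \qquad
  \sum_{i<j}\widetilde Y_{ij}^2=\tfrac12\|S_Z(U,V)\|_F^2 .
\]

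\emph{Upper bound.} On the thick part $\lambda_i+\lambda_j\ge2\lambda_{\min}\ge2\varepsilon$. This is exactly the upper estimate in \cref{eq:spectral-sylvester-curvature-bounds}, namely $\tr(\Theta'^\top E_Z\Theta')\le\|S\|_F^2/(4\varepsilon)$. Multiplying by $3$ gives the coefficient $3/(4\varepsilon)$.

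\emph{Lower bound.} Here the crude $\lambda_{\max}$ estimate of the lemma is not used; I bound each pairwise sum directly. Since $\tr E_Z=\|Z\|_F^2=n$ (unit rows) and every eigenvalue is at least $\varepsilon$, any pair satisfies
\[
  \lambda_i+\lambda_j=n-\sum_{l\ne i,j}\lambda_l\le n-(s-2)\varepsilon .
\]
Hence $\tr(\Theta'^\top E_Z\Theta')\ge \|S\|_F^2/\bigl(2(n-(s-2)\varepsilon)\bigr)$. Multiplying by $3$ yields the stated constant $3/\bigl(2(n-(s-2)\varepsilon)\bigr)$.

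\emph{Uniform bound.} Take $U,V$ horizontal lifts of an orthonormal basis, so $\|U\|_F=\|V\|_F=1$ and $\|U\wedge V\|_F=1$. Put $a_i\coloneq(\|u_i\|^2\|v_i\|^2-\langle u_i,v_i\rangle^2)^{1/2}\le\|u_i\|\|v_i\|$. By Cauchy--Schwarz, $\sum_i a_i\le\sum_i\|u_i\|\|v_i\|\le1$. First, $\sec_{M_s}(U,V)=\sum_i a_i^2\le1$. Second, write $S_Z(U,V)=\sum_i(u_i^\top v_i-v_i^\top u_i)$; each summand is a rank-two skew matrix with Frobenius norm $\sqrt2\,a_i$. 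The triangle inequality gives $\|S_Z(U,V)\|_F\le\sqrt2\sum_i a_i\le\sqrt2$, so $\|S\|_F^2\le2$. Substituting into the upper bound gives $\sec\le1+\tfrac{3}{4\varepsilon}\cdot2=1+\tfrac{3}{2\varepsilon}$. Nonnegativity is \cref{cor:nonnegative-sectional-curvature}. The transfer to $\Qks$ follows because $\Xks$ is a Riemannian isometry (\cref{thm:fixed-rank-riemannian-reduction}) and $\lambda_{\min}(Z^\top Z)$ is unchanged by padding.

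The only step needing care is the estimate $\|S_Z\|_F^2\le2$. The naive bound $\|U^\top V-V^\top U\|_F\le2$ loses a factor of two and would give $3/\varepsilon$ instead of $3/(2\varepsilon)$. The row-wise decomposition into simple bivectors is what gives the sharp constant.
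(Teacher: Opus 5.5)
Your proposal is correct. The two-sided estimate follows the paper's argument up to normalization. The paper applies \cref{lem:spectral-sylvester-curvature-term} with $Y=2S_Z(U,V)$ instead of introducing $\Theta'$. It uses the same pair-sum bounds $2\varepsilon\le\lambda_i+\lambda_j\le n-(s-2)\varepsilon$, which come from $\tr E_Z=n$.

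The real difference is in the uniform bound. The paper quotes $0\le\sec_{M_s}\le1$. It then obtains $\|S_Z(U,V)\|_F^2\le2\|U\wedge V\|_F^2$ from \cref{lem:skew-frobenius-estimate}, whose proof is a duality argument. That argument replaces $V$ by its component $V_\perp$ orthogonal to $U$ and pairs $S$ against $\so(s)$. It then uses $\|\Theta\|_{\mathrm{op}}\le\|\Theta\|_F/\sqrt2$, which holds because the nonzero singular values of a skew matrix come in pairs.

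You instead split $S_Z(U,V)$ into row bivectors of norm $\sqrt2\,a_i$ and use only the triangle inequality and Cauchy--Schwarz. This buys three things:
\begin{itemize}
\item It is more elementary.
\item It controls the sphere term and the O'Neill term through the same row-wise defects $a_i$. For orthonormal lifts it even gives $\sec\le(1+\tfrac{3}{2\varepsilon})(\sum_i a_i)^2$.
\item It exposes the equality case of $\|S_Z(U,V)\|_F^2\le2$: $u_i\perp v_i$, $\|u_i\|=\|v_i\|$, and all row bivectors positively proportional. This is exactly the configuration used in \cref{prop:asymptotic-sharpness-thick-part}.
\end{itemize}

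The paper's lemma is basis-free and holds for arbitrary independent $U,V\in\R^{n\times s}$. Your argument recovers that general form too. Apply it to the pair $(U,V_\perp)$, using that $S_Z$ is alternating and that $\|U\wedge V\|_F=\|U\|_F\|V_\perp\|_F$.
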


The proof makes use of the following lemma.

\begin{lemma}[Skew-symmetric Frobenius bound]
  \label{lem:skew-frobenius-estimate}
  Let \(U,V\in\R^{n\times s}\) be linearly independent, then
  \[
    \|U^\top V-V^\top U\|_F^2
    \le
    2\|U\wedge V\|_F^2.
  \]
\end{lemma}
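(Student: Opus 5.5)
The plan is to expand both sides in terms of Frobenius inner products of columns, or equivalently to use the bilinear structure. Write $u_a, v_a\in\R^n$ for the $a$-th columns of $U,V$. Then the $(a,b)$ entry of $U^\top V-V^\top U$ is $\langle u_a,v_b\rangle-\langle v_a,u_b\rangle$. Squaring and summing over all $a,b$, I will obtain
\[
  \|U^\top V-V^\top U\|_F^2
  =2\|U^\top V\|_F^2-2\tr\bigl((U^\top V)^2\bigr),
\]
since $\|V^\top U\|_F=\|U^\top V\|_F$ and the cross term is $-2\tr(V^\top V\ldots)$; more precisely the cross term is $-2\Frob{U^\top V}{V^\top U}=-2\tr\bigl((U^\top V)^2\bigr)$. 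The target is $2\|U\|_F^2\|V\|_F^2-2\Frob{U}{V}^2$. It therefore suffices to prove
\[
  \|U^\top V\|_F^2-\tr\bigl((U^\top V)^2\bigr)
  \le \|U\|_F^2\|V\|_F^2-\tr(U^\top V)^2 .
\]

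I see two routes to this inequality. The cleanest is a reduction to the case $\Frob{U}{V}=0$. Both sides are invariant under replacing $V$ by $V-tU$: the left side because $U^\top(V-tU)-(V-tU)^\top U=U^\top V-V^\top U$, and the right side by the standard invariance of the Gram determinant. After this replacement I may assume $\Frob{U}{V}=0$, and the goal becomes $\|U^\top V-V^\top U\|_F^2\le 2\|U\|_F^2\|V\|_F^2$. This follows from the triangle inequality and submultiplicativity: $\|U^\top V-V^\top U\|_F\le 2\|U^\top V\|_F$, and $\|U^\top V\|_F\le\|U\|_F\|V\|_F$. But that only gives the constant $4$, so I need something sharper.

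The sharper bound I will aim for is
\[
  \|U^\top V-V^\top U\|_F^2=2\|U^\top V\|_F^2-2\Frob{U^\top V}{V^\top U}.
\]
Write $U=\sum_i e_i u_i$ by rows, with $u_i,v_i\in\R^{1\times s}$, so that $U^\top V=\sum_i u_i^\top v_i$ and $U^\top V-V^\top U=\sum_i(u_i^\top v_i-v_i^\top u_i)=\sum_i u_i\wedge v_i$, viewed as a bivector. The norm of a bivector sum is at most the norm of the corresponding element of $\R^n\otimes\Lambda^2\R^s$ under a suitable map. The slickest argument is a Cauchy--Schwarz step: for the rank-$\le 2$ skew matrix $\Lambda=U^\top V-V^\top U$, use
\[
  \|\Lambda\|_F^2=\Frob{\Lambda}{U^\top V-V^\top U}=2\Frob{\Lambda}{U^\top V}=2\tr(\Lambda^\top U^\top V)=2\Frob{U\Lambda}{V}.
\]
By skewness $\Frob{U\Lambda}{U}=0$, so $U\Lambda$ is orthogonal to $U$. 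Hence $\Frob{U\Lambda}{V}=\Frob{U\Lambda}{V_\perp}$, where $V_\perp=V-\frac{\Frob{U}{V}}{\|U\|_F^2}U$ satisfies $\|U\|_F^2\|V_\perp\|_F^2=\|U\wedge V\|_F^2$. Applying Cauchy--Schwarz, $\|\Lambda\|_F^2\le 2\|U\Lambda\|_F\|V_\perp\|_F$. Symmetrically, with the roles of $U$ and $V$ exchanged, $\|\Lambda\|_F^2\le 2\|V\Lambda\|_F\|U_\perp\|_F$.

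\textbf{Main obstacle.} These bounds still involve $\|U\Lambda\|_F\le\|U\|_F\|\Lambda\|_{\mathrm{op}}$, and $\|\Lambda\|_{\mathrm{op}}\le\|\Lambda\|_F/\sqrt2$ because a nonzero skew matrix has its singular values in equal pairs. Using this, I get $\|\Lambda\|_F^2\le 2\|U\|_F\|V_\perp\|_F\|\Lambda\|_F/\sqrt2$, hence $\|\Lambda\|_F\le\sqrt2\,\|U\wedge V\|_F$, which is exactly the claim. The key nontrivial input is the pairing of singular values of skew matrices, which gives $\|\Lambda\|_{\mathrm{op}}^2\le\|\Lambda\|_F^2/2$. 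I expect the only delicate points to be verifying $\Frob{U\Lambda}{U}=\tr(U^\top U\Lambda)=0$ and handling the case $\Lambda=0$, which is trivial.
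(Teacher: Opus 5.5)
Your final argument is correct and is essentially the paper's proof. Both use the same projection $V_\perp$, the identity $\Frob{S}{\Theta}=2\Frob{V_\perp}{U\Theta}$ for skew $\Theta$ (you take $\Theta=\Lambda$ directly, where the paper takes a supremum over unit skew $\Theta$), Cauchy--Schwarz, and the pairing bound $\|\Lambda\|_{\mathrm{op}}\le\|\Lambda\|_F/\sqrt2$. One harmless slip: $\Lambda$ is not of rank at most $2$ in general (for $n=s\ge4$, $U=\I_s$ and $V$ a nonzero skew matrix give $\Lambda=2V$, which can have rank $4$), but your argument only uses the singular-value pairing, which holds for every skew matrix.
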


\begin{proof}
  We first recall that, if \(\Theta\in\so(s)\), then $
  \|\Theta\|_{\mathrm{op}} \le \frac{1}{\sqrt2}\|\Theta\|_F$.
  Indeed, if \(\sigma_1\ge\cdots\ge\sigma_s\ge0\) are the singular values of
  \(\Theta\), then
  \[
    \|\Theta\|_{\mathrm{op}}=\sigma_1,
    \qquad
    \|\Theta\|_F^2=\sum_{a=1}^s\sigma_a^2.
  \]
  The nonzero singular values of a real skew-symmetric matrix occur
  with even multiplicity, hence \(\sigma_1\) appears at least twice
  when \(\Theta\neq0\), and therefore \(\|\Theta\|_F^2\ge
  2\|\Theta\|_{\mathrm{op}}^2\). Now set
  \[
    V_\perp
    \coloneq 
    V-\frac{\Frob{U}{V}}{\|U\|_F^2}U.
  \]
  Then \(\Frob{U}{V_\perp}=0\), \(\|U\wedge
  V\|_F^2=\|U\|_F^2\|V_\perp\|_F^2\) and, since \(U^\top U\) is symmetric,
  \[
    U^\top V_\perp-V_\perp^\top U
    =
    U^\top V-V^\top U.
  \]
  Let \(S\coloneq U^\top V_\perp-V_\perp^\top U\in\so(s)\). For every
  \(\Theta\in\so(s)\), from skew-symmetry and cyclicity of the trace,
  \(\Frob{S}{\Theta} = 2\Frob{V_\perp}{U\Theta}\). Thus, by
  Cauchy--Schwarz and the estimate above,
  \[
    |\Frob{S}{\Theta}|
    \le
    2\|V_\perp\|_F\|U\Theta\|_F
    \le
    2\|V_\perp\|_F\|U\|_F\|\Theta\|_{\mathrm{op}}
    \le
    \sqrt2\,\|U\|_F\|V_\perp\|_F\|\Theta\|_F.
  \]
  Recall that $\Vert S\|_F = \sup_{\|\Theta\|_F=1} \vert \Frob{S}{
  \Theta}\vert $. Indeed, Cauchy--Schwarz gives the
  inequality \(\le\), and equality is attained by
  \(\Theta=S/\|S\|_F\) if \(S\neq0\), which still belongs to
  \(\so(s)\). Thus, taking the supremum over
  \(\Theta\in\so(s)\) with \(\|\Theta\|_F=1\) gives \(\|S\|_F \le
  \sqrt2\,\|U\|_F\|V_\perp\|_F\). Therefore
  \[
    \|U^\top V-V^\top U\|_F^2
    =
    \|S\|_F^2
    \le
    2\|U\|_F^2\|V_\perp\|_F^2
    =
    2\|U\wedge V\|_F^2.
  \]
\end{proof}

We now prove \cref{cor:curvature-bounds-thick-parts}.

\begin{proof}
  Apply \cref{lem:spectral-sylvester-curvature-term} with
  \(Y=2S_Z(U,V)\). Since each row of \(Z\) has unit norm,
  \[
    \tr(E_Z)=\tr(Z^\top Z)=\|Z\|_F^2=n.
  \]
  On the \(\varepsilon\)-thick part,
  \(\lambda_{\min}(E_Z)\ge\varepsilon\). Hence,
  if \(\lambda_1,\ldots,\lambda_s\) are the eigenvalues of \(E_Z\), then
  \[
    2\varepsilon
    \le
    \lambda_i+\lambda_j
    \le
    n-(s-2)\varepsilon
    \qquad
    (i\ne j).
  \]
  Using the spectral formula in
  \cref{eq:spectral-sylvester-curvature-term} with \(Y=2S_Z(U,V)\), we obtain
  \[
    \frac{2}{n-(s-2)\varepsilon}\|S_Z(U,V)\|_F^2
    \le
    \tr\bigl(\Theta_Z(U,V)^\top E_Z\Theta_Z(U,V)\bigr)
    \le
    \frac{1}{\varepsilon}\|S_Z(U,V)\|_F^2.
  \]
  Substituting these inequalities into the sectional-curvature
  formula in \cref{eq:sectional-curvature-fixed-rank-general} gives
  \cref{eq:curvature-bounds-thick-parts}. It remains to justify the
  coarse bound. The product-of-spheres sectional curvature satisfies
  \(0\le \sec_{M_s}\le 1\). Moreover, by
  \cref{lem:skew-frobenius-estimate}, for any two linearly
  independent matrices \(U,V\),
  \[
    \|U^\top V-V^\top U\|_F^2
    \le
    2\|U\wedge V\|_F^2.
  \]
  Hence,
  \[
    \frac{3}{4\varepsilon}
    \frac{\|S_Z(U,V)\|_F^2}{\|U\wedge V\|_F^2}
    \le
    \frac{3}{2\varepsilon}.
  \]
  Together with \(\sec_{M_s}\le1\), this gives the coarse estimate.
\end{proof}

\begin{remark}[Degeneration near lower rank]
  \label{rem:curvature-degeneration-lower-rank}
  The thick-part estimates give a convenient sufficient control of
  the quotient curvature, but the factor
  \(\lambda_{\min}(Z^\top Z)^{-1}\) is not the exact degeneration
  scale. If
  \[
    0<\lambda_1\le\lambda_2\le\cdots\le\lambda_s
  \]
  are the eigenvalues of \(Z^\top Z\), then the exact spectral
  formula shows that the relevant scale is
  \[
    (\lambda_1+\lambda_2)^{-1}.
  \]
  Consequently, approaching a rank-\((s-1)\) configuration need not
  cause the quotient curvature to become unbounded. Possible
  curvature blow-up can occur only when at least two eigenvalues tend
  to zero, that is, when the factor approaches the locus of rank at
  most \(s-2\).
\end{remark}

\begin{proposition}[Asymptotic sharpness of the thick-part bound]
  \label{prop:asymptotic-sharpness-thick-part}
  Let \(s\ge 3\) and \(\varepsilon>0\). As the number \(n\) of sphere
  factors tends to infinity, there exist points
  \([Z_n]\in \Qrk{s,\varepsilon}{s}\) and tangent \(2\)-planes
  \(\bar\sigma_n\subset \Tz{[Z_n]}\Qss\) such that
  \[
    \sec_{\Qss}(\bar\sigma_n)
    \longrightarrow
    1+\frac{3}{2\varepsilon}.
  \]
  Hence the curvature bound on the \(\varepsilon\)-thick part is
  asymptotically sharp uniformly in \(n\).
\end{proposition}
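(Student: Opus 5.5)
The plan is to exhibit near-extremal configurations in which both terms of \cref{eq:sectional-curvature-fixed-rank-orthonormal} are almost saturated simultaneously. The first term equals $1$ exactly when the horizontal orthonormal pair $U,V$ is concentrated in a single row $i$ with $u_i\perp v_i$ unit vectors; this requires $\dim\Tz{z_i}\mathbb S^{s-1}\ge2$, which is where $s\ge3$ is used. For the second term, I use \cref{lem:spectral-sylvester-curvature-term} with $Y=2S_Z(U,V)$. Suppose $S=u_i^\top v_i-v_i^\top u_i$ is supported, in the eigenbasis of $E_Z$, on a pair of eigenvectors whose eigenvalues are both $\varepsilon$. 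Then $\|S\|_F^2=2$ gives $\tr(\Theta^\top E_Z\Theta)=4/(2\varepsilon)$, so the O'Neill contribution is $\tfrac34\cdot 2/\varepsilon=3/(2\varepsilon)$. The target value $1+3/(2\varepsilon)$ is therefore attained by the ``ideal'' data. The only obstruction is that this ideal single-row pair is not horizontal, and the asymptotics in $n$ are used to make the horizontal correction negligible.

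\emph{Construction.} Fix $n$ large. Take row $1$ equal to $e_3$, and put $u\coloneq e_1$, $v\coloneq e_2$ in that row. Choose the remaining $n-1$ unit rows so that $E_{Z_n}=\diag(\varepsilon,\varepsilon,\mu_3,\ldots,\mu_s)$ with all $\mu_a\ge\varepsilon$ and $\mu_a$ of order $n/(s-2)$. Concretely:
\begin{itemize}
\item most rows are the basis vectors $e_3,\ldots,e_s$;
\item a bounded number of extra rows have the form $\cos\theta\,e_a\pm\sin\theta\,e_b$ with $a\ge3$, $b\in\{1,2\}$;
\item the angle $\theta$ is tuned so that the $e_1$ and $e_2$ second moments equal $\varepsilon$;
\item the sign pairs are used to kill all off-diagonal entries.
\end{itemize}
Then $\lambda_{\min}(E_{Z_n})=\varepsilon$, so $[Z_n]\in\Qrk{s,\varepsilon}{s}$ for $n$ large.

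\emph{Horizontal correction.} Let $U_0,V_0$ be supported in row $1$, with rows $e_1$ and $e_2$ respectively. Set $U\coloneq U_0^{\Hbundle}=U_0-Z\omega_Z(U_0)$, and likewise $V\coloneq V_0^{\Hbundle}$, using \cref{prop:sylvester-mechanical-connection}. Here $J_Z(U_0)=E_{31}-E_{13}$ is supported on the eigenpair $(1,3)$. By \cref{lem:spectral-sylvester-curvature-term},
\[
  \|Z\omega_Z(U_0)\|_F^2=\tr(\omega^\top E_Z\omega)=\frac{2}{\varepsilon+\mu_3}=O(1/n),
\]
and similarly for $V_0$. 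Hence $U,V$ differ from $U_0,V_0$ by $O(n^{-1/2})$ in Frobenius norm. After Gram--Schmidt, $U,V$ are still within $O(n^{-1/2})$ of $U_0,V_0$, which are orthonormal. The row-sphere term of \cref{eq:sectional-curvature-fixed-rank-orthonormal} is a bounded quartic in $(U,V)$, since each row contributes at most $\|u_i\|^2\|v_i\|^2$ and these sum to at most $1$. It is therefore within $O(n^{-1/2})$ of its value $1$ at $(U_0,V_0)$.

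\emph{Main obstacle.} The delicate step is the O'Neill term. One must check that
\[
  S_Z(U,V)=S_Z(U_0,V_0)+O(n^{-1/2})=E_{12}-E_{21}+O(n^{-1/2}),
\]
and that the perturbation does not get amplified when divided by small denominators. The components of the perturbation land on pairs $(a,b)$ with $a$ or $b\ge3$, so their denominators $\lambda_a+\lambda_b\ge\mu_3\to\infty$ and they contribute $o(1)$. The component on the pair $(1,2)$ is divided only by $2\varepsilon$, which is fixed, so its $O(n^{-1/2})$ error also contributes $o(1)$. Expanding \cref{eq:spectral-sylvester-curvature-term} term by term then gives $\tfrac34\tr(\Theta^\top E\Theta)\to 3/(2\varepsilon)$. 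Summing the two terms yields $\sec_{\Qss}(\bar\sigma_n)\to1+3/(2\varepsilon)$, and together with \cref{cor:curvature-bounds-thick-parts} this gives the asserted asymptotic sharpness.
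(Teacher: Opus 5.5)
Your proposal is correct and follows the same blueprint as the paper. You take a diagonal \(E_Z\) with exactly two eigenvalues \(\varepsilon\) in the directions \(e_1,e_2\), and a third eigenvalue that grows with \(n\) because many rows equal \(e_3\). You then let \(U,V\) point along \(e_1,e_2\) in those \(e_3\)-rows, so both terms of \cref{eq:sectional-curvature-fixed-rank-orthonormal} saturate.

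The genuine difference is how horizontality is obtained. The paper spreads the vectors over the \(N\) rows equal to \(e_3\), with weights \(a_N\perp\mathbf 1_N\) and \(\|a_N\|=1\). Then \(J_{Z_N}(U_N)=(\mathbf 1_N^\top a_N)(e_3e_1^\top-e_1e_3^\top)=0\) exactly, and \(S_{Z_N}(U_N,V_N)=e_1e_2^\top-e_2e_1^\top\) exactly. So the O'Neill term is exactly \(3/(2\varepsilon)\), and the only defect is \(\sec_{M_s}=\sum_r a_r^4\to1\). In fact \(a_N\) is just the normalized projection of your single-row vector onto \(\mathbf 1_N^\perp\).

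You instead take the orthogonal horizontal projection of single-row data. This shows the mechanism clearly: the vertical part has squared norm \(1/(\varepsilon+\mu_3)\). It also lets you replace the roots-of-unity cone by simpler \(\pm\)-pairs. The cost is that you need continuity estimates that are uniform in the growing number of rows. These estimates do hold:
\begin{itemize}
\item Every pair sum is at least \(2\varepsilon\), and \(\|S_Z(U,V)\|_F\le2\|U\|_F\|V\|_F\). Hence \(S\mapsto\tr(\Theta^\top E_Z\Theta)\) is Lipschitz on bounded sets with an \(n\)-independent constant. Your remark about denominators \(\ge\mu_3\) is therefore not needed.
\item For the row-sphere term, ``bounded quartic'' is not by itself a reason. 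But, for instance, \(\sum_i\bigl|\|u_i\|^2-\|u_i^0\|^2\bigr|\,\|v_i\|^2\le\|V\|_F^2\,\|U-U_0\|_F\,\|U+U_0\|_F\), and the other pieces are handled the same way.
\end{itemize}

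Two small corrections. First, \cref{lem:spectral-sylvester-curvature-term} with \(Y=e_3e_1^\top-e_1e_3^\top\) gives \(\|Z\omega_Z(U_0)\|_F^2=1/(\varepsilon+\mu_3)\), not \(2/(\varepsilon+\mu_3)\); this is harmless. Second, Gram--Schmidt is unnecessary. For your diagonal \(E_Z\), \(\Frob{U}{V}=-\tr\bigl(\omega_Z(U_0)^\top E_Z\,\omega_Z(V_0)\bigr)=0\), so only normalization by \(\|U\|_F^2=\|V\|_F^2=1-c\) occurs, where \(c\coloneq1/(\varepsilon+\mu_3)\).

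In fact everything is explicit. In \(U\), row \(1\) becomes \((1-c)e_1\), the other \(e_3\)-rows become \(-c\,e_1\), and the boundedly many \(\pm\)-pair rows move by at most \(c\); the same holds for \(V\) with \(e_2\). So you could compute both terms directly instead of estimating them.
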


\begin{proof}
  Choose integers \(m>\max\{2\varepsilon,2\}\),
  \(\ell\ge\varepsilon\) and let \(\theta\in(0,\pi/2)\) be determined
  by \(\sin^2\theta=\frac{2\varepsilon}{m}\). Set
  \[
    \phi_j\coloneq \frac{2\pi j}{m},
    \qquad
    c_m\coloneq
    \begin{pmatrix}
      \cos\phi_1\\
      \vdots\\
      \cos\phi_m
    \end{pmatrix},
    \qquad
    s_m\coloneq
    \begin{pmatrix}
      \sin\phi_1\\
      \vdots\\
      \sin\phi_m
    \end{pmatrix}.
  \]
  Denote by \(\mathbf 1_r\in\R^{r\times 1}\) the column vector
  of \(r\) ones and by
  \(\mathbf 0_{p\times q}\in\R^{p\times q}\) the zero matrix.
  For \(N\ge 2\), set
  \[
    n_N\coloneq m+N+\ell(s-3).
  \]
  Define
  \[
    B_{\ell,s-3}
    \coloneq 
    \begin{pmatrix}
      \mathbf 1_\ell
      & \mathbf 0_{\ell\times1}
      & \cdots
      & \mathbf 0_{\ell\times1}
      \\
      \mathbf 0_{\ell\times1}
      & \mathbf 1_\ell
      & \ddots
      & \vdots
      \\
      \vdots
      & \ddots
      & \ddots
      & \mathbf 0_{\ell\times1}
      \\
      \mathbf 0_{\ell\times1}
      & \cdots
      & \mathbf 0_{\ell\times1}
      & \mathbf 1_\ell
    \end{pmatrix}
    \in\R^{\ell(s-3)\times(s-3)}.
  \]
  Thus \(B_{\ell,s-3}\) is obtained by repeating each row of
  \(\I_{s-3}\) exactly \(\ell\) times. We then define
  \[
    Z_N
    \coloneq 
    \begin{pmatrix}
      \sin\theta\,c_m
      & \sin\theta\,s_m
      & \cos\theta\,\mathbf 1_m
      & \mathbf 0_{m\times(s-3)}
      \\
      \mathbf 0_{N\times1}
      & \mathbf 0_{N\times1}
      & \mathbf 1_N
      & \mathbf 0_{N\times(s-3)}
      \\
      \mathbf 0_{\ell(s-3)\times1}
      & \mathbf 0_{\ell(s-3)\times1}
      & \mathbf 0_{\ell(s-3)\times1}
      & B_{\ell,s-3}
    \end{pmatrix}
    \in\R^{n_N\times s}.
  \]
  When \(s=3\), the last block row and the last block column are
  omitted.
  Every row of \(Z_N\) has unit norm.

  Since \(m\ge3\), the root-of-unity identities
  \[
    \sum_{j=1}^m e^{\mathrm i\phi_j}=0,
    \qquad
    \sum_{j=1}^m e^{2\mathrm i\phi_j}=0
  \]
  imply
  \[
    c_m^\top\mathbf 1_m
    =
    s_m^\top\mathbf 1_m
    =
    c_m^\top s_m
    =0,
    \qquad
    \|c_m\|^2=\|s_m\|^2=\frac m2.
  \]
  Direct block multiplication therefore gives
  \[
    E_{Z_N}=Z_N^\top Z_N
    =
    \begin{pmatrix} \varepsilon & 0 & 0 & \mathbf 0_{1\times(s-3)}
      \\ 0 & \varepsilon & 0 & \mathbf 0_{1\times(s-3)} \\ 0 & 0 &
      N+m-2\varepsilon & \mathbf 0_{1\times(s-3)} \\ \mathbf
      0_{(s-3)\times1} & \mathbf 0_{(s-3)\times1} & \mathbf
      0_{(s-3)\times1} & \ell \I_{s-3}
    \end{pmatrix}
  \]
  Since all diagonal entries of \(E_{Z_N}\) are positive, \(Z_N\) has
  full column rank \(s\). Moreover, because \(\ell\ge\varepsilon\),
  \(\lambda_{\min}(E_{Z_N})=\varepsilon\) for every integer \(N\ge2\)
  satisfying \(N+m\ge3\varepsilon\). Thus \([Z_N]\in
  \Qrk{s,\varepsilon}{s}\) for every such \(N\). Define
  \(a_N=(a_1,\ldots,a_N)^\top\in\R^N\) by
  \[
    a_1\coloneq \sqrt{\frac{N-1}{N}},
    \qquad
    a_r\coloneq -\frac{1}{\sqrt{N(N-1)}},
    \quad 2\le r\le N.
  \]
  Then
  \[
    \mathbf 1_N^\top a_N=0,
    \qquad
    \|a_N\|^2=1.
  \]
  Define \(U_N,V_N\in \Tz{Z_N}\Mss\) by
  \[
    U_N\coloneq
    \begin{pmatrix}
      \mathbf 0_{m\times s}\\
      a_N
      & \mathbf 0_{N\times1}
      & \mathbf 0_{N\times1}
      & \mathbf 0_{N\times(s-3)}\\
      \mathbf 0_{\ell(s-3)\times s}
    \end{pmatrix},
    \quad
    V_N\coloneq
    \begin{pmatrix}
      \mathbf 0_{m\times s}\\
      \mathbf 0_{N\times1}
      & a_N
      & \mathbf 0_{N\times1}
      & \mathbf 0_{N\times(s-3)}\\
      \mathbf 0_{\ell(s-3)\times s}
    \end{pmatrix},
  \]
  We have that \(U_N\) and \(V_N\) are orthonormal
  \[
    \|U_N\|_F^2=\|V_N\|_F^2=\|a_N\|^2=1,
    \qquad
    \Frob{U_N}{V_N}=0.
  \]
  For a tangent vector \(W\in \Tz{Z}\Mss\), the horizontality
  criterion is
  \[
    W\in\Hz{Z}
    \quad\Longleftrightarrow\quad
    J_Z(W)\coloneq Z^\top W-W^\top Z=0.
  \]
  Applying this criterion to \(U_N\) gives
  \[
    \begin{aligned}
      J_{Z_N}(U_N)
      =
      \sum_{r=1}^N
      a_r\bigl(e_3e_1^\top-e_1e_3^\top\bigr)
      =
      \bigl(\mathbf 1_N^\top a_N\bigr)
      \bigl(e_3e_1^\top-e_1e_3^\top\bigr)
      =0.
    \end{aligned}
  \]
  Likewise,
  \[
    J_{Z_N}(V_N)
    =
    \bigl(\mathbf 1_N^\top a_N\bigr)
    \bigl(e_3e_2^\top-e_2e_3^\top\bigr)
    =0.
  \]
  Thus \(U_N\) and \(V_N\) are horizontal at \(Z_N\). Furthermore,
  \[
    S_{Z_N}(U_N,V_N)
    \coloneq 
    U_N^\top V_N-V_N^\top U_N
    =
    e_1e_2^\top-e_2e_1^\top.
  \]
  Let
  \[
    \mathcal L_{E_{Z_N}}(\Omega)
    =
    E_{Z_N}\Omega+\Omega E_{Z_N},
    \qquad
    \Theta_N
    \coloneq
    2\mathcal L_{E_{Z_N}}^{-1}
    \bigl(S_{Z_N}(U_N,V_N)\bigr).
  \]
  Since the first two eigenvalues of \(E_{Z_N}\) are both equal to
  \(\varepsilon\), and since the Sylvester operator acts entrywise:
  \(\bigl(\mathcal
  L_{E_{Z_N}}(\Omega)\bigr)_{ij}=(\lambda_i+\lambda_j)\Omega_{ij}\)
  for \(i,j = 1,\ldots, s\), we have
  \[
    \Theta_N
    =
    \frac{1}{\varepsilon}
    \bigl(e_1e_2^\top-e_2e_1^\top\bigr),
  \]
  and therefore \(\tr
    \bigl(
      \Theta_N^\top E_{Z_N}\Theta_N
    \bigr)
    =
  \frac{2}{\varepsilon}\). The product-of-spheres sectional curvature
  of the plane
  \(\sigma_N=\spanv\{U_N,V_N\}\) is
  \[
    \sec_{M_s}(\sigma_N)
    =
    \sum_{r=1}^N a_r^4
    =
    \frac{(N-1)^3+1}{N^2(N-1)}
    \longrightarrow 1.
  \]
  Consequently, for the quotient plane \(\bar\sigma_N =
  \dd\pi_s(Z_N)\sigma_N\), the sectional-curvature formula
  (\cref{eq:sectional-curvature-fixed-rank-orthonormal}) gives
  \[
    \sec_{\Qss}(\bar\sigma_N)
    =
    \sec_{M_s}(\sigma_N)
    +
    \frac34
    \tr
    \bigl(
      \Theta_N^\top E_{Z_N}\Theta_N
    \bigr)
    =
    \frac{(N-1)^3+1}{N^2(N-1)}
    +
    \frac{3}{2\varepsilon}.
  \]
  Letting \(N\to\infty\) proves that \(\sec_{\Qss}(\bar\sigma_N)
    \longrightarrow
  1+\frac{3}{2\varepsilon}\).
\end{proof}

\subsection{Non-polarity of the quotient action}
\label{subsec:non-polarity}

We now use the curvature $2$-form of the mechanical connection to
show that the horizontal distribution is not integrable in ranks
\(s\ge3\).  This is the differential-geometric obstruction behind the
non-polarity of the quotient action.

Recall that an isometric action of a compact Lie group \(G\) on a
Riemannian manifold \(M\) is called \emph{polar} if there exists an
immersed submanifold \(\Sigma\subset M\), called a section, which
meets every \(G\)-orbit and such that
\[
  \Tz{x}\Sigma \perp \Tz{x}(G\cdot x)
\]
at every point of intersection. On the principal part of the action,
this means that the horizontal distribution is tangent to local submanifolds.

If the \(O(s)\)-action on \(\Mss\) were polar, quotient computations would
simplify drastically.  One could work on a section \(\Sigma\), modulo
a finite Weyl group, instead of working with horizontal lifts in the
whole factor space (see for instance
  \cite{PalaisTerng1988,GroveZiller2012} or
\cite{AlexandrinoBettiol2015}). On the principal part, the O'Neill
tensor $A$ associated with horizontal fields tangent to the local
sections would vanish, horizontal lifts would stay tangent to a fixed
submanifold, and quotient geodesic computations would reduce to
geodesic computations along \(\Sigma\). The result below says that
this simplification is not available in ranks \(s\ge3\).

\begin{proposition}[Curvature obstruction to polarity]
  \label{prop:curvature-obstruction-polarity}
  On the principal bundle
  \[
    \pi_s:\Mss\longrightarrow \Qss,
  \]
   the horizontal distribution $\Hbundle=\ker\omega$ is integrable
  if and only if the curvature $2$-form $\Omega^\omega=0$. If the
  action is polar on the principal stratum, then $\Omega^\omega=0$.
  Consequently, if $\Omega^\omega$ is not identically zero, the
  action is not polar.
\end{proposition}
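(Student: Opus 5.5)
The proposition has three parts, and I will prove them in order. The first two are essentially already available: integrability against curvature was obtained in \cref{prop:curvature-oneill-tensor}. The real work is to show that a polar action on the principal stratum forces the horizontal distribution to be integrable.

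\emph{Step 1: integrability versus curvature.} Here I invoke \cref{prop:curvature-oneill-tensor} directly. On horizontal pairs it gives $\Omega^\omega(U,V)=-\omega([U,V])$. By Frobenius, $\Hbundle$ is integrable exactly when $\omega([U,V])=0$ for all horizontal fields $U,V$.

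It remains to pass from vanishing on horizontal pairs to $\Omega^\omega=0$ everywhere. For this I use the standard fact that the curvature form of a principal connection is horizontal, i.e.\ $\Omega^\omega(W,\cdot)=0$ whenever $W$ is vertical. I will verify it briefly by evaluating $\dd\omega+\tfrac12[\omega\wedge\omega]$ on a fundamental field $\xi_P$ and a horizontal field, or a second fundamental field. This uses equivariance, namely $[\xi_P,U]$ is horizontal for horizontal $U$ because right translations preserve $\Hbundle$, together with the reproduction property. Hence vanishing on $\Hbundle\times\Hbundle$ is equivalent to $\Omega^\omega=0$.

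\emph{Step 2: polarity implies integrability.} Let $\Sigma$ be a section. Since $\Mss$ is open and the action on it is free, every orbit there is principal.

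\begin{enumerate}
\item Fix $Z\in\Sigma\cap\Mss$. Orthogonality gives $\Tz{Z}\Sigma\subset\Hz{Z}$.
\item Sections meet principal orbits transversally and have dimension equal to that of the orbit space, which is the cohomogeneity $m_s=\dim\Hz{Z}$. Hence $\Tz{Z}\Sigma=\Hz{Z}$. I would justify this dimension count through the standard theory cited in the paper \cite{PalaisTerng1988,AlexandrinoBettiol2015}; alternatively, $\dd\pi_s|_{\Tz{Z}\Sigma}$ must be surjective because $\pi_s(\Sigma)$ contains the open set $\Qss$.
\item Equivariance makes $\Sigma\cdot g$ a section through every point of $Z\cdot O(s)$. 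Since $\Sigma$ meets every orbit, every point $Y\in\Mss$ therefore lies on a local submanifold $\Sigma g$ with tangent space $\Hz{Y}$.
\end{enumerate}

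So $\Hbundle$ admits an integral manifold through each point and is integrable. Indeed, two horizontal fields tangent to a common integral manifold have a bracket tangent to it; more carefully, $\Hbundle$ is involutive because at each point it agrees with the tangent distribution of a submanifold. Step 1 then gives $\Omega^\omega=0$.

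\emph{Step 3.} The final claim is the contrapositive of Step 2.

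\emph{Main obstacle.} The delicate point is Step 2: making sure that $\Sigma$ is tangent to all of $\Hz{Z}$, not just to a subspace of it, and that the immersed sections produce genuine local integral manifolds through every point of $\Mss$. I would handle this with the dimension argument in (2) and $O(s)$-translation in (3). I would add a remark that immersion self-intersections are harmless, since integrability is a local statement and a single local sheet of $\Sigma$ suffices.
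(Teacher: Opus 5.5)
Your proof is correct and follows essentially the paper's route: the identity $\Omega^\omega(U,V)=-\omega([U,V])$ on horizontal pairs combined with Frobenius, then orthogonality plus the dimension count giving $\Tz{Z}\Sigma=\Hz{Z}$ for a section, hence integrability, and finally the contrapositive. Your additions (horizontality of $\Omega^\omega$, translating $\Sigma$ by $O(s)$, working on a single local sheet) only make explicit what the paper leaves implicit; one small fix is that your alternative surjectivity argument should go through Sard, because $\pi_s(\Sigma)\supset\Qss$ yields $\dim\Sigma\ge m_s$, not surjectivity of $\dd\pi_s$ at each given point.
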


\begin{proof}
  For horizontal vector fields \(U,V\),
  \cref{prop:curvature-oneill-tensor} ensures
  \(
    \Omega^\omega(U,V)=-\omega([U,V]).
  \)
  Therefore $\Omega^\omega(U,V)=0$ for all horizontal \(U,V\) if and
  only if $\omega([U,V])=0$ for all horizontal \(U,V\), that is, if
  and only if \([U,V]\in\Hbundle\) for all horizontal vector fields
  \(U,V\). By Frobenius' theorem, this is precisely the integrability
  of the horizontal distribution. If the action were polar on the
  principal part, the local sections would be orthogonal to the
  orbits. Hence their tangent spaces would coincide with \(\Hbundle\), because both have dimension equal to the quotient dimension.
  Thus \(\Hbundle\) would be integrable. Therefore non-vanishing of
  \(\Omega^\omega\) rules out polarity.
\end{proof}

\begin{theorem}[Non-polarity in rank at least three]
  \label{thm:non-polarity-rank-at-least-three}
  Assume \(s\ge3\). Then the mechanical horizontal distribution on
  \[
    \pi_s:\Mss\to \Qss
  \]
  is not integrable. Consequently, the \(O(s)\)-action on \(\Mss\) is
  not polar on its principal full-rank stratum. By fixed-rank reduction
  {(\cref{thm:fixed-rank-riemannian-reduction})}, the same conclusion
  holds for the \(O(k)\)-action on \(\Mks\), for every \(k\ge s\).
\end{theorem}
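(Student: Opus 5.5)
By \cref{prop:curvature-obstruction-polarity}, it suffices to exhibit a single point \(Z\in\Mss\) and two horizontal vectors \(U,V\in\Hz{Z}\) with \(\Theta_Z(U,V)\neq0\). By \cref{lem:dJ-explicit} we have \(\Theta_Z(U,V)=2\mathcal L_{E_Z}^{-1}(U^\top V-V^\top U)\), and \(\mathcal L_{E_Z}\) is invertible by \cref{lem:sylvester-invertible}. So we only need horizontal \(U,V\) with \(S_Z(U,V)=U^\top V-V^\top U\neq0\).

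The construction follows the one already used in \cref{prop:asymptotic-sharpness-thick-part}. Take \(Z\) with rows \(e_1,\dots,e_s\) followed by \(n-s\) copies of \(e_3\); this has full column rank. We want tangent vectors supported on rows whose factor is \(e_3\), placing \(e_1\) in those rows for \(U\) and \(e_2\) for \(V\). We weight them by a vector \(a\) with \(\sum_r a_r=0\), which is exactly the condition \(J_Z(U)=J_Z(V)=0\).

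\begin{itemize}
\item If \(n-s\ge1\), there are at least two rows equal to \(e_3\), namely row \(3\) and row \(s+1\). Putting weights \(a=(1,-1)\) on these rows gives \(J_Z(U)=(a_1+a_2)(e_3e_1^\top-e_1e_3^\top)=0\), and likewise for \(V\). Both vectors are tangent, since \(e_1,e_2\perp e_3\). We then get \(S_Z(U,V)=(a_1^2+a_2^2)(e_1e_2^\top-e_2e_1^\top)\neq0\).
\item If \(n=s\), there is only one \(e_3\) row, so we modify \(Z\). Take a full-rank \(Z\in\Mss\) with two rows equal, or nearly equal, to a common unit vector \(w\), and use two other directions orthogonal to \(w\). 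For example, take rows \(e_1,e_2,w,w',e_4,\dots,e_s\), where \(w,w'\) are distinct unit vectors in \(\mathrm{span}(e_3,e_1+e_2)\) chosen so that the matrix still has full rank. Then run the same computation.
\end{itemize}

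The \(n=s\) case is where I expect the main obstacle. The cleaner route there is to verify horizontality directly for explicit \(U,V\). Alternatively, use a dimension count: \(\Hz{Z}\) has dimension \(m_s=s(s-1)/2\), and the alternating map \(S_Z\) restricted to \(\Hz{Z}\) cannot vanish identically. If it did, \cref{thm:zero-sectional-curvature-characterization} would force every horizontal plane to be flat. But the row-wise dependence condition (3) of that theorem fails for a generic pair of horizontal vectors as soon as \(m_s\) exceeds \(n\), or more precisely as soon as some row admits two independent horizontal directions. For \(s\ge3\) one checks this at \(Z=\I_s\): there the horizontal space is \(\{W:\ W \text{ symmetric with zero diagonal}\}\), so \(W=e_1e_2^\top+e_2e_1^\top\) and \(W'=e_1e_3^\top+e_3e_1^\top\) are horizontal. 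Their first rows, \(e_2\) and \(e_3\), are independent, and \(S=W^\top W'-W'^\top W=e_2e_3^\top-e_3e_2^\top\neq0\). This point \(Z=\I_s\) works for all \(n\) if we pad with rows equal to \(e_1\), so I would actually use it uniformly and skip the case split.

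Concluding, \(\Theta_Z(U,V)\neq0\). Then \cref{prop:curvature-oneill-tensor} gives non-integrability of the horizontal distribution, and \cref{prop:curvature-obstruction-polarity} gives non-polarity. For \(\Mks\), \cref{lem:padding-preserves-horizontal-lifts} and \cref{prop:padding-normalizer-reduction} show that the padded horizontal fields \((U,0),(V,0)\) have vertical bracket component \(((Z\Theta),0)\neq0\). Hence the \(O(k)\)-horizontal distribution is also non-integrable on \(\Mks\), and the \(O(k)\)-action is not polar there.
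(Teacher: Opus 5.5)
Your proof is correct. On $\Mss$, the witness you finally adopt is exactly the paper's construction with $(a,b,c)=(2,3,1)$: $Z$ has rows $e_1,\dots,e_s$ followed by copies of $e_1$, and $U,V$ have first $s$ rows $E_{12}+E_{21}$ and $E_{13}+E_{31}$ and zero rows afterwards. This gives $S_Z(U,V)=E_{23}-E_{32}\neq0$. Your first construction for $n>s$, with two rows equal to $e_3$ and weights $\pm1$, is also valid.

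You should drop the dimension-count detour, because its key implication is false. \cref{thm:zero-sectional-curvature-characterization} only says that a flat plane has $\Theta_Z(U,V)=0$; the converse does not hold. If $S_Z\equiv0$ on $\Hz{Z}$, that would give $\sec_{\Qss}=\sec_{M_s}$ on horizontal planes, not flatness. Nothing rests on this, since you verify $S_Z(U,V)\neq0$ directly, so the vague $w,w'$ modification for $n=s$ is unnecessary too.

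The genuine difference is in the transfer to $\Mks$.
\begin{itemize}
\item The paper argues through sectional curvature. Padding is totally geodesic inside $\Mks$, so $\sec_{\Mks}((U,0),(V,0))=\sec_{\Mss}(U,V)$. The fixed-rank isometry matches the quotient curvatures. O'Neill's formula then forces $\|A^{(k)}_{(U,0)}(V,0)\|_F=\|A^{(s)}_UV\|_F\neq0$.
\item You compute the vertical part of the bracket directly. That works, but spell out three steps.
  \begin{itemize}
  \item Extend by basic horizontal fields, i.e.\ horizontal lifts of vector fields on $\Qks$. By Step~2 of \cref{thm:fixed-rank-riemannian-reduction}, along $\iks(\Mss)$ these equal the paddings $(\tilde U,0),(\tilde V,0)$ of basic fields on $\Mss$.
  \item These fields are tangent to $\iks(\Mss)$, so naturality of brackets gives $([\tilde U,\tilde V],0)$ there.
  \item Split $[\tilde U,\tilde V]=H+Z\Omega$ with $H\in\Hz{Z}$ and $\Omega=\omega_Z([\tilde U,\tilde V])=-\Theta_Z(U,V)$. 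Then $(H,0)$ is $O(k)$-horizontal by \cref{lem:padding-preserves-horizontal-lifts}, and $(Z\Omega,0)=(Z,0)\diag(\Omega,0)$ is $O(k)$-vertical. So the $O(k)$-vertical part is $-(Z\Theta_Z(U,V),0)\neq0$, correcting your sign.
  \end{itemize}
\end{itemize}
This splitting is the fact you actually need. \cref{prop:padding-normalizer-reduction} only concerns the residual $O(s)$-action on $\iks(\Mss)$, not the $O(k)$-vertical projection in $\Mks$.

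What each route buys:
\begin{itemize}
\item Yours avoids total geodesicity inside $\Mks$ and identifies the vertical bracket itself, not just its norm.
\item The paper's needs no vector-field extensions on $\Mks$ and only reuses isometry statements it has already proved.
\end{itemize}
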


\begin{proof}
  It is enough to find one point \(Z_0\in \Mss\) and two horizontal tangent
  vectors \(U,V\in\Hz{Z_0}\) such that \((\Omega^\omega)_{Z_0}(U,V)\ne0\). Let
  \[
    Z_0\coloneq 
    \begin{pmatrix}
      e_1\\
      e_2\\
      \vdots\\
      e_s\\
      e_1\\
      \vdots\\
      e_1
    \end{pmatrix}
    \in \Mss,
  \]
  where the first \(s\) rows are the canonical basis vectors of
  \(\R^s\), and the remaining \(n-s\) rows, if any, are equal
  to \(e_1\).  Then \(Z_0\) has rank \(s\). Choose three distinct
  indices \(a,b,c\in\{1,\ldots,s\}\). Let
  \[
    A\coloneq E_{ac}+E_{ca},
    \qquad
    B\coloneq E_{cb}+E_{bc},
  \]
  which are symmetric zero-diagonal matrices. Define tangent vectors
  \(U,V\in\Tz{Z_0}\Mss\) by taking their first \(s\) rows to be the rows
  of \(A\) and \(B\), respectively, and their remaining rows to be zero:
  \[
    U\coloneq
    \begin{pmatrix}
      A\\
      0
    \end{pmatrix},
    \qquad
    V\coloneq
    \begin{pmatrix}
      B\\
      0
    \end{pmatrix}.
  \]
  The zero-diagonal condition ensures tangency to the product of
  spheres.  Moreover,
  \[
    Z_0^\top U=A,
    \qquad
    Z_0^\top V=B.
  \]
  Since \(A\) and \(B\) are symmetric, the horizontality criterion gives
  $U,V\in\Hz{Z_0}$. Now
  \[
    S_{Z_0}(U,V)=U^\top V-V^\top U=AB-BA.
  \]
  Using \(E_{ij}E_{k\ell}=\delta_{jk}E_{i\ell}\), one obtains
  \[
    AB=E_{ab},
    \qquad
    BA=E_{ba}.
  \]
  Thus
  \[
    S_{Z_0}(U,V)=E_{ab}-E_{ba}\ne0.
  \]
  The curvature formula for horizontal vectors gives
  \[
    (\Omega^\omega)_{Z_0}(U,V)
    =
    2\mathcal L_{E_{Z_0}}^{-1}\bigl(S_{Z_0}(U,V)\bigr).
  \]
  Since \(\mathcal L_{E_{Z_0}}\) is an isomorphism on
  \(\so(s)\) (\cref{lem:sylvester-invertible}), this
  curvature value is nonzero. Hence the mechanical horizontal
  distribution is not integrable. By
  \cref{prop:curvature-obstruction-polarity}, the action is not polar
  on the principal stratum.

  Finally, let
  \[
    \widehat Z_0\coloneq \iks(Z_0),
    \qquad
    \widehat U\coloneq \dd\iks(Z_0)[U]=(U,0),
    \qquad
    \widehat V\coloneq \dd\iks(Z_0)[V]=(V,0).
  \]
  By \cref{lem:padding-preserves-horizontal-lifts},
  \(\widehat U\) and \(\widehat V\) are horizontal for the
  \(O(k)\)-action on \(\Mks\).

  By \cref{lem:padding-isometric-embedding}, the padding map
  \(\iks:M_s\to M_k\) is a totally geodesic Riemannian
  isometric embedding. Since \(\Mss\) is open in \(M_s\), any
  geodesic \(\gamma\) of \(\iks(\Mss)\) is an ambient
  geodesic in \(M_k\) whose image lies in \(\Mks\).
  Locally, \(\gamma\) minimizes length among all curves in \(M_k\),
  and hence also among curves constrained to \(\Mks\).
  Thus \(\gamma\) is a geodesic of \(\Mks\), so the restricted
  padding map to $\Mss$ is totally geodesic in $\Mks$ and thus
  preserves sectional curvature:
  \[
    \sec_{\Mks}(\widehat U,\widehat V)
    =
    \sec_{\Mss}(U,V).
  \]
  Moreover, by \cref{thm:fixed-rank-riemannian-reduction}, the induced
  map on the quotient strata is a Riemannian isometry, and hence
  \[
    \sec_{\Qks}
    \bigl(
      \dd\pi_k(\widehat Z_0)[\widehat U],
      \dd\pi_k(\widehat Z_0)[\widehat V]
    \bigr)
    =
    \sec_{\Qss}
    \bigl(
      \dd\pi_s(Z_0)[U],
      \dd\pi_s(Z_0)[V]
    \bigr).
  \]

  Let \(A^{(s)}\) and \(A^{(k)}\) denote the corresponding O'Neill
  tensors. Applying O'Neill's formula in the reduced and padded models
  and using \(\|\widehat U\wedge\widehat V\|_F = \|U\wedge V\|_F\), we obtain
  \[
    \|A^{(k)}_{\widehat U}\widehat V\|_F
    =
    \|A^{(s)}_UV\|_F.
  \]
  Since \(A^{(s)}_UV\neq0\), it follows that \(A^{(k)}_{\widehat
  U}\widehat V\neq0\). Thus the horizontal distribution on \(\Mks\)
  is not integrable.
  By \cref{prop:curvature-obstruction-polarity}, the \(O(k)\)-action on
  the rank-\(s\) stratum is not polar.

\end{proof}

\begin{remark}[Low-rank behaviour]
  \label{rem:low-rank-polarity}
  For \(s=1\), the structure group \(O(1)\) is discrete and the Lie
  algebra is zero. For \(s=2\), every row tangent space
  \(\Tz{z_i}\mathbb S^1\) is
  one-dimensional. Hence, for any tangent vectors \(U,V\in\Tz{Z}\Mrk{2}{2}\),
  the row vectors \(u_i\) and \(v_i\) are linearly dependent for
  every \(i\), and therefore
  \[
    u_i^\top v_i-v_i^\top u_i=0.
  \]
  Summing over \(i\), one obtains
  \[
    S_Z(U,V)=U^\top V-V^\top U=0.
  \]
  Thus the curvature formula gives $\Omega^\omega=0$ in rank \(2\).
  The curvature obstruction to polarity starts precisely in rank \(s\ge3\).
\end{remark}

\section{Holonomy of the mechanical connection}
\label{sec:holonomy-mechanical-connection}

\subsection{Classification of the holonomy groups}
\label{subsec:holonomy-mechanical-connection}

We now compute the holonomy group of the reduced mechanical
connection \(\omega\) of \cref{def:reduced-mechanical-connection}.
By fixed-rank reduction, the
answer depends only on the pair \((n,s)\), and not on the ambient
factor dimension \(k\). Throughout this section, paths and loops are
assumed to be continuous and piecewise \(C^1\).

\begin{definition}[Holonomy, restricted holonomy, and monodromy]
\label{def:holonomy-restricted-monodromy}
For \(Z\in\Mss\),
the \emph{holonomy group}
\[
  \Hol_Z(\omega)\subset O(s)
\]
is the subgroup consisting of all elements \(R\in O(s)\) such that
there exists a loop
\[
  \bar\gamma:[0,1]\to \Qss,
  \qquad
  \bar\gamma(0)=\bar\gamma(1)=\pi_s(Z),
\]
whose horizontal lift starting at \(Z\) ends at \(ZR\).
The \emph{restricted holonomy group}, denoted by
\(\Hol_Z(\omega)^0\), is obtained by restricting to
loops homotopic to the constant loop through loops with the same
base point. It is the identity component of the holonomy Lie group
\cite[Chap.~II, Theorem~4.2]{KobayashiNomizu1963}. We denote its Lie algebra by
\[
  \mathfrak{hol}_Z(\omega)\subset\so(s).
\]
The discrete quotient
\[
  \Hol_Z(\omega)/\Hol_Z(\omega)^0
\]
is called the \emph{monodromy group} of the connection. It records
holonomy modulo the contribution of contractible loops. For a flat
connection, restricted holonomy is trivial, so the monodromy group
is the full holonomy group. The holonomy element of a loop then
depends only on its based homotopy class.
\end{definition}

\begin{definition}[Oriented quotient and double cover]
\label{def:oriented-quotient-cover}
For \(1\le s\le n\), the \emph{oriented quotient} is
\[
  \widetilde Q_s\coloneq \Mss/SO(s),
  \qquad
  \widetilde\pi_s:\Mss\longrightarrow\widetilde Q_s,
  \quad Z\longmapsto[Z]_{SO(s)}.
\]
Its natural projection to the unoriented quotient is
\[
  \rho:\widetilde Q_s\longrightarrow\Qss,
  \qquad [Z]_{SO(s)}\longmapsto[Z]_{O(s)},
  \qquad \pi_s=\rho\circ\widetilde\pi_s.
\]
Since the \(O(s)\)-action is free and \(SO(s)\) is normal of
index two, \(\rho\) is a two-sheeted covering. Its fibres distinguish
the two classes in
\(O(s)/SO(s)\simeq\{\pm1\}\).
The \emph{monodromy of this covering} assigns to a based loop the
permutation of the two points of its fibre obtained by lifting the
loop from each of them.
\end{definition}

The main holonomy theorem is the following.

\begin{theorem}[Holonomy of the mechanical connection]
  \label{thm:holonomy-all-rank-strata}
  Let \(1\le s\le n\), and consider the reduced principal \(O(s)\)-bundle
  \[
    \pi_s : \Mss \coloneq  \{Z\in(\mathbb S^{s-1})^n:\rank
    Z=s\}\longrightarrow \Qss\coloneq \Mss/O(s)
  \]
  endowed with the mechanical connection \(\omega\). Then, for every
  \(Z\in \Mss\), one has
  \[
    \begin{array}{c|c|c|c}
      \text{Case} &
      \mathfrak{hol}_Z(\omega) &
      \Hol_Z(\omega)^0 &
      \Hol_Z(\omega)
      \\
      \hline
      s=1 & \{0\} & {\{\I_s\}} & {\{\I_s\}}
      \\[1mm]
      s=2,\ n=2 & \{0\} & \{\I_s\} & \{\I_s\}
      \\[1mm]
      s=2,\ n>2
        &
        \{0\}
        &
        \{\I_s\}
        &
        \simeq D_n
      \\[1mm]
      s\ge3,\ n=s & \so(s) & SO(s) & SO(s)
      \\[1mm]
      s\ge3,\ n>s & \so(s) & SO(s) & O(s)
    \end{array}
  \]
  For \(s=2\), the connection is flat. Here \(D_n\) denotes the
dihedral group of order \(2n\), isomorphic to the subgroup of
\(O(2)\) generated by
\[
  C_n\coloneq \{R_{2\pi j/n}:j=0,\ldots,n-1\}\subset SO(2)
\]
and any reflection, where
\[
  R_\alpha\coloneq 
  \begin{pmatrix}
    \cos\alpha&\sin\alpha\\
    -\sin\alpha&\cos\alpha
  \end{pmatrix}
\]
adds \(\alpha\) to the angle of a row vector under the right action.
\end{theorem}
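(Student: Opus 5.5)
I would split the argument by rank and treat the Lie-algebra and discrete parts separately.

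\emph{Rank one.} For \(s=1\), \(\so(1)=\{0\}\). Moreover \(\Qrk11\) and \(\Crk1n\) are discrete by \cref{rem:rank-one-stratum}, so every loop is constant and the holonomy is trivial.

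\emph{Restricted holonomy.} For \(s\ge2\) I would invoke Ambrose--Singer. The holonomy algebra \(\mathfrak{hol}_Z(\omega)\) is spanned by conjugates \(R^\top\Theta_{Z'}(U,V)R\) of curvature values, taken at points \(Z'\) reachable from \(Z\) by horizontal curves. For \(s=2\), \cref{rem:low-rank-polarity} gives \(\Omega^\omega=0\), hence \(\mathfrak{hol}=0\) and \(\Hol^0\) is trivial. For \(s\ge3\), \cref{lem:dJ-explicit} shows that curvature values at a point \(Z'\) are \(2\mathcal L_{E_{Z'}}^{-1}(U^\top V-V^\top U)\) over horizontal pairs. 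I would use the test point \(Z_0\) of \cref{thm:non-polarity-rank-at-least-three}, where \(E_{Z_0}\) is diagonal. There the construction with \(A=E_{ac}+E_{ca}\), \(B=E_{cb}+E_{bc}\) produces curvature proportional to \(E_{ab}-E_{ba}\) for every pair \(a\neq b\), since we may pick a third index \(c\) because \(s\ge3\). So curvature values at \(Z_0\) already span \(\so(s)\). By \cref{thm:path-connectedness-fixed-rank-strata} and the standard fact that holonomy at two points of one bundle are conjugate, we get \(\mathfrak{hol}_Z=\so(s)\) and \(\Hol_Z^0=SO(s)\) everywhere.

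\emph{Full holonomy for \(s\ge3\).} Here \(\Hol_Z\) lies between \(SO(s)\) and \(O(s)\). It equals \(O(s)\) exactly when some loop's horizontal lift reverses orientation, that is, when the double cover \(\rho:\widetilde Q_s\to\Qss\) of \cref{def:oriented-quotient-cover} has nontrivial monodromy. Equivalently, \(\widetilde Q_s\) is connected, i.e.\ \(\Mss\) is connected: \(\Hol_Z\) contains a reflection iff \(Z\) and \(ZJ\) (with \(J\) a reflection) lie in one path component of \(\Mss\). I would justify this by noting that \(\Hol^0=SO(s)\) makes any path in \(\Mss\) homotopic, up to vertical adjustment, to a horizontal lift followed by an \(SO(s)\) element.

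For \(n=s\), \(\det Z\) is a continuous nonvanishing function on \(\Mss\). It separates \(Z\) from \(ZJ\), so \(\Hol=SO(s)\). For \(n>s\), I would build an explicit path in \(\Mss\) from \(Z_0\) (rows \(e_1,\dots,e_s\), then \(e_1,\dots\)) to a reflected copy. Rotate the extra row \(z_{s+1}\) from \(e_1\) to \(-e_1\) while keeping rows \(1,\dots,s\) fixed; rank stays \(s\). Then swap the roles of row \(1\) and row \(s+1\): move row \(1\) from \(e_1\) to \(-e_1\), with rows \(2,\dots,s\) and row \(s+1=-e_1\) spanning throughout. Composing suitable such moves connects \(Z_0\) to \(Z_0\diag(-1,1,\dots,1)\) up to a permutation of extra rows. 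Making this path concrete is the main obstacle I expect.

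\emph{Rank two.} The connection is flat, so \(\Hol_Z\) is the image of a homomorphism \(\pi_1(\Qrk22{})\to O(2)\). I would compute it via angles. Write rows \(z_i=(\cos\theta_i,\sin\theta_i)\). Horizontality \(Z^\top W\) symmetric reduces to \(\sum_i\dot\theta_i=0\), since \(S\) vanishes and the only skew condition is the sum of angular velocities. Right rotation by \(R_\alpha\) adds \(\alpha\) to all \(\theta_i\), so the class is determined by the differences \(\theta_i\) mod the diagonal. The total \(\sum\theta_i\) is conserved along horizontal lifts.

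For a loop whose lift ends at \(ZR_\alpha\), we have \(\sum\theta_i\) changed by \(n\alpha\) modulo \(2\pi\mathbb Z\) contributions from individual windings. Conservation then forces \(n\alpha\in2\pi\mathbb Z\), so rotations lie in \(C_n\). All of \(C_n\) is realized by winding a single row once around while the others stay put, using a detour that avoids rank one (possible when \(n>2\)). A reflection is realized for \(n>2\) by a path from \(Z\) to \(ZJ\) inside \(\Mrk22\), analogous to the \(s\ge3\) connectivity argument. Hence \(\Hol=D_n\). For \(n=2\), with \(\Mrk22=\{\theta_1-\theta_2\notin\pi\mathbb Z\}\), orientation is fixed and a single winding would pass through rank one, so the holonomy is trivial. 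The delicate bookkeeping here is checking that windings are achievable in \(\Mrk22\).
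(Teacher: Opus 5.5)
Your proposal is correct and follows essentially the same route as the paper. It uses Ambrose--Singer at the test point $Z_0$ to get $\Hol^0=SO(s)$ for $s\ge3$, and the oriented double cover together with either the sign of $\det Z$ (for $n=s$) or an explicit rank-preserving path from $Z_0$ to $Z_0\diag(-1,1,\ldots,1)$ (for $n>s$). In rank two it uses flatness plus the angular horizontality condition $\sum_i\dot\vartheta_i=0$ to confine the rotational holonomy to $C_n$, a single-row winding to generate $C_n$, and the same path to produce a reflection.

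The path you flag as the main obstacle is exactly the paper's \cref{lem:orientation-reversing-factor-path}. Your ordering of the row rotations works just as well and ends exactly at $Z_0\diag(-1,1,\ldots,1)$, with no row permutation needed. For $n=s=2$ the paper simply uses that $\Qrk{2}{2}\simeq(0,\pi)$ is contractible, which is equivalent to your winding argument.
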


The proof separates the connected holonomy from the possible
orientation-reversing elements. In ranks \(s\ge3\), curvature and
the Ambrose--Singer theorem determine the identity component; an
oriented double cover then determines the full group. In rank two,
the connection is flat, and relative-angle coordinates allow us to
compute its finite holonomy explicitly. The three propositions below
establish these claims.

\begin{proposition}[Restricted holonomy in rank at least three]
  \label{prop:restricted-holonomy-rank-at-least-three}
  Assume \(s\ge3\). Then, for every \(Z\in \Mss\),
  \[
    \mathfrak{hol}_Z(\omega)=\so(s), \quad \textnormal{and therefore} \quad \Hol_Z(\omega)^0=SO(s).
  \]
\end{proposition}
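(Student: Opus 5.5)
The plan is to use the Ambrose--Singer theorem: \(\mathfrak{hol}_Z(\omega)\) is the span of all values \(R^{-1}\Theta_{Z'}(U,V)R\), where \(Z'=\mathrm{PT}(Z)\cdot R\) is any point reachable from \(Z\) by horizontal curves and \(U,V\in\Hz{Z'}\). Since \(\mathfrak{hol}_Z(\omega)\subset\so(s)\), it suffices to produce enough curvature values to span \(\so(s)\). By \cref{thm:path-connectedness-fixed-rank-strata}, \(\Qss\) is connected. Hence every fibre is reached by horizontal lifts, and the holonomy bundle through \(Z\) meets every fibre.

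First I will reduce to the span of curvature values at a single point of the holonomy bundle. Equivariance gives \(\Theta_{ZR}(UR,VR)=R^\top\Theta_Z(U,V)R\). Therefore the set of curvature values over a whole fibre, transported back to \(Z\), is conjugation-invariant. The goal is to show that the span of curvature values at one convenient point \(Z_0\), say the point used in \cref{thm:non-polarity-rank-at-least-three} (rows \(e_1,\ldots,e_s\), then copies of \(e_1\)), is all of \(\so(s)\).

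At \(Z_0\), the rows \(e_a\) occur, so \(E_{Z_0}\) is diagonal with positive entries. By \cref{lem:spectral-sylvester-curvature-term}, \(\mathcal L_{E_{Z_0}}^{-1}\) then acts on \(E_{ab}-E_{ba}\) by the scalar \((\lambda_a+\lambda_b)^{-1}\). The construction in \cref{thm:non-polarity-rank-at-least-three} gives horizontal \(U,V\) with \(S_{Z_0}(U,V)=E_{ab}-E_{ba}\) for any ordered pair \(a\neq b\): choose a third index \(c\), which exists because \(s\ge3\). So \(\Theta_{Z_0}(U,V)\) is a nonzero multiple of \(E_{ab}-E_{ba}\). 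These elements, over all pairs \(a<b\), form a basis of \(\so(s)\).

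Next, let \(Z_1\) be the point of the holonomy bundle of \(Z\) lying over \([Z_0]\), so \(Z_1=Z_0R\) for some \(R\). The curvature values at \(Z_1\) are \(R^\top(\cdot)R\) applied to all of \(\so(s)\), which is again \(\so(s)\). Ambrose--Singer then yields \(\mathfrak{hol}_Z(\omega)=\so(s)\). The restricted holonomy group is connected with Lie algebra \(\so(s)\), so \(\Hol_Z(\omega)^0=SO(s)\).

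The main obstacle I expect is bookkeeping in Ambrose--Singer: the curvature must be evaluated at points of the holonomy bundle, not at arbitrary points of \(\Mss\). This is resolved by the equivariance and conjugation-invariance remark above, together with connectedness of the base. The only remaining checks are that the vectors \(U,V\) from the non-polarity proof are tangent and horizontal at \(Z_0\) for every pair \((a,b)\). Both follow as there from \(Z_0^\top U=A\) being symmetric with zero diagonal.
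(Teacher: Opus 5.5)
Your proposal is correct and follows essentially the same route as the paper: the same base point $Z_0$, the same horizontal vectors built from $A=E_{ac}+E_{ca}$ and $B=E_{cb}+E_{bc}$ with $[A,B]=E_{ab}-E_{ba}$, invertibility of the Sylvester operator, and Ambrose--Singer. The only difference is cosmetic: you apply Ambrose--Singer directly at $Z$, using a point $Z_0R$ of its holonomy bundle. The paper instead proves the claim at $Z_0$ and then transfers it to an arbitrary $Z$ by horizontal transport and conjugation of holonomy groups.
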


\begin{proof}
  We first prove the statement at the base point
  \[
    Z_0=
    \begin{pmatrix}
    1 & 0 & \cdots & 0\\
    0 & 1 & \ddots & \vdots\\
    \vdots & \ddots & \ddots & 0\\
    0 & \cdots & 0 & 1\\[2pt]
    1 & 0 & \cdots & 0\\
    \vdots & \vdots & & \vdots\\
    1 & 0 & \cdots & 0
  \end{pmatrix}=
    \begin{pmatrix}
      e_1\\
      e_2\\
      \vdots\\
      e_s\\
      e_1\\
      \vdots\\
      e_1
    \end{pmatrix}
    \in \Mss.
  \]
  The first \(s\) rows of \(Z_0\) form \(\I_s\), and its remaining
  \(n-s\) rows are equal to \(e_1\); these last rows are absent when
  \(n=s\). As above, write
  \[
    E_{Z_0}=Z_0^\top Z_0
    =
    \diag(n-s+1,1,\ldots,1),
  \]
  so \(\mathcal L_{E_{Z_0}}\) is an automorphism of
  \(\so(s)\) (see \cref{lem:sylvester-invertible}).

  Let \(A,B\) range over symmetric zero-diagonal matrices, and define horizontal
  vectors
  \[
    U_A\coloneq
    \begin{pmatrix}
      A\\
      \mathbf{0}_{(n-s)\times s}
    \end{pmatrix}
    \in \Hz{Z_0},
    \qquad
    U_B\coloneq
    \begin{pmatrix}
      B\\
      \mathbf{0}_{(n-s)\times s}
    \end{pmatrix}
    \in \Hz{Z_0}.
  \]
  The zero diagonal ensures tangency to the product of spheres.
  Horizontality follows from
  \[
    Z_0^\top U_A=A=A^\top,
    \qquad
    Z_0^\top U_B=B=B^\top.
  \]
  For such vectors,
  \[
    S_{Z_0}(U_A,U_B)=U_A^\top U_B-U_B^\top U_A=AB-BA=[A,B].
  \]
  The curvature formula in \cref{eq:curvature-form-sylvester-horizontal} gives
  \begin{equation}
  \label{eq:holonomy-curvature-basepoint}
    (\Omega^\omega)_{Z_0}(U_A,U_B)
    =
    2\mathcal L_{E_{Z_0}}^{-1}([A,B]).
\end{equation}

  We claim that the commutators \([A,B]\), with \(A,B\) symmetric and
  zero diagonal, span \(\so(s)\). Indeed, for distinct
  indices \(a,b,c\), set
  \[
    A\coloneq E_{ac}+E_{ca},
    \qquad
    B\coloneq E_{cb}+E_{bc}.
  \]
  Here \(E_{ij}\) denotes the elementary matrix with a single nonzero
  entry, equal to \(1\), in position \((i,j)\). Then
  \[
    [A,B]=E_{ab}-E_{ba}.
  \]
  Since, for distinct \(a, b\), the matrices \(E_{ab}-E_{ba}\) span
  \(\so(s)\), the claim follows. Because \(\mathcal
  L_{E_{Z_0}}^{-1}\) is an automorphism, the curvature values at
  \(Z_0\) span all of \(\so(s)\).

  By the Ambrose--Singer holonomy theorem
  (see for instance \cite[Chap.~II, Theorem~8.1]{KobayashiNomizu1963}),
  the holonomy Lie algebra is generated by curvature values
  transported along horizontal curves. In particular, it contains the
  span of the curvature values at \(Z_0\). Hence
  \[
    \so(s) \subseteq \mathfrak{hol}_{Z_0}(\omega) \implies \mathfrak{hol}_{Z_0}(\omega)=\so(s).
  \]

  Now let \(Z\in \Mss\) be arbitrary. Since \(\Qss\) is path-connected for
  \(s\ge2\) (\cref{thm:path-connectedness-fixed-rank-strata}), choose
  a path in the base joining \(\pi_s(Z_0)\) to \(\pi_s(Z)\), and lift
  it horizontally from \(Z_0\). Its endpoint lies in the fibre over
  \(\pi_s(Z)\), hence is of the form \(ZR\) for some \(R\in O(s)\).
  Horizontal transport identifies the holonomy groups at \(Z_0\) and
  \(ZR\), that is,
  \(\Hol_{Z_0}(\omega)=\Hol_{ZR}(\omega)\). On the other hand, points in the same fibre related by the
  right action have conjugate
  holonomy groups:
  \[
    \Hol_{ZR}(\omega)
    =
    R^{-1}\Hol_{Z}(\omega)R.
  \]
  Therefore
  \[
    \mathfrak{hol}_{Z}(\omega)
    =
    R\,\mathfrak{hol}_{Z_0}(\omega)\,R^{-1}
    =
    R\,\so(s)\,R^{-1}
    =
    \so(s).
  \]
  Therefore, the identity component of
  \(\Hol_Z(\omega)\) is the connected Lie subgroup of
  \(O(s)\) with Lie algebra \(\so(s)\), namely \(\Hol_{Z}(\omega)^0 = SO(s)\).
\end{proof}

The following path construction will be used to detect
orientation-reversing holonomy in every rank \(s\ge2\).

\begin{lemma}[A path to an orientation-reversed factor]
  \label{lem:orientation-reversing-factor-path}
  Let \(2\le s<n\), and let \(e_1,\ldots,e_s\) denote the
  standard row vectors of \(\R^s\). Set
  \[
    Z_0\coloneq
    \begin{pmatrix}
      e_1\\
      \vdots\\
      e_s\\
      e_1\\
      \vdots\\
      e_1
    \end{pmatrix}
    \in\Mss,
    \qquad
    R_-\coloneq \diag(-1,1,\ldots,1),
  \]
  where the first \(s\) rows of \(Z_0\) form \(\I_s\) and the
  remaining \(n-s\) rows equal \(e_1\). There exists a continuous
  piecewise smooth path \(\beta:[0,1]\to\Mss\) such that
  \[
    \beta(0)=Z_0,
    \qquad
    \beta(1)=Z_0R_-.
  \]
\end{lemma}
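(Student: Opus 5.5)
The plan is to build \(\beta\) explicitly by moving rows one at a time along great circles, keeping the first \(s\) rows fixed at \(e_1,\dots,e_s\) whenever possible so that the rank stays \(s\). The target \(Z_0R_-\) has rows \(-e_1,e_2,\dots,e_s,-e_1,\dots,-e_1\). It differs from \(Z_0\) only in row \(1\) (from \(e_1\) to \(-e_1\)) and in rows \(s+1,\dots,n\) (each from \(e_1\) to \(-e_1\)). The extra rows \(s+1,\dots,n\) exist because \(s<n\), and they are what make the construction possible.

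The path is a concatenation of three stages.

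\emph{Stage 1.} Move row \(s+1\) from \(e_1\) to \(-e_1\) along the great semicircle \(t\mapsto \cos(\pi t)e_1+\sin(\pi t)e_2\), with all other rows fixed. Rows \(1,\dots,s\) remain \(e_1,\dots,e_s\), so every intermediate matrix has rank \(s\) and stays in \(\Mss\). The same argument handles rows \(s+2,\dots,n\), taken one at a time, since rows \(1,\dots,s\) always remain the identity block.

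\emph{Stage 2.} Row \(1\) must now go from \(e_1\) to \(-e_1\). If we moved it directly, the first \(s\) rows would pass through \(e_2\) and lose rank. So we first use row \(s+1\), which now equals \(-e_1\), as a substitute spanning vector. With row \(s+1=-e_1\) fixed, the row span always contains \(e_1,e_2,\dots,e_s\) (using rows \(2,\dots,s\) and row \(s+1\)). Row \(1\) can therefore travel along any great semicircle from \(e_1\) to \(-e_1\), for instance through \(e_2\), and the rank stays \(s\).

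\emph{Stage 3.} This stage is needed only if the concatenation of Stages 1 and 2 has not already produced exactly the target. In the order given above, after Stage 1 all of rows \(s+1,\dots,n\) equal \(-e_1\), and after Stage 2 row \(1\) equals \(-e_1\). That is exactly \(Z_0R_-\), so Stage 3 is empty.

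Each stage is smooth, since each row follows a great circle with constant speed. The concatenation is therefore continuous and piecewise smooth.

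The main point to check is the rank condition at each moment. At every time, either rows \(1,\dots,s\) equal \(\I_s\), or rows \(2,\dots,s\) together with a row equal to \(-e_1\) span \(\R^s\). In both cases \(\rank=s\), so \(\beta(t)\in\Mss\) throughout. The hypothesis \(s<n\) is used precisely to guarantee the existence of row \(s+1\).
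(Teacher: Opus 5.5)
Your proposal is correct and is essentially the paper's argument with the order of moves reversed. The paper first rotates row \(1\) through \(e_2\) to \(-e_1\), with rank kept by the extra rows still equal to \(e_1\), and then flips the extra rows one at a time while the first \(s\) rows equal \(R_-\). You flip the extra rows first, while the first block is \(\I_s\), and then rotate row \(1\), with rank kept by an extra row already equal to \(-e_1\); your Stage~3 is superfluous.
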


\begin{proof}
Write \(\mathbf 1_m\) for the column vector of \(m\) ones and
\(\mathbf{0}_{a\times b}\) for the \(a\times b\) zero matrix.
A block with zero rows contributes no actual row, and a block with
zero columns contributes no actual column. First rotate the first row
along \(u(t)\coloneq \cos(\pi t)e_1+\sin(\pi t)e_2\), keeping all other
rows fixed:
\begin{equation}
  \label{eq:orientation-reversing-first-row}
  \beta_0(t)\coloneq 
  \begin{pmatrix}
    \cos(\pi t) & \sin(\pi t) & \mathbf{0}_{1\times(s-2)}\\
    0 & 1 & \mathbf{0}_{1\times(s-2)}\\
    \mathbf{0}_{(s-2)\times1} & \mathbf{0}_{(s-2)\times1} & \I_{s-2}\\[2pt]
    \mathbf 1_{n-s} & \mathbf{0}_{(n-s)\times1}
      & \mathbf{0}_{(n-s)\times(s-2)}
  \end{pmatrix},
  \qquad t\in[0,1].
\end{equation}
Thus \(\beta_0(0)=Z_0\). The rows \(2,\ldots,s\) of
\(\beta_0(t)\) remain equal to \(e_2,\ldots,e_s\), while
its last \(n-s\) rows remain equal to \(e_1\).
Since \(n>s\), these fixed rows contain a basis of
\(\R^s\), so \(\rank\beta_0(t)=s\) for every
\(t\in[0,1]\).

Next, for \(j=1,\ldots,n-s\), rotate the \(j\)-th extra row along \( u(t) \),
keeping the previously rotated rows equal to \(-e_1\):
\begin{equation}
  \label{eq:orientation-reversing-extra-rows}
  \beta_j(t)\coloneq 
  \begin{pmatrix}
    -1 & 0 & \mathbf{0}_{1\times(s-2)}\\
    0 & 1 & \mathbf{0}_{1\times(s-2)}\\
    \mathbf{0}_{(s-2)\times1} & \mathbf{0}_{(s-2)\times1} & \I_{s-2}\\[2pt]
    -\mathbf 1_{j-1} & \mathbf{0}_{(j-1)\times1}
      & \mathbf{0}_{(j-1)\times(s-2)}\\
    \cos(\pi t) & \sin(\pi t) & \mathbf{0}_{1\times(s-2)}\\
    \mathbf 1_{n-s-j} & \mathbf{0}_{(n-s-j)\times1}
      & \mathbf{0}_{(n-s-j)\times(s-2)}
  \end{pmatrix},
  \qquad t\in[0,1].
\end{equation}
The first \(s\) rows remain equal to \(R_-\), so the rank
remains \(s\) throughout. Moreover,
\[
  \beta_j(0)=\beta_{j-1}(1)
  \quad (1\le j\le n-s),
  \qquad
  \beta_{n-s}(1)=Z_0R_-.
\]
Concatenating these paths and rescaling the parameter interval
gives a continuous piecewise smooth path from \(Z_0\) to \(Z_0R_-\):
\[
  \beta:[0,1]\to\Mss,
  \qquad
  \beta(0)=Z_0,
  \qquad
  \beta(1)=Z_0R_-.
\]
(Formally, we set \(N\coloneq n-s+1\) and define \(\beta(t)\coloneq \beta_j(Nt-j)\) for \(t\in[j/N,(j+1)/N]\) and \(j=0,\ldots,N-1\)).
\end{proof}

\begin{proposition}[Full holonomy in rank at least three]
  \label{prop:full-holonomy-rank-at-least-three}
  Assume \(s\ge3\).  Then, for every \(Z\in \Mss\),
  \[
    \Hol_Z(\omega)
    =
    \begin{cases}
      SO(s), & n=s,\\[1mm]
      O(s), & n>s.
    \end{cases}
  \]
\end{proposition}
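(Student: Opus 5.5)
Since \(\Hol_Z(\omega)^0=SO(s)\) by \cref{prop:restricted-holonomy-rank-at-least-three}, and \(SO(s)\) has index two in \(O(s)\), the full holonomy group is either \(SO(s)\) or \(O(s)\). The plan is to decide which by asking whether some loop in \(\Qss\) has a horizontal lift that ends at \(ZR\) with \(\det R=-1\). This is exactly the question of whether the monodromy of the double cover \(\rho:\widetilde Q_s\to\Qss\) of \cref{def:oriented-quotient-cover} is nontrivial.

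The first step is a general criterion: \(\Hol_Z(\omega)=O(s)\) if and only if the two points of \(\widetilde\pi_s^{-1}\)-type fibre \(\{[Z]_{SO(s)},[ZR_-]_{SO(s)}\}\) lie in the same path component of \(\widetilde Q_s\), i.e.\ iff \(Z\) and \(ZR_-\) are joined by a path in \(\Mss\) modulo \(SO(s)\). One direction is direct. If \(\beta\) is a piecewise smooth path in \(\Mss\) from \(Z\) to \(ZR_-\), then \(\pi_s\circ\beta\) is a loop. Its horizontal lift \(\widetilde\beta\) from \(Z\) satisfies \(\widetilde\beta(t)=\beta(t)g(t)\) with \(g\) continuous in \(O(s)\) and \(g(0)=\I_s\), so \(g(t)\in SO(s)\) for all \(t\). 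Hence \(\widetilde\beta(1)=ZR_-g(1)\), and the holonomy element \(R_-g(1)\) has determinant \(-1\). Conversely, if every holonomy element lies in \(SO(s)\), then following any loop lift shows the endpoint stays in the \(SO(s)\)-orbit. So orientation-reversing holonomy exists iff there is a path in \(\Mss\) from \(Z\) to \(ZR\) for some \(R\) with \(\det R=-1\). Any such path can be composed with a path in the connected group \(SO(s)\) to end at \(ZR_-\).

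The second step treats \(n>s\). Here I invoke \cref{lem:orientation-reversing-factor-path} at the base point \(Z_0\), which gives the path \(\beta\), so \(\Hol_{Z_0}(\omega)=O(s)\). To move to an arbitrary \(Z\), I use horizontal transport along a path in the connected base (\cref{thm:path-connectedness-fixed-rank-strata}) together with conjugation invariance, exactly as in the proof of \cref{prop:restricted-holonomy-rank-at-least-three}. Since \(O(s)\) is invariant under conjugation, this gives \(\Hol_Z(\omega)=O(s)\).

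The third step, the case \(n=s\), is where I expect the main obstacle, since it requires showing that no such path exists. When \(n=s\), \(Z\) is a square invertible matrix, so \(\det\) is a continuous nonvanishing function on \(\Mss\). Its sign is therefore constant along any path. But \(\det(ZR_-)=-\det Z\), so no path in \(\Mss\) joins \(Z\) to \(ZR_-\), or to any \(ZR\) with \(\det R=-1\). By the criterion of the first step, every holonomy element lies in \(SO(s)\). Combined with \(\Hol_Z(\omega)^0=SO(s)\), this gives \(\Hol_Z(\omega)=SO(s)\). The delicate point is making the first step's converse precise. I would argue directly that a horizontal lift of a loop is itself a path in \(\Mss\) from \(Z\) to \(ZR\), and then apply the determinant sign.
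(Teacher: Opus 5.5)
Your proposal is correct and follows the paper's proof. Both arguments reduce the question to orientation-reversing holonomy detected by the oriented double cover, use the sign of \(\det\) for \(n=s\), and use the path of \cref{lem:orientation-reversing-factor-path} together with conjugacy over the connected base for \(n>s\). The only differences are cosmetic: you compare \(\beta\) with its horizontal lift through a continuous gauge \(g(t)\in O(s)\) (continuous because \(g(t)=E_{\beta(t)}^{-1}\beta(t)^\top\widetilde\beta(t)\)) and apply the determinant sign directly on \(\Mss\), whereas the paper invokes uniqueness of path lifts through \(\rho\) and the sign function on \(\widetilde Q_s\).
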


\begin{proof}
  By \cref{prop:restricted-holonomy-rank-at-least-three}, \(\Hol_Z(\omega)^0=SO(s)\). Since \(\Hol_Z(\omega)\subset O(s)\), the only possibilities are
  \[
    \Hol_Z(\omega)=SO(s)
    \qquad\text{or}\qquad
    \Hol_Z(\omega)=O(s).
  \]
  It remains to decide whether orientation-reversing holonomy occurs.

  Consider the double cover
  \(\rho:\widetilde Q_s\to\Qss\) of
  \cref{def:oriented-quotient-cover}.

  Let \(Z\in \Mss\), let \(\bar\gamma\) be a loop in \(\Qss\) based at
  \(\pi_s(Z)\), and let \(\widehat\gamma\) be its horizontal lift
  starting at \(Z\). If $\widehat\gamma(1)=ZR$, then
  \(R\in\Hol_Z(\omega)\). Projecting \(\widehat\gamma\)
  to \(\widetilde Q_s\) gives a lift of \(\bar\gamma\) through the
  covering \(\rho\),
  starting at \([Z]_{SO(s)}\) and ending at \([ZR]_{SO(s)}\).  Since
  \(Z\) has full
  column rank,
  \begin{equation}
  \label{eq:holonomy-oriented-cover-criterion}
    [ZR]_{SO(s)}=[Z]_{SO(s)}
    \quad\Longleftrightarrow\quad
    R\in SO(s).
\end{equation}
  Thus the determinant of each holonomy element records the
monodromy of the corresponding loop. More explicitly, since \(\Hol_Z(\omega)^0=SO(s)\), the map
\(R\,SO(s)\mapsto\det R\) gives the isomorphism
\begin{equation}
  \label{eq:holonomy-component-determinant}
  \Hol_Z(\omega)/
  \Hol_Z(\omega)^0
  \simeq
  \det\bigl(\Hol_Z(\omega)\bigr)
  \subset\{\pm1\}.
\end{equation}

  We now distinguish the two cases.

  \smallskip
  \noindent\emph{Case \(n=s\).}
  Here \(\Mss\) consists of invertible \(s\times s\) matrices with
  unit row norms.
  The function
  \[
    [Z]_{SO(s)}\longmapsto \sgn(\det Z)
  \]
  is well defined on \(\widetilde Q_s\), because right multiplication
  by an element of \(SO(s)\) preserves \(\det Z\).  It is continuous
  and takes the values \(\pm1\). Hence \(\widetilde Q_s\) is the disjoint union of the two open and closed subsets corresponding to positive and negative determinant. The sign of the determinant is constant along every continuous path in \(\widetilde Q_s\). Therefore a lift of a loop in \(\Qss\) cannot join points having opposite determinant signs. Thus every loop has trivial monodromy in the double cover. Therefore
  every holonomy element has determinant \(+1\), and we have \(\Hol_Z(\omega)\subset SO(s)\). Since the identity component is already \(SO(s)\), we get
  \[
    \Hol_Z(\omega)=SO(s).
  \]

  \smallskip
  \noindent\emph{Case \(n>s\).}
  Let \(Z_0\), \(R_-\), and \(\beta\) be as in
  \cref{lem:orientation-reversing-factor-path}. Then
  \(\pi_s(Z_0R_-)=\pi_s(Z_0)\), whereas
  \([Z_0R_-]_{SO(s)}\ne[Z_0]_{SO(s)}\), since
  \(\det R_-=-1\).

  Projecting \(\beta\) to the oriented quotient \(\widetilde Q_s\) gives a path \([\beta]_{SO(s)}\) from \([Z_0]_{SO(s)}\) to \([Z_0R_-]_{SO(s)}\).
  Projecting further by \(\rho:\widetilde Q_s\longrightarrow \Qss\) gives a loop
  \[
    \bar\gamma(t)\coloneq \rho([\beta(t)]_{SO(s)}), \qquad t \in [0,1],
  \]
  based at \(\pi_s(Z_0)\), because \(Z_0\) and \(Z_0R_-\) lie in the same
  \(O(s)\)-orbit. Since the lift of this loop starting at
  \([Z_0]_{SO(s)}\) ends at \([Z_0R_-]_{SO(s)}\ne[Z_0]_{SO(s)}\), the
  loop has nontrivial monodromy in the double cover \(\rho\). Let
  \(\widehat\gamma\) be the horizontal lift of \(\bar\gamma\) to
  \(\Mss\) starting at \(Z_0\), and write
  \[
    \widehat\gamma(1)=Z_0H,
    \qquad H\in O(s).
  \]
  Then \(H\in\Hol_{Z_0}(\omega)\). Moreover, the two curves
\[
  t\longmapsto[\beta(t)]_{SO(s)}
  \qquad\text{and}\qquad
  t\longmapsto[\widehat\gamma(t)]_{SO(s)}
\]
both project through \(\rho\) to the same loop \(\bar\gamma\),
and both start at \([Z_0]_{SO(s)}\). By uniqueness of path lifts
with prescribed initial point, they coincide:
\begin{equation}
  \label{eq:holonomy-cover-lift-agreement}
  [\beta(t)]_{SO(s)}
  =
  [\widehat\gamma(t)]_{SO(s)}
  \qquad\text{for every }t\in[0,1].
\end{equation}
In particular, they have the same endpoint. Hence
  \[
    [Z_0H]_{SO(s)}=[Z_0R_-]_{SO(s)}.
  \]
  Since \(Z_0\) has full column rank, this implies \(HR_-^{-1}\in SO(s)\). As \(R_-\notin SO(s)\), it follows that \(H\notin SO(s)\). Therefore
  \[
    \Hol_{Z_0}(\omega)\not\subset SO(s).
  \]
  Since its identity component is \(SO(s)\), and \(SO(s)\) has index
  two in \(O(s)\), we get
  \[
    \Hol_{Z_0}(\omega)=O(s).
  \]
  Finally, holonomy groups over the path-connected base \(\Qss\) are
  conjugate, so the same conclusion holds for every \(Z\in \Mss\).
\end{proof}

Before treating the low-rank holonomy cases, we use relative-angle
coordinates to exhibit a diffeomorphism between the oriented
rank-two quotient and the \((n-1)\)-torus punctured at the
\(2^{n-1}\) points of \(\{0,\pi\}^{n-1}\).

\begin{lemma}[Relative-angle model of the oriented rank-two quotient]
\label{lem:rank-two-oriented-quotient}
Let \(n\ge2\), \(\mathbb T\coloneq \R/(2\pi\mathbb Z)\), and
\(\widetilde Q_2\coloneq \Mrk{2}{2}/SO(2)\). Write the rows of
\(Z\in M_2=(\mathbb S^1)^n\) as
\(z_i=(\cos\theta_i,\sin\theta_i)\), \(i=1,\ldots,n\). The relative-angle map
\[
  \Phi:\widetilde Q_2\longrightarrow
  \mathbb T^{n-1}\setminus\{0,\pi\}^{n-1},
  \qquad
  [Z]_{SO(2)}\longmapsto
  (\theta_1-\theta_n,\ldots,\theta_{n-1}-\theta_n)
\]
is a diffeomorphism. Without the rank restriction, the same map
identifies \(M_2/SO(2)\) with \(\mathbb T^{n-1}\).
\end{lemma}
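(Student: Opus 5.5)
The plan is to establish the unrestricted statement first and then restrict to the rank-two locus. On \(M_2=(\mathbb S^1)^n\), right multiplication by \(R_\alpha\in SO(2)\) adds \(\alpha\) to every angle \(\theta_i\), by the convention recorded in \cref{thm:holonomy-all-rank-strata}. Relative angles \(\theta_i-\theta_n\) are therefore invariant, and the map \(\Phi\) is well defined on \(M_2/SO(2)\).

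\emph{Step 1: identify \(M_2/SO(2)\) with \(\mathbb T^{n-1}\).} The angle map \(M_2\to\mathbb T^n\), \(Z\mapsto(\theta_1,\ldots,\theta_n)\), is a diffeomorphism intertwining the \(SO(2)\)-action with the diagonal translation \(\theta\mapsto\theta+\alpha(1,\ldots,1)\). This action is free and proper. The map \(\theta\mapsto(\theta_1-\theta_n,\ldots,\theta_{n-1}-\theta_n)\) is a surjective submersion \(\mathbb T^n\to\mathbb T^{n-1}\) whose fibres are exactly the diagonal orbits: if two points have equal differences, they differ by a constant \(\alpha\) in every coordinate. A smooth surjective submersion whose fibres are the orbits of a free proper action descends to a diffeomorphism of the quotient. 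This gives the final sentence of the statement.

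\emph{Step 2: identify the rank-deficient locus.} A matrix \(Z\in M_2\) has rank \(\le 1\) exactly when all its rows are collinear, that is, \(z_i=\pm z_n\) for every \(i\). Equivalently, \(\theta_i-\theta_n\in\{0,\pi\}\) for all \(i\). This set is \(SO(2)\)-invariant, and its image under the diffeomorphism of Step 1 is precisely \(\{0,\pi\}^{n-1}\). The subset \(\Mrk{2}{2}\) is open and \(SO(2)\)-invariant, so \(\widetilde Q_2=\Mrk{2}{2}/SO(2)\) is the open subset of \(M_2/SO(2)\) given by the complement of the image of the rank-one locus. Restricting the diffeomorphism of Step 1 to this open subset yields the claimed diffeomorphism onto \(\mathbb T^{n-1}\setminus\{0,\pi\}^{n-1}\).

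\emph{Expected obstacle.} The only delicate point is the smooth structure on \(\widetilde Q_2\). One must check that the quotient-manifold structure from the free action, as in \cref{def:oriented-quotient-cover}, agrees with the open-subset structure inherited from \(M_2/SO(2)\). Both are characterized by making the projection a smooth submersion, so they coincide. The remaining work is routine bookkeeping.
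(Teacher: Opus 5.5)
Your proposal is correct and follows essentially the same route as the paper: use the invariant relative angles to identify \(M_2/SO(2)\) with \(\mathbb T^{n-1}\), then remove the rank-one locus \(\{z_i=\pm z_n\}\), which corresponds to \(\{0,\pi\}^{n-1}\). The only differences are minor. The paper shows the inverse is smooth through the explicit global slice \(\theta_n=0\) (first \(n-1\) rows \((\cos\bar\delta_i,\sin\bar\delta_i)\), last row \((1,0)\)), whereas you invoke descent of a surjective submersion whose fibres are the orbits; you also check the compatibility of smooth structures on the open subset, which the paper leaves implicit.
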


\begin{proof}
The \(SO(2)\)-action on \(M_2\) is free, since a planar rotation
fixing a unit row vector must be the identity. It is also proper,
because \(SO(2)\) is compact. Hence \(M_2/SO(2)\) is a smooth
manifold. The diagonal \(SO(2)\)-action adds the same angle to all \(\theta_i\). Thus the relative angles \(\bar\delta_i\coloneq \theta_i-\theta_n\in\mathbb T\) are invariant.
Every orbit has a unique representative with \(\theta_n=0\), obtained
by rotating all rows by \(-\theta_n\). Conversely,
\(\bar\delta=(\bar\delta_1,\ldots,\bar\delta_{n-1})\in\mathbb T^{n-1}\) determines
the orbit of the factor with first \(n-1\) rows
\((\cos\bar\delta_i,\sin\bar\delta_i)\) and last row \((1,0)\).
These constructions are smooth inverses on \(M_2 / SO(2)\).

The representative has rank one precisely when every one of its first
\(n-1\) rows equals \((1,0)\) or \((-1,0)\). Equivalently,
\(\bar\delta_i\in\{0,\pi\}\) for every \(i<n\). Removing these points
gives the claimed restriction of \(\Phi\) to \(\widetilde Q_2\).
\end{proof}

For the rank-two argument and the example below, write
\(\widetilde\pi_2(Z)\coloneq [Z]_{SO(2)}\) for the oriented quotient map.
The quotient maps fit into the commutative diagram
\[
  \begin{tikzcd}[column sep=large, row sep=large]
    \Mrk{2}{2}
      \arrow[rr,"\pi_2"]
      \arrow[dr,"\widetilde\pi_2"']
    & & \Qrk{2}{2} \\
    & \widetilde Q_2
      \arrow[ur,"\rho"']
    &
  \end{tikzcd}
  \qquad
  \rho([Z]_{SO(2)})\coloneq [Z]_{O(2)}.
\]
Here \(\rho\) is the oriented double cover, and
\(\pi_2=\rho\circ\widetilde\pi_2\).

\begin{proposition}[Low-rank holonomy]
  \label{prop:low-rank-holonomy}
  For the reduced mechanical connection on
  \(\pi_s:\Mss\to\Qss\), the following hold.
  \begin{enumerate}
    \item If \(s=1\), then \(\Hol_Z(\omega)=\{\I_s\}\).
    \item If \(s=2\), then
      \[
        \Omega^\omega=0,
        \qquad
        \Hol_Z(\omega)^0=\{\I_s\}.
      \]
      Moreover,
      \[
        \Hol_Z(\omega)=\{\I_s\}\quad\text{if }n=2,
        \qquad
        \Hol_Z(\omega)\simeq D_n\quad\text{if }n>2,
      \]
      where \(D_n\) is as in \cref{thm:holonomy-all-rank-strata}.
  \end{enumerate}
\end{proposition}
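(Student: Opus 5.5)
For \(s=1\) the plan is immediate. The structure group is \(O(1)=\{\pm1\}\) and the Lie algebra is zero, so \(\omega\equiv0\) and every tangent vector is horizontal. A horizontal lift of a loop is then just a continuous lift through the covering \(\pi_1:\Mrk{1}{1}\to\Qrk{1}{1}\). By \cref{rem:rank-one-stratum} the base is discrete, so every loop is constant. Its lift is therefore constant, and the holonomy is trivial.

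For \(s=2\), flatness follows from \cref{rem:low-rank-polarity} together with \cref{lem:dJ-explicit}: \(S_Z(U,V)=0\) for all tangent \(U,V\), hence \(\Omega^\omega=0\). Ambrose--Singer then gives \(\Hol_Z(\omega)^0=\{\I_2\}\). For the full group I will compute horizontal lifts in angle coordinates. Write \(z_i=(\cos\theta_i,\sin\theta_i)\), so a tangent vector is \(u_i=\dot\theta_i(-\sin\theta_i,\cos\theta_i)\). Then
\[
Z^\top U-U^\top Z=\Bigl(\sum_i\dot\theta_i\Bigr)J,
\qquad
J=\begin{pmatrix}0&1\\-1&0\end{pmatrix}.
\]
The horizontality criterion \cref{lem:horizontality-criterion} therefore becomes the single equation \(\sum_i\dot\theta_i=0\). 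Horizontal curves in \(\Mrk{2}{2}\) are thus exactly those with constant total angle \(\Theta=\sum_i\theta_i\).

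\emph{Orientation-preserving holonomy.} A loop in \(\Qrk{2}{2}\) whose lift to \(\widetilde Q_2\) closes corresponds, by \cref{lem:rank-two-oriented-quotient}, to a loop of relative angles \(\bar\delta\) in the punctured torus. Its winding numbers \(w_i\in\mathbb Z\) satisfy \(\bar\delta_i(1)=\bar\delta_i(0)+2\pi w_i\) on the universal cover. The horizontal lift must satisfy \(\theta_i=\theta_n+\delta_i\) and \(\sum_i\theta_i=\text{const}\). This forces \(n\,\Delta\theta_n=-2\pi\sum_i w_i\), so the holonomy is a rotation by \(-2\pi\sum_i w_i/n\).

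To realize every element of \(C_n\), I need a loop with \(\sum_i w_i=1\) avoiding \(\{0,\pi\}^{n-1}\). Winding only \(\bar\delta_1\) once while keeping \(\bar\delta_2=\pi/2\) does this; it is allowed when \(n\ge3\), since then a second relative angle exists. Hence the rotation part of the holonomy is exactly \(C_n\) for \(n>2\). For \(n=2\) the single relative angle must avoid \(\{0,\pi\}\), so its winding is zero and the rotation part is trivial.

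\emph{Orientation-reversing holonomy.} For \(n>2\), I take the loop \(\rho\circ[\beta]_{SO(2)}\) built from \cref{lem:orientation-reversing-factor-path}. Exactly as in the proof of \cref{prop:full-holonomy-rank-at-least-three}, this produces a holonomy element of determinant \(-1\). The group generated by \(C_n\) and one reflection is \(D_n\). For the inclusion \(\Hol\subseteq D_n\), I note that the square of a reflection is the identity and that the rotations have already been shown to lie in \(C_n\), so the holonomy is contained in \(C_n\) together with a coset of reflections.

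For \(n=2\), I argue as in the \(n=s\) case of \cref{prop:full-holonomy-rank-at-least-three}. The sign of \(\det Z\) separates the sheets of \(\rho\), so no orientation-reversing holonomy occurs. Combined with the trivial rotation part, the holonomy is trivial. Concretely, \(\Qrk{2}{2}\) is an interval: the correlation \(\cos\bar\delta_1\in(-1,1)\).

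The main obstacle is the bookkeeping of winding numbers through the covering \(\rho\). I must check that a loop which is nontrivial in \(\pi_1\) of the torus lifts to a loop, not a path, in \(\widetilde Q_2\). I must also check that the reflection coset contributes no additional rotations: composing the \(\beta\)-loop with itself has to give a rotation lying in \(C_n\). This last point follows from the general formula once the \(\beta\)-loop, traversed twice, is expressed through its winding numbers.
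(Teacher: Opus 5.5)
Your proposal is correct and follows essentially the paper's proof: flatness via $S_Z(U,V)=0$ and Ambrose--Singer, the relative-angle winding computation giving rotations $R_{-\frac{2\pi}{n}\sum_i w_i}$, a loop winding $\bar\delta_1$ once to generate $C_n$, the loop from \cref{lem:orientation-reversing-factor-path} for a reflection, and the index-two coset argument for $D_n$. For $n=2$ you use the determinant-sign splitting of $\widetilde Q_2$ where the paper uses contractibility of $\Qrk{2}{2}\simeq(0,\pi)$; both work. You should add, as the paper does, that holonomy groups at different base points are conjugate over the path-connected base. The two ``obstacles'' you list are automatic: your loop is built in $\widetilde Q_2$, so the lift of its projection is the loop itself, and the orientation-reversing element is a reflection with square $\I_2$.
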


The rank-two argument proceeds as follows. After establishing
flatness, we describe a loop in the oriented quotient by its relative
angles. Real lifts of these coordinates record the winding numbers,
while horizontality determines the common rotation of a factor
above the loop. This computes the orientation-preserving holonomy;
the double cover \(\rho\) then detects whether a reflection occurs.

\begin{proof}
\emph{Case \(s=1\).}
The spaces \(\Mrk{1}{1}=\{\pm1\}^n\) and
\(\Qrk{1}{1}=\Mrk{1}{1}/O(1)\) are discrete. Every loop in the base
and its horizontal lift are therefore constant, so
\(\Hol_Z(\omega)=\{\I_s\}\).

\smallskip
\noindent\emph{Case \(s=2\): flatness.}
Write the rows of \(Z\in \Mrk{2}{2}\) as
\(z_i=(\cos\theta_i,\sin\theta_i)\), with \(\theta_i\in\mathbb T\),
and set
\[
  K\coloneq \begin{pmatrix}0&1\\-1&0\end{pmatrix},
  \qquad R_\alpha\coloneq \exp(\alpha K).
\]
Since \(\Tz{z_i}\mathbb S^1=\spanv\{z_iK\}\), every
pair \(U,V\in\Tz{Z}\Mrk{2}{2}\) has rows
\[
  u_i=a_i z_iK,
  \qquad v_i=b_i z_iK,
  \qquad a_i,b_i\in\R.
\]
Thus \(u_i^\top v_i=a_ib_i(z_iK)^\top(z_iK)=v_i^\top u_i\), and
\[
  U^\top V-V^\top U
  =\sum_{i=1}^n(u_i^\top v_i-v_i^\top u_i)=0.
\]
For horizontal \(U,V\), the curvature formula in
\cref{eq:curvature-form-sylvester-horizontal} gives
\[
  (\Omega^\omega)_Z(U,V)
  =2\mathcal L_{E_Z}^{-1}(U^\top V-V^\top U)=0.
\]
Since the curvature form is horizontal, \(\Omega^\omega\equiv0\).
By Ambrose--Singer,
\[
  \mathfrak{hol}_Z(\omega)=\{0\},
  \qquad \Hol_Z(\omega)^0=\{\I_s\}.
\]
Indeed, the identity component is a connected zero-dimensional Lie
group, hence consists only of the identity. The full holonomy is
therefore the monodromy of this flat connection: horizontal transport
around a loop depends only on its homotopy class with fixed base point.

When \(n=2\), \cref{lem:rank-two-oriented-quotient} identifies
\(\widetilde Q_2\) with \(\mathbb T\setminus\{0,\pi\}\).
The remaining \(O(2)/SO(2)\)-action sends
\(\bar\delta\) to \(-\bar\delta\) and exchanges its two interval
components. Hence \(\Qrk{2}{2}\simeq(0,\pi)\). This base is
contractible, so the flat connection has trivial holonomy.
We assume \(n>2\) for the rest of the proof.

\smallskip
\noindent\emph{Orientation-preserving holonomy.}
Since \(SO(2)\) and \(O(2)\) have the same infinitesimal orbit
directions, the same horizontal distribution defines a principal
\(SO(2)\)-connection on
\(\widetilde\pi_2:\Mrk{2}{2}\to\widetilde Q_2\).
Its holonomy group is \(\Hol_Z(\omega)\cap SO(2)\).
Indeed, a horizontal lift of a loop in \(\Qrk{2}{2}\) with endpoint
\(ZR\) projects through \(\widetilde\pi_2\) to a closed loop exactly
when
\[
  [ZR]_{SO(2)}=[Z]_{SO(2)}
  \quad\Longleftrightarrow\quad R\in SO(2),
\]
because \(Z\) has full column rank. Thus the double cover detects
precisely the determinant of the holonomy element.

The product metric, equivalently the restriction of the Frobenius
inner product to \(\Tbundle M_2\), has the angular expression
\[
  g_{M_2}=\sum_{i=1}^n(\dd\theta_i)^2.
\]
The vertical direction is spanned by \(ZK\). Hence a tangent vector
\(\dot Z\), with rows \(\dot z_i=\dot\theta_i z_iK\), is horizontal
if and only if
\begin{equation}
  \label{eq:rank-two-angular-horizontality}
  0=(g_{M_2})_Z(\dot Z,ZK)=\sum_{i=1}^n\dot\theta_i,
\end{equation}
since \(\|z_iK\|_2=1\). Under the identification
\(\so(2)\simeq\R\), \(aK\leftrightarrow a\), the
connection form therefore has the local expression
\[
  \omega=\frac1n\sum_{i=1}^n\dd\theta_i.
\]
This form vanishes on horizontal vectors and has the required
normalization \(\omega_Z(aZK)=a\), in particular
\(\omega_Z(ZK)=1\), under this identification.

Let \(\gamma:[0,1]\to\widetilde Q_2\) be a loop based at
\(\widetilde\pi_2(Z)\), and set \(\bar\gamma\coloneq \rho\circ\gamma\).
Using \cref{lem:rank-two-oriented-quotient}, write
\[
  \Phi(\gamma(t))
  =(\bar\delta_1(t),\ldots,\bar\delta_{n-1}(t))
  \in\mathbb T^{n-1}\setminus\{0,\pi\}^{n-1}.
\]
For the covering map \(q:\R\to\mathbb T\),
\(q(x)=x\bmod 2\pi\), choose continuous real lifts
\(\delta_i:[0,1]\to\R\) satisfying
\(q(\delta_i(t))=\bar\delta_i(t)\). These lifts are piecewise
\(C^1\), and, since \(\gamma\) is closed,
\begin{equation}
  \label{eq:rank-two-winding-increments}
  \Delta\delta_i\coloneq \delta_i(1)-\delta_i(0)=2\pi m_i,
  \qquad m_i\in\mathbb Z,\quad i=1,\ldots,n-1.
\end{equation}

Let \(\widehat\gamma:[0,1]\to \Mrk{2}{2}\) be the horizontal lift
of \(\gamma\) through \(\widetilde\pi_2\), starting at \(Z\).
It is also the horizontal lift of \(\bar\gamma\) through \(\pi_2\),
since
\begin{equation}
  \label{eq:rank-two-lift-projections}
  \widetilde\pi_2\circ\widehat\gamma=\gamma,
  \qquad \pi_2\circ\widehat\gamma=\rho\circ\gamma=\bar\gamma.
\end{equation}
To specify the row angles, let
\(c:\mathbb T\to\mathbb S^1\),
\(c(\theta)\coloneq (\cos\theta,\sin\theta)\), be the usual diffeomorphism.
The \emph{circle-valued row angles of \(\widehat\gamma\)} are the
uniquely defined curves
\begin{equation}
  \label{eq:rank-two-circle-row-angles}
  \theta_i:[0,1]\to\mathbb T,
  \qquad
  \theta_i(t)\coloneq c^{-1}\bigl(\widehat\gamma(t)_{i\cdot}\bigr),
  \qquad i=1,\ldots,n.
\end{equation}
Since \(\widetilde\pi_2\circ\widehat\gamma=\gamma\), their
relative angles satisfy
\begin{equation}
  \label{eq:rank-two-circle-relative-angles}
  \bar\delta_i(t)=\theta_i(t)-\theta_n(t)
  \quad\text{in }\mathbb T,
  \qquad i=1,\ldots,n-1.
\end{equation}
Choose a continuous real lift \(\vartheta_n:[0,1]\to\R\)
of \(\theta_n\), so that \(q\circ\vartheta_n=\theta_n\), and set
\begin{equation}
  \label{eq:rank-two-compatible-real-angles}
  \vartheta_i(t)\coloneq \vartheta_n(t)+\delta_i(t),
  \qquad i=1,\ldots,n-1.
\end{equation}
These are real lifts of the other row angles because
\begin{equation}
  \label{eq:rank-two-row-angle-projection}
  q(\vartheta_i(t))
  =q(\vartheta_n(t))+q(\delta_i(t))
  =\theta_n(t)+\bar\delta_i(t)
  =\theta_i(t).
\end{equation}
Consequently, for every \(i=1,\ldots,n\),
\[
  \widehat\gamma(t)_{i\cdot}
  =c(\theta_i(t))
  =c(q(\vartheta_i(t)))
  =(\cos\vartheta_i(t),\sin\vartheta_i(t)).
\]
Thus \(\theta_i\) is an angle of a row of \(\widehat\gamma\),
\(\vartheta_i\) is its real lift, and \(\delta_i\) is a real lift
of the relative angle \(\bar\delta_i=\theta_i-\theta_n\).
The maps and their compatibility are summarized in
\cref{fig:rank-two-lifts-and-angles}.

On each smooth piece, \cref{eq:rank-two-angular-horizontality} and
\cref{eq:rank-two-compatible-real-angles} give
\begin{equation}
  \label{eq:rank-two-horizontal-angle-velocity}
  0=\sum_{i=1}^n\dot\vartheta_i
   =n\dot\vartheta_n+\sum_{i=1}^{n-1}\dot\delta_i,
  \qquad
  \dot\vartheta_n=-\frac1n\sum_{i=1}^{n-1}\dot\delta_i.
\end{equation}
Integrating \cref{eq:rank-two-horizontal-angle-velocity} over the
pieces, summing, and using \cref{eq:rank-two-winding-increments} yields
\begin{equation}
  \label{eq:rank-two-horizontal-angle-increment}
  \Delta\vartheta_n
  \coloneq \vartheta_n(1)-\vartheta_n(0)
  =-\frac1n\sum_{i=1}^{n-1}\Delta\delta_i
  =-\frac{2\pi}{n}\sum_{i=1}^{n-1}m_i.
\end{equation}
For \(i<n\), the increment of the \(i\)-th row angle is
\(\Delta\vartheta_i=\Delta\vartheta_n+2\pi m_i\).
All rows therefore undergo the same rotation modulo \(2\pi\), so
\begin{equation}
  \label{eq:rank-two-horizontal-endpoint}
  \widehat\gamma(1)=ZR_{\Delta\vartheta_n}.
\end{equation}
By \cref{eq:rank-two-horizontal-angle-increment}, the corresponding
orientation-preserving holonomy element is
\begin{equation}
  \label{eq:hol_formula}
  R_{\Delta\vartheta_n}
  =R_{-\frac{2\pi}{n}\sum_{i=1}^{n-1}m_i}.
\end{equation}

\begin{figure}[htbp]
  \centering
  \definecolor{holrowblue}{RGB}{42,83,145}
  \definecolor{holrelgreen}{RGB}{0,116,105}
  \definecolor{holpuncture}{RGB}{172,69,53}
  \resizebox{.90\linewidth}{!}{\begin{tikzpicture}[>=stealth,font=\small,
    rowbox/.style={draw=holrowblue!45,fill=holrowblue!4,
      rounded corners=2pt,align=center,text width=4.8cm,
      minimum height=.85cm,inner sep=5pt},
    relbox/.style={draw=holrelgreen!45,fill=holrelgreen!4,
      rounded corners=2pt,align=center,text width=4.8cm,
      minimum height=.85cm,inner sep=5pt},
    map/.style={->,draw=black!65,semithick}]
    \node[anchor=west,font=\small\bfseries] at (-2.58,.95)
      {(a) The quotient curves, row angles, and real lifts};
    \node[rowbox] (realrow) at (0,0)
      {$\vartheta(t)\in\R^n$\\[-1pt]
       {\scriptsize real row-angle lifts}};
    \node[relbox] (realrel) at (8.6,0)
      {$\delta(t)\in\R^{n-1}$\\[-1pt]
       {\scriptsize real relative-angle lifts}};
    \node[rowbox] (rowangle) at (0,-1.95)
      {$\theta(t)\in\mathbb T^n$\\[-1pt]
       {\scriptsize circle-valued row angles of $\widehat\gamma$}};
    \node[relbox] (relangle) at (8.6,-1.95)
      {$\bar\delta(t)\in\mathbb T^{n-1}$\\[-1pt]
       {\scriptsize relative-angle coordinates $\Phi(\gamma(t))$}};
    \node[rowbox] (factor) at (0,-3.9)
      {$\widehat\gamma(t)\in \Mrk{2}{2}$\\[-1pt]
       {\scriptsize horizontal factor curve}};
    \node[relbox] (oriented) at (8.6,-3.9)
      {$\gamma(t)\in\widetilde Q_2$\\[-1pt]
       {\scriptsize loop in the oriented quotient}};
    \node[draw=black!35,fill=black!2,rounded corners=2pt,
      text width=4.4cm,align=center,inner sep=5pt] (base) at (4.3,-5.55)
      {$\bar\gamma(t)\in \Qrk{2}{2}$\\[-1pt]
       {\scriptsize loop in the unoriented quotient}};
    \draw[map] (realrow) -- node[above,font=\small]
      {$\delta_i=\vartheta_i-\vartheta_n$} (realrel);
    \draw[map] (rowangle) -- node[above,font=\small]
      {$\bar\delta_i=\theta_i-\theta_n$} (relangle);
    \draw[map] (factor) -- node[above] {$\widetilde\pi_2$} (oriented);
    \draw[map] (realrow) -- node[right] {$q^{\times n}$} (rowangle);
    \draw[map] (realrel) -- node[right] {$q^{\times(n-1)}$} (relangle);
    \draw[map] (rowangle) -- node[right] {$c^{\times n}$} (factor);
    \draw[map] (relangle) -- node[right] {$\Phi^{-1}$} (oriented);
    \draw[map] (factor.south) -- node[below left] {$\pi_2$} (base.north west);
    \draw[map] (oriented.south) -- node[below right] {$\rho$} (base.north east);
  \end{tikzpicture}}
  \par\medskip
  \resizebox{.94\linewidth}{!}{\begin{tikzpicture}[>=stealth,font=\small]
    \node[anchor=west,font=\small\bfseries] at (-.45,3.8)
      {(b) A real lift in $\R^2$};
    \node[anchor=west,font=\small\bfseries] at (8.55,3.8)
      {(c) The punctured torus};
    \fill[holrelgreen!3] (0,0) rectangle (3,3);
    \fill[holrelgreen!7] (3,0) rectangle (6,3);
    \draw[black!40] (0,0) rectangle (6,3);
    \draw[dashed,black!35] (3,0) -- (3,3);
    \draw[black!12] (0,1.5) -- (6,1.5);
    \draw[->,black!65] (0,0) -- (6.35,0) node[right] {$\delta_1$};
    \draw[->,black!65] (0,0) -- (0,3.35);
    \node[anchor=west] at (.09,3.20) {$\delta_2$};
    \foreach \x/\lab in {0/0,1.5/\pi,3/2\pi,4.5/3\pi,6/4\pi}
      \node[below,font=\scriptsize] at (\x,-.05) {$\lab$};
    \foreach \y/\lab in {1.5/\pi,3/2\pi}
      \node[left,font=\scriptsize] at (-.05,\y) {$\lab$};
    \foreach \x in {0,1.5,3,4.5,6} {
      \foreach \y in {0,1.5,3} {
        \draw[holpuncture,semithick]
          ({\x-.05},{\y-.05}) -- ({\x+.05},{\y+.05})
          ({\x-.05},{\y+.05}) -- ({\x+.05},{\y-.05});
      }
    }
    \draw[holrelgreen,line width=1.2pt] (.375,.375) -- (3.375,.375);
    \draw[->,holrelgreen,line width=1.2pt] (1.4,.375) -- (1.9,.375);
    \fill[holrelgreen] (.375,.375) circle (2pt);
    \draw[holrelgreen,fill=white,line width=.9pt] (3.375,.375) circle (2.5pt);
    \node[above,holrelgreen,font=\scriptsize] at (.375,.48) {$\delta(0)$};
    \node[above,holrelgreen,font=\scriptsize] at (3.375,.48) {$\delta(1)$};
    \draw[<->,holrelgreen!80!black] (.375,1.05) --
      node[above,font=\scriptsize] {$2\pi$} (3.375,1.05);
    \node[align=center] at (3,-.78)
      {$\delta(1)-\delta(0)=(2\pi,0)$};
    \draw[->,black!65,semithick] (6.55,1.65) --
      node[above] {$q^{\times2}$} (8.45,1.65);
    \fill[holrelgreen!3] (9,0) rectangle (12,3);
    \draw[black!40] (9,0) rectangle (12,3);
    \draw[black!12] (9,1.5) -- (12,1.5);
    \draw[black!12] (10.5,0) -- (10.5,3);
    \draw[->,black!65] (9,0) -- (12.35,0) node[right] {$\bar\delta_1$};
    \draw[->,black!65] (9,0) -- (9,3.35);
    \node[anchor=west] at (9.09,3.20) {$\bar\delta_2$};
    \foreach \x/\lab in {9/0,10.5/\pi,12/2\pi}
      \node[below,font=\scriptsize] at (\x,-.05) {$\lab$};
    \foreach \y/\lab in {1.5/\pi,3/2\pi}
      \node[left,font=\scriptsize] at (8.95,\y) {$\lab$};
    \foreach \x in {9,10.5,12} {
      \foreach \y in {0,1.5,3} {
        \draw[holpuncture,semithick]
          ({\x-.05},{\y-.05}) -- ({\x+.05},{\y+.05})
          ({\x-.05},{\y+.05}) -- ({\x+.05},{\y-.05});
      }
    }
    \foreach \y in {0,3}
      \draw[->,holrowblue,semithick] (10.0,\y) -- (10.35,\y);
    \foreach \x in {9,12} {
      \draw[->,holrowblue,semithick] (\x,1.05) -- (\x,1.25);
      \draw[->,holrowblue,semithick] (\x,1.25) -- (\x,1.45);
    }
    \draw[holrelgreen,line width=1.2pt] (9,.375) -- (12,.375);
    \draw[->,holrelgreen,line width=1.2pt] (10.25,.375) -- (10.75,.375);
    \draw[->,holrelgreen,line width=1.2pt] (9.03,.375) -- (9.25,.375);
    \draw[holrelgreen,fill=white,line width=.9pt] (9.375,.375) circle (2.8pt);
    \fill[holrelgreen] (9.375,.375) circle (1.35pt);
    \node[above,holrelgreen,font=\scriptsize] at (10.6,.48)
      {$\bar\delta(t)=\Phi(\gamma(t))$};
    \node[align=center] at (10.5,-.78)
      {$\bar\delta(0)=\bar\delta(1)$};
  \end{tikzpicture}}
  \caption{The lifts used in rank-two holonomy.
    (a) The real row-angle lifts satisfy $q(\vartheta_i)=\theta_i$,
    where $\theta_i$ is the circle-valued angle of the $i$-th row of
    $\widehat\gamma$. All maps are evaluated along the indicated
    curves; $q^{\times m}$ and $c^{\times n}$ act componentwise.
    (b--c) For $n=3$, the real relative-angle lift
    $\delta(t)=(\pi/4+2\pi t,\pi/4)$ has distinct endpoints, while
    its projection closes after identifying opposite edges of the
    square. Crosses mark the removed rank-one classes and their
    periodic copies.}
  \label{fig:rank-two-lifts-and-angles}
\end{figure}

In particular,
\[
  \Hol_Z(\omega)\cap SO(2)
  \subset C_n\coloneq \{R_{2\pi j/n}:j=0,\ldots,n-1\}.
\]

To obtain a generator, choose \(\varepsilon\in(0,\pi)\) and take
the loop \(\gamma_\varepsilon\) defined by
\[
  \Phi(\gamma_\varepsilon(t))
  =\bigl(q(\varepsilon+2\pi t),
          q(\varepsilon),\ldots,q(\varepsilon)\bigr).
\]
Since \(n>2\), its second coordinate is present and never belongs
to \(\{0,\pi\}\); hence the loop avoids the removed rank-one set.
The real lifts \(\delta_1(t)=\varepsilon+2\pi t\) and
\(\delta_j(t)=\varepsilon\) for \(j\ge2\) have winding numbers
\((m_1,\ldots,m_{n-1})=(1,0,\ldots,0)\).
By \cref{eq:hol_formula}, the holonomy at any factor
\(Z_\varepsilon\) above \(\gamma_\varepsilon(0)\) is
\(R_{-2\pi/n}\), which generates \(C_n\). Therefore
\(\Hol_{Z_\varepsilon}(\omega)\cap SO(2)=C_n\).
For \(n>2\), the punctured torus \(\widetilde Q_2\) is
path-connected: removing finitely many points from a connected
manifold of dimension \(n-1\ge2\) preserves path-connectedness.
The holonomy groups of its principal \(SO(2)\)-connection are
therefore conjugate in \(SO(2)\). Since this group is abelian,
\begin{equation}
  \label{eq:rank-two-cyclic-holonomy}
  \Hol_Z(\omega)\cap SO(2)=C_n
  \qquad\text{for every }Z\in \Mrk{2}{2}.
\end{equation}

\smallskip
\noindent\emph{Orientation-reversing holonomy.}
Apply \cref{lem:orientation-reversing-factor-path} with \(s=2\).
It gives a path \(\beta\) from \(Z_0\) to \(Z_0R_-\), where
\(R_-\coloneq \diag(-1,1)\). Its projection
\(\pi_2\circ\beta\) is a loop, whose lift through \(\rho\)
starting at \([Z_0]_{SO(2)}\) is
\(\widetilde\pi_2\circ\beta\) and ends at the distinct point
\([Z_0R_-]_{SO(2)}\). By uniqueness of covering lifts, the
projection of the horizontal lift of this loop has the same endpoint.
The determinant criterion in \cref{eq:holonomy-oriented-cover-criterion}
therefore gives an element
\(\tau_0\in\Hol_{Z_0}(\omega)\) with
\(\det\tau_0=-1\).

Since the determinant kernel in \(\Hol_{Z_0}(\omega)\)
is \(C_n\),
\begin{equation}
  \label{eq:rank-two-dihedral-cosets}
  \Hol_{Z_0}(\omega)=C_n\cup\tau_0C_n.
\end{equation}
Indeed, any orientation-reversing element \(h\) satisfies
\(\tau_0^{-1}h\in C_n\), and both cosets belong to the holonomy
group. Every orientation-reversing element of \(O(2)\) is a
reflection, so
\[
  \tau_0^2=\I_2,
  \qquad \tau_0R_\alpha\tau_0^{-1}=R_{-\alpha}.
\]
Thus \(C_n\cup\tau_0C_n\) is a dihedral group of order \(2n\).
Finally, \(\Qrk{2}{2}\) is path-connected, so the full holonomy
groups at different base points are conjugate in \(O(2)\). Hence
\(\Hol_Z(\omega)\simeq D_n\) for every
\(Z\in \Mrk{2}{2}\).
\end{proof}

\subsection{Holonomy over the rank-\texorpdfstring{\(s\)}{s}
correlation stratum}
\label{subsec:holonomy-correlation-stratum}

We now translate the holonomy classification of the mechanical
connection to the rank-\(s\) correlation stratum. Recall that the
Gram map $\Gamma_s$ factors through the quotient as
\[
  \begin{tikzcd}
    \Mss \arrow[r,"\pi_s"] \arrow[dr,swap,"\Gamma_s"]
    & \Qss \arrow[d,"\bar\Gamma_s"] \\
    & \Crk{s}{n},
  \end{tikzcd}
\]
Therefore, the Gram map
\[
  \Gamma_s:\Mss\longrightarrow \Crk{s}{n}
\]
is a principal \(O(s)\)-bundle, whose fibres are precisely the factor
representations of a given rank-\(s\) correlation matrix. More explicitly, if
\[
  C=ZZ^\top\in\Crk{s}{n}, \qquad Z\in \Mss,
\]
then
\[
  \Gamma_s^{-1}(C) = \{Y\in \Mss:YY^\top=C\} = \{ZR:R\in O(s)\},
\]
where the equality follows from the full-column-rank condition: if
\(YY^\top=ZZ^\top\), then there exists a unique \(R\in O(s)\) such
that \(Y=ZR\). Through the Riemannian isometry
\[
  \bar\Gamma_s:(\Qss,\gQss)
  \longrightarrow
  \bigl(\Crk{s}{n},(\bar\Gamma_s)_*\gQss\bigr),
\]
the reduced mechanical connection \(\omega\) on
\(\pi_s:\Mss\to \Qss\), given by
\cref{prop:sylvester-mechanical-connection} as
\[
  \omega_Z(W)
  =
  \mathcal L_{E_Z}^{-1}\bigl(Z^\top W-W^\top Z\bigr),
  \qquad
  E_Z\coloneq Z^\top Z,
\]
may be viewed as a connection on the principal $O(s)$-bundle
$\Gamma_s$. Its holonomy measures the path dependence of horizontally
transported factors \(Z\) along loops in the rank-\(s\) correlation stratum.

\begin{definition}[Factor holonomy]
\label{def:factor-holonomy}
Fix \(1\le s\le n\) and \(Z\in\Mss\). Let
\(C:[0,1]\to\Crk{s}{n}\) be a continuous piecewise
\(C^1\) loop based at \(ZZ^\top\), and let
\(\widehat C:[0,1]\to\Mss\) be its \(\omega\)-horizontal
factor lift starting at \(Z\). Thus
\[
  \widehat C(t)\widehat C(t)^\top=C(t),
  \qquad \widehat C(0)=Z,
  \qquad
  \omega_{\widehat C(t)}\bigl(\widehat C'(t)\bigr)=0
\]
at every differentiability time. The \emph{factor holonomy} of
\(C\) based at \(Z\) is the unique matrix \(R_C\in O(s)\) such that
\[
  \widehat C(1)=ZR_C.
\]
The \emph{factor holonomy group} at \(Z\) is the subgroup
\[
  \Hol^{\Corr^{(s)}}_Z(\omega)
  \coloneq \bigl\{R_C:\ C\text{ is a loop in }\Crk{s}{n}
    \text{ based at }ZZ^\top\bigr\}
  \subset O(s).
\]
\end{definition}

\begin{corollary}[Holonomy over the rank-\(s\) correlation stratum]
  \label{thm:holonomy-correlation-strata}
  Let \(1\le s\le n\) and \(Z\in\Mss\). Then the factor
  holonomy group satisfies
  \[
    \Hol^{\Corr^{(s)}}_Z(\omega)
    =
    \Hol_Z(\omega).
  \]
  In particular, the classification in
  \cref{thm:holonomy-all-rank-strata} applies to factor holonomy
  over \(\Crk{s}{n}\).
\end{corollary}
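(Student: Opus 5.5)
The plan is to show that the two holonomy groups are defined by the same horizontal lifts of corresponding loops. The correspondence is transport through the diffeomorphism \(\bar\Gamma_s:\Qss\to\Crk{s}{n}\) of \cref{prop:quotient-model-low-rank-correlations}. The key identity is \(\Gamma_s=\bar\Gamma_s\circ\pi_s\). From it, a curve \(\widehat C:[0,1]\to\Mss\) satisfies \(\widehat C\widehat C^\top=C\) if and only if \(\pi_s\circ\widehat C=\bar\Gamma_s^{-1}\circ C\).

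First I will check that loops correspond bijectively. A continuous piecewise \(C^1\) loop \(C\) in \(\Crk{s}{n}\) based at \(ZZ^\top\) gives \(\bar\gamma\coloneq\bar\Gamma_s^{-1}\circ C\). This is a continuous piecewise \(C^1\) loop in \(\Qss\) based at \(\pi_s(Z)\), because \(\bar\Gamma_s\) and its inverse are smooth. Conversely, every such loop \(\bar\gamma\) in \(\Qss\) yields the loop \(C\coloneq\bar\Gamma_s\circ\bar\gamma\) in \(\Crk{s}{n}\).

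Next I will identify the lifts. For corresponding \(C\) and \(\bar\gamma\), the horizontality condition \(\omega_{\widehat C(t)}(\widehat C'(t))=0\) is the same in \cref{def:factor-holonomy} and \cref{def:horizontal-lifts-parallel-transport}. The lifting condition is also the same, by the identity above. By the uniqueness part of \cref{lem:horizontal-lifts-exist}, the \(\omega\)-horizontal factor lift of \(C\) starting at \(Z\) is therefore exactly the horizontal lift of \(\bar\gamma\) starting at \(Z\). In particular it exists and is unique, so \(R_C\) is well defined.

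Finally I will compare endpoints. The two lifts have the same endpoint \(ZR\). Since \(Z\) has full column rank, \(R\) is uniquely determined, as in the fibre description \(\Gamma_s^{-1}(ZZ^\top)=\{ZR:R\in O(s)\}\). So each factor holonomy element \(R_C\) equals the holonomy element of \(\bar\gamma\), and the reverse direction gives the opposite inclusion. This proves \(\Hol^{\Corr^{(s)}}_Z(\omega)=\Hol_Z(\omega)\), and the classification of \cref{thm:holonomy-all-rank-strata} then transfers directly. I do not expect a genuine obstacle. The only points needing care are the uniqueness of \(R\), which comes from the free action, and the preservation of piecewise \(C^1\) regularity under \(\bar\Gamma_s^{\pm1}\), which comes from the diffeomorphism.
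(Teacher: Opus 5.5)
Your proposal is correct and follows the paper's proof: you use \(\Gamma_s=\bar\Gamma_s\circ\pi_s\) to identify lifts of \(C\) with lifts of \(\bar\gamma=\bar\Gamma_s^{-1}\circ C\), note that horizontality is defined by the same form \(\omega\), and conclude from uniqueness of horizontal lifts and the loop bijection induced by the diffeomorphism \(\bar\Gamma_s\). Your added remarks on the well-definedness of \(R_C\) (free action) and on preservation of piecewise \(C^1\) regularity only make explicit what the paper's short argument leaves implicit.
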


\begin{proof}
  For a loop \(C\) in \(\Crk{s}{n}\), set
  \(\bar\gamma\coloneq \bar\Gamma_s^{-1}\circ C\). The factorization
  \(\Gamma_s=\bar\Gamma_s\circ\pi_s\) identifies lifts of \(C\)
  through \(\Gamma_s\) with lifts of \(\bar\gamma\) through \(\pi_s\).
  Horizontality is determined in both cases by the same connection
  form \(\omega\) on \(\Mss\). Thus the horizontal lifts with
  prescribed initial point coincide and have the same endpoint,
  yielding the same holonomy element. Since the descended Gram
  diffeomorphism identifies all based loops, the holonomy groups
  are equal.
\end{proof}

\subsection{An example of holonomy in low dimension}
We illustrate the rank-two classification for \(3\times3\)
correlation matrices in \cref{fig:holonomy-s2-dihedral}.
For \((s,n)=(2,3)\), write the rows of
\(Z\in \Mrk{2}{2}\subset(\mathbb S^1)^3\) as
\(z_i=(\cos\theta_i,\sin\theta_i)\), with \(\theta_i\in\mathbb T\).
By \cref{lem:rank-two-oriented-quotient}, the oriented quotient
\(\widetilde Q_2\) is \(\mathbb T^2\setminus\{0,\pi\}^2\), with
circle-valued relative angles
\[
  \bar\delta_1\coloneq \theta_1-\theta_3,
  \qquad \bar\delta_2\coloneq \theta_2-\theta_3.
\]

Using the notation of the proof of \cref{prop:low-rank-holonomy},
define a loop \(\gamma:[0,1]\to\widetilde Q_2\) by
\[
  \Phi(\gamma(t))=(q(\delta_1(t)),q(\delta_2(t))),
  \qquad
  \delta_1(t)\coloneq \frac\pi4+2\pi t,
  \qquad \delta_2(t)\coloneq \frac\pi4.
\]
The relative-angle loop and its real lift are shown in the lower
panels of \cref{fig:rank-two-lifts-and-angles}. Its projection
\(\bar\gamma\coloneq \rho\circ\gamma\) is a loop in
\(\Qrk{2}{2}\). Set
\[
  Z_*\coloneq 
  \begin{pmatrix}
    \cos(\pi/4)&\sin(\pi/4)\\
    \cos(\pi/4)&\sin(\pi/4)\\
    1&0
  \end{pmatrix}\in \Mrk{2}{2}.
\]
Then \(\widetilde\pi_2(Z_*)=\gamma(0)\) and
\(\pi_2(Z_*)=\bar\gamma(0)\). Let \(\widehat\gamma\) be the
horizontal lift of \(\gamma\) through \(\widetilde\pi_2\),
starting at \(Z_*\). Since \(\pi_2=\rho\circ\widetilde\pi_2\),
it is also the horizontal lift of \(\bar\gamma\) through \(\pi_2\).

Let \(\theta_i(t)\coloneq c^{-1}(\widehat\gamma(t)_{i\cdot})\in\mathbb T\)
be its circle-valued row angles, as in
\cref{eq:rank-two-circle-row-angles}. Choose compatible real lifts
\(\vartheta=(\vartheta_1,\vartheta_2,\vartheta_3)\) satisfying
\[
  q(\vartheta_i(t))=\theta_i(t)\quad(i=1,2,3),
  \qquad
  \vartheta(0)=(\pi/4,\pi/4,0),
\]
with \(\vartheta_i-\vartheta_3=\delta_i\) for \(i=1,2\), as in
\cref{eq:rank-two-compatible-real-angles}.
The prescribed relative angles and
\cref{eq:rank-two-angular-horizontality} give
\[
  \begin{cases}
    \dot\vartheta_1-\dot\vartheta_3=2\pi,\\
    \dot\vartheta_2-\dot\vartheta_3=0,\\
    \dot\vartheta_1+\dot\vartheta_2+\dot\vartheta_3=0.
  \end{cases}
\]
Consequently,
\begin{equation}
  \label{eq:rank-two-example-angle-lift}
  \vartheta(t)
  =\left(\frac\pi4,\frac\pi4,0\right)
    +t\left(\frac{4\pi}3,-\frac{2\pi}3,-\frac{2\pi}3\right),
\end{equation}
and the horizontal factor curve is
\[
  \widehat\gamma(t)=
  \begin{pmatrix}
    \cos\vartheta_1(t)&\sin\vartheta_1(t)\\
    \cos\vartheta_2(t)&\sin\vartheta_2(t)\\
    \cos\vartheta_3(t)&\sin\vartheta_3(t)
  \end{pmatrix}\in \Mrk{2}{2}.
\]
After one turn of the first relative angle,
\begin{equation}
  \label{eq:rank-two-example-holonomy}
  \widehat\gamma(1)=Z_*R_{-2\pi/3}.
\end{equation}
Thus one traversal of \(\bar\gamma\) gives the cyclic holonomy
generator \(a=R_{-2\pi/3}\), and repeated traversals give
\(a^2\) and \(a^3=\I_2\).

For orientation-reversing holonomy, take the path \(\beta\) from
\cref{lem:orientation-reversing-factor-path}, with
\[
  \beta(0)=Z_0=
  \begin{pmatrix}
    1&0\\0&1\\1&0
  \end{pmatrix},
  \qquad
  \beta(1)=Z_0\diag(-1,1).
\]
The curve \(\pi_2\circ\beta\) is a loop, while its lift
\(\widetilde\pi_2\circ\beta\) to the oriented cover has distinct
endpoints. The corresponding horizontal lift therefore yields an
orientation-reversing holonomy element at \(Z_0\). The two loops
illustrate the two types of holonomy: the first produces a generator
of \(C_3\), and the second has determinant \(-1\). They are based
at different points, but their holonomy groups are conjugate; by
\cref{prop:low-rank-holonomy}, each is isomorphic to \(D_3\).

\begin{figure}[htpb]
  \centering
  \begin{subfigure}[t]{.48\linewidth}
    \centering
    \includegraphics[width=\linewidth]{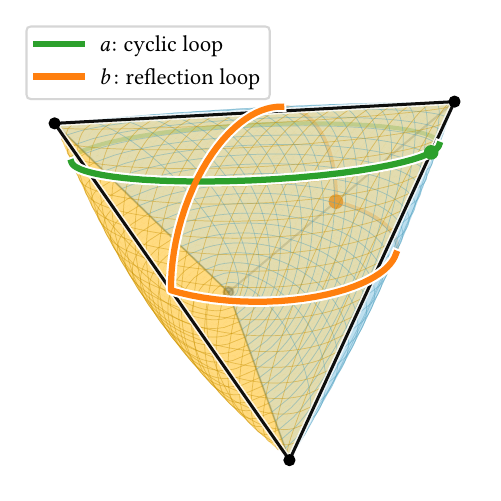}
    \caption{Loops on \(\Crk{2}{3}\).}
  \end{subfigure}\hfill
  \begin{subfigure}[t]{.48\linewidth}
    \centering
    \includegraphics[width=\linewidth]{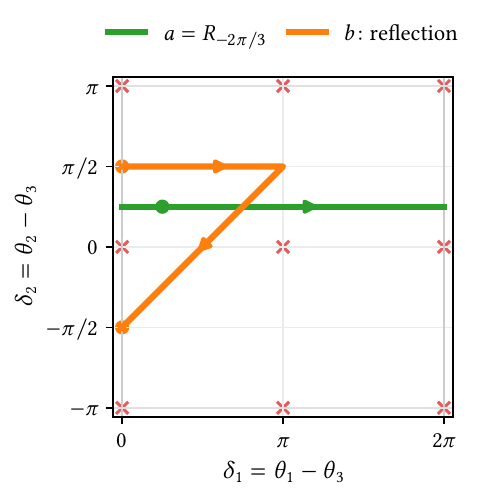}
    \caption{Relative-angle cover \(\mathbb T^2\setminus\{0,\pi\}^2\).}
  \end{subfigure}
  \caption{Rank-two mechanical holonomy for \((s,n)=(2,3)\).
    The green loop has holonomy \(a=R_{-2\pi/3}\); the orange loop $b$
    has orientation-reversing holonomy. In relative-angle coordinates
    modulo \(2\pi\), the green lift closes on the torus, whereas
    the orange lift joins distinct points exchanged by reflection.}
  \label{fig:holonomy-s2-dihedral}
\end{figure}

\section{Intrinsic geodesics under rank constraints}
\label{sec:intrinsic-geodesics-rank-constraints}

We now turn from fixed-rank geometry to minimizing geodesics subject
to a rank bound. We use the ambient distance $\dQ{k}$, the stratum distance
$\dQks$, and the intrinsic rank-bounded distance $\dkles$ as defined in
\cref{subsec:metric-conventions,subsec:fixed-rank-riemannian-reduction,subsec:global-rank-bounded-metric-reduction}.
For rank-$s$ endpoints, the equality
\[
  \dkles(\bar p,\bar q)=\dQks(\bar p,\bar q)
\]
already follows from \cref{eq:rank-bounded-distance-on-top-stratum}.
We establish the additional fact that every rank-bounded minimizer
between these endpoints has rank $s$ at all interior times.

We shall use the following metric fact about minimizing segments in
orbit spaces. It is a standard consequence of the orbit-type
structure of a Riemannian quotient; see \cite[Lemma~3.5]{AKLM03}. If
a compact Lie group acts isometrically on a complete Riemannian
manifold and \(\bar\gamma\) is a minimizing geodesic segment in the
orbit space, then the orbit type along the interior of \(\bar\gamma\)
is no more singular
than the orbit types at the endpoints. Equivalently, for a horizontal lift
\(\gamma\), the stabilizer of \(\gamma(t)\), \(t\in(0,1)\), is
conjugate to a subgroup of the stabilizers of the endpoints.

\begin{theorem}[No interior rank drop for rank-bounded minimizers]
  \label{thm:no-interior-rank-drop-rank-bounded}
  Let $2\le s\le\min(n,k)$, and let $\bar p,\bar q\in\Qks$.
  Every $\dkles$-minimizing geodesic segment from $\bar p$ to
  $\bar q$ has rank $s$ at all interior times.
\end{theorem}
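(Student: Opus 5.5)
The plan is to transport the problem through the global rank-bounded reduction, \cref{thm:global-rank-bounded-metric-reduction}, and then apply orbit-type monotonicity \cite[Lemma~3.5]{AKLM03} in the reduced quotient $Q_s$. There the endpoints lie in the principal stratum.

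Let $\bar\gamma:[0,1]\to\Qkles$ be a $\dkles$-minimizing segment from $\bar p$ to $\bar q$. Since $\Xks:(Q_s,\dQ{s})\to(\Qkles,\dkles)$ is an isometry of length spaces, the curve $\bar\sigma\coloneq\Xks^{-1}\circ\bar\gamma$ is a $\dQ{s}$-minimizing segment in $Q_s$. Padding preserves rank, so for every $t$ the rank of $\bar\sigma(t)$ equals the rank of $\bar\gamma(t)$. In particular, both endpoints $\bar p_s,\bar q_s$ lie in $\Qss$, the principal (free) stratum of the $O(s)$-action on $M_s$ by \cref{cor:rank-frontier-relation}.

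The statement therefore reduces to this: every $\dQ{s}$-minimizing segment in $Q_s$ between full-rank points stays in $\Qss$. To prove it, I use the structure of minimizers in the orbit space. Pick representatives $Z_0$ of $\bar p_s$ and $Z_1$ of $\bar q_s$ realizing $\dM{s}(Z_0,Z_1)=\dQ{s}(\bar p_s,\bar q_s)$, which exist by \cref{prop:reduced-quotient-global-metric-geometry}. The given segment $\bar\sigma$ is then the projection of a horizontal minimizing geodesic $Z:[0,1]\to M_s$ between two orbits. This is the standard lifting of quotient minimizers \cite[Proposition~3.1]{AKLM03}: a minimizing segment in $M/G$ lifts to a minimal geodesic between orbits, for instance via the length-preserving lift of \cref{thm:length-preserving-lifts-submetry}. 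That lift has length $\dQ{s}(\bar p_s,\bar q_s)$, hence realizes the distance between the endpoint orbits and is a horizontal geodesic.

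By \cite[Lemma~3.5]{AKLM03}, for $t\in(0,1)$ the stabilizer $O(s)_{Z(t)}$ is conjugate to a subgroup of the endpoint stabilizers, and these are trivial because $Z_0,Z_1$ have full column rank. So $O(s)_{Z(t)}$ is trivial. By \cref{prop:isotropy-group-rank}, the rank of $Z(t)$ is then $s$, because a rank-$j<s$ point has stabilizer conjugate to $\I_j\oplus O(s-j)$, which is nontrivial. Transporting back by $\Xks$ shows that $\bar\gamma(t)$ has rank $s$ for all interior $t$.

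The main obstacle is making sure the lemma applies to the arbitrary given minimizer, not just to one convenient one. This requires identifying every minimizing segment of $(Q_s,\dQ{s})$ with the projection of a horizontal minimal geodesic upstairs. If the lifting step is delicate, I would use the fact that a length-preserving lift of a minimizer is itself a minimizing curve in $M_s$ between the orbits. It is therefore a smooth Riemannian geodesic, and it is normal to the orbits by the first variation formula, which puts us exactly in the setting of \cite[Lemma~3.5]{AKLM03}.
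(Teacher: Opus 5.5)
Your proposal is correct and matches the paper's proof: you transport the minimizer through the isometry $\Xks$ to $(Q_s,\dQ{s})$, where both endpoints lie in the free stratum $\Qss$, and then apply orbit-type monotonicity \cite[Lemma~3.5]{AKLM03} to a horizontal lift. The paper simply assumes that such a horizontal lift exists; your construction of one for an arbitrary minimizer, via the length-preserving lift of \cref{thm:length-preserving-lifts-submetry}, supplies that step explicitly.
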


Existence of the minimizing segment used below follows from
\cref{cor:rank-bounded-global-alexandrov-geometry}: the rank-bounded space is
compact and geodesic.  The orbit-type argument is still essential for the
stronger conclusion that every such segment remains in the top-rank stratum
at interior times.

\begin{proof}
  Let \(\bar\gamma:[0,1]\to \Qkles\) be a \(\dkles\)-minimizing
  segment from \(\bar p\) to \(\bar q\).  By the global rank-bounded metric
  reduction theorem, \cref{thm:global-rank-bounded-metric-reduction}, the map
  \[
    \Xks:(Q_s,\dQ{s})
    \longrightarrow
    (\Qkles,\dkles)
  \]
  is an isometry of length spaces. Hence
  \[
    \bar\gamma_s\coloneq\Xks^{-1}\circ\bar\gamma
  \]
  is a minimizing segment in \((Q_s,\dQ{s})\) joining $\bar
  p_s\coloneq\Xks^{-1}(\bar p)$ to $\bar q_s\coloneq\Xks^{-1}(\bar q)$. Since
  \(\bar p\) and \(\bar q\) have rank \(s\), their preimages \(\bar
  p_s\) and \(\bar q_s\) lie in the full-rank stratum \(\Qss\). On
  \(\Mss\), the right \(O(s)\)-action is free. Therefore the
  stabilizers of full-rank representatives of \(\bar p_s\) and \(\bar
  q_s\) are trivial.

  By \cite[Lemma~3.5]{AKLM03}, the stabilizer of an interior point of
  a horizontal lift of \(\bar\gamma_s\) is conjugate to a subgroup of
  the stabilizers of the endpoints. Since both endpoint stabilizers
  are trivial, every interior stabilizer is trivial. Thus
  \[
    \bar\gamma_s((0,1))\subset \Qss,
    \qquad
    \bar\gamma((0,1))
    =
    \Xks\bigl(\bar\gamma_s((0,1))\bigr)
    \subset \Qks.
  \]
  This proves the no-rank-drop statement.
\end{proof}

We now translate \cref{thm:no-interior-rank-drop-rank-bounded} to the
correlation-matrix side. For \(2\le s\le \min(n,k)\), define the
intrinsic rank-bounded distance on \(\Crle{s}{n}\) by
\[
  \dCorrle{k}{s}(C_0,C_1)
  \coloneq
  \dkles
  \bigl(
    \bar\Gamma_k^{-1}(C_0),
    \bar\Gamma_k^{-1}(C_1)
  \bigr),
  \qquad
  C_0,C_1\in\Crle{s}{n}.
\]
Similarly, define the fixed-rank Riemannian distance on the smooth stratum
\(\Crk{s}{n}\) by
\[
  \dCorr{k}{s}(C_0,C_1)
  \coloneq
  \dQks
  \bigl(
    \bar\Gamma_k^{-1}(C_0),
    \bar\Gamma_k^{-1}(C_1)
  \bigr),
  \qquad
  C_0,C_1\in\Crk{s}{n}.
\]

\begin{corollary}[Correlation-side form]
  \label{cor:no-rank-drop-correlation}
  Let \(2\le s\le \min(n,k)\) and let $C_0,C_1\in\Crk{s}{n}$. Then
  \[
    \dCorrle{k}{s}(C_0,C_1)
    =
    \dCorr{k}{s}(C_0,C_1).
  \]
  Moreover, every $\dCorrle{k}{s}$-minimizing segment between
  \(C_0\) and \(C_1\) has rank \(s\) at all interior times.
\end{corollary}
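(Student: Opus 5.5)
The corollary is a transport statement. My plan is to pull both claims back through the Gram homeomorphism \(\bar\Gamma_k\) and read them off from results already proved on the quotient side.

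The definitions of \(\dCorrle{k}{s}\) and \(\dCorr{k}{s}\) are literally \(\dkles\) and \(\dQks\) composed with \(\bar\Gamma_k^{-1}\). So for \(C_0,C_1\in\Crk{s}{n}\), I set \(\bar p\coloneq\bar\Gamma_k^{-1}(C_0)\) and \(\bar q\coloneq\bar\Gamma_k^{-1}(C_1)\). By \cref{thm:orbit-type-equals-rank-stratification}, and since \(\rank(XX^\top)=\rank X\), both points lie in \(\Qks\).

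For the distance equality I will invoke \cref{eq:rank-bounded-distance-on-top-stratum}, proved in \cref{thm:ambient-quotient-length-on-strata}. It gives \(\dkles(\bar p,\bar q)=\dQks(\bar p,\bar q)\) for rank-\(s\) endpoints when \(s\ge2\), and substituting the definitions yields the first claim.

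For the minimizers I will observe that \(\bar\Gamma_k\) restricts to a bijection \(\Qkles\to\Crle{s}{n}\), by \cref{def:rank-bounded-pieces}. By construction, this restriction is an isometry from \((\Qkles,\dkles)\) onto \((\Crle{s}{n},\dCorrle{k}{s})\). An isometry preserves lengths of curves, so it carries minimizing segments to minimizing segments in both directions. Hence a \(\dCorrle{k}{s}\)-minimizing segment \(C(t)\) from \(C_0\) to \(C_1\) corresponds to the \(\dkles\)-minimizing segment \(\bar\gamma\coloneq\bar\Gamma_k^{-1}\circ C\) from \(\bar p\) to \(\bar q\). By \cref{thm:no-interior-rank-drop-rank-bounded}, \(\bar\gamma(t)\in\Qks\) for all \(t\in(0,1)\), so \(\rank C(t)=\rank\bar\gamma(t)=s\) on the interior.

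I expect no real obstacle. The only point needing care is the bookkeeping that the transported metric makes \(\bar\Gamma_k\) an isometry by definition rather than by a new argument, and that rank is preserved under the Gram identification.
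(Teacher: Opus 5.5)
Your proposal is correct and follows the paper's proof: both claims come from transporting the quotient-side results through the Gram identification \(\bar\Gamma_k\), which is an isometry for \(\dCorrle{k}{s}\) and \(\dCorr{k}{s}\) by the very definition of these distances. Your explicit citation of \cref{eq:rank-bounded-distance-on-top-stratum} for the distance equality is slightly more careful than the paper's one-line argument, since \cref{thm:no-interior-rank-drop-rank-bounded} itself states only the rank conclusion.
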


\begin{proof}
  The descended Gram map identifies \(\Qkles\) with \(\Crle{s}{n}\) and
  \(\Qks\) with \(\Crk{s}{n}\).  The result is exactly
  \cref{thm:no-interior-rank-drop-rank-bounded} transported through
  this identification.
\end{proof}

\begin{remark}[Computing rank-constrained geodesics]
\label{rem:computing-rank-constrained-geodesics}
By \cref{thm:no-interior-rank-drop-rank-bounded},
rank-constrained minimizing geodesics between rank-\(s\) endpoints
remain in \(\Qks\). After the fixed-rank reduction of
\cref{thm:fixed-rank-riemannian-reduction}, they admit the
horizontal-lift representation described in
\cref{subsec:fixed-rank-geodesics-horizontal-lifts}.
Their correlation-matrix representation is given by
\cref{thm:fixed-rank-correlation-geodesic}.
\end{remark}

\section{Geodesics constrained to hit lower-rank loci}
\label{sec:geodesics-constrained-hit-lower-rank}

We next consider a different type of constraint. In
\cref{sec:intrinsic-geodesics-rank-constraints}, curves were forced
to remain inside a rank-bounded set \(\Qkles\). Here curves are
allowed to move in the ambient quotient \((Q_k,\dQ{k})\), but they are
required to meet a prescribed lower-rank locus. These are distinct
variational problems.

\begin{definition}[Admissible curves with lower-rank contact]
\label{def:admissible-contact-curves}
Let \(r\coloneq\min(n,k)\), let \(1\le\ell<r\), and let
\(x_0,x_1\in \Qrk{k}{r}\). The admissible curves joining \(x_0\)
to \(x_1\) and meeting \(\Qle{k}{\ell}\) form the class
\[
  \begin{aligned}
    \mathcal A_\ell(x_0,x_1)\coloneq\Bigl\{&
      \gamma:[0,1]\to Q_k\text{ continuous and }\dQ{k}\text{-rectifiable}:\\
      &\gamma(0)=x_0,\quad \gamma(1)=x_1,\quad
      \gamma([0,1])\cap \Qle{k}{\ell}\ne\varnothing\Bigr\}.
  \end{aligned}
\]
The constrained problem minimizes \(\Length_{\dQ{k}}\) over this
class; curves may otherwise move throughout \(Q_k\).
\end{definition}

\begin{proposition}[One-contact property]
  \label{prop:one-contact-lower-rank-locus}
  Let \(1\le \ell<r\), and let \(x_0,x_1\in \Qrk{k}{r}\). Suppose that
  \(\gamma\in\mathcal A_\ell(x_0,x_1)\) minimizes \(\Length_{\dQ{k}}\) among
  all curves in \(\mathcal A_\ell(x_0,x_1)\). Then there is a
  unique contact point \(p_*\) in the path image:
  \[
    \gamma([0,1])\cap \Qle{k}{\ell}=\{p_*\}.
  \]
  Every other point of the path image lies in \(\Qrk{k}{r}\).  The contact-time
  set \(\gamma^{-1}(p_*)\) may be a nontrivial closed interval if the
  parametrization pauses at \(p_*\); it is a singleton for a constant-speed,
  non-stalling parametrization.
\end{proposition}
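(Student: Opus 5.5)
The plan is to show that both sub-arcs of a minimizer on either side of a contact point are ambient minimizing segments, and then apply the orbit-type monotonicity for minimizing geodesics, \cite[Lemma~3.5]{AKLM03}, recalled at the start of \cref{sec:intrinsic-geodesics-rank-constraints}, to each sub-arc separately.

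First, since \(\gamma\in\mathcal A_\ell(x_0,x_1)\), fix a time \(t_*\) with \(p_*\coloneq\gamma(t_*)\in\Qle{k}{\ell}\). Because \(\ell<r\) and \(x_0,x_1\in\Qrk{k}{r}\), we have \(0<t_*<1\). By \cref{prop:reduced-quotient-global-metric-geometry}, \((Q_k,\dQ{k})\) is geodesic. Hence the concatenation of a \(\dQ{k}\)-minimizing segment from \(x_0\) to \(p_*\) with one from \(p_*\) to \(x_1\) is itself admissible, and minimality of \(\gamma\) gives the chain
\[
  \Length_{\dQ{k}}(\gamma)\le\dQ{k}(x_0,p_*)+\dQ{k}(p_*,x_1)
  \le\Length_{\dQ{k}}(\restr{\gamma}{[0,t_*]})+\Length_{\dQ{k}}(\restr{\gamma}{[t_*,1]})
  =\Length_{\dQ{k}}(\gamma).
\]
All of these are therefore equalities. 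In particular \(\restr{\gamma}{[0,t_*]}\) has length \(\dQ{k}(x_0,p_*)\), and \(\restr{\gamma}{[t_*,1]}\) has length \(\dQ{k}(p_*,x_1)\). After constant-speed reparametrization, which does not change the image, each sub-arc is a minimizing geodesic segment of \(Q_k\). As a by-product, this also gives the variational value \(\dQ{k}(x_0,p_*)+\dQ{k}(p_*,x_1)\).

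Second, I apply \cite[Lemma~3.5]{AKLM03} to each segment. The hypotheses hold because \(O(k)\) is compact and acts isometrically on the compact manifold \(M_k\). A horizontal lift exists by \cref{prop:reduced-quotient-global-metric-geometry}: take a minimizing product-sphere geodesic between optimally aligned representatives. Along the first segment, every interior stabilizer is conjugate to a subgroup of the stabilizer of a lift of \(x_0\). That stabilizer is conjugate to the principal isotropy \(H_r=\I_r\oplus O(k-r)\) by \cref{prop:isotropy-group-rank}. By the fixed-subspace dimension argument of \cref{thm:orbit-type-equals-rank-stratification}, a subgroup of a conjugate of \(H_r\) that is itself conjugate to some \(H_j\) must satisfy \(j\ge r\). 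Hence every interior point of this segment has rank \(r\). The same holds for the second segment, using \(x_1\). Consequently any point of \(\gamma([0,1])\) other than \(x_0\), \(x_1\) and \(p_*\) is an interior point of one of the two segments and has rank \(r\). Together with \(x_0,x_1\in\Qrk{k}{r}\), this yields \(\gamma([0,1])\cap\Qle{k}{\ell}=\{p_*\}\).

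Third, I address the contact-time set \(\gamma^{-1}(p_*)\). It is closed by continuity. If \(t_1<t_2\) both lie in it, then by the equalities above, applied with \(t_*\) replaced by either time, the arc \(\restr{\gamma}{[t_1,t_2]}\) has length \(\dQ{k}(p_*,p_*)=0\). That arc is therefore constant, so the contact-time set is an interval. For a constant-speed, non-stalling parametrization, zero length forces \(t_1=t_2\), so the set is a singleton.

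The step I expect to need the most care is the second one. One must check that the cited lemma gives containment in the stabilizer of each endpoint separately, not merely of one of them. One must also handle the subtlety that, when \(k>n\), the principal isotropy \(H_r\) is nontrivial, so the conclusion has to go through conjugacy classes of the \(H_j\) rather than through freeness of the action.
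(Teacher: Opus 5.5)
Your proposal is correct and follows the same route as the paper's proof. You split at a contact time, show both halves are ambient \(\dQ{k}\)-minimizers, apply \cite[Lemma~3.5]{AKLM03} from each regular endpoint, and use a zero-length argument for \(\gamma^{-1}(p_*)\). Two details differ from the paper, both to your credit. Your chain of inequalities replaces the paper's ``replace a nonminimizing half by a shorter segment'' argument. Your fixed-subspace check through the conjugacy classes \((H_j)\) makes explicit, for \(k>n\) where \(H_r=\I_r\oplus O(k-r)\) is nontrivial, what the paper leaves implicit.

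One justification in your second step should be repaired. The lemma must be applied to a horizontal lift of the given sub-arc \(\restr{\gamma}{[0,t_*]}\) itself. \Cref{prop:reduced-quotient-global-metric-geometry} only goes in the opposite direction. It says that a minimizing product-sphere geodesic between optimally aligned representatives projects to \emph{some} quotient minimizer from \(x_0\) to \(p_*\). That minimizer need not be your sub-arc, since quotient minimizers are not known to be unique.

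The right tool is \cref{thm:length-preserving-lifts-submetry}. Lift the reparametrized sub-arc to a curve \(\sigma\) in \(M_k\) with \(\Length_{M_k}(\sigma)=\dQ{k}(x_0,p_*)\). Then \(\dQ{k}(x_0,p_*)\le\dM{k}(\sigma(0),\sigma(1))\le\Length_{M_k}(\sigma)=\dQ{k}(x_0,p_*)\). So \(\sigma\), after constant-speed reparametrization, is a minimizing geodesic of \(M_k\) realizing the distance between the orbits of its endpoints. Hence it is normal to the orbits, and \cite[Lemma~3.5]{AKLM03} applies to it. With this substitution the rest of your argument, including the contact-time analysis, goes through unchanged.
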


\begin{figure}[htbp]
  \centering
  \begin{subfigure}[t]{.48\linewidth}
    \centering
    \includegraphics[width=\linewidth]{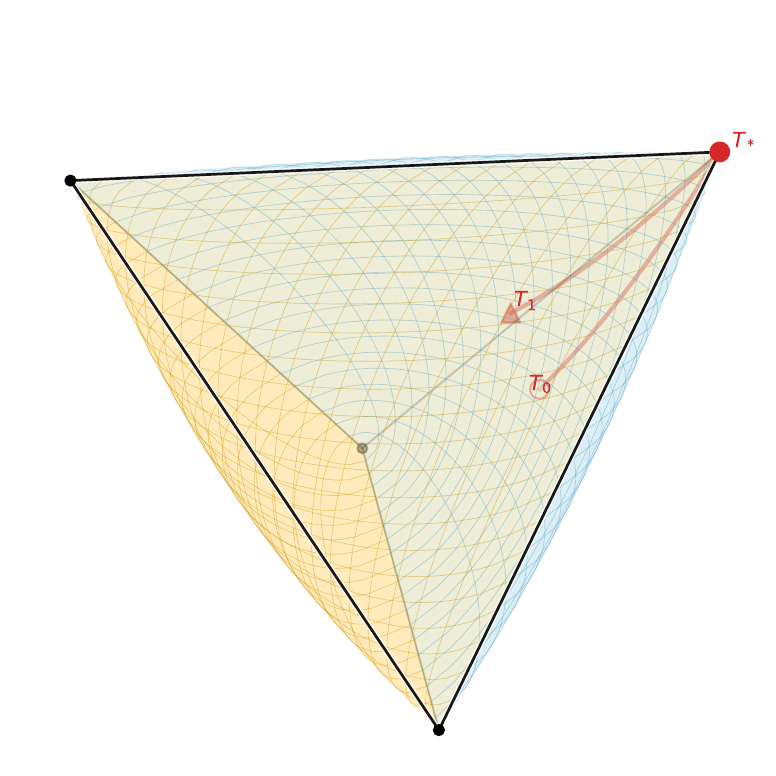}
    \caption{Broken path through the rank-one vertex \(T_*\).}
  \end{subfigure}\hfill
  \begin{subfigure}[t]{.48\linewidth}
    \centering
    \includegraphics[width=\linewidth]{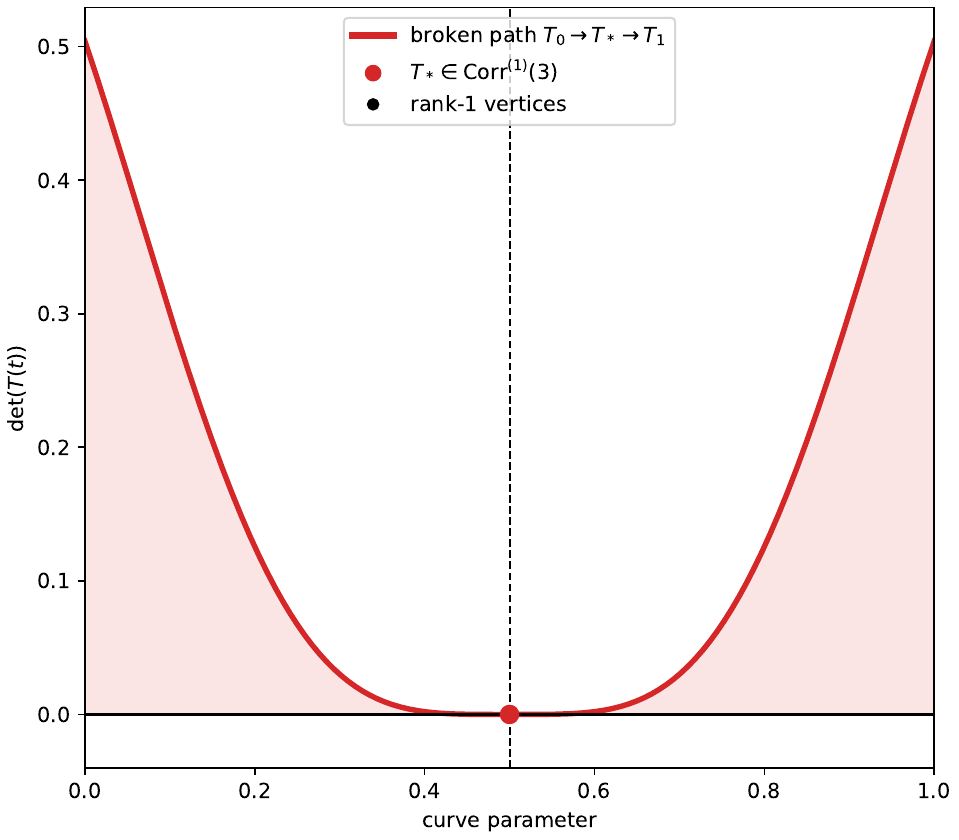}
    \caption{Determinant \(\det T(t)\) along the displayed path.}
  \end{subfigure}
  \caption{Illustration of the single-contact geometry in
    \cref{prop:one-contact-lower-rank-locus}.
    The red curve joins full-rank endpoints \(T_0,T_1\) through
    \(T_*\in\Crk{1}{3}\).
    }
  \label{fig:corr3-rank1-contact}
\end{figure}

\begin{proof}
    Reparametrize the path by arclength after deleting stationary
    subintervals; this preserves its image, length, and admissibility.  Choose
    \(s_*\) with \(\gamma(s_*)\in \Qle{k}{\ell}\), and put
    \(p_*\coloneq\gamma(s_*)\).  Since the endpoints are regular, \(s_*\) is an
    interior parameter.  Each restriction on either side of \(s_*\) is an
    ambient \(\dQ{k}\)-minimizing segment: replacing a nonminimizing restriction
    by a shorter segment would preserve the contact at \(p_*\) and contradict
    constrained minimality.

    Orbit-type monotonicity for minimizing segments
    \cite[Lemma~3.5]{AKLM03}, applied from each regular endpoint, places the
    interiors of both segments in \(\Qrk{k}{r}\).  Hence \(p_*\) is the only
    lower-rank point in the arclength-parametrized image, and therefore in the
    original image.

    Finally, if two original parameters \(t_1<t_2\) map to \(p_*\), the
    restriction between them must have zero length; otherwise deleting that
    loop would shorten an admissible path.  Thus \(\gamma\) is constant on
    \([t_1,t_2]\), so the full preimage of \(p_*\) is a closed interval.  A
    non-stalling constant-speed parametrization cannot contain such a
    nontrivial interval.
\end{proof}

The previous proposition shows that the constrained curve is a broken
ambient geodesic, with a single contact point on the lower-rank locus. The
next statement identifies the contact point by a finite-dimensional
optimization problem on \(\Qle{k}{\ell}\).

\begin{proposition}[Reduction to a contact-point optimization]
  \label{prop:reduction-constrained-hit}
  Let \(1\le \ell<r\), and let \(x_0,x_1\in \Qrk{k}{r}\). Define the functional
  \[
    \Phi_\ell(p)
    \coloneq
    \dQ{k}(x_0,p)+\dQ{k}(p,x_1),
    \qquad
    p\in \Qle{k}{\ell}.
  \]
  Then
  \[
    \inf_{\gamma\in\mathcal A_\ell(x_0,x_1)}
    \Length_{\dQ{k}}(\gamma)
    =
    \min_{p\in \Qle{k}{\ell}}\Phi_\ell(p).
  \]
  Moreover:
  \begin{enumerate}
    \item if \(\gamma\in\mathcal A_\ell(x_0,x_1)\) is minimizing and
      \(p_*\) is its unique contact point, then \(p_*\) minimizes \(\Phi_\ell\) on
      \(\Qle{k}{\ell}\);
    \item conversely, if \(p_*\in \Qle{k}{\ell}\) minimizes
      \(\Phi_\ell\), then the
      concatenation of any ambient minimizing segment from \(x_0\) to
      \(p_*\) with any
      ambient minimizing segment from \(p_*\) to \(x_1\) is a
      minimizing element of
      \(\mathcal A_\ell(x_0,x_1)\).
  \end{enumerate}
\end{proposition}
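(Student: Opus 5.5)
The plan is to prove the two inequalities between the constrained infimum and \(\inf\Phi_\ell\) directly, then obtain attainment of the minimum by compactness. Both directions use only the triangle inequality, the fact that \((Q_k,\dQ{k})\) is a compact geodesic length space (\cref{prop:reduced-quotient-global-metric-geometry}), and the one-contact structure of minimizers (\cref{prop:one-contact-lower-rank-locus}).

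\emph{Lower bound.} Let \(\gamma\in\mathcal A_\ell(x_0,x_1)\) and choose a time \(t_*\) with \(p\coloneq\gamma(t_*)\in\Qle{k}{\ell}\). Additivity of metric length gives
\[
  \Length_{\dQ{k}}(\gamma)=\Length_{\dQ{k}}\bigl(\restr{\gamma}{[0,t_*]}\bigr)+\Length_{\dQ{k}}\bigl(\restr{\gamma}{[t_*,1]}\bigr)\ge\dQ{k}(x_0,p)+\dQ{k}(p,x_1)=\Phi_\ell(p).
\]
Hence the infimum over \(\mathcal A_\ell(x_0,x_1)\) is at least \(\inf\Phi_\ell\).

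\emph{Existence of the minimum.} The set \(\Qle{k}{\ell}=\pi_k(M_k^{\le\ell})\) is compact, since \(M_k^{\le\ell}\) is closed in the compact space \(M_k\) and \(\pi_k\) is continuous. The function \(\Phi_\ell\) is continuous; indeed it is \(2\)-Lipschitz for \(\dQ{k}\). Therefore \(\Phi_\ell\) attains its minimum at some \(p_*\in\Qle{k}{\ell}\).

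\emph{Upper bound and item 2.} Take any \(p\in\Qle{k}{\ell}\). Because \(Q_k\) is geodesic, there are minimizing segments from \(x_0\) to \(p\) and from \(p\) to \(x_1\). Their concatenation is continuous and rectifiable, passes through \(p\), and therefore lies in \(\mathcal A_\ell(x_0,x_1)\); its length is exactly \(\Phi_\ell(p)\). This shows that the infimum over \(\mathcal A_\ell(x_0,x_1)\) is at most \(\inf\Phi_\ell\), which together with the lower bound proves the displayed equality. When \(p=p_*\) is a minimizer, the concatenation has length \(\min\Phi_\ell\), which equals the infimum, so it is a minimizing element of \(\mathcal A_\ell(x_0,x_1)\). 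This is item 2.

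\emph{Item 1.} Let \(\gamma\) be minimizing and let \(p_*\) be its unique contact point from \cref{prop:one-contact-lower-rank-locus}. The lower-bound computation applied at a contact time gives \(\Phi_\ell(p_*)\le\Length_{\dQ{k}}(\gamma)=\min\Phi_\ell\). Hence \(p_*\) is a minimizer of \(\Phi_\ell\).

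The argument has no substantive obstacle. The only point needing care is to justify the use of additivity of metric length at the contact time and that concatenations of rectifiable curves remain admissible, both of which are standard.
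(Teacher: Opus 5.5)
Your proposal is correct and follows essentially the same route as the paper: the lower bound comes from splitting the length of an admissible curve at a contact time, \(\min\Phi_\ell\) is attained by compactness of \(\Qle{k}{\ell}\) and continuity of \(\Phi_\ell\), and the upper bound together with item~2 comes from concatenating two ambient minimizing segments through a minimizer. Item~1 is then read off from equality in the lower-bound chain exactly as you do; your argument works at any contact point, so the uniqueness from \cref{prop:one-contact-lower-rank-locus} is not actually needed there.
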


\begin{proof}
The continuous function \(\Phi_\ell\) attains its minimum on the
compact set \(\Qle{k}{\ell}\).
For any \(\gamma\in\mathcal A_\ell(x_0,x_1)\), choose
\(t\in[0,1]\) such that \(p\coloneq\gamma(t)\in \Qle{k}{\ell}\).
Then
\[
  \begin{aligned}
    \Length_{\dQ{k}}(\gamma)
    &=
    \Length_{\dQ{k}}\bigl(\restr{\gamma}{[0,t]}\bigr)
    +\Length_{\dQ{k}}\bigl(\restr{\gamma}{[t,1]}\bigr)\\
    &\ge \dQ{k}(x_0,p)+\dQ{k}(p,x_1)
    =\Phi_\ell(p)
    \ge\min_{\Qle{k}{\ell}}\Phi_\ell.
  \end{aligned}
\]
Conversely, let \(p_*\) minimize \(\Phi_\ell\).
Since \((Q_k,\dQ{k})\) is geodesic, choose any minimizing segments
from \(x_0\) to \(p_*\) and from \(p_*\) to \(x_1\).
Their concatenation belongs to \(\mathcal A_\ell(x_0,x_1)\)
and has length
\[
  \dQ{k}(x_0,p_*)+\dQ{k}(p_*,x_1)
  =\Phi_\ell(p_*)
  =\min_{\Qle{k}{\ell}}\Phi_\ell.
\]
This proves the equality of optimal values.
The stated characterization of minimizers follows from equality
in the preceding inequalities and the concatenation construction.
\end{proof}

The contact-point formulation reduces the constrained problem to the
minimization of the functional
\[
  \Phi_\ell : p\in \Qle{k}{\ell} \longmapsto
  \dQ{k}(x_0,p)+\dQ{k}(p,x_1) \in \R.
\]
This minimization sits in a stratified space. Nevertheless, once the
minimizing contact point is known to lie in a fixed smooth rank
stratum, the usual Riemannian first-order condition applies along
that stratum. More precisely, suppose that \(p_*\in \Qkj\) for some \(1\le j\le \ell\) is a minimizing contact point. Then \(p_*\) minimizes the restricted function \(\Phi_\ell^{(j)} \coloneq \restr{\Phi_\ell}{\Qkj}\) on the smooth Riemannian manifold \(\Qkj\). Therefore, at every point where the two restricted distance functions
\[
  p\longmapsto \dQ{k}(x_0,p),
  \qquad
  p\longmapsto \dQ{k}(p,x_1)
\]
are differentiable along \(\Qkj\), the differential of
\(\Phi_\ell^{(j)}\) must vanish:
\[
  \dd\Phi_\ell^{(j)}(p_*)[v]=0
  \qquad
  \text{for every }v\in\Tz{p_*}\Qkj.
\]
Equivalently,
\[
  \grad^{(j)} \dQ{k}(x_0,\cdot)
  +
  \grad^{(j)} \dQ{k}(\cdot,x_1)
  =
  0
  \qquad
  \text{at }p_*,
\]
where the Riemannian gradients are taken with respect to the quotient Riemannian metric on the rank-\(j\) stratum \(\Qkj\). Geometrically, this says that a minimizing broken geodesic
\[
  x_0\longrightarrow p_*\longrightarrow x_1
\]
satisfies a stationarity condition with respect to variations of the contact point inside the smooth stratum containing \(p_*\). The tangential components of the incoming and outgoing first-variation vectors cancel along \(\Tz{p_*}\Qkj\). Thus the stratified constraint determines which smooth stratum contains the contact point. On that stratum, the broken length is stationary under all tangent variations of
the contact point.

\begin{theorem}[Density of the top allowed stratum]
\label{thm:top-allowed-rank-determines-value}
Let \(2\le\ell\le r\coloneq\min(n,k)\). Then \(\Qrk{k}{\ell}\) is dense in \(\Qle{k}{\ell}\). Consequently, if \(x_0,x_1\in \Qrk{k}{r}\) with \(\ell<r\), and if \(\Phi_\ell(p) \coloneq \dQ{k}(x_0,p)+\dQ{k}(p,x_1)\), \(p\in \Qle{k}{\ell}\) then
\[
  \min_{p\in \Qle{k}{\ell}}\Phi_\ell(p)
  =
  \inf_{p\in \Qrk{k}{\ell}}\Phi_\ell(p).
\]
\end{theorem}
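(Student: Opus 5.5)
The density claim is essentially the frontier relation already recorded in \cref{cor:rank-frontier-relation}. That result gives $\overline{\Qrk{k}{\ell}}=\Qle{k}{\ell}$, with the closure taken in $Q_k$. Since $\Qrk{k}{\ell}\subset\Qle{k}{\ell}$, the closure relative to $\Qle{k}{\ell}$ is also all of $\Qle{k}{\ell}$, so $\Qrk{k}{\ell}$ is dense there. If I want the argument self-contained, I will repeat the explicit approximation upstairs. Take $X\in M_k^{\le\ell}$ and rotate it by some $O\in O(k)$ so that $XO=(Z,0)$ with $Z\in\R^{n\times\ell}$. Since $\ell\le n$, perturb $Z$ to full column rank $\ell$ and renormalize the rows; normalization is continuous near $Z$ and preserves rank. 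This gives rank-$\ell$ factors converging to $XO$. I then project by the continuous map $\pi_k$, which commutes with the $O(k)$-action.

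For the consequence, I use that $\dQ{k}$ is a metric on $Q_k$ and hence $1$-Lipschitz in each argument. So $\Phi_\ell$ is continuous, indeed $2$-Lipschitz, on $\Qle{k}{\ell}$ with respect to $\dQ{k}$. Because $\Qle{k}{\ell}$ is compact (the continuous image under $\pi_k$ of the compact set $M_k^{\le\ell}$), the minimum is attained at some $p_*$. This is also part of \cref{prop:reduction-constrained-hit}.

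The inequality $\min_{\Qle{k}{\ell}}\Phi_\ell\le\inf_{\Qrk{k}{\ell}}\Phi_\ell$ holds trivially, since we are restricting to a subset. For the reverse inequality, pick $p_j\in\Qrk{k}{\ell}$ with $p_j\to p_*$, using density. Continuity then gives $\Phi_\ell(p_j)\to\Phi_\ell(p_*)$, so $\inf_{\Qrk{k}{\ell}}\Phi_\ell\le\Phi_\ell(p_*)$.

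The only point needing care is the topology. The density from \cref{cor:rank-frontier-relation} must hold in the $\dQ{k}$-metric topology and not only in some other topology. I will justify this by noting that $\dQ{k}$ metrizes the quotient topology: $\pi_k$ is a submetry from a compact metric space, by \cref{prop:reduced-quotient-global-metric-geometry}, so the metric and quotient topologies coincide. Once that is settled, the remaining steps are routine. The hypothesis $\ell\ge2$ is not actually needed for density, but I will keep it for consistency with the statement. I also note that the infimum over $\Qrk{k}{\ell}$ need not be attained, since $p_*$ may lie in a lower stratum.
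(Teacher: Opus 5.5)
Your proof is correct, but it reaches density by a more elementary route than the paper. The paper's route is as follows:
\begin{itemize}
\item it invokes the global rank-bounded reduction (\cref{thm:global-rank-bounded-metric-reduction}), so that $\padXi{\ell}{k}:(Q_\ell,\dQ{\ell})\to(\Qle{k}{\ell},\dle{\mathrm{int},k}{\ell})$ is an isometry restricting to a homeomorphism $\Qrk{\ell}{\ell}\to\Qrk{k}{\ell}$;
\item it pushes density of the open set $\Mrk{\ell}{\ell}$ in $M_\ell$ forward through $\pi_\ell$;
\item it then transports that density through this homeomorphism.
\end{itemize}
You instead use the frontier relation of \cref{cor:rank-frontier-relation}, or its perturb-and-renormalize argument carried out directly in $M_k$. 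This needs neither the length-preserving lifts nor the semialgebraic approximation behind the reduction theorem, and it works for every $\ell\ge1$.

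What the paper's route buys is density in the intrinsic metric $\dle{\mathrm{int},k}{\ell}$, which is a priori finer than $\dQ{k}$ since $\dQ{k}\le\dle{\mathrm{int},k}{\ell}$. It also gives the parametrization of $\Qle{k}{\ell}$ by $Q_\ell$ that is used in \cref{rem:rank-adaptive-hit-interpretation}. For the statement as posed, however, $\Phi_\ell$ is built from $\dQ{k}$, so density in the $\dQ{k}$-topology is exactly what you need.

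Your topological check can be shortened. Since $\dQ{k}([X],[Y])\le\dM{k}(X,Y)$, the map $\pi_k$ is $1$-Lipschitz. Hence convergence of your rank-$\ell$ factors $(Z_j,0)\to XO$ in $M_k$ immediately gives $\dQ{k}$-convergence of their classes to $\pi_k(X)$. You therefore do not need to identify the quotient and metric topologies.
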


\begin{proof}
By \cref{thm:global-rank-bounded-metric-reduction}, the map
\[
  \padXi{\ell}{k}:
  (Q_\ell,\dQ{\ell})
  \longrightarrow
  (\Qle{k}{\ell},\dle{\mathrm{int},k}{\ell})
\]
is an isometry, in particular a homeomorphism, and it restricts to a homeomorphism $\Qrk{\ell}{\ell}\longrightarrow \Qrk{k}{\ell}$. Now \(\Mrk{\ell}{\ell}\) is open and dense in
\(M_\ell=(\mathbb S^{\ell-1})^n\). Since the quotient map $\pi_\ell:M_\ell\to Q_\ell$ is continuous and surjective, its image \(\Qrk{\ell}{\ell}=\pi_\ell(\Mrk{\ell}{\ell})\) is dense in \(Q_\ell\). Applying the homeomorphism \(\padXi{\ell}{k}\), we obtain
that \(\Qrk{k}{\ell}\) is dense in \(\Qle{k}{\ell}\). Finally, \(\Phi_\ell\) is continuous on the compact set \(\Qle{k}{\ell}\). Hence it
attains its minimum, and density of \(\Qrk{k}{\ell}\subset \Qle{k}{\ell}\) gives
\[
  \min_{p\in \Qle{k}{\ell}}\Phi_\ell(p)
  =
  \inf_{p\in \Qrk{k}{\ell}}\Phi_\ell(p).
\]
\end{proof}

\begin{remark}[Rank-adaptive interpretation]
\label{rem:rank-adaptive-hit-interpretation}
Assume \(2\le\ell<r\). The minimum of \(\Phi_\ell\)
may be attained on a lower-rank stratum \(\Qkj\),
with \(j<\ell\). By
\cref{thm:top-allowed-rank-determines-value}, its value
can nevertheless be obtained by minimizing
\[
  F_\ell(\bar z)
  \coloneq
  \dQ{k}\bigl(x_0,\padXi{\ell}{k}(\bar z)\bigr)
  +
  \dQ{k}\bigl(\padXi{\ell}{k}(\bar z),x_1\bigr),
  \qquad
  \bar z\in \Qrk{\ell}{\ell}.
\]
This formula extends continuously to \(Q_\ell\).
Indeed, the padding map $\padXi{\ell}{k}:(Q_\ell,\dQ{\ell})\longrightarrow(Q_k,\dQ{k})$ is \(1\)-Lipschitz, so $F_\ell$ is
\(2\)-Lipschitz with respect to \(\dQ{\ell}\). The smoothness of the stratum does not by itself imply
smoothness of \(F_\ell\). On any open subset of
\(\Qrk{\ell}{\ell}\) where both restricted distance terms
are smooth, \(F_\ell\) is smooth. At a minimizing point
\(\bar z_*\) where \(F_\ell\) is differentiable, the
usual first-order condition holds:
\[
  \dd F_\ell(\bar z_*)[v]=0
  \qquad
  \text{for every }
  v\in \Tz{\bar z_*}\Qrk{\ell}{\ell}.
\]
Let \((\bar z_m)\) be a minimizing sequence for \(F_\ell\),
and choose representatives \(Z_m\in \Mrk{\ell}{\ell}\).
If there exists $\varepsilon>0$ such that
  $\lambda_{\min}(Z_m^\top Z_m)\ge\varepsilon$ for all $m$,
  then compactness of \(M_\ell\)
gives a convergent subsequence whose limit still has
rank \(\ell\). By continuity, its image under
\(\padXi{\ell}{k}\) is a minimizing rank-\(\ell\)
contact point. Otherwise, a subsequence satisfies $\lambda_{\min}(Z_m^\top Z_m)\longrightarrow0$. After passing to a further subsequence, compactness
gives a limit of rank strictly less than \(\ell\).
Continuity of the extended objective then shows that
its image is a minimizing contact point on the
lower-rank boundary.
\end{remark}

\begin{corollary}[Correlation-side constrained hitting problem]
  \label{cor:correlation-constrained-hit}
  The statements of
  \cref{prop:one-contact-lower-rank-locus,prop:reduction-constrained-hit,thm:top-allowed-rank-determines-value}
  transport verbatim to the rank strata of correlation matrices
  through the descended
  Gram identification
  \[
    \bar\Gamma_k:Q_k\longrightarrow \Crle{k}{n}.
  \]
  Thus, for two full-rank points in \(\Crk{r}{n}\), a shortest
  ambient path
  constrained to hit \(\Crle{\ell}{n}\) has a unique contact point with
  that lower-rank
  locus, and the optimal contact value is already determined by the
  top allowed
  stratum \(\Crk{\ell}{n}\).
\end{corollary}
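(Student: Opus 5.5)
The corollary is a transport statement. I would prove it by pushing every ingredient of \cref{prop:one-contact-lower-rank-locus,prop:reduction-constrained-hit,thm:top-allowed-rank-determines-value} through the homeomorphism \(\bar\Gamma_k:Q_k\to\Crle{k}{n}\) of \cref{prop:quotient-model-low-rank-correlations}. The distance on the correlation side is the transported \(\dQ{k}\), so \(\bar\Gamma_k\) is, by definition, an isometry of metric spaces \((Q_k,\dQ{k})\cong(\Crle{k}{n},\dQ{k})\).

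First I check that the objects in the three statements correspond exactly under this isometry.
\begin{itemize}
\item Continuity, rectifiability and metric length of curves are intrinsic to the metric, so \(\gamma\mapsto\bar\Gamma_k\circ\gamma\) is a length-preserving bijection between continuous rectifiable curves in the two spaces.
\item By \cref{thm:orbit-type-equals-rank-stratification}, together with \(\rank(XX^\top)=\rank X\), the map \(\bar\Gamma_k\) sends \(\Qrk{k}{j}\) onto \(\Crk{j}{n}\) and \(\Qle{k}{\ell}\) onto \(\Crle{\ell}{n}\).
\item Consequently the admissible class \(\mathcal A_\ell(x_0,x_1)\) corresponds to the class of rectifiable curves in \(\Crle{k}{n}\) joining \(\bar\Gamma_k(x_0)\) and \(\bar\Gamma_k(x_1)\) that meet \(\Crle{\ell}{n}\).
\item Minimizers correspond to minimizers, and ambient minimizing segments correspond to ambient minimizing segments.
\item The functional \(\Phi_\ell\) transports to \(C\mapsto\dQ{k}(C_0,C)+\dQ{k}(C,C_1)\) on \(\Crle{\ell}{n}\).
\end{itemize}

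With these correspondences in place, each conclusion transfers directly.
\begin{itemize}
\item \emph{Unique contact.} The image of a minimizing curve meets \(\Crle{\ell}{n}\) in exactly one point. This is because \(\bar\Gamma_k\) is a bijection and intersections of images are preserved.
\item \emph{Contact-point optimization.} The equality of optimal values in \cref{prop:reduction-constrained-hit}, and both directions of its characterization of minimizers, hold verbatim, since all quantities involved are preserved.
\item \emph{Top allowed stratum.} Density of \(\Crk{\ell}{n}\) in \(\Crle{\ell}{n}\) follows because homeomorphisms preserve closures; alternatively, it is \cref{cor:rank-frontier-relation}. The equality \(\min=\inf\) over the top stratum then follows as in \cref{thm:top-allowed-rank-determines-value}, using compactness of \(\Crle{\ell}{n}\) and continuity of the transported \(\Phi_\ell\).
\end{itemize}

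There is no real obstacle. The one point needing care is that \cref{prop:one-contact-lower-rank-locus} relies on orbit-type monotonicity, which is a statement about the quotient and not about the correlation side. I therefore apply that proposition in \(Q_k\) and only then transport its conclusion; I do not try to re-derive it on \(\Crle{k}{n}\).
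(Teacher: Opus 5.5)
Your proposal is correct and follows the paper's own route. The paper gives no separate proof because the corollary is exactly the transport of \cref{prop:one-contact-lower-rank-locus,prop:reduction-constrained-hit,thm:top-allowed-rank-determines-value} through the isometry \(\bar\Gamma_k:(Q_k,\dQ{k})\to(\Crle{k}{n},\dQ{k})\), which sends \(\Qkj\) onto \(\Crk{j}{n}\), just as in \cref{cor:no-rank-drop-correlation}; you are also right to apply the orbit-type ingredient in \(Q_k\) before transporting.
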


\section{Excess length of rank-constrained geodesics}
\label{sec:excess-rank-constrained-geodesics}

Throughout this section, let \(2\le s\le\min(n,k)\).
We quantify the excess length caused by requiring curves to
remain in \(\Qkles\).

\begin{definition}[Excess of the rank constraint]
\label{def:rank-constraint-excess}
For \(\bar p,\bar q\in\Qks\), define
\[
  \Delta_s(\bar p,\bar q)
  \coloneq
  \dkles(\bar p,\bar q)-\dQ{k}(\bar p,\bar q)
  \ge0.
\]
\end{definition}

Set
\[
  \bar p_s\coloneq\Xks^{-1}(\bar p),
  \qquad
  \bar q_s\coloneq\Xks^{-1}(\bar q).
\]
The global rank-bounded reduction
\cref{thm:global-rank-bounded-metric-reduction} gives
\begin{equation}
\label{eq:excess-reduced-model}
  \Delta_s(\bar p,\bar q)
  =
  \dQ{s}(\bar p_s,\bar q_s)-\dQ{k}(\bar p,\bar q).
\end{equation}
The estimates below use this identity, the Procrustes comparison,
and chord--arc inequalities on the sphere.

\subsection{Chordal quotient comparison}
\label{subsec:excess-chordal-comparison}

Recall the chordal quotient distance introduced in
\cref{lem:procrustes-chordal}
\[
  \dChord{s}([A],[B])
  \coloneq
  \min_{R\in O(s)}\|A-BR\|_F,
  \qquad A,B\in M_s.
\]
By the orthogonal Procrustes formula, $  \dChord{s}([A],[B])^2 =
2n-2\|A^\top B\|_*$, where \(\|\cdot\|_*\) is the nuclear norm. We
also use the following two consequences already proved in
\cref{sec:quotient-metric-sylvester-connection}. First, the
Procrustes formula is compatible with padding; indeed this is the
padding identity used in the proof of
\cref{lem:chordal-comparison-padded-factors}.  Thus, for \(Z_0,Z_1\in M_s\),
\begin{equation}
  \label{eq:chordal-padding-invariance-excess}
  \dChord{k}(\Xks([Z_0]),\Xks([Z_1]))
  =
  \dChord{s}([Z_0],[Z_1]).
\end{equation}
Second, \cref{lem:chordal-comparison-padded-factors} gives
\begin{equation}
  \label{eq:chordal-dominated-by-orbit-excess}
  \dChord{s}([Z_0],[Z_1])
  \le
  \dQ{k}(\Xks([Z_0]),\Xks([Z_1])).
\end{equation}

\subsection{Arcsine and cubic bounds}
\label{subsec:universal-cubic-excess-bound}

We first record the exact chord--arc identity upstairs. The resulting
arcsine bound immediately yields the local cubic estimate.

\begin{lemma}[Exact product chord--arc formula]
  \label{lem:exact-product-chord-arc-excess}
  Let \(X,Y\in M_s=(\mathbb S^{s-1})^n\). Write their rows as \(x_i\coloneq X_{i\cdot}\), \(y_i\coloneq Y_{i\cdot}\) and set \(c_i\coloneq\|x_i-y_i\|\), \(\theta_i\coloneq\dSphere{s-1}(x_i,y_i)\). Then
  \[
    \theta_i=2\arcsin(c_i/2),
    \quad \textnormal{and hence} \quad 
    \dM{s}(X,Y)=\left(\sum_{i=1}^n \bigl(2\arcsin(c_i/2)\bigr)^2\right)^{1/2}.
  \]
\end{lemma}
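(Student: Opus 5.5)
The plan is to reduce everything to the elementary geometry of a single great circle, and then assemble the product formula from the distance formula already recorded in \cref{def:factor-quotient-metrics}.

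Fix a row index \(i\). Since \(x_i,y_i\in\mathbb S^{s-1}\) are unit vectors, the round distance is \(\theta_i=\arccos\langle x_i,y_i\rangle\in[0,\pi]\). Expanding the chord length gives
\[
  c_i^2=\|x_i\|^2+\|y_i\|^2-2\langle x_i,y_i\rangle=2-2\cos\theta_i=4\sin^2(\theta_i/2).
\]
Because \(\theta_i/2\in[0,\pi/2]\), we have \(\sin(\theta_i/2)\ge0\), so \(c_i=2\sin(\theta_i/2)\) and \(c_i/2\in[0,1]\). The function \(\arcsin\) is a bijection from \([0,1]\) onto \([0,\pi/2]\) and inverts \(\sin\) on that interval. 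Applying it to \(c_i/2\) therefore yields \(\theta_i/2=\arcsin(c_i/2)\), which is the first identity.

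For the second identity, I use \cref{eq:factor-and-quotient-distances}: \(\dM{s}(X,Y)^2=\sum_i\arccos^2\langle x_i,y_i\rangle=\sum_i\theta_i^2\). This is the standard formula for the distance in a Riemannian product, namely the square root of the sum of the squared factor distances. Substituting \(\theta_i=2\arcsin(c_i/2)\) and taking square roots gives the displayed formula.

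I expect no step to be a real obstacle. The only point that needs care is the branch choice: one must check that \(\theta_i/2\) lies in \([0,\pi/2]\) so that the square root and the \(\arcsin\) inversion are legitimate. The edge cases are consistent with this. Antipodal rows give \(c_i=2\) and \(\theta_i=\pi\), and coincident rows give \(c_i=0\) and \(\theta_i=0\).
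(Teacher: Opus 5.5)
Your proposal is correct and follows the paper's route: the rowwise identity \(c_i=2\sin(\theta_i/2)\) with \(0\le\theta_i\le\pi\), followed by the \(\ell^2\)-product structure of the product-of-spheres distance. You additionally write out the law-of-cosines computation and the branch check for \(\arcsin\), both of which the paper leaves implicit.
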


\begin{proof}
  On a unit sphere, \(c_i=2\sin(\theta_i/2)\) with
  \(0\le\theta_i\le\pi\), giving the first identity. The product
  metric on \(M_s\) is the \(\ell^2\)-product of the rowwise
  spherical metrics, giving the second identity.
\end{proof}

\begin{lemma}[Sharp radial chord--arc bound on the product]
  \label{lem:sharp-product-arcsine-bound-excess}
  Let \(X,Y\in M_s\), and set \(C\coloneq\|X-Y\|_F\). If \(0\le C\le2\), then \(\dM{s}(X,Y) \le 2\arcsin(C/2)\). This bound is sharp as a function of \(C\) on the product space: equality holds whenever all nonzero chordal displacement is contained in a single row.
\end{lemma}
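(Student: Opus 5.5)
The bound reduces to a concavity statement for the function \(f(c)\coloneq\bigl(2\arcsin(c/2)\bigr)^2\) in the variable \(c^2\). By \cref{lem:exact-product-chord-arc-excess},
\[
  \dM{s}(X,Y)^2=\sum_{i=1}^n f(c_i),
  \qquad
  C^2=\sum_{i=1}^n c_i^2,
  \qquad 0\le c_i\le 2 .
\]
Set \(h(u)\coloneq\bigl(2\arcsin(\sqrt u/2)\bigr)^2\) for \(u\in[0,4]\), so that \(f(c)=h(c^2)\). The claim \(\dM{s}(X,Y)\le 2\arcsin(C/2)\) is then equivalent to
\[
  \sum_i h(u_i)\le h\Bigl(\sum_i u_i\Bigr)
  \qquad\text{whenever } u_i\ge0,\ \sum_i u_i\le 4 .
\]
Since \(h(0)=0\), this is superadditivity of \(h\) on \([0,4]\). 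Superadditivity follows from convexity of \(h\) together with \(h(0)=0\). Indeed, for such \(h\) the quotient \(h(u)/u\) is nondecreasing, hence \(h(u_i)\le \frac{u_i}{U}h(U)\) for \(U\coloneq\sum_j u_j\), and summing over \(i\) gives the inequality.

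The main step is to verify that \(h\) is convex on \([0,4]\). Substitute \(u=4\sin^2(\theta/2)=2(1-\cos\theta)\) with \(\theta\in[0,\pi]\), so that \(h=\theta^2\) and \(du=2\sin\theta\,d\theta\). Then
\[
  \frac{dh}{du}=\frac{\theta}{\sin\theta}.
\]
This quantity is nondecreasing in \(\theta\) on \((0,\pi)\), because \((\theta/\sin\theta)'=(\sin\theta-\theta\cos\theta)/\sin^2\theta\ge0\), using \(\tan\theta\ge\theta\) on \([0,\pi/2)\) and \(\cos\theta\le 0\) on \([\pi/2,\pi)\). Since \(\theta\) is increasing in \(u\), the derivative \(h'\) is nondecreasing, so \(h\) is convex. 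At the endpoints \(u=0\) and \(u=4\), convexity extends by continuity, with \(h'\to\infty\) as \(u\to4\) and \(h\) still continuous there. This concludes the inequality. The hypothesis \(C\le 2\) is exactly what is needed for \(\sum_i u_i\le 4\) to lie in the domain of \(h\).

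For sharpness, suppose the nonzero displacement sits in a single row \(i_0\). Then \(c_{i_0}=C\), all other \(c_i\) vanish, and \(\dM{s}(X,Y)=2\arcsin(C/2)\) exactly. Such pairs exist for every \(C\in[0,2]\): rotate one row through the angle \(2\arcsin(C/2)\in[0,\pi]\) and keep the remaining rows fixed. So no smaller function of \(C\) can serve as a bound. The only delicate point is the endpoint \(C=2\) (\(\theta=\pi\)), where the derivative of \(h\) blows up. Continuity of \(h\) on the closed interval handles it, and no other obstacle is expected.
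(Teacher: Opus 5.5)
Your argument is correct and is essentially the paper's. Writing $f(t)=2\arcsin(t/2)$, one has $h(u)/u=\bigl(f(\sqrt u)/\sqrt u\bigr)^2$, so the monotonicity of $h(u)/u$ that you extract from convexity of $h$ is exactly the monotonicity of the arc-to-chord ratio $f(t)/t$ that the paper checks directly via $t=2\sin u$; both proofs then sum $f(c_i)^2\le \bigl(f(C)/C\bigr)^2c_i^2$ over rows and use the same single-row equality case. Your convexity of $h$ in the squared chord is a slightly stronger intermediate fact than the inequality requires.
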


\begin{proof}
  Let \(f(t)\coloneq2\arcsin(t/2)\).  For \(t>0\), the quotient \(f(t)/t\)
  is increasing
  on \((0,2]\).  Indeed, with \(t=2\sin u\),
  \[
    \frac{\dd}{\dd t}\left(\frac{f(t)}{t}\right)
    =
    \frac{2(\tan u-u)}{t^2}
    \ge0.
  \]
  Since \(c_i\le C\), \cref{lem:exact-product-chord-arc-excess} gives
  \[
    \dM{s}(X,Y)^2
    =
    \sum_i f(c_i)^2
    \le
    \left(\frac{f(C)}{C}\right)^2\sum_i c_i^2
    =
    f(C)^2,
  \]
  with the case \(C=0\) being trivial. The equality follows by taking
  exactly one nonzero \(c_i\).
\end{proof}

The equality case is already present on one sphere: choose
\(\|x-y\|=C\) in one row and keep every other row fixed. For \(C=1\),
this gives \(\dSphere{s-1}(x,y)=\pi/3\), hence equality in the
cubic constant \(\pi/3-1\).

\begin{theorem}[Arcsine bound for quotient excess]
  \label{thm:arcsine-bound-rank-excess}
  Let \(\bar p,\bar q\in\Qks\), and suppose $\dChord{k}(\bar p,\bar q)\le2$. Then
  \[
    0\le
    \Delta_s(\bar p,\bar q)
    \le
    2\arcsin\!\left(\frac{\dChord{k}(\bar p,\bar q)}2\right)
    -\dQ{k}(\bar p,\bar q).
  \]
  In particular,
  \[
    \Delta_s(\bar p,\bar q)
    \le
    2\arcsin\!\left(\frac{\dChord{k}(\bar p,\bar q)}2\right)
    -\dChord{k}(\bar p,\bar q).
  \]
  If \(\dChord{k}(\bar p,\bar q)\le1\), this gives the cubic envelope
  inherited from the chord--arc inequality
  \[
    \Delta_s(\bar p,\bar q)
    \le
    \left(\frac\pi3-1\right)\dChord{k}(\bar p,\bar q)^3.
  \]
\end{theorem}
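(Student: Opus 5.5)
We bound \(\dkles(\bar p,\bar q)\) from above by an explicit chord length and then subtract \(\dQ{k}(\bar p,\bar q)\). By \cref{eq:excess-reduced-model}, \(\Delta_s(\bar p,\bar q)=\dQ{s}(\bar p_s,\bar q_s)-\dQ{k}(\bar p,\bar q)\), so it suffices to show \(\dQ{s}(\bar p_s,\bar q_s)\le 2\arcsin(\dChord{k}(\bar p,\bar q)/2)\). Nonnegativity is immediate from \(\dQ{k}\le\dkles\), since admissible curves for \(\dkles\) are a subset of those for \(\dQ{k}\).

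For the upper bound, pick representatives \(Z_0,Z_1\in\Mss\) of \(\bar p_s,\bar q_s\). By \cref{lem:procrustes-chordal} there is \(R_*\in O(s)\) with \(\|Z_0-Z_1R_*\|_F=\dChord{s}([Z_0],[Z_1])\). By the padding invariance \cref{eq:chordal-padding-invariance-excess}, this equals \(C\coloneq\dChord{k}(\bar p,\bar q)\le2\). Then
\[
  \dQ{s}(\bar p_s,\bar q_s)\le\dM{s}(Z_0,Z_1R_*)\le 2\arcsin(C/2),
\]
where the first inequality is \cref{eq:reduced-quotient-orbital-distance} and the second is \cref{lem:sharp-product-arcsine-bound-excess} applied to \(X=Z_0\), \(Y=Z_1R_*\). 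This gives the first displayed inequality.

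The second inequality follows from \cref{eq:chordal-dominated-by-orbit-excess}, i.e.\ \(\dQ{k}(\bar p,\bar q)\ge\dChord{s}(\bar p_s,\bar q_s)=C\), by replacing \(-\dQ{k}\) with \(-C\).

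For the cubic envelope we need the scalar inequality \(g(C)\coloneq2\arcsin(C/2)-C\le(\pi/3-1)C^3\) on \([0,1]\). We plan to show that \(g(t)/t^3\) is nondecreasing on \((0,1]\); then its value is at most \(g(1)=\pi/3-1\). The Taylor series \(2\arcsin(t/2)=t+\sum_{m\ge1}a_m t^{2m+1}\) has all \(a_m>0\), so \(g(t)/t^3=\sum_{m\ge1}a_m t^{2m-2}\) is a power series with nonnegative coefficients and hence increasing. This one-variable estimate is the only part needing care, and the positive-coefficient series handles it cleanly. Equality at \(C=1\) is consistent with the sharpness remark after \cref{lem:sharp-product-arcsine-bound-excess}.
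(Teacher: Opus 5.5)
Your proposal is correct and follows the paper's proof essentially step for step. You reduce via \cref{eq:excess-reduced-model}, align the representatives with a Procrustes minimizer, and apply \cref{lem:sharp-product-arcsine-bound-excess} together with the padding identity \cref{eq:chordal-padding-invariance-excess}; the second bound then comes from \cref{eq:chordal-dominated-by-orbit-excess}. Your positive-coefficient Taylor argument that \(\bigl(2\arcsin(t/2)-t\bigr)/t^3\) is nondecreasing on \((0,1]\) is a clean justification of the one-variable cubic estimate, which the paper only asserts.
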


\begin{proof}
  Let \(\bar p_s,\bar q_s\in\Qss\) be the reduced endpoints. By
  \cref{eq:excess-reduced-model}, \(\Delta_s(\bar p,\bar q)=\dQ{s}(\bar p_s,\bar q_s)-\dQ{k}(\bar p,\bar q)\). Choose a Procrustes minimizer \(R_*\in O(s)\) for representatives \(Z_p,Z_q\in M_s\), so that \(\|Z_p-Z_qR_*\|_F=\dChord{s}(\bar p_s,\bar q_s)\). The quotient distance satisfies \(\dQ{s}(\bar p_s,\bar q_s) \le \dM{s}(Z_p,Z_qR_*)\). By \cref{lem:sharp-product-arcsine-bound-excess} and the padding identity
  \cref{eq:chordal-padding-invariance-excess},
  \[
    \dQ{s}(\bar p_s,\bar q_s)
    \le
    2\arcsin\!\left(\frac{\dChord{s}(\bar p_s,\bar q_s)}2\right)
    =
    2\arcsin\!\left(\frac{\dChord{k}(\bar p,\bar q)}2\right).
  \]
  This proves the first upper bound. The second follows from
  \cref{eq:chordal-dominated-by-orbit-excess}. The cubic estimate
  follows from the one-dimensional bound on \([0,1]\), with equality at \(c=1\).
\end{proof}

\begin{corollary}[Cubic bound for rank-constrained excess]
  \label{thm:universal-cubic-rank-excess}
  Let $\bar p,\bar q\in\Qks$. If $\dQ{k}(\bar p,\bar q)\le1$, then
  \[
    0\le\Delta_s(\bar p,\bar q)
    \le\left(\frac\pi3-1\right)\dChord{k}(\bar p,\bar q)^3
    \le\left(\frac\pi3-1\right)\dQ{k}(\bar p,\bar q)^3.
  \]
\end{corollary}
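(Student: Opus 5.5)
The corollary follows from \cref{thm:arcsine-bound-rank-excess} once we check its hypotheses and compare the chordal and angular distances. Nonnegativity of \(\Delta_s\) holds by \cref{def:rank-constraint-excess}, since restricting admissible curves to \(\Qkles\) can only increase the infimum of lengths.

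For the upper bounds, start from the assumption \(\dQ{k}(\bar p,\bar q)\le1\). By \cref{lem:chordal-comparison-padded-factors}, read through the padding identity \cref{eq:chordal-padding-invariance-excess} together with \cref{eq:chordal-dominated-by-orbit-excess}, we have
\[
  \dChord{k}(\bar p,\bar q)=\dChord{s}(\bar p_s,\bar q_s)\le\dQ{k}(\bar p,\bar q)\le1 .
\]
Write \(\bar p=\Xks(\bar p_s)\) and \(\bar q=\Xks(\bar q_s)\) with padded representatives, which exist by \cref{thm:fixed-rank-riemannian-reduction}. In particular \(\dChord{k}\le 2\) and \(\dChord{k}\le 1\), so both the arcsine bound and the cubic envelope of \cref{thm:arcsine-bound-rank-excess} apply. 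This gives the middle inequality
\[
  \Delta_s\le\bigl(\tfrac\pi3-1\bigr)\dChord{k}^3 .
\]

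The one point I would spell out is the scalar estimate behind that cubic envelope:
\[
  g(c)\coloneq 2\arcsin(c/2)-c\le(\pi/3-1)c^3 \quad\text{on } [0,1].
\]
To prove it, note that \(g(c)/c^3\) is nondecreasing there. Indeed, \(2\arcsin(c/2)-c=\sum_{j\ge1}a_jc^{2j+1}\) with every \(a_j\ge0\), so \(g(c)/c^3=\sum_j a_jc^{2j-2}\) is increasing. Its value at \(c=1\) is \(\pi/3-1\).

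Finally, the last inequality \(\dChord{k}^3\le\dQ{k}^3\) follows from the first display by cubing a comparison of nonnegative numbers. I expect no real obstacle here; the only care needed is to confirm that the chordal distances are computed on the same padded representatives, which \cref{eq:chordal-padding-invariance-excess} guarantees.
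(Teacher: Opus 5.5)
Your proposal is correct and follows the paper's route: the padding identity and the chordal domination give $\dChord{k}(\bar p,\bar q)\le\dQ{k}(\bar p,\bar q)\le1$, then the cubic conclusion of \cref{thm:arcsine-bound-rank-excess} and cubing finish the argument. Your power-series proof that $2\arcsin(c/2)-c\le(\pi/3-1)c^3$ on $[0,1]$ is a valid justification of the one-dimensional bound that the paper only asserts.
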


\begin{proof}
By \cref{eq:chordal-padding-invariance-excess,eq:chordal-dominated-by-orbit-excess},
$\dChord{k}(\bar p,\bar q)\le \dQ{k}(\bar p,\bar q)\le1$.
The cubic conclusion of \cref{thm:arcsine-bound-rank-excess} gives the
first bound; the same comparison gives the second. Nonnegativity follows
from \cref{def:rank-constraint-excess}.
\end{proof}

\begin{corollary}[Procrustes rowwise sharpening]
\label{cor:procrustes-rowwise-sharpening-excess}
Let \(2\le s\le\min(n,k)\), let \(\bar p,\bar q\in\Qks\),
and let \(\bar p_s,\bar q_s\in\Qss\) be their reduced
endpoints. Choose representatives \(A,B\in\Mss\), with rows
\(a_i,b_i\), and define
\[
  \mathcal R_{\mathrm{Proc}}(A,B)
  \coloneq
  \argmin_{R\in O(s)}\|A-BR\|_F
\]
and
\[
  \dProc{s}(\bar p_s,\bar q_s)
  \coloneq
  \min_{R\in\mathcal R_{\mathrm{Proc}}(A,B)}
  \left(
    \sum_{i=1}^n
    \left[
      2\arcsin\!\left(\frac{\|a_i-b_iR\|}{2}\right)
    \right]^2
  \right)^{1/2}.
\]
Then, without any restriction on
\(\dChord{k}(\bar p,\bar q)\),
\begin{equation}
\label{eq:procrustes-rowwise-distance-bound-excess}
  \dQ{s}(\bar p_s,\bar q_s)
  \le
  \dProc{s}(\bar p_s,\bar q_s),
\end{equation}
and consequently
\[
  0\le\Delta_s(\bar p,\bar q)
  \le
  \dProc{s}(\bar p_s,\bar q_s)
  -\dQ{k}(\bar p,\bar q).
\]
If, in addition, \(\dChord{k}(\bar p,\bar q)\le2\), then
\[
  \dProc{s}(\bar p_s,\bar q_s)
  \le
  2\arcsin\!\left(\frac{\dChord{k}(\bar p,\bar q)}{2}\right).
\]
\end{corollary}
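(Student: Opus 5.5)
The plan is to deduce \cref{eq:procrustes-rowwise-distance-bound-excess} from the orbital distance formula of \cref{prop:reduced-quotient-global-metric-geometry}, and then obtain the other two claims from results already proved. First, $\mathcal R_{\mathrm{Proc}}(A,B)$ is nonempty and compact. It is the set of maximizers of the continuous function $R\mapsto\tr(A^\top BR)$ on the compact group $O(s)$, by the expansion in the proof of \cref{lem:procrustes-chordal}. Hence the displayed minimum defining $\dProc{s}$ is attained.

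Next, fix any $R\in\mathcal R_{\mathrm{Proc}}(A,B)$. The factor $BR$ is again a representative of $\bar q_s$. By \cref{lem:exact-product-chord-arc-excess} applied to $X=A$ and $Y=BR$, whose rows are $a_i$ and $b_iR$, we get
\[
  \dM{s}(A,BR)^2=\sum_{i=1}^n\bigl[2\arcsin(\|a_i-b_iR\|/2)\bigr]^2 .
\]
Here every row chord is at most $2$, since all rows are unit vectors, so no restriction on $\dChord{k}$ is needed. By \cref{eq:reduced-quotient-orbital-distance}, $\dQ{s}(\bar p_s,\bar q_s)\le\dM{s}(A,BR)$. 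Taking the minimum over $R\in\mathcal R_{\mathrm{Proc}}(A,B)$ then gives \cref{eq:procrustes-rowwise-distance-bound-excess}.

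The excess bound follows by subtracting $\dQ{k}(\bar p,\bar q)$ and using the reduced expression \cref{eq:excess-reduced-model}, $\Delta_s=\dQ{s}(\bar p_s,\bar q_s)-\dQ{k}(\bar p,\bar q)$. Nonnegativity comes from \cref{def:rank-constraint-excess}.

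For the final claim, take any Procrustes minimizer $R$. Then $\|A-BR\|_F=\dChord{s}(\bar p_s,\bar q_s)$, and this equals $\dChord{k}(\bar p,\bar q)$ by the padding identity \cref{eq:chordal-padding-invariance-excess}. When this is at most $2$, \cref{lem:sharp-product-arcsine-bound-excess} applied to $X=A$, $Y=BR$ bounds the rowwise expression above by $2\arcsin(\dChord{k}(\bar p,\bar q)/2)$. The minimum over $R$ is no larger than this value.

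I do not expect a real obstacle. The only point needing care is to check that the quantity is independent of the chosen representatives $A,B$, or else to note that the statement only needs the bound for the given choice. Independence holds because replacing $A,B$ by $AO_1,BO_2$ transforms $\mathcal R_{\mathrm{Proc}}$ by $R\mapsto O_2^\top RO_1$, and row norms are preserved under right multiplication by $O_1$.
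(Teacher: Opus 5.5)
Your proposal is correct and follows the paper's proof essentially step for step. The shared steps are:
\begin{itemize}
\item compactness of the Procrustes set;
\item the rowwise chord--arc identity, which rewrites $\dProc{s}$ as $\min_R \dM{s}(A,BR)$ over Procrustes minimizers;
\item comparison with the orbital formula for $\dQ{s}$, a minimum over all of $O(s)$;
\item the reduced excess identity;
\item padding invariance of the chordal distance, combined with the sharp product arcsine bound.
\end{itemize}
Your explicit check that $R\mapsto O_2^\top R O_1$ transports Procrustes minimizers under a change of representatives spells out the paper's one-line appeal to orthogonal invariance.
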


\begin{proof}
The set \(\mathcal R_{\mathrm{Proc}}(A,B)\) is nonempty
and compact, since \(O(s)\) is compact and the Frobenius
distance is continuous. Moreover, each row has unit norm,
so \(\|a_i-b_iR\|\le2\) for every \(R\in O(s)\).
Thus the rowwise expression is defined globally, and
the sphere chord--arc identity gives
\[
  \dProc{s}(\bar p_s,\bar q_s)
  =
  \min_{R\in\mathcal R_{\mathrm{Proc}}(A,B)}
  \dM{s}(A,BR).
\]
Orthogonal invariance shows that this value is independent
of the chosen representatives. Since
\[
  \dQ{s}(\bar p_s,\bar q_s)
  =
  \min_{R\in O(s)}\dM{s}(A,BR),
\]
minimizing over the smaller set
\(\mathcal R_{\mathrm{Proc}}(A,B)\) proves the first bound.
The excess estimate follows from
{\cref{eq:excess-reduced-model}}. Now assume \(\dChord{k}(\bar p,\bar q)\le2\).
For every \(R\in\mathcal R_{\mathrm{Proc}}(A,B)\),
padding invariance of the chordal quotient distance gives $\|A-BR\|_F=\dChord{s}(\bar p_s,\bar q_s)=\dChord{k}(\bar p,\bar q)$. Hence \cref{lem:sharp-product-arcsine-bound-excess} yields
\[
  \dM{s}(A,BR)
  \le
  2\arcsin\!\left(\frac{\dChord{k}(\bar p,\bar q)}{2}\right).
\]
Taking the minimum over the Procrustes minimizers proves
the bound.
\end{proof}

\begin{remark}[Sharpness]
  For \(n=3\) and \(s=2\), let \(A,B\) have rows
  \((\cos\alpha_i,\sin\alpha_i)\) and
  \((\cos\beta_i,\sin\beta_i)\), respectively, where
  \(\alpha=(-\pi/3,0,\pi/3)\) and
  \(\beta=(-5\pi/6,0,5\pi/6)\).
  This parametrization gives
  \(\dQ{2}([A],[B])=\dProc{2}([A],[B])=\pi/\sqrt{2}\),
  so the bound in \cref{eq:procrustes-rowwise-distance-bound-excess}
  is sharp.
  Additionally, set
  \[
    \widetilde A_{i\cdot}=(\cos\alpha_i,\sin\alpha_i,0),
    \qquad
    \widetilde B_{\varepsilon,i\cdot}
      =(\cos\beta_i\cos\varepsilon,\sin\beta_i,
        \cos\beta_i\sin\varepsilon).
  \]
  For small \(\varepsilon\), the squared product-sphere distance
  \(F(\varepsilon)\coloneq
    \dM{3}(\widetilde A,\widetilde B_\varepsilon)^2\) satisfies
  \[
    F(\varepsilon)
    =2\arccos^2\!\left(\frac{\sqrt3}{4}(1-\cos\varepsilon)\right)
      +\varepsilon^2
    =\frac{\pi^2}{2}
      +\left(1-\frac{\pi\sqrt3}{4}\right)\varepsilon^2
      +O(\varepsilon^4).
  \]
  Since the quadratic coefficient is negative, the padded endpoints
  have \(\dQ{3}<\pi/\sqrt2=\dQ{2}=\dProc{2}\), as illustrated in
  \cref{sfig:corr3-quotient-small-shortcut}.
\end{remark}

\begin{figure}[htbp]
  \centering
  \begin{subfigure}[t]{.48\linewidth}
    \centering
    \includegraphics[width=\linewidth]
      {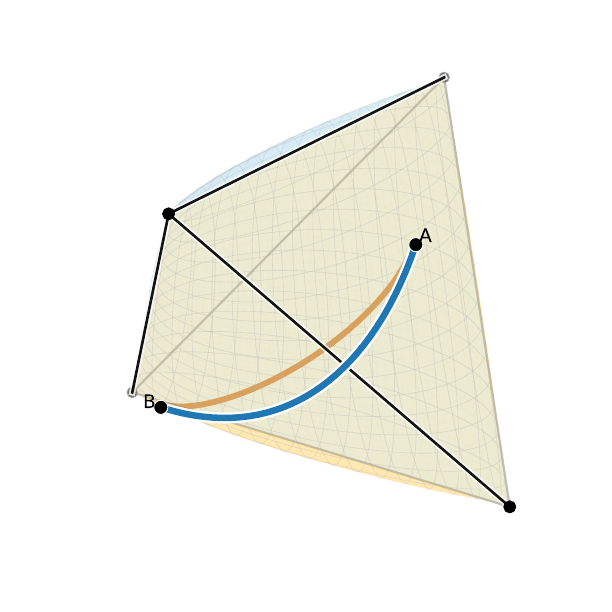}
    \caption{\(\dQ{3}<\dQ{2}=\dProc{2}\)}
    \label{sfig:corr3-quotient-small-shortcut}
  \end{subfigure}\hfill
  \begin{subfigure}[t]{.48\linewidth}
    \centering
    \includegraphics[width=\linewidth]
      {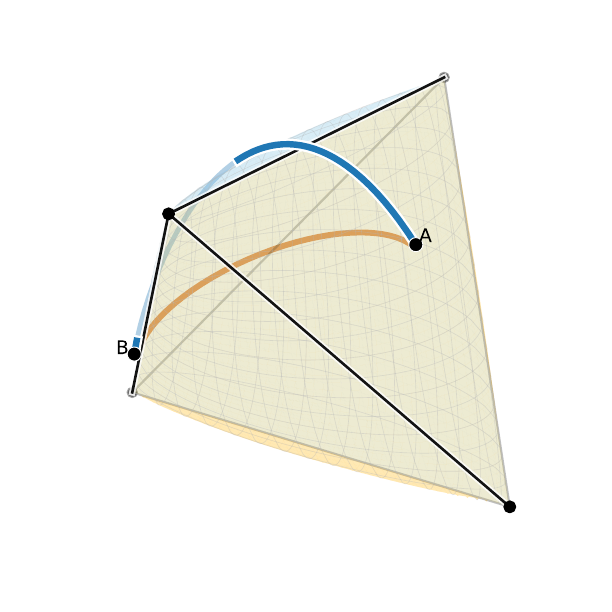}
    \caption{\(\dQ{3}<\dQ{2}<\dProc{2}\)}
    \label{sfig:corr3-quotient-large-shortcut}
  \end{subfigure}
  \par\medskip
  \definecolor{shortcutboundary}{HTML}{1F77B4}
  \definecolor{shortcutambient}{HTML}{D95F02}
  {\small
    \tikz[baseline=-.5ex]{\draw[shortcutboundary,line width=1.4pt] (0,0)--(.7,0);}
    \(\dQ{2}\)\quad
    \tikz[baseline=-.5ex]{\draw[shortcutambient,line width=1pt] (0,0)--(.7,0);}
    \(\dQ{3}\)\par}
  \caption{Two quotient-excess examples in \(\Corr(3)\).}
  \label{fig:corr3-quotient-small-excess-example}
\end{figure}

\subsection{A global bound}
\label{subsec:global-excess-bound}

The cubic bound is local in the sense that it is stated for points
with ambient distance less than one. Globally, compactness gives a
coarse but useful uniform bound based on the diameter of the product of spheres.

\begin{theorem}[Global uniform bound for rank-constrained excess]
  \label{thm:global-uniform-rank-excess-bound}
  For every \(\bar p,\bar q\in \Qks\),
  \[
    0\le
    \Delta_s(\bar p,\bar q)
    \le
    \pi\sqrt n.
  \]
\end{theorem}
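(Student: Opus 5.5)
The plan is to bound the two terms in the definition of \(\Delta_s\) separately. Nonnegativity is already recorded in \cref{def:rank-constraint-excess}: every admissible curve for \(\dkles\) is also admissible for \(\dQ{k}\), and it is measured with the same length functional \(\Length_{\dQ{k}}\). Hence \(\dQ{k}\le\dkles\) on \(\Qkles\), which gives \(\Delta_s\ge0\).

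For the upper bound, I will start from the reduced-model identity \cref{eq:excess-reduced-model},
\[
  \Delta_s(\bar p,\bar q)=\dQ{s}(\bar p_s,\bar q_s)-\dQ{k}(\bar p,\bar q).
\]
Since \(\dQ{k}(\bar p,\bar q)\ge0\), it suffices to show \(\dQ{s}(\bar p_s,\bar q_s)\le\pi\sqrt n\). This follows from \cref{prop:reduced-quotient-global-metric-geometry} with \(m=s\ge2\), which gives \(\diam(Q_s)\le\diam(M_s)=\pi\sqrt n\). As a direct check, for any representatives \(Z_p,Z_q\) we have \(\dQ{s}\le\dM{s}(Z_p,Z_q)\). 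Each row angle satisfies \(\arccos\langle x_i,y_i\rangle\le\pi\), so \(\dM{s}^2\le n\pi^2\).

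An equivalent route avoids the reduction identity. By \cref{cor:rank-bounded-global-alexandrov-geometry}, \(\diam(\Qkles,\dkles)\le\pi\sqrt n\), so \(\dkles(\bar p,\bar q)\le\pi\sqrt n\), and subtracting the nonnegative \(\dQ{k}\) gives the same bound.

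I see no real obstacle. The only care needed is that \(\dkles\) is finite between rank-\(s\) points, which the global reduction (\cref{thm:global-rank-bounded-metric-reduction}) guarantees since \(s\ge2\).
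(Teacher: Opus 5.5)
Your proof is correct and follows the paper's argument: nonnegativity comes from the inclusion of admissible curves, and the upper bound comes from the reduced-model identity \cref{eq:excess-reduced-model} together with $\diam(Q_s)\le\pi\sqrt n$ from \cref{prop:reduced-quotient-global-metric-geometry}. Your ``alternative'' route via \cref{cor:rank-bounded-global-alexandrov-geometry} does not really avoid the reduction, since that corollary is itself derived from \cref{thm:global-rank-bounded-metric-reduction}; it is the same chain of inequalities.
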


\begin{proof}
Set \(\bar p_s\coloneq\Xks^{-1}(\bar p)\) and
\(\bar q_s\coloneq\Xks^{-1}(\bar q)\).  Then
\cref{eq:excess-reduced-model,prop:reduced-quotient-global-metric-geometry}
gives
\[
  0
  \le
  \Delta_s(\bar p,\bar q)
  \le
  \dkles(\bar p,\bar q)
  =
  \dQ{s}(\bar p_s,\bar q_s)
  \le
  \diam(Q_s)
  \le
  \pi\sqrt n.
\]
  \end{proof}

The same estimate may be read directly on correlation matrices. Let
\(C_0,C_1\in\Crk{s}{n}\). We denote by
\[
  \bar\Gamma_k:Q_k\longrightarrow \Crle{k}{n},
  \qquad
  \bar\Gamma_k([X])=XX^\top,
\]
the descended Gram identification, and by
\[
  \bar\Gamma_k^{\le s}:\Qkles\longrightarrow \Crle{s}{n}
\]
its restriction to the rank-bounded locus.  We retain the notation
\[
  \dQ{k}(C_0,C_1)
  \coloneq
  \dQ{k}\bigl(
    \bar\Gamma_k^{-1}(C_0),
    \bar\Gamma_k^{-1}(C_1)
  \bigr),
\]
for the transported ambient correlation distance, and write the intrinsic rank-bounded correlation distance as
\[
  \dCorrle{k}{s}(C_0,C_1)
  \coloneq
  \dkles\bigl(
    (\bar\Gamma_k^{\le s})^{-1}(C_0),
    (\bar\Gamma_k^{\le s})^{-1}(C_1)
  \bigr).
\]
Define the excess
\[
  \Delta_s^{\Corr}(C_0,C_1)
  \coloneq
  \dCorrle{k}{s}(C_0,C_1)-\dQ{k}(C_0,C_1) \ge0.
\]
If $\dQ{k}(C_0,C_1)\le1$, then
\[
  0\le
  \Delta_s^{\Corr}(C_0,C_1)
  \le
  {\left(\frac\pi3-1\right)}\dQ{k}(C_0,C_1)^3.
\]
Indeed, this is exactly \cref{thm:universal-cubic-rank-excess}
transported through the Gram identification of \(\Qks\) with the
rank-$s$ locus \(\Crk{s}{n}\).

\section{Volume measures and uniform laws}
\label{sec:volume-measures-correlation-strata}

The purpose of this section is to identify uniform probability laws on
fixed- and bounded-rank correlation strata and to express them as Gram pushforwards of
explicit probability densities on the factor space. Uniformity depends on
the chosen volume measure: the quotient metric and the metric induced by
off-diagonal Euclidean coordinates generally give different laws.
Sampling independent uniform rows on a sphere provides a natural starting
point, but its Gram pushforward need not be uniform for either metric.
Indeed, a region of the quotient receives mass from all the factors in
the orthogonal-group orbits above it, and these orbits can have different
volumes.

The coarea formula makes this dependence precise. With the quotient metric
on the Gram target, the normal Jacobian is one, and the pushforward of
factor-space volume has density equal to the volume of the fiber.
Weighting the factors by the reciprocal of that volume therefore produces
quotient-uniform Gram matrices. To obtain the ambient-uniform law, one
must also account for the change from quotient to ambient volume. This
second correction is the Jacobian of the Gram diffeomorphism on the
quotient. We compute it using horizontal lifts, first without the unit-row
constraints and then on the corresponding unit-diagonal level sets.

Throughout this section, let \(2\le s\le r=\min(n,k)\). By
\cref{thm:fixed-rank-riemannian-reduction}, the rank-\(s\) geometry can be
computed in the reduced model
\[
  \Mss\xrightarrow{\ \pi_s\ }\Qss
  \xrightarrow{\ \bar\Gamma_s\ }\Crk{s}{n},
  \qquad X\longmapsto[X]\longmapsto C=XX^\top,
\]
where
\[
  M_s=(\mathbb S^{s-1})^n,\qquad
  \Mss=\{X\in M_s:\rank X=s\},\qquad
  \Qss=\Mss/O(s).
\]
We retain the dimension notation used in previous sections:
\[
  m_s\coloneq \dim\Qss=\dim\Crk{s}{n}
      =n(s-1)-\binom s2.
\]
As in the connection and curvature formulas, write
\[
  E_X\coloneq X^\top X\in\SPD(s),\qquad
  x_i\coloneq X_{i\cdot}^{\top},\qquad
  P_C\coloneq XE_X^{-1}X^\top=CC^\dagger.
\]
Here \(P_C\) is the orthogonal projector onto \(\im C\), and
\(C^\dagger\) denotes the Moore--Penrose inverse.
For the spectral estimates in this section, we list the \(s\) positive
eigenvalues in \emph{nonincreasing} order:
\begin{equation}
\label{eq:volume-section-eigenvalue-convention}
  \lambda_1(C)\ge\cdots\ge\lambda_s(C)>0,
  \qquad \lambda_a(C)=\lambda_a(E_X),\quad 1\le a\le s.
\end{equation}
This ordering convention is local to this section. Repeated eigenvalues
are allowed; \(\lambda_s(C)\) is the smallest positive eigenvalue, even
when \(s<n\) and the remaining \(n-s\) eigenvalues of \(C\) vanish.

The terms \emph{source} and \emph{target} always refer to the domain and
codomain of the map under consideration, equipped with the stated metrics.
Thus \(\Gamma_s=\bar\Gamma_s\circ\pi_s\) has factor-space source
\(\Mss\), whereas \(\bar\Gamma_s\) has quotient source \(\Qss\);
both have Gram-matrix target \(\Crk{s}{n}\). We also use these symbols
on \(M_s\) and \(Q_s=M_s/O(s)\), respectively, with target
\(\Crle{s}{n}\). The constraint maps introduced below have their
own domains and take values in Euclidean space.

The argument is organized as follows.
\Cref{subsec:three-volume-forms-correlation-strata} introduces the
additional ambient metrics and volume measures, including their
Hausdorff interpretation on the stratified closure.
\Cref{subsec:ambient-volume-relation-quotient-density} recalls the normal
Jacobian and coarea formula, computes orbit volumes through the Sylvester
operator, and establishes a general rule for transferring weighted
factor measures to the quotient.
\Cref{subsec:exact-volume-comparison} computes the exact density of
ambient volume with respect to quotient volume, proves finiteness of
the total volumes, and describes the density near lower rank.
Finally, \cref{subsec:volume-section-uniform-laws-lifts} combines these
results into normalized factor densities for the two uniform laws and
characterizes when independent uniform sphere rows already give the
quotient-uniform law. The rank-one discrete law is recalled at the end.

\subsection{Quotient and ambient volume measures}
\label{subsec:three-volume-forms-correlation-strata}

Recall the product metric $g_{M_s}$ and the quotient metric
$\gQss$ from \cref{eq:product-sphere-metric,eq:fixed-rank-quotient-metric}.
The horizontal spaces, computed in
\cref{eq:tangent-space-product-spheres,eq:vertical-space-reduced-model,lem:horizontality-criterion},
are
\begin{equation}
\label{eq:volume-section-horizontal-space}
  \begin{aligned}
    \Tz{X}\Mss
      &=\{H\in\R^{n\times s}:\diag(HX^\top)=0\},\\
    \Vz{X}&=\{X\Omega:\Omega\in\so(s)\},\\
    \Hz{X}&=\{H\in\Tz{X}\Mss:X^\top H=H^\top X\}.
  \end{aligned}
\end{equation}
Transporting $\gQss$ through the Gram diffeomorphism
{(\cref{prop:quotient-model-low-rank-correlations})}
defines a metric
$g_Q$ on $\Crk{s}{n}$ by $\bar\Gamma_s^*g_Q=\gQss$.
For $C=XX^\top$, if $H_U,H_V\in\Hz{X}$ project to tangent vectors
$U,V\in\Tz{C}\Crk{s}{n}$ under
$\dd\Gamma_s(X)[H]=XH^\top+HX^\top$, then
\begin{equation}
\label{eq:volume-section-quotient-metric}
  g_{Q,C}(U,V)=\tr(H_U^\top H_V).
\end{equation}
Thus $\Gamma_s:(\Mss,g_{M_s})\to(\Crk{s}{n},g_Q)$ is a Riemannian
submersion. The Riemannian distance for $g_Q$ is the transport of
$\dQss=\dQ{s}$ on $\Qss$; by the reduction theorems it also equals
$\dQks=\dkles$ on rank-$s$ endpoints in $Q_k$
{(\cref{eq:rank-bounded-distance-on-top-stratum})}.

The two ambient Riemannian metrics on \(\Crk{s}{n}\) are
\begin{equation}
\label{eq:volume-section-ambient-metrics}
  g_{\mathrm E,C}(U,V)\coloneq \sum_{i<j}U_{ij}V_{ij},
  \qquad
  g_{\mathrm F,C}(U,V)\coloneq \tr(UV).
\end{equation}
The first is induced by the embedding
\(C\mapsto(C_{ij})_{i<j}\) into \(\R^{\binom n2}\); the second is induced
by the Frobenius inner product on \(\Sym(n)\). We write
\(\|U\|_{\mathrm E}^2\coloneq \sum_{i<j}U_{ij}^2\).
The ambient distances \(\|C-C'\|_{\mathrm E}\) and
\(\|C-C'\|_F\) measure chords, whereas the corresponding Riemannian
distances minimize lengths of curves remaining in the stratum. These
distances need not agree when the stratum is not convex. Moreover,
\(g_Q\) uses Frobenius inner products of horizontal \emph{factor}
variations, while \(g_{\mathrm F}\) uses Frobenius inner products of
\emph{Gram-matrix} variations.

\begin{definition}[Riemannian volume measures]
\label{def:volume-measures}
We use Riemannian volume \emph{measures}, which require no choice of
orientation. 
For a Riemannian metric \(g\) on an \(m\)-dimensional
manifold, its volume density in local coordinates \(u^1,\ldots,u^m\) is
\begin{equation}
\label{eq:volume-section-riemannian-density}
  \dd\Vol_g
  \coloneq \sqrt{\det\bigl(g(\partial_{u^\alpha},\partial_{u^\beta})\bigr)_{
      \alpha,\beta=1}^m}\,|\dd u^1\cdots\dd u^m|.
\end{equation}
We denote the measures associated with \(g_Q,g_{\mathrm E},g_{\mathrm F}\)
by \(\Vol_Q,\Vol_{\mathrm E},
\Vol_{\mathrm F}\), respectively. The measure
\(\Vol_{M_s}\) is that of \(g_{M_s}\).
\end{definition}

Every tangent
matrix to the correlation stratum is symmetric with zero diagonal, so
\begin{equation}
\label{eq:volume-section-frobenius-scaling}
  g_{\mathrm F}=2g_{\mathrm E},\qquad
  \dd\Vol_{\mathrm F}
  =2^{m_s/2}\dd\Vol_{\mathrm E}.
\end{equation}
Indeed, their metric matrices in any tangent basis differ by a factor
of \(2\), and their determinants by \(2^{m_s}\). The ambient volume
measures therefore give the same law after normalization.

\begin{definition}[Normalized volume laws]
\label{def:normalized-volume-laws}
Anticipating the finiteness and positivity proved in
\cref{lem:finite-volume-comparison-section}, define
\begin{equation}
\label{eq:volume-section-normalized-laws}
  \mu_{n,s}^{Q}\coloneq 
  \frac{\Vol_Q}{\Vol_Q(\Crk{s}{n})},
  \qquad
  \mu_{n,s}^{\mathrm E}\coloneq 
  \frac{\Vol_{\mathrm E}}
       {\Vol_{\mathrm E}(\Crk{s}{n})}.
\end{equation}
A law called \emph{uniform} on the stratum will always mean one of
these normalized volume measures. We refer to \(\mu_{n,s}^{Q}\) as the
quotient-uniform law and to \(\mu_{n,s}^{\mathrm E}\) as the
ambient-uniform law; the latter also equals normalized Frobenius volume.
\end{definition}

\begin{example}[Uniform laws on the two-by-two correlation stratum]
\label{ex:volume-section-two-by-two}
For \(n=s=2\), put
\[
  C(c)\coloneq \begin{pmatrix}1&c\\c&1\end{pmatrix},
  \qquad -1<c<1,\qquad c=\cos\theta,\quad 0<\theta<\pi.
\]
A factor curve satisfying \(X(\theta)X(\theta)^\top=C(\cos\theta)\) is
\[
  X(\theta)=
  \begin{pmatrix}
    \cos(\theta/2)&\sin(\theta/2)\\
    \cos(\theta/2)&-\sin(\theta/2)
  \end{pmatrix}.
\]
Writing \(a\coloneq \cos(\theta/2)\), \(b\coloneq \sin(\theta/2)\), one obtains
\[
  X'(\theta)=\frac12\begin{pmatrix}-b&a\\-b&-a\end{pmatrix},
  \qquad
  X(\theta)^\top X'(\theta)
     =\begin{pmatrix}-ab&0\\0&ab\end{pmatrix},
  \qquad \|X'(\theta)\|_F^2=\frac12.
\]
Since \(X^\top X'\) is symmetric, \(X'(\theta)\) is the horizontal
lift of the coordinate tangent vector \(\partial_\theta\).
The definition of the quotient metric therefore gives
\[
  g_Q(\partial_\theta,\partial_\theta)
  =\|X'(\theta)\|_F^2=\frac12.
\]
On this one-dimensional stratum, this determines
\(g_Q=\frac12(\dd\theta)^2\), where
\((\dd\theta)^2\coloneq \dd\theta\otimes\dd\theta\).

For the ambient metrics, the coordinate tangent vector
\[
  \partial_c=\frac{\dd C}{\dd c}
  =\begin{pmatrix}0&1\\1&0\end{pmatrix}
\]
satisfies
\(g_{\mathrm E}(\partial_c,\partial_c)=1\) and
\(g_{\mathrm F}(\partial_c,\partial_c)=2\).
Using \(\dd c=-\sin\theta\,\dd\theta\) and
\(\sin^2\theta=1-c^2\), we consequently obtain
\[
  g_Q=\frac12(\dd\theta)^2
     =\frac{(\dd c)^2}{2(1-c^2)},
  \qquad
  g_{\mathrm E}=(\dd c)^2,
  \qquad
  g_{\mathrm F}=2(\dd c)^2.
\]
The volume density of a one-dimensional metric \(h(c)(\dd c)^2\)
is \(\sqrt{h(c)}\,\dd c\). Hence
\[
  \dd\Vol_Q=\frac{\dd c}{\sqrt{2(1-c^2)}},
  \qquad
  \dd\Vol_{\mathrm E}=\dd c.
\]
Their total masses on \((-1,1)\) being respectively
\(\pi/\sqrt2\) and \(2\), normalization gives
\[
  \dd\mu_{2,2}^{Q}(c)=\frac{\dd c}{\pi\sqrt{1-c^2}},
  \qquad
  \dd\mu_{2,2}^{\mathrm E}(c)=\frac12\dd c.
\]
Thus quotient volume is uniform in the angle \(\theta\), whereas ambient
volume is uniform in the off-diagonal coordinate \(c\).
\end{example}

As $\Crle{s}{n}$ is a stratified space, we will need the Hausdorff measure to compute volumes.
For a metric space with distance \(\rho\),
we denote by \(\mathcal H_\rho^m\) the \(m\)-dimensional Hausdorff
measure, normalized to agree with Lebesgue measure on \(\R^m\). For an
integer \(m\ge1\), this means
\begin{equation}
\label{eq:volume-section-hausdorff-normalization}
  \mathcal H_\rho^m(B)
  \coloneq \frac{\omega_m}{2^m}\lim_{\varepsilon\downarrow0}
  \inf_{\substack{B\subset\bigcup_jU_j\\
                  \diam_\rho U_j<\varepsilon}}
  \sum_{j=1}^\infty(\diam_\rho U_j)^m,
\end{equation}
where \(\omega_m\) is the volume of the Euclidean unit ball. In words,
one covers the set by small sets and minimizes their total
\(m\)-dimensional diameter cost. For \(m=0\), Hausdorff measure is
counting measure. On a smooth \(m\)-dimensional Riemannian manifold,
\(\mathcal H_{\dg{g}}^m=\Vol_g\); on a smooth Euclidean
submanifold, the same surface volume is obtained from the ambient chord
distance \cite{Federer1969,BuragoBuragoIvanov2001}.

In particular, for a measurable (Borel) set \(B\subset\Crk{s}{n}\),
\begin{equation}
\label{eq:volume-section-hausdorff-identifications}
  \begin{aligned}
    \Vol_Q(B)
      &=\mathcal H_{\dQ{s}}^{m_s}\bigl(\bar\Gamma_s^{-1}(B)\bigr),\\
    \Vol_{\mathrm E}(B)
      &=\mathcal H_{\|\cdot\|_{\mathrm E}}^{m_s}(B),\\
    \Vol_{\mathrm F}(B)
      &=\mathcal H_{\|\cdot\|_F}^{m_s}(B).
  \end{aligned}
\end{equation}
The first Hausdorff measure is computed on \((Q_s,\dQ{s})\); the norm
subscripts in the last two lines denote the distances
\((C,C')\mapsto\|C-C'\|_{\mathrm E}\) and
\((C,C')\mapsto\|C-C'\|_F\). When \(s<n\), ambient volume therefore
means \(m_s\)-dimensional surface volume, rather than Lebesgue measure in
the full space of off-diagonal coordinates. Lower-rank strata have
smaller dimensions and will have zero \(m_s\)-dimensional measure; their
own intrinsic volumes need not vanish. This observation is made precise
in \cref{lem:finite-volume-comparison-section}.

\subsection{Coarea and the volume of the fibers}
\label{subsec:ambient-volume-relation-quotient-density}

We first explain the factor that appears when volume is pushed through
the Gram map. The coarea formula decomposes integration over the source
into integration along each fiber and integration over the target.
For example, if \(h:(0,1)\to(0,\infty)\) is continuous, projection of
the region \(\{(u,v):0<u<1,\ 0<v<h(u)\}\) onto the \(u\)-axis gives,
for nonnegative measurable \(f\),
\[
  \int_0^1\int_0^{h(u)} f(u)\dd v\dd u
  =\int_0^1 f(u)h(u)\dd u.
\]
The projected measure has density \(h(u)\): the measure of the fiber.
The same principle explains the role of orbit volumes in Gram sampling.

Let \(F:(\mathcal M,g_{\mathcal M})\to
(\mathcal N,g_{\mathcal N})\) be a smooth submersion. Its
\emph{normal Jacobian} is defined by
\begin{equation}
\label{eq:volume-section-normal-jacobian}
  J_F(x)\coloneq \sqrt{\det\bigl(\dd F(x)\,\dd F(x)^*\bigr)},
\end{equation}
where the determinant is taken on \(\Tz{F(x)}\mathcal N\).
The adjoint
\(\dd F(x)^*:\Tz{F(x)}\mathcal N\to\Tz{x}\mathcal M\)
depends on both metrics through its defining identity
\[
  g_{\mathcal M,x}(U,\dd F(x)^*V)
  =g_{\mathcal N,F(x)}(\dd F(x)[U],V),
\]
for \(U\in\Tz{x}\mathcal M\) and \(V\in\Tz{F(x)}\mathcal N\).

Since \(\ker\dd F(x)\) is tangent to the fiber \(F^{-1}(F(x))\),
its \(g_{\mathcal M}\)-orthogonal complement
\(\Hz{x}\coloneq (\ker\dd F(x))^\perp\) is the normal space to the fiber.
The restriction
\[
  \restr{\dd F(x)}{\Hz{x}}:\Hz{x}\longrightarrow\Tz{F(x)}\mathcal N
\]
is an isomorphism. In orthonormal bases adapted to
\(\Tz{x}\mathcal M=\Hz{x}\oplus\ker\dd F(x)\), the differential
has matrix \((B\;\;0)\), where the matrix \(B\) represents the restriction $\restr{\dd F(x)}{\Hz{x}}$.
The adjoint is represented by the transpose, and hence
\[
  J_F(x)=\sqrt{\det(BB^\top)}=|\det B|.
\]
Thus \(J_F(x)\) measures the factor by which the differential scales
volumes normal to the fiber. 
If $F$ is a diffeomorphism, then
\(\Hz{x}=\Tz{x}\mathcal M\), giving the usual Riemannian Jacobian.
For a Riemannian submersion, the restriction is an isometry,
so \(B\) is orthogonal and \(J_F\equiv1\).
We use the same notation \(J_F\) for the usual Jacobian in the
diffeomorphism case. These conventions, and their formulation in terms
of volume densities, are discussed in \cite{cibotaruNoteAreaCoarea2016}.

For every nonnegative measurable function \(\psi\), the smooth coarea
formula \cite{Federer1969,cibotaruNoteAreaCoarea2016} gives
\begin{equation}
\label{eq:volume-section-smooth-coarea}
  \int_{\mathcal M}\psi J_F\,\dd\Vol_{\mathcal M}
  =\int_{\mathcal N}\left(
    \int_{F^{-1}(y)}\psi\,\dd\Vol_{F^{-1}(y)}
  \right)\dd\Vol_{\mathcal N}(y).
\end{equation}
Each fiber carries the metric induced from \(g_{\mathcal M}\). In
particular, for a submersion \(a=(a_1,\ldots,a_q):\mathcal M\to\R^q\), with the
standard Euclidean metric on the target, one has
\begin{equation}
\label{eq:coarea-formula-a}
  \int_{\mathcal M}\psi J_a\,\dd\Vol_{\mathcal M}
  =\int_{\R^q}\left(
    \int_{a^{-1}(u)}\psi\,\dd\Vol_{a^{-1}(u)}
  \right)\dd u,
\end{equation}
and the normal Jacobian can be computed from the gradients of the
constraint functions:
\begin{equation}
\label{eq:volume-section-constraint-gradient-jacobian}
  J_a(x)^2
  =\det\bigl(g_{\mathcal M}(\grad a_i,\grad a_j)(x)\bigr)_{i,j=1}^q.
\end{equation}
Indeed, the adjoint of \(\dd a(x)\) sends the \(i\)-th standard basis
vector to \(\grad a_i(x)\), so the displayed Gram matrix represents
\(\dd a(x)\dd a(x)^*\). We will use this identity to impose the
unit-diagonal constraints.

For \(C=XX^\top\in\Crk{s}{n}\), let
\[
  v_s(C)\coloneq \Vol(X\cdot O(s)),
\]
be the measure of the fiber, where the orbit carries the metric induced from \(g_{M_s}\). Applying
the coarea formula (\cref{eq:volume-section-smooth-coarea}) to the Riemannian submersion
\(\Gamma_s:(\Mss,g_{M_s})\to(\Crk{s}{n},g_Q)\), with normal
Jacobian one, gives
\begin{equation}
\label{eq:volume-section-fiber-integration}
  \int_{\Mss} f(\Gamma_s(X))\,\dd\Vol_{M_s}(X)
  =\int_{\Crk{s}{n}} f(C)v_s(C)\,\dd\Vol_Q(C)
\end{equation}
for every nonnegative measurable function \(f:\Crk{s}{n}\to[0,\infty]\). 
Thus, the pushforward of
factor-space volume has density \(v_s\) with respect to quotient volume.
Orbit-volume factors also appear in Weyl-type integration formulas for
isometric group actions; see \cite{Magata2010} for formulas based on
minimal sections and their specialization to polar actions. In the present
setting, \cref{eq:volume-section-fiber-integration} follows directly
from coarea on the smooth quotient.

We now express \(v_s\) through the Sylvester operator from
\cref{subsec:sylvester-mechanical-connection}.
For any full-column-rank factor \(Y\in\R^{n\times s}\), set
\begin{equation}
\label{eq:orbit-volume-factor}
  D_s(Y)\coloneq \prod_{1\le a<b\le s}
    \bigl(\lambda_a(Y^\top Y)+\lambda_b(Y^\top Y)\bigr)^{1/2}.
\end{equation}
The positive eigenvalues of \(C=XX^\top\) are those of \(E_X\), and
we use the same invariant on the correlation stratum:
\[
  D_s(C)\coloneq \prod_{1\le a<b\le s}
    \bigl(\lambda_a(C)+\lambda_b(C)\bigr)^{1/2}=D_s(X).
\]
These products are symmetric in the positive eigenvalues and are
independent of the chosen factor representative.

\begin{lemma}[Orbit volume and the Sylvester operator]
\label{lem:volume-section-orbit-sylvester}
For \(X\in\Mss\), recall
\[
  \mathcal L_{E_X}:\so(s)\to\so(s),
  \qquad \mathcal L_{E_X}(\Omega)=E_X\Omega+\Omega E_X.
\]
Then
\begin{equation}
\label{eq:volume-section-sylvester-determinant}
  D_s(C)^2
  =\det
  \mathcal L_{E_X},
  \qquad C=XX^\top,
\end{equation}
and
\begin{equation}
\label{eq:volume-section-orbit-volume}
  v_s(C)=2^{-\binom s2/2}D_s(C)\Vol(O(s)).
\end{equation}
Here \(O(s)\) carries the bi-invariant Riemannian metric whose value at
the identity is the Frobenius inner product on \(\so(s)\), and its
volume includes both connected components.
\end{lemma}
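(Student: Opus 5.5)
The plan has three parts: first the determinant, then the orbit volume via the orbit map, then matching the resulting constant to \(D_s\).

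For \cref{eq:volume-section-sylvester-determinant}, diagonalize \(E_X=RDR^\top\) with \(R\in O(s)\), exactly as in the proof of \cref{lem:sylvester-invertible}. Conjugation \(\Omega\mapsto R^\top\Omega R\) is an automorphism of \(\so(s)\), so \(\det\mathcal L_{E_X}=\det\mathcal L_D\). In the basis \(E_{ab}-E_{ba}\), \(a<b\), the operator \(\mathcal L_D\) is diagonal with entries \(\lambda_a+\lambda_b\). Its determinant is therefore \(\prod_{a<b}(\lambda_a+\lambda_b)=D_s(C)^2\).

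For the orbit volume, consider the orbit map \(\phi:O(s)\to X\cdot O(s)\), \(\phi(R)=XR\). Since \(X\) has full column rank, the action is free and \(\phi\) is a diffeomorphism onto the orbit. By equivariance, \(\phi(R_0R)=\phi(R_0)R\), and right multiplication is an isometry of \(M_s\). Left translation is an isometry of the bi-invariant metric on \(O(s)\). Together these show that the pullback of the induced orbit metric is left-invariant, so its Jacobian relative to the bi-invariant volume is constant. It can therefore be computed at the identity and in either component.

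At the identity, \(\dd\phi(\I)[\Omega]=X\Omega\), and the pulled-back inner product on \(\so(s)\) is
\[
\langle X\Omega,X\Omega'\rangle_F=\tr(\Omega^\top E_X\Omega').
\]
Relative to the Frobenius inner product on \(\so(s)\), this form is represented by the operator \(T(\Omega)=-\tfrac12(\Omega E_X+E_X\Omega)\) up to sign, that is, \(T=\tfrac12\mathcal L_{E_X}\). To check this, note that \(\tr(\Omega^\top E_X\Omega')\) is symmetric in \((\Omega,\Omega')\). Symmetrizing gives \(\tfrac12\langle\Omega,E_X\Omega'+\Omega'E_X\rangle_F\), using \(\Omega^\top=-\Omega\) and cyclicity. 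This is the same pairing that gave \(\tr(\Theta^\top E\Theta)=\sum_{a<b}(\lambda_a+\lambda_b)\widetilde\Theta_{ab}^2\) in \cref{lem:spectral-sylvester-curvature-term}. The volume distortion is then
\[
\sqrt{\det(\tfrac12\mathcal L_{E_X})}=2^{-\binom s2/2}D_s(C).
\]
Integrating this constant over \(O(s)\) gives \cref{eq:volume-section-orbit-volume}.

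The delicate point is the normalization constant: the factor \(2^{-\binom s2/2}\) relative to \(D_s\). One must be careful that the Frobenius norm on \(\so(s)\) gives each basis element \(E_{ab}-E_{ba}\) norm \(\sqrt2\), while \(\|X(E_{ab}-E_{ba})\|_F^2\) equals \(\lambda_a+\lambda_b\) in the eigenbasis. So each of the \(\binom s2\) directions is stretched by the factor \(\sqrt{(\lambda_a+\lambda_b)/2}\), and the product of these stretches is exactly \(2^{-\binom s2/2}D_s(C)\). Checking this against the \(n=s=2\) example, where \(\Vol(O(2))=2\cdot2\pi\sqrt2\) under the Frobenius metric, gives a sanity check.
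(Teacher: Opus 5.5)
Your proof is correct and follows the paper's argument. Diagonalizing \(E_X\) gives \(\det\mathcal L_{E_X}=\prod_{a<b}(\lambda_a+\lambda_b)\); the Jacobian of \(R\mapsto XR\) at the identity then comes from \(\Frob{X\Omega}{X\Lambda}=\tfrac12\Frob{\Omega}{\mathcal L_{E_X}(\Lambda)}\), and integrating over both components of \(O(s)\) gives the volume.

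One small correction concerns the constancy of the Jacobian, a step the paper leaves implicit. The identity \(\phi(R_0R)=\phi(R_0)R\) makes the pulled-back orbit metric \emph{right}-invariant, not left-invariant. Since the bi-invariant metric is also right-invariant, the Jacobian is still constant, as you need.
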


\begin{proof}
Pick an orthonormal eigenbasis \(e_1,\ldots,e_s\) of \(E_X\), with
\(E_Xe_a=\lambda_a e_a\) and \(\lambda_a>0\).
The matrices
\[
  \Omega_{ab}\coloneq \frac{e_ae_b^\top-e_be_a^\top}{\sqrt2},
  \qquad 1\le a<b\le s,
\]
form a Frobenius-orthonormal basis of \(\so(s)\).
Since left and right multiplication by \(E_X\) multiply
\(e_ae_b^\top\) by \(\lambda_a\) and \(\lambda_b\), respectively,
\[
  \mathcal L_{E_X}(\Omega_{ab})
  =(\lambda_a+\lambda_b)\Omega_{ab}.
\]
Thus \(\mathcal L_{E_X}\) is diagonal in this basis, hence
\[
  \det\mathcal L_{E_X}
  =\prod_{a<b}(\lambda_a+\lambda_b)
  =D_s(C)^2,
\]
which proves \cref{eq:volume-section-sylvester-determinant}. To compute the orbit volume, consider
\[
  \phi_X:O(s)\longrightarrow XO(s),
  \qquad \phi_X(R)=XR,
\]
with the reference metric on \(O(s)\) and the metric induced from
\(g_{M_s}\) on the orbit. 

By linearity, its differential at the identity is $\dd\phi_X(\I_s)\colon \Omega \mapsto  X\Omega$.
We therefore compare the Frobenius inner products of
\(\Omega,\Lambda\in\so(s)\) with those of their images
\(X\Omega,X\Lambda\). Skew-symmetry, symmetry of \(E_X\), and
cyclicity of the trace give
\[
  \begin{aligned}
    \tr(\Omega^\top\Lambda E_X)
    =\tr\bigl((\Omega^\top\Lambda E_X)^\top\bigr)
    =\tr(E_X\Lambda^\top\Omega)
    =\tr(\Omega^\top E_X\Lambda).
  \end{aligned}
\]
Consequently, the orbit metric evaluated on the images of
\(\Omega\) and \(\Lambda\) satisfies
\[
  \begin{aligned}
    \Frob{X\Omega}{X\Lambda}
    =\tr(\Omega^\top E_X\Lambda)
    =\frac12
      \Frob{\Omega}{\mathcal L_{E_X}(\Lambda)}.
  \end{aligned}
\]
In particular, for \(a<b\) and \(c<d\),
\[
  \Frob{X\Omega_{ab}}{X\Omega_{cd}}
  =\frac{\lambda_c+\lambda_d}{2}\,
    \delta_{ac}\delta_{bd}.
\]
Thus the images of the reference-orthonormal basis are mutually
orthogonal, and for any $a<b$
\[\|X\Omega_{ab}\|_F^2=\frac{\lambda_a+\lambda_b}{2}.
\]
The Jacobian is the product of their lengths. Since there are
\(\binom s2\) basis vectors, we obtain
\[
  J_{\phi_X}(\I_s)
  =\prod_{a<b}
    \left(\frac{\lambda_a+\lambda_b}{2}\right)^{1/2}
  =2^{-\binom s2/2}D_s(C).
\]

Finally, full column rank of \(X\) makes \(\phi_X\) a diffeomorphism
onto its orbit, with smooth inverse given by
\[
  XO(s)\longrightarrow O(s),
  \qquad Y\longmapsto E_X^{-1}X^\top Y.
\]
The change-of-variables formula, integrated over both connected
components of \(O(s)\) then yields
\begin{align*}
  v_s(C)
    &=\int_{O(s)}J_{\phi_X}(R)\,
      \dd\Vol_{O(s)}(R)\\
    &=\int_{O(s)}2^{-\binom s2/2}D_s(C)\,
      \dd\Vol_{O(s)}(R)\\
    &=2^{-\binom s2/2}D_s(C)\Vol(O(s)),
\end{align*}
ensuring \cref{eq:volume-section-orbit-volume}.
\end{proof}

\begin{definition}[Uniform factor law and raw Gram law]
\label{def:raw-gram-law}
The \emph{uniform factor law} is normalized product-sphere volume:
\[
  \mathcal U_{M_s}
  \coloneq \frac{\Vol_{M_s}}{\Vol(M_s)},
  \qquad \Vol(M_s)\coloneq \Vol_{M_s}(M_s).
\]
Under this law, the rows are independent and uniform on
\(\mathbb S^{s-1}\). The \emph{raw Gram law} is its pushforward
\[
  \eta_{n,s}\coloneq (\Gamma_s)_\#\mathcal U_{M_s}.
\]
Explicitly, for every Borel set \(B\subset\Crle{s}{n}\),
\[
  \eta_{n,s}(B)
  =\mathcal U_{M_s}\bigl(\{X\in M_s:XX^\top\in B\}\bigr).
\]
In particular, \(\eta_{n,s}\) is the probability distribution of \(C=XX^\top\) when
\(X\sim\mathcal U_{M_s}\). Uniformity here refers to the factor
rows; the target volume-uniform laws are those of
\cref{def:normalized-volume-laws}.
\end{definition}

The rank-deficient locus is
\(\{X\in M_s:\det(X^\top X)=0\}\).
Its defining function is real analytic and not identically zero: a
factor whose first \(s\) rows form the standard basis of \(\R^s\)
has full column rank. Since \(M_s\) is connected for \(s\ge2\), the
analytic zero-set theorem \cite{Mityagin2020}, applied in coordinate
charts, shows that this locus has zero Riemannian volume.
Since \(\rank(XX^\top)=\rank X\), the law \(\eta_{n,s}\) is
almost surely supported on \(\Crk{s}{n}\).
Consider now
\[
  \kappa_{n,s}\coloneq 
  \frac{2^{-\binom s2/2}\Vol(O(s))}
       {\Vol(M_s)}.
\]

\begin{proposition}[Weighted quotient-volume transfer]
\label{prop:weighted-quotient-volume-transfer}
For every nonnegative measurable function \(h\) on \(\Crk{s}{n}\),
\begin{equation}
\label{eq:volume-section-weighted-transfer}
  (\Gamma_s)_\#\left(
    \ind{\Mss}\frac{h(\Gamma_s(X))}{D_s(X)}
    \mathcal U_{M_s}(\dd X)
  \right)
  =\kappa_{n,s}\,h(C)\Vol_Q(\dd C).
\end{equation}
The source density is understood to be zero outside \(\Mss\).
\end{proposition}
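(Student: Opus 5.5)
The plan is to test \cref{eq:volume-section-weighted-transfer} against an arbitrary nonnegative measurable function \(f\) on \(\Crk{s}{n}\) and reduce it to the fiber-integration identity \cref{eq:volume-section-fiber-integration} together with the orbit-volume formula of \cref{lem:volume-section-orbit-sylvester}. Two Borel measures on \(\Crk{s}{n}\) agree if and only if they integrate every nonnegative measurable \(f\) to the same value, so we must prove
\[
  \frac{1}{\Vol(M_s)}\int_{\Mss} f(\Gamma_s(X))\,\frac{h(\Gamma_s(X))}{D_s(X)}\,\dd\Vol_{M_s}(X)
  =\kappa_{n,s}\int_{\Crk{s}{n}} f(C)h(C)\,\dd\Vol_Q(C).
\]
The indicator \(\ind{\Mss}\) makes the source integral range over \(\Mss\) only. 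This is harmless: the rank-deficient locus has zero volume, and in any case it is excluded by convention.

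The key step is to use that \(D_s(X)=D_s(\Gamma_s(X))\) depends only on \(C=XX^\top\). This holds because the positive eigenvalues of \(E_X\) coincide with those of \(C\), as recorded before \cref{lem:volume-section-orbit-sylvester}. The integrand is therefore of the form \(\tilde f(\Gamma_s(X))\) with
\[
  \tilde f(C)\coloneq f(C)h(C)/D_s(C).
\]
This \(\tilde f\) is nonnegative and measurable, since \(D_s\) is continuous and positive on \(\Crk{s}{n}\). Applying \cref{eq:volume-section-fiber-integration} to \(\tilde f\) turns the left side into
\[
  \frac{1}{\Vol(M_s)}\int_{\Crk{s}{n}} \frac{f(C)h(C)}{D_s(C)}\,v_s(C)\,\dd\Vol_Q(C).
\]
Substituting \(v_s(C)=2^{-\binom s2/2}D_s(C)\Vol(O(s))\) from \cref{eq:volume-section-orbit-volume}, the factor \(D_s(C)\) cancels and the remaining constant \(2^{-\binom s2/2}\Vol(O(s))/\Vol(M_s)\) is exactly \(\kappa_{n,s}\).

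No single step is a serious obstacle; the points needing care are measure-theoretic. First, \cref{eq:volume-section-fiber-integration} must hold for arbitrary nonnegative measurable \(f\), including \(+\infty\) values; the smooth coarea formula \cref{eq:volume-section-smooth-coarea} on the Riemannian submersion \(\Gamma_s\), whose normal Jacobian is one, already covers this by monotone convergence. Second, one must check that the pushforward of a measure with density is characterized by this change of variables, that is, \(\int f\,\dd(\Gamma_s)_\#\nu=\int f\circ\Gamma_s\,\dd\nu\), which is the definition. I would also remark that no finiteness of \(\Vol_Q\) is needed here, since both sides are possibly infinite measures and the identity is one of measures rather than probability laws.
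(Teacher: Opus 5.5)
Your proposal is correct and is the paper's own proof. Test against a nonnegative measurable function, use that $D_s$ and $h$ are constant on $O(s)$-fibers to apply \cref{eq:volume-section-fiber-integration}, then cancel $D_s$ against $v_s=2^{-\binom s2/2}D_s\Vol(O(s))$ from \cref{eq:volume-section-orbit-volume}, leaving exactly $\kappa_{n,s}$.
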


\begin{proof}
Let \(\varphi\) be a nonnegative measurable test function on
\(\Crk{s}{n}\). The functions \(\varphi\circ\Gamma_s\),
\(h\circ\Gamma_s\), and \(D_s\) are constant on each fiber. By
\cref{eq:volume-section-fiber-integration,eq:volume-section-orbit-volume},
\[
  \begin{aligned}
    &\int_{\Mss}
      \varphi(\Gamma_s(X))\frac{h(\Gamma_s(X))}{D_s(X)}
      \,\dd\mathcal U_{M_s}(X)\\
    &\qquad=
      \frac{1}{\Vol(M_s)}
      \int_{\Crk{s}{n}}\varphi(C)h(C)
      \frac{v_s(C)}{D_s(C)}\,\dd\Vol_Q(C)\\
    &\qquad=
      \kappa_{n,s}\int_{\Crk{s}{n}}
      \varphi(C)h(C)\,\dd\Vol_Q(C),
  \end{aligned}
\]
which concludes 
\cref{eq:volume-section-weighted-transfer}.
\end{proof}

Taking \(h(C)=D_s(C)\) gives the density of the raw Gram law with respect to quotient volume
\begin{equation}
\label{eq:volume-section-raw-gram-density}
  \dd\eta_{n,s}(C)
  =\kappa_{n,s}D_s(C)\,\dd\Vol_Q(C).
\end{equation}
Taking \(h\equiv1\) shows that weighting factors by \(D_s^{-1}\)
compensates for their fiber volumes. To obtain a lift of ambient volume,
we first compute its density with respect to quotient volume.

\subsection{Exact volume comparison and boundary behavior}
\label{subsec:exact-volume-comparison}

We now determine the additional correction required for ambient
uniformity. Throughout the remainder of the section, write
\[
  w(C)\coloneq \frac{\dd\Vol_{\mathrm E}}
                {\dd\Vol_Q}(C).
\]
In other words, for every measurable set \(B\subset\Crk{s}{n}\),
\[
  \Vol_{\mathrm E}(B)
  =\int_B w(C)\,\dd\Vol_Q(C).
\]
The density \(w(C)\) is
the local volume distortion between the two metrics. If
\(U_1,\ldots,U_{m_s}\) is a \(g_Q\)-orthonormal basis at \(C\), then
\[
  w(C)=\sqrt{\det\bigl(g_{\mathrm E,C}(U_\alpha,U_\beta)\bigr)
       _{\alpha,\beta=1}^{m_s}}.
\]
Equivalently, \(w\) is the Jacobian of \(\bar\Gamma_s\) when its
source \(\Qss\) carries \(\gQss\) and its target \(\Crk{s}{n}\)
carries \(g_{\mathrm E}\). To compute it, we first work with
unconstrained full-column-rank factors and rank-\(s\) positive
semidefinite matrices. The calculation is simplest with the Frobenius
metric on the target; \cref{eq:volume-section-frobenius-scaling} then
converts the result to \(g_{\mathrm E}\).

\subsubsection{The unconstrained Gram map and the diagonal constraints}
\label{subsubsec:volume-section-gram-constraints}

Let
\[
  \mathcal R_{n,s}\coloneq \{Y\in\R^{n\times s}:\rank Y=s\},\qquad
  \mathcal P_{n,s}\coloneq \{G\in\Sym(n):G\succeq0,\ \rank G=s\}.
\]
The unconstrained Gram map \(\Phi(Y)\coloneq YY^\top\) descends to a
diffeomorphism
\[
  \bar\Phi:\mathcal R_{n,s}/O(s)\longrightarrow\mathcal P_{n,s}.
\]
The source carries the quotient metric induced by the Frobenius metric on
\(\mathcal R_{n,s}\), while the target carries the metric induced by the
Frobenius metric on \(\Sym(n)\). We also write
\(\pi_s:\mathcal R_{n,s}\to\mathcal R_{n,s}/O(s)\),
\(\pi_s(Y)=[Y]\), for the quotient projection; its restriction to
\(\Mss\) agrees with the earlier notation. Define
\[
  \operatorname{Diag}_{n,s}
  \coloneq \{\diag(G):G\in\mathcal P_{n,s}\}
  \subset\R_{\ge0}^n,
\]
and
\[
  \bar a([Y])\coloneq \diag(YY^\top),
  \qquad b(G)\coloneq \diag(G).
\]
The identities \(\Phi=\bar\Phi\circ\pi_s\) and
\(\bar a=b\circ\bar\Phi\) give the two commuting triangles
\[
\begin{tikzcd}[column sep=huge, row sep=large]
  \mathcal R_{n,s}
    \arrow[r, "\Phi"]
    \arrow[d, "\pi_s"']
  &
  \mathcal P_{n,s}
    \arrow[d, "b"]
  \\
  \mathcal R_{n,s}/O(s)
    \arrow[ur, "\bar\Phi"]
    \arrow[r, "\bar a"']
  &
  \operatorname{Diag}_{n,s}
\end{tikzcd}
\]
with corresponding levels
\[
  \bar a^{-1}(\mathbf1_n)=\Qss,
  \qquad
  b^{-1}(\mathbf1_n)=\Crk{s}{n}.
\]
We view \(\bar a\) and \(b\) as maps into \(\R^n\) with its standard
Euclidean metric when computing their normal Jacobians.

Let \(H\) be the horizontal lift to \(\mathcal R_{n,s}\) of a
tangent vector \(\xi\) to \(\bar a^{-1}(\mathbf1_n)\) at \([X]\).
Horizontality and tangency to the level give, respectively,
\[
  X^\top H=H^\top X,
  \qquad
  \dd\bar a([X])[\xi]=2\diag(HX^\top)=0.
\]
These are exactly the conditions defining the horizontal space
\(\Hz{X}\) of \(\Mss\). The induced metric on
\(\bar a^{-1}(\mathbf1_n)=\Qss\) is therefore \(\gQss\),
since it is computed from the Frobenius inner products of these lifts.
On the target side, restricting the Frobenius metric of
\(\mathcal P_{n,s}\) to
\(b^{-1}(\mathbf1_n)=\Crk{s}{n}\) gives \(g_{\mathrm F}\).
For \(C=XX^\top\in\Crk{s}{n}\), set
\[
  \pdet(C)\coloneq \prod_{a=1}^s\lambda_a(C)=\det E_X,
  \qquad
  A_C\coloneq 2\diag(\diag P_C)-P_C\odot P_C,
\]
where \(\odot\) denotes the
Hadamard (entrywise) product, and $\pdet$ is the \emph{pseudo-determinant}, i.e.,  \(\pdet(C)\) is the product
of the positive eigenvalues of \(C\); it remains positive when
\(0<s<n\), although \(\det C=0\). We use \(\diag\) both to extract a
matrix diagonal as a vector and to form a diagonal matrix from a vector.
Thus \(\diag(\diag P_C)\) has diagonal entries \((P_C)_{ii}\), and
\[
  (A_C)_{ij}=2\delta_{ij}(P_C)_{ii}-(P_C)_{ij}^2.
\]
 As we will see in \cref{lem:unit-diagonal-normal-jacobians}, \(A_C\) is the
Gram matrix of the gradients of \(b_i(G)=G_{ii}\) on the 
manifold \(\mathcal P_{n,s}\). Consequently,
\(J_b(C)=\sqrt{\det A_C}\): this factor measures the normal Jacobian
of the diagonal constraint on the Gram-matrix side.

\begin{theorem}[Exact comparison of quotient and ambient volume]
\label{thm:quotient-versus-ambient-volume}
On \(\Crk{s}{n}\),
\begin{equation}
\label{eq:volume-section-exact-euclidean-density}
  \frac{\dd\Vol_{\mathrm E}}
       {\dd\Vol_Q}(C)
  =2^{\frac{s-n}{2}}\pdet(C)^{\frac{n-s+1}{2}}
   D_s(C)\sqrt{\det A_C},
\end{equation}
and
\begin{equation}
\label{eq:volume-section-exact-frobenius-density}
  \frac{\dd\Vol_{\mathrm F}}
       {\dd\Vol_Q}(C)
  =2^{\frac{s(2n-s+3)}4-n}
   \pdet(C)^{\frac{n-s+1}{2}}
   D_s(C)\sqrt{\det A_C}.
\end{equation}
The three volume measures are mutually absolutely continuous on the
fixed-rank stratum.
\end{theorem}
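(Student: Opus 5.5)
The plan is to compute the Jacobian \(w(C)\) of \(\bar\Gamma_s:(\Qss,\gQss)\to(\Crk{s}{n},g_{\mathrm F})\) by first computing the Jacobian of the unconstrained diffeomorphism \(\bar\Phi:\mathcal R_{n,s}/O(s)\to\mathcal P_{n,s}\). Then we pass to the unit-diagonal level sets using the coarea formula \cref{eq:coarea-formula-a} applied to the two constraint maps \(\bar a=b\circ\bar\Phi\). Finally \cref{eq:volume-section-frobenius-scaling} converts from \(g_{\mathrm F}\) to \(g_{\mathrm E}\), with the factor \(2^{-m_s/2}\). Since \(\bar a^{-1}(\mathbf 1_n)=\Qss\) carries \(\gQss\), the level-set Jacobian of \(\bar\Gamma_s\) should satisfy the multiplicativity relation
\[
J_{\bar\Phi}([X])\,J_{\bar a}([X])^{-1}=J_{\restr{\bar\Phi}{\Qss}}([X])\,J_b(C)^{-1}.
\]
This is obtained by writing the coarea identity for \(\bar a\) on the source and for \(b\) on the target and comparing the level-set integrands for all test functions; equivalently, it is linear algebra on the decomposition of \(\Tz{[X]}\) into the tangent space to the level and its normal space. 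Hence
\[
\frac{\dd\Vol_{\mathrm F}}{\dd\Vol_Q}(C)=J_{\bar\Phi}([X])\,J_b(C)\,J_{\bar a}([X])^{-1}.
\]

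\emph{Step 1: \(J_{\bar\Phi}\).} Horizontal vectors at \(Y\) for the Frobenius quotient satisfy \(Y^\top H\in\Sym(s)\). Write \(Y=U\Sigma V^\top\) with \(U\in\R^{n\times s}\) orthonormal, and split \(H=U K+U_\perp N\) with \(K\in\R^{s\times s}\), \(N\in\R^{(n-s)\times s}\). Horizontality becomes a symmetry condition on \(\Sigma V^\top K\)-type blocks. The image \(\dd\Phi[H]=HY^\top+YH^\top\) splits into a symmetric \(s\times s\) block (a Lyapunov-type map in the \(\Sigma\) eigenbasis, multiplying entries by \(\sigma_a+\sigma_b\)-type factors with the horizontal constraint) and the off-diagonal block \(N\Sigma\) counted twice in Frobenius norm. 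Carrying out the eigenbasis computation, the off-diagonal part gives \((\sqrt2)^{(n-s)s}\pdet(C)^{(n-s)/2}\). For the diagonal block, horizontal \(K'\) (symmetric after rotation) maps to \(\Sigma K'+K'\Sigma\) scaled appropriately; the factors \(\sigma_a\sigma_b\)-products combine with the quotient metric so that the result contributes \(\pdet(C)^{1/2}D_s(C)\) times a power of \(2\). I expect \(J_{\bar\Phi}=2^{c}\,\pdet(C)^{(n-s+1)/2}D_s(C)\) for an explicit constant \(c\). This is the same Sylvester structure as \cref{lem:volume-section-orbit-sylvester}.

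\emph{Step 2: constraint Jacobians via \cref{eq:volume-section-constraint-gradient-jacobian}.} On the source, \(\bar a_i([Y])=\|Y_{i\cdot}\|^2\). Its gradient lifted horizontally is \(2e_ie_i^\top Y\), which is already horizontal since \(Y^\top e_ie_i^\top Y\) is symmetric. So the Gram matrix is \(4\diag(\|y_i\|^2)\), equal to \(4\I_n\) on the level, and \(J_{\bar a}=2^n\). On the target, the gradient of \(b_i\) on \(\mathcal P_{n,s}\) is the Frobenius projection of \(e_ie_i^\top\) onto \(\Tz{C}\mathcal P_{n,s}=\{PS+SP-PSP\}\), namely \(P e_ie_i^\top+e_ie_i^\top P-Pe_ie_i^\top P\) with \(P=P_C\). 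Inner products \(\langle\grad b_i,\grad b_j\rangle=(\grad b_i)_{jj}=2\delta_{ij}P_{ii}-P_{ij}^2\), giving \(J_b=\sqrt{\det A_C}\), as announced. One must also check that \(A_C\) is nonsingular, i.e.\ that \(b\) is a submersion at \(C\). This follows because \(\bar a\) is a submersion and \(\bar\Phi\) is a diffeomorphism.

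\emph{Step 3: assemble powers of two.} Collecting \(2^{c}\cdot2^{-n}\) gives the Frobenius constant \(2^{s(2n-s+3)/4-n}\). Multiplying by \(2^{-m_s/2}\) yields \(2^{(s-n)/2}\) in \cref{eq:volume-section-exact-euclidean-density}. Positivity of the density gives mutual absolute continuity. The main obstacle is Step 1: getting the correct spectral decomposition of the horizontal Gram differential and the exact power of \(2\). As a check, I will first verify the \(n=s=2\) case against \cref{ex:volume-section-two-by-two}, where \(w=\sqrt{2(1-c^2)}\).
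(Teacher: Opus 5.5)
Your architecture matches the paper's. It uses the level-set identity $J_{\restr{\bar\Phi}{\Qss}}=J_{\bar\Phi}\,J_b/J_{\bar a}$, the normal Jacobians $J_{\bar a}=2^n$ and $J_b=\sqrt{\det A_C}$ from constraint gradients, and the final rescaling by $2^{-m_s/2}$. Your Step~2 is correct. Deducing regularity of $A_C$ from $\dd\bar a=\dd b\circ\dd\bar\Phi$ is enough for this theorem; the paper instead proves $\frac1n\I_n\preceq A_C\preceq\I_n$, which it needs later. For the level-set identity, use the pointwise linear-algebra version: the block-triangular form of $\dd\bar\Phi$ with respect to level tangent spaces and their normals. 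Comparing coarea integrands over all test functions only gives equality for almost every level value, whereas $\mathbf 1_n$ is a single level.

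The genuine gap is Step~1, which is the core of the theorem. It is not carried out, and the sketch as written would give the wrong Jacobian. After rotating, horizontality says that $\Sigma K'$ is symmetric, i.e.\ $\sigma_aK'_{ab}=\sigma_bK'_{ba}$; it does not say that $K'$ is symmetric. With symmetric $K'$ and the map $K'\mapsto K'\Sigma+\Sigma K'$, each off-diagonal pair would scale by $\sigma_a+\sigma_b$. The total would be $\prod_{a\le b}(\sigma_a+\sigma_b)$, which is not a constant multiple of $\pdet(C)^{1/2}D_s(C)$. The parameter count agrees, which is why this slip is easy to miss. As a result, the constant $c$ in Step~3 is read off from the statement rather than derived.

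The fix is an explicit orthonormal horizontal basis. Take $Y=\binom{\Sigma}{0}$ and $\lambda_a=\sigma_a^2$.
\begin{enumerate}
\item The directions $E_{aa}$ in the top block have images $2\sigma_aE_{aa}$, of norm $2\sqrt{\lambda_a}$.
\item For $a<b$, the directions $(\sigma_bE_{ab}+\sigma_aE_{ba})/\sqrt{\lambda_a+\lambda_b}$ have images $\sqrt{\lambda_a+\lambda_b}\,(E_{ab}+E_{ba})$, of norm $\sqrt{2(\lambda_a+\lambda_b)}$.
\item The directions $E_{\ell a}$ in the bottom block have images of norm $\sqrt{2\lambda_a}$.
\end{enumerate}
These images have disjoint supports, so $J_{\bar\Phi}$ is the product of the norms, namely $2^{s+\binom{s}{2}/2+s(n-s)/2}\,\pdet(C)^{(n-s+1)/2}D_s(C)$. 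Since $s+\binom{s}{2}/2+s(n-s)/2=s(2n-s+3)/4$, this gives the Frobenius constant honestly. Your $n=s=2$ check against \cref{ex:volume-section-two-by-two} agrees with this corrected computation.
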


\subsubsection{Jacobian calculations}
\label{subsubsec:volume-section-jacobian-calculations}

The proof has three ingredients. We first relate the Jacobian of a
diffeomorphism to that of its restriction to corresponding regular levels.
We then compute \(J_{\bar\Phi}\) on the unconstrained quotient using
horizontal lifts. Finally, we compute the normal Jacobians of
\(\bar a\) and \(b\), which remove the normal distortion and leave
the Jacobian along the unit-diagonal levels.
Each constraint map uses the metric on its own domain and the standard
Euclidean metric on its codomain: \(\R^q\) in the following lemma
and \(\R^n\) in the application.

\begin{lemma}[Jacobian on corresponding regular level sets]
\label{lem:jacobian-corresponding-level-sets}
Let \(F:(\mathcal M,g_{\mathcal M})\to(\mathcal N,g_{\mathcal N})\) be a
smooth diffeomorphism between Riemannian manifolds, and let
\(a:\mathcal M\to\R^q\), \(b:\mathcal N\to\R^q\) be smooth maps
satisfying \(a=b\circ F\). If \(t\) is a regular value of both maps, then
\[
  J_{\restr{F}{a^{-1}(t)}}(x)
  =J_F(x)\frac{J_b(F(x))}{J_a(x)},\qquad x\in a^{-1}(t),
\]
where the level sets carry their induced metrics.
\end{lemma}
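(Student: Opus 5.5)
The plan is to reduce the statement to linear algebra at a single point $x\in a^{-1}(t)$. Since $t$ is a regular value of $a$ and of $b$, both levels are embedded submanifolds with
\[
  \Tz{x}a^{-1}(t)=\ker\dd a(x),\qquad \Tz{F(x)}b^{-1}(t)=\ker\dd b(F(x)).
\]
Differentiating $a=b\circ F$ gives $\dd a(x)=\dd b(F(x))\circ\dd F(x)$. Because $\dd F(x)$ is invertible, it maps $K\coloneq\ker\dd a(x)$ isomorphically onto $K'\coloneq\ker\dd b(F(x))$. Hence $\restr{F}{a^{-1}(t)}$ is a diffeomorphism onto an open subset of $b^{-1}(t)$, and its Jacobian at $x$ is $|\det(\restr{\dd F(x)}{K}:K\to K')|$, computed in orthonormal bases for the induced metrics.

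First choose orthonormal decompositions $\Tz{x}\mathcal M=K\oplus K^\perp$ and $\Tz{F(x)}\mathcal N=K'\oplus K'^\perp$. In adapted orthonormal bases the map $L\coloneq\dd F(x)$ has the block upper triangular form
\[
  \begin{pmatrix} L_{11} & L_{12}\\ 0 & L_{22}\end{pmatrix},
\]
because $L(K)=K'$. Here $L_{11}=\restr{L}{K}$ and $L_{22}:K^\perp\to K'^\perp$ is the induced map $\Tz{x}\mathcal M/K\to\Tz{F(x)}\mathcal N/K'$. Taking absolute determinants gives
\[
  J_F(x)=|\det L_{11}|\,|\det L_{22}|,\qquad\text{so}\qquad J_{\restr{F}{a^{-1}(t)}}(x)=J_F(x)/|\det L_{22}|.
\]

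Next, identify $|\det L_{22}|$ with $J_a(x)/J_b(F(x))$, using the description of normal Jacobians recorded before \cref{eq:volume-section-smooth-coarea}. By that description, $J_a(x)=|\det(\restr{\dd a(x)}{K^\perp}:K^\perp\to\R^q)|$ and $J_b(F(x))=|\det(\restr{\dd b}{K'^\perp})|$. Since $\dd b$ vanishes on $K'$, we have
\[
  \restr{\dd a(x)}{K^\perp}=\restr{\dd b}{K'^\perp}\circ\pi_{K'^\perp}\circ\restr{L}{K^\perp}=\restr{\dd b}{K'^\perp}\circ L_{22}.
\]
Multiplicativity of determinants then yields $J_a(x)=J_b(F(x))\,|\det L_{22}|$. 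Substituting this into the previous display gives the formula, with all factors positive by regularity.

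The only delicate step is the bookkeeping that $L_{22}$ is genuinely the orthogonal compression, i.e.\ that the lower-left block vanishes while the upper-right block $L_{12}$ need not. This holds because $L(K)\subset K'$, even though $L(K^\perp)$ need not lie in $K'^\perp$. Once this is in place, the block determinant identity handles the rest.
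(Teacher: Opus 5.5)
Your proposal is correct and is essentially the paper's proof. You block-decompose $\dd F(x)$ in orthonormal bases adapted to $K\oplus K^\perp\to K'\oplus K'^\perp$ as an upper-triangular matrix, read off $J_F(x)=|\det L_{11}|\,|\det L_{22}|$, and use $\restr{\dd a(x)}{K^\perp}=\restr{\dd b}{K'^\perp}\circ L_{22}$ to get $J_a(x)=J_b(F(x))\,|\det L_{22}|$, which is exactly the paper's bookkeeping with $B$, $D$ and $N_a=N_bD$ (and, as a minor sharpening, since $F$ is bijective it maps $a^{-1}(t)$ onto all of $b^{-1}(t)$, not merely onto an open subset).
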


\begin{proof}
Let $t$ be a regular value of $a$ and $b$, and fix $x\in a^{-1}(t)$. Consider \(y\coloneq F(x)\), \(K_a\coloneq \ker\dd a(x)\), and
\(K_b\coloneq \ker\dd b(y)\). These are the tangent spaces to the
corresponding level sets. Since
\(\dd a(x)=\dd b(y)\circ\dd F(x)\) and \(\dd F(x)\) is
invertible, it maps \(K_a\) onto \(K_b\).
In orthonormal bases adapted to the direct sums
 \(\dd F(x) \colon K_a\oplus K_a^\perp \to K_b\oplus K_b^\perp\), we can thus write
\[
  \dd F(x)=\begin{pmatrix}B&T\\0&D\end{pmatrix}.
\]
Similarly, consider $N_a$ and $N_b$ such that 
\[
  \dd a(x)=\begin{pmatrix}0&N_a\end{pmatrix} \colon K_a\oplus K_a^\perp \to \R^q,
  \qquad
  \dd b(y)=\begin{pmatrix}0&N_b\end{pmatrix}
  \colon K_b\oplus K_b^\perp \to \R^q.
\]
The chain rule ensures
\[
  \begin{pmatrix}0&N_a\end{pmatrix}
  =
  \begin{pmatrix}0&N_b\end{pmatrix}
  \begin{pmatrix}B&T\\0&D\end{pmatrix}
  =
  \begin{pmatrix}0&N_bD\end{pmatrix},
\]
so \(N_a=N_bD\). Since all bases are orthonormal,
\[
  \begin{aligned}
    J_{\restr{F}{a^{-1}(t)}}(x)&=|\det B|,\\
    J_F(x)&=|\det B|\,|\det D|,\\
    J_a(x)&=|\det N_a|
           =|\det N_b|\,|\det D|
           =J_b(y)\,|\det D|.
  \end{aligned}
\]
Eliminating \(|\det D|\) yields
\[
  J_{\restr{F}{a^{-1}(t)}}(x)
  =J_F(x)\frac{J_b(F(x))}{J_a(x)}.
\]
\end{proof}

In this decomposition, \(J_a/J_b=|\det D|\) is the contribution of
the normal block. Multiplication by its reciprocal leaves the tangential
Jacobian \(|\det B|\). The identity holds at every point of the
specified regular level, so it applies directly to the unit-diagonal
constraints.

\begin{lemma}[Unconstrained Gram-quotient Jacobian]
\label{lem:unconstrained-gram-quotient-jacobian}
For \(Y\in\mathcal R_{n,s}\), with the source and target metrics
specified above,
\[
  J_{\bar\Phi}([Y])
  =2^{\frac{s(2n-s+3)}4}
   \det(Y^\top Y)^{\frac{n-s+1}{2}}D_s(Y).
\]
\end{lemma}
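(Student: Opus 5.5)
The plan is to compute the Jacobian directly. I will reduce to a diagonal normal form by equivariance and then exhibit an explicit orthonormal basis of the horizontal space whose images under $\dd\Phi$ are mutually orthogonal. The Jacobian is then the product of the image norms.

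\emph{Reduction to a normal form.} The group $O(n)\times O(s)$ acts on $\mathcal R_{n,s}$ by $Y\mapsto QYR$ and on $\mathcal P_{n,s}$ by $G\mapsto QGQ^\top$. Both actions are Frobenius isometries and $\Phi$ is equivariant. The $O(n)$-action commutes with the $O(s)$-quotient, so it descends to an isometry of $\mathcal R_{n,s}/O(s)$. Hence $J_{\bar\Phi}$ is invariant, and by a singular value decomposition it suffices to treat
\[
  Y=\begin{pmatrix}\Sigma\\0\end{pmatrix},\qquad \Sigma=\diag(\sigma_1,\ldots,\sigma_s),\quad \sigma_a^2=\lambda_a(Y^\top Y)>0.
\]
The right-hand side of the claimed formula is also invariant, since it depends only on the eigenvalues of $Y^\top Y$.

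\emph{Horizontal space.} As in \cref{lem:horizontality-criterion}, but now without the sphere constraint, the horizontal space at $Y$ is $\{H:Y^\top H=H^\top Y\}$. Write $H=\bigl(\begin{smallmatrix}H_1\\H_2\end{smallmatrix}\bigr)$. The condition is $\sigma_a(H_1)_{ab}=\sigma_b(H_1)_{ba}$, with $H_2\in\R^{(n-s)\times s}$ free. The quotient metric is the Frobenius inner product of horizontal lifts, and $\dd\Phi(Y)[H]=YH^\top+HY^\top$. I will use the following Frobenius-orthonormal horizontal basis:
\begin{itemize}
  \item the diagonal units $E_{aa}$ in the top block;
  \item for $a<b$, the matrix $(\sigma_bE_{ab}+\sigma_aE_{ba})/\sqrt{\lambda_a+\lambda_b}$ in the top block;
  \item the units $E_{ja}$ in the bottom block.
\end{itemize}
Their count is $s+\binom s2+s(n-s)=\dim$, as required.

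\emph{Images and their norms.} A direct block computation gives the images:
\begin{itemize}
  \item $E_{aa}$ maps to $2\sigma_aE_{aa}$, of norm $2\sigma_a$;
  \item the pair element for $a<b$ maps to $\sqrt{\lambda_a+\lambda_b}\,(E_{ab}+E_{ba})$, of norm $\sqrt2\,(\lambda_a+\lambda_b)^{1/2}$;
  \item the bottom unit $E_{ja}$ maps to $\sigma_a$ times the symmetric unit at the positions $(s+j,a)$ and $(a,s+j)$, of norm $\sqrt2\,\sigma_a$.
\end{itemize}
These images have pairwise disjoint supports, so they are mutually orthogonal. Hence $J_{\bar\Phi}$ is the product of the norms:
\[
  \prod_a 2\sigma_a\cdot\prod_{a<b}\sqrt2(\lambda_a+\lambda_b)^{1/2}\cdot\prod_a(\sqrt2\sigma_a)^{n-s}
  =2^{\,s+\frac{s(s-1)}4+\frac{s(n-s)}2}\Bigl(\prod_a\sigma_a\Bigr)^{n-s+1}D_s(Y).
\]
The power of two simplifies to $s(2n-s+3)/4$, and $\prod_a\sigma_a=\det(Y^\top Y)^{1/2}$, which gives the stated formula.

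The main point needing care is the reduction step. One must check that the left $O(n)$-action induces an isometry of the quotient and intertwines $\bar\Phi$, so that the Jacobian is genuinely invariant. The rest is bookkeeping of orthogonality and norms.
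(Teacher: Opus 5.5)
Your proposal is correct and follows the paper's proof essentially step for step: the same singular-value normal form $Y=\binom{\Sigma}{0}$, the same orthonormal horizontal basis $E_{aa}$, $(\sigma_bE_{ab}+\sigma_aE_{ba})/\sqrt{\lambda_a+\lambda_b}$ and $E_{\ell a}$, and the same images, which have disjoint supports and whose norms multiply to the stated formula. The paper justifies the reduction step in the same way you do, by noting that $Y\mapsto U^\top YV$, $H\mapsto U^\top HV$ preserves Frobenius inner products, horizontality, and the Gram matrix of the images.
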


\begin{proof}
We compute the Jacobian in three steps: identify the quotient
differential through horizontal lifts, reduce \(Y\) by a singular
value decomposition, and construct an orthonormal horizontal basis
with orthogonal images. The Jacobian will then be the product of
their image lengths.

By definition of the quotient metric, \(\dd\pi_s(Y)\) restricts to
an isometry from the horizontal space
\(\{H:Y^\top H=H^\top Y\}\) onto the quotient tangent space.
Differentiating \(\Phi=\bar\Phi\circ\pi_s\) gives
\[
  \dd\bar\Phi([Y])[\dd\pi_s(Y)[H]]
  =\dd\Phi(Y)[H]
  =YH^\top+HY^\top.
\]
Consequently, for any Frobenius-orthonormal horizontal basis
\((H_\alpha)_\alpha\),
\[
  J_{\bar\Phi}([Y])
  =\sqrt{\det\Bigl(
    \bigl[
      \Frob{\dd\Phi(Y)[H_\alpha]}{\dd\Phi(Y)[H_\beta]}
    \bigr]_{\alpha,\beta}
    \Bigr)}.
\]

Choose a singular value decomposition
\[
  Y=U\begin{pmatrix}\Sigma\\0\end{pmatrix}V^\top,
  \qquad
  \Sigma=\diag(\sigma_1,\ldots,\sigma_s),
  \qquad
  \lambda_a\coloneq \sigma_a^2>0,
\]
where \(U\in O(n)\) and \(V\in O(s)\).
Under the orthonormal changes of basis
\(\widetilde Y\coloneq U^\top YV\), \(\widetilde H\coloneq U^\top HV\),
Frobenius inner products are preserved, and
\[
  \begin{aligned}
    \widetilde Y^\top\widetilde H
      &=V^\top(Y^\top H)V,\\
    \dd\Phi(\widetilde Y)[\widetilde H]
      &=U^\top\dd\Phi(Y)[H]U.
  \end{aligned}
\]
Thus horizontality and the Jacobian are preserved, so we may
assume \(Y=\binom{\Sigma}{0}\).

Write \(H=\binom{B}{K}\), with
\(B=(b_{ab})\in\R^{s\times s}\) and
\(K\in\R^{(n-s)\times s}\). Then
\(\|H\|_F^2=\|B\|_F^2+\|K\|_F^2\), and block multiplication gives
\[
  Y^\top H=\Sigma B,
  \qquad
  \dd\Phi(Y)[H]
  =\begin{pmatrix}
      \Sigma B^\top+B\Sigma&\Sigma K^\top\\
      K\Sigma&0
    \end{pmatrix}.
\]
Horizontality is therefore equivalent to
\[
  \Sigma B=B^\top\Sigma
  \quad\Longleftrightarrow\quad
  \sigma_a b_{ab}=\sigma_b b_{ba}\qquad(a<b).
\]
The diagonal entries of \(B\) and all entries of \(K\) are free,
meaning they may be chosen arbitrarily: the conditions above
involve only off-diagonal pairs of \(B\).
For each \(a<b\), choosing \(b_{ab}\) determines
\(b_{ba}=(\sigma_a/\sigma_b)b_{ab}\), so each pair has one
free parameter.

Let \(E_{ij} \coloneq \left(\delta_{ik}\delta_{jl}\right)_{k,l}\) denote the matrix unit of the appropriate size.
For each \(a<b\), the horizontal direction supported on that pair
is proportional to \(\sigma_bE_{ab}+\sigma_aE_{ba}\).
Its squared Frobenius norm is \(\lambda_a+\lambda_b\), so define
the unit direction
\[
  B_{ab}
  \coloneq \frac{\sigma_bE_{ab}+\sigma_aE_{ba}}
           {\sqrt{\lambda_a+\lambda_b}}.
\]
Together with the diagonal and \(K\)-directions, this gives the
following three families. The rows are indexed respectively by
\(1\le a\le s\), \(1\le a<b\le s\), and
\(1\le\ell\le n-s,\ 1\le a\le s\):
\[
  \begin{array}{c c c c}
    B & K & \dd\Phi(Y)[H] & \|\dd\Phi(Y)[H]\|_F\\[2pt]
    \hline
    E_{aa} & 0
      & 2\sigma_aE_{aa}
      & 2\sqrt{\lambda_a}\\[4pt]
    B_{ab} & 0
      & \sqrt{\lambda_a+\lambda_b}(E_{ab}+E_{ba})
      & \sqrt{2(\lambda_a+\lambda_b)}\\[4pt]
    0 & E_{\ell a}
      & \sigma_a(E_{a,s+\ell}+E_{s+\ell,a})
      & \sqrt{2\lambda_a}
  \end{array}
\]
Every source direction is horizontal and has unit Frobenius norm.
The image lengths follow from
\(\|E_{ii}\|_F=1\) and
\(\|E_{ij}+E_{ji}\|_F=\sqrt2\) for \(i\ne j\).
Distinct source directions have disjoint supports, as do their
images. Thus the source directions are orthonormal and their images
are mutually orthogonal.

Spanning follows explicitly from the horizontal constraints.
For \(K=(k_{\ell a})\), put
\[
  t_{ab}\coloneq \frac{b_{ab}}{\sigma_b}
         =\frac{b_{ba}}{\sigma_a},
  \qquad a<b.
\]
Every horizontal \(H=\binom{B}{K}\) admits the expansions
\[
  \begin{aligned}
    B
      &=\sum_{a=1}^s b_{aa}E_{aa}
        +\sum_{1\le a<b\le s}
          t_{ab}\sqrt{\lambda_a+\lambda_b}\,B_{ab},\\
    K
      &=\sum_{\ell=1}^{n-s}\sum_{a=1}^s
          k_{\ell a}E_{\ell a}.
  \end{aligned}
\]
The number of basis vectors is therefore
\[
  s+\binom{s}{2}+s(n-s)
  =ns-\binom{s}{2},
\]
which also agrees with the dimension of the unconstrained quotient:
\[
  \dim\bigl(\mathcal R_{n,s}/O(s)\bigr)
  =ns-\dim O(s)
  =ns-\binom{s}{2}.
\]

The image Gram matrix is diagonal, so multiplying the lengths in
the last column gives
\[
  \begin{aligned}
    J_{\bar\Phi}([Y])
      &=
        \underbrace{\prod_{a=1}^s2\sqrt{\lambda_a}}
                   _{\text{diagonal directions}}
        \underbrace{\prod_{a<b}\sqrt{2(\lambda_a+\lambda_b)}}
                   _{\text{off-diagonal pairs}}
        \underbrace{\prod_{a=1}^s\prod_{\ell=1}^{n-s}
                    \sqrt{2\lambda_a}}
                   _{\text{entries of }K}\\[4pt]
      &=\left(\prod_{a=1}^s2\sqrt{\lambda_a}\right)
        \left(\prod_{a<b}\sqrt{2(\lambda_a+\lambda_b)}\right)
        \left(\prod_{a=1}^s(2\lambda_a)^{(n-s)/2}\right)\\
      &=2^{s+\binom{s}{2}/2+s(n-s)/2}
        \left(\prod_{a=1}^s\lambda_a\right)^{(n-s+1)/2}
        \prod_{a<b}\sqrt{\lambda_a+\lambda_b}\\
      &=2^{s(2n-s+3)/4}
        \det(Y^\top Y)^{(n-s+1)/2}D_s(Y).
  \end{aligned}
\]
\end{proof}

\begin{lemma}[Normal Jacobians of the unit-diagonal constraints]
\label{lem:unit-diagonal-normal-jacobians}
At \([X]\in\Qss\) and \(C=XX^\top\),
\[
  J_{\bar a}([X])=2^n,\qquad J_b(C)=\sqrt{\det A_C},
\]
and
\[
  \frac1n\I_n\preceq A_C\preceq\I_n,\qquad
  n^{-n/2}\le\sqrt{\det A_C}\le1.
\]
In particular, \(\mathbf1_n\) is a regular value of both constraint
maps.
\end{lemma}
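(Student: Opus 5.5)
We compute both normal Jacobians with the gradient formula \cref{eq:volume-section-constraint-gradient-jacobian}. In each case we identify the gradients of the coordinate constraints, form their Gram matrix, and take the determinant.

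\emph{Source side.} Consider the components \(\bar a_i([Y])=\|Y_{i\cdot}\|^2\) on \(\mathcal R_{n,s}/O(s)\) with its quotient metric. Each \(a_i(Y)=\|Y_{i\cdot}\|^2\) is \(O(s)\)-invariant upstairs, so its Frobenius gradient \(2E_{ii}Y\) (row \(i\) equal to \(2Y_{i\cdot}\), the other rows zero) is automatically horizontal. Indeed, \(Y^\top(E_{ii}Y)=Y_{i\cdot}^\top Y_{i\cdot}\) is symmetric. Hence this gradient is the horizontal lift of \(\grad\bar a_i\). The Gram matrix of these gradients has entries \(4\delta_{ij}\|Y_{i\cdot}\|^2\), because distinct gradients have disjoint row supports. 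At a unit-row factor \(X\) this matrix is \(4\I_n\), so \(J_{\bar a}([X])=\sqrt{4^n}=2^n\).

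\emph{Target side.} Consider \(b_i(G)=G_{ii}\) on the manifold \(\mathcal P_{n,s}\) with the Frobenius metric. The tangent space at \(C\) is \(\{M\in\Sym(n): P^\perp MP^\perp=0\}\), where \(P=P_C\) and \(P^\perp=\I-P\). Equivalently, it is the image of \(\dd\Phi\), namely \(\{XH^\top+HX^\top\}\). The gradient of \(b_i\) is the orthogonal projection of \(e_ie_i^\top\) onto this space, which is the map \(M\mapsto M-P^\perp MP^\perp\). Write \(p_i=Pe_i\). Then
\[
  \grad b_i=e_ie_i^\top-(e_i-p_i)(e_i-p_i)^\top=e_ip_i^\top+p_ie_i^\top-p_ip_i^\top .
\]
We then compute the inner products \(\tr(\grad b_i\,\grad b_j)\) using \(\langle e_i,p_j\rangle=P_{ij}\) and \(\langle p_i,p_j\rangle=P_{ij}\). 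Expanding the nine terms should give \(2\delta_{ij}P_{ii}-P_{ij}^2\), that is, \((A_C)_{ij}\). A quick check for \(i=j\): the terms sum to \(2P_{ii}+2P_{ii}^2-4P_{ii}^2+P_{ii}^2=2P_{ii}-P_{ii}^2\). This gives \(J_b(C)=\sqrt{\det A_C}\). Carrying out this bookkeeping carefully is the main computational step.

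\emph{Spectral bounds.} Since \(A_C\) is a Gram matrix, it is positive semidefinite. For the upper bound, note that \(P\odot P\) is positive semidefinite by the Schur product theorem, and that the row sums of \(P\odot P\) equal \(\sum_j P_{ij}^2=(P^2)_{ii}=P_{ii}\). We therefore write
\[
  A_C=\bigl(\diag(\diag P)-P\odot P\bigr)+\diag(\diag P).
\]
The first summand is a Laplacian-type matrix with zero row sums and nonpositive off-diagonal entries, hence positive semidefinite. This gives the lower bound \(A_C\succeq\diag(\diag P)\), but it does not directly produce \(\I_n/n\), and the upper bound \(A_C\preceq\I_n\) still needs separate work.

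\emph{Hardest step and fallback.} The expected main obstacle is obtaining the bounds \(\I_n/n\preceq A_C\preceq\I_n\) uniformly, independently of \(P_{ii}\). The first approach to try is to use the quadratic form \(u^\top A_Cu=\|\sum_i u_i\grad b_i\|_F^2\). Setting \(D=\diag(u)\), this equals \(\|DP+PD-PDP\|_F^2=\|M-P^\perp DP^\perp\|_F^2\) with \(M=D\). This is the squared norm of the projection of \(D\) onto the tangent space, so it is at most \(\|D\|_F^2=\|u\|^2\), giving \(A_C\preceq\I_n\).

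For the lower bound we need \(\|P^\perp DP^\perp\|_F^2\le(1-1/n)\|D\|_F^2\). This should follow from bounding the projection of a diagonal matrix onto the normal space \(\{P^\perp NP^\perp\}\), which has dimension \(\binom{n-s+1}{2}\). The natural route is to write \(\|P^\perp DP^\perp\|_F^2=\sum_{ij}u_iu_jQ_{ij}^2\) with \(Q=P^\perp\), and then apply \(\lambda_{\max}(Q\odot Q)\le\max_i Q_{ii}\le 1-1/n\) via Schur's inequality. The subtle point is that \(Q_{ii}\le1-1/n\) must be justified. This would come from \(P_{ii}\ge1/n\), which in turn should follow from the unit diagonal of \(C\): we have \(P_{ii}\ge C_{ii}^2/(e_i^\top Ce_i\,\lambda_1)\ge 1/\lambda_1\ge1/n\), since \(\lambda_1\le\tr C=n\).

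The determinant bounds \(n^{-n/2}\le\sqrt{\det A_C}\le1\) then follow immediately from \(\I_n/n\preceq A_C\preceq\I_n\). Positivity of \(A_C\) shows that the differentials \(\dd b\), and hence also \(\dd\bar a\), are surjective, so \(\mathbf 1_n\) is a regular value of both constraint maps.
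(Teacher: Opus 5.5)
Your proposal is correct and follows the paper's proof almost step for step. You use the same horizontal gradients $2e_ix_i^\top$ with Gram matrix $4\I_n$, the same projected gradients $\Pi_C(e_ie_i^\top)$, and the same two identities $A_C=\diag(\diag P)+P\odot(\I_n-P)$ and $\I_n-A_C=(\I_n-P)\odot(\I_n-P)$; the paper shortens your nine-term expansion by using that $\Pi_C$ is a self-adjoint idempotent, which gives $\delta_{ij}-(P^\perp_{ij})^2$ at once, and it justifies both identities by the Schur product theorem where you use the Laplacian structure and the projection contraction. Note also that your first bound $A_C\succeq\diag(\diag P)$, combined with $P_{ii}\ge 1/\lambda_1\ge 1/n$, already gives $A_C\succeq\frac1n\I_n$; this is exactly the paper's route, so the fallback through Schur's eigenvalue inequality for $(\I_n-P)\odot(\I_n-P)$ is valid but unnecessary.
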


\begin{proof}
Define \(a\coloneq\bar a\circ\pi_s\), so that
\(a(Y)=\diag(YY^\top)\). Writing \(y_i\coloneq Y_{i\cdot}^\top\),
we have \(a_i(Y)=\|y_i\|^2\) and
\[
  \dd a_i(Y)[H]
  =2\Frob{y_i}{H_{i\cdot}^\top}
  =\Frob{2e_i y_i^\top}{H},
\]
where \(e_i\) is the \(i\)-th standard basis vector of \(\R^n\).
Thus the Frobenius gradient on \(\mathcal R_{n,s}\) is
\(\grad a_i(Y)=2e_i y_i^\top\). It is horizontal because
\[
  Y^\top\grad a_i(Y)=2y_i y_i^\top
\]
is symmetric. By the definition of the quotient metric, its
projection is the quotient gradient:
\[
  \grad\bar a_i([Y])
  =\dd\pi_s(Y)[2e_i y_i^\top].
\]
Thus, at a unit-row factor \(X\),
\[
  \Frob{2e_ix_i^\top}{2e_jx_j^\top}
  =4\delta_{ij}\|x_i\|^2=4\delta_{ij}.
\]
By \cref{eq:volume-section-constraint-gradient-jacobian},
\(J_{\bar a}([X])=\sqrt{\det(4\I_n)}=2^n\).

For the target, let \(P\coloneq P_C\) be the orthogonal projector onto
\(\im C\), and put \(P^\perp\coloneq \I_n-P\), the orthogonal
projector onto \(\ker C\). The tangent
space to \(\mathcal P_{n,s}\) and its Frobenius-orthogonal projection
are
\[
  \Tz{C}\mathcal P_{n,s}
  =\{S\in\Sym(n):P^\perp SP^\perp=0\},
  \qquad \Pi_C(S)=S-P^\perp SP^\perp.
\]
For \(S\in\Tz{C}\mathcal P_{n,s}\),
\[
  \dd b_i(C)[S]
  =S_{ii}
  =\Frob{e_ie_i^\top}{S}
  =\Frob{\Pi_C(e_ie_i^\top)}{S} + \Frob*{P^\perp e_i e_i^\top P^\perp}{S}
  =\Frob{\Pi_C(e_ie_i^\top)}{S},
\]
since \(S\) is tangent. Hence
\(\grad b_i(C)=\Pi_C(e_ie_i^\top)\).
The projection \(\Pi_C\) is self-adjoint and satisfies
\(\Pi_C^2=\Pi_C\). Using also
\(\Frob{e_ie_i^\top}{T}=T_{ii}\), we obtain
\[
  \begin{aligned}
    \Frob{\grad b_i(C)}{\grad b_j(C)}
      &=\Frob{e_ie_i^\top}{\Pi_C(e_je_j^\top)}\\
      &=\delta_{ij}-(P^\perp e_je_j^\top P^\perp)_{ii}\\
      &=\delta_{ij}-P^\perp_{ij}P^\perp_{ji}\\
      &=\delta_{ij}-(\delta_{ij}-P_{ij})^2\\
      &=2\delta_{ij}P_{ii}-P_{ij}^2.
  \end{aligned}
\]
Here we used the symmetry of \(P^\perp\), the identity
\(P^\perp_{ij}=\delta_{ij}-P_{ij}\), and
\(\delta_{ij}P_{ij}=\delta_{ij}P_{ii}\).
The gradient Gram matrix is therefore
\(2\diag(\diag P)-P\odot P=A_C\).
By \cref{eq:volume-section-constraint-gradient-jacobian}, \(J_b(C)=\sqrt{\det A_C}\).

Since \(P\) is an orthogonal projector, both \(P\) and
\(\I_n-P\) are positive semidefinite. Recall also that
\[
  P\odot\I_n=\diag(\diag P),
  \qquad \I_n\odot\I_n=\I_n.
\]
Starting from the definition of \(A_C\), the Schur product theorem
\cite[Theorem~7.5.3]{HornJohnsonMatrixAnalysis} therefore gives
\[
  \begin{aligned}
    A_C
      &=2\diag(\diag P)-P\odot P\\
      &=\diag(\diag P)+P\odot(\I_n-P)
        \succeq\diag(\diag P),\\
    \I_n-A_C
      &=\I_n-2\diag(\diag P)+P\odot P\\
      &=(\I_n-P)\odot(\I_n-P)\succeq0.
  \end{aligned}
\]
Thus \(\diag(\diag P)\preceq A_C\preceq\I_n\).
Recall that \(P=XE_X^{-1}X^\top\) and \(x_i=X^\top e_i\).
Since \(E_X\) is positive definite,
\(E_X^{-1}\succeq\lambda_1(E_X)^{-1}\I_s\), where
\(\lambda_1(E_X)\) is its largest eigenvalue.
Moreover, the unit-row constraints give
\[
  \lambda_1(E_X)\le\tr E_X
  =\sum_{j=1}^n\|x_j\|^2=n.
\]
Consequently,
\[
  \begin{aligned}
    P_{ii}
      =e_i^\top XE_X^{-1}X^\top e_i
        =x_i^\top E_X^{-1}x_i
      \ge\frac{\|x_i\|^2}{\lambda_1(E_X)}
        =\frac1{\lambda_1(E_X)}
        \ge\frac1n.
  \end{aligned}
\]
This proves \(n^{-1}\I_n\preceq A_C\preceq\I_n\). Taking
determinants gives the bounds on \(J_b\), and the positivity of the two
normal Jacobians proves regularity at every point of the specified levels.
\end{proof}

\begin{proof}[Proof of \cref{thm:quotient-versus-ambient-volume}]
The restriction of \(\bar\Phi\) to the unit-diagonal level is
\(\bar\Gamma_s\), with source metric \(\gQss\) and target metric
\(g_{\mathrm F}\). Applying
\cref{lem:jacobian-corresponding-level-sets,lem:unconstrained-gram-quotient-jacobian,lem:unit-diagonal-normal-jacobians}
gives
\[
  \begin{aligned}
    \frac{\dd\Vol_{\mathrm F}}
         {\dd\Vol_Q}(C)
      =J_{\bar\Phi}([X])\frac{J_b(C)}{J_{\bar a}([X])}
      =2^{\frac{s(2n-s+3)}4-n}
        \det(E_X)^{\frac{n-s+1}{2}}D_s(X)\sqrt{\det A_C}.
  \end{aligned}
\]
By \cref{eq:volume-section-frobenius-scaling}, multiplying this density
by \(2^{-m_s/2}\) gives the coordinate-Euclidean density. The exponent
simplifies as
\[
  \frac{s(2n-s+3)}4-n-\frac{m_s}{2}
  =\frac{s(2n-s+3)}4-n
   -\frac{n(s-1)}2+\frac{s(s-1)}4
  =\frac{s-n}{2}.
\]
Since \(\det(E_X)=\pdet(C)\) and \(D_s(X)=D_s(C)\),
this proves both formulas. Each density is positive and finite everywhere
on \(\Crk{s}{n}\), so the three measures have the same null sets as the Hausdorff measure.
\end{proof}

\subsubsection{Finiteness and behavior near lower rank}
\label{subsubsec:volume-section-finiteness-boundary}

To normalize the volume measures, we need finite, positive total masses.
A direct estimate of the Gram differential bounds ambient volume by
quotient volume. Compactness of the reduced quotient then gives
finiteness, while the exact density describes degeneration near the
lower-rank boundary.

\begin{proposition}[Metric and volume distortion]
\label{prop:ambient-quotient-volume-bounds}
For every \(X\in\Mss\), \(C=XX^\top\), and \(H\in\Hz{X}\),
\[
  \lambda_s(C)\|H\|_F^2
  \le\|\dd\Gamma_s(X)[H]\|_{\mathrm E}^2
  \le2\lambda_1(C)\|H\|_F^2.
\]
Consequently,
\[
  \lambda_s(C)^{m_s/2}
  \le\frac{\dd\Vol_{\mathrm E}}
           {\dd\Vol_Q}(C)
  \le(2\lambda_1(C))^{m_s/2}
  \le(2n)^{m_s/2}.
\]
\end{proposition}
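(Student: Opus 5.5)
The proof has two parts: a pointwise metric comparison for the Gram differential, and then its conversion into volume bounds.

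\emph{Metric comparison.} For $H\in\Hz{X}$, put $S\coloneq \dd\Gamma_s(X)[H]=XH^\top+HX^\top$. Tangency means $\diag(HX^\top)=0$, so $S$ has zero diagonal. Hence
\[
\|S\|_{\mathrm E}^2=\tfrac12\|S\|_F^2 .
\]
Next expand the Frobenius norm:
\[
\|S\|_F^2=2\tr(HX^\top XH^\top)+2\tr(XH^\top XH^\top)=2\tr(H^\top H E_X)+2\tr\bigl((X^\top H)^2\bigr).
\]
Horizontality says $M\coloneq X^\top H$ is symmetric, so $\tr(M^2)=\|M\|_F^2\ge0$. This gives the lower bound
\[
\|S\|_{\mathrm E}^2\ge\tr(H^\top HE_X)\ge\lambda_s\|H\|_F^2 .
\]
For the upper bound, use $\|X^\top H\|_F^2=\tr(H^\top XX^\top H)\le\lambda_1\|H\|_F^2$, since $XX^\top$ has the same nonzero eigenvalues as $E_X$. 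Together with $\tr(H^\top HE_X)\le\lambda_1\|H\|_F^2$, this gives
\[
\|S\|_{\mathrm E}^2\le2\lambda_1\|H\|_F^2 .
\]

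\emph{Volume bounds.} By \cref{eq:volume-section-quotient-metric}, $\|H\|_F^2=g_Q(U,U)$ when $U=\dd\Gamma_s(X)[H]$. So the inequalities above read
\[
\lambda_s\, g_Q\preceq g_{\mathrm E}\preceq 2\lambda_1\, g_Q
\]
as quadratic forms on $\Tz{C}\Crk{s}{n}$. Take a $g_Q$-orthonormal basis $U_1,\dots,U_{m_s}$. The Gram matrix $G\coloneq (g_{\mathrm E}(U_\alpha,U_\beta))$ then satisfies $\lambda_s\I\preceq G\preceq2\lambda_1\I$. By monotonicity of the determinant on positive definite matrices,
\[
\lambda_s^{m_s}\le\det G\le(2\lambda_1)^{m_s}.
\]
By the formula for $w(C)$ given before \cref{thm:quotient-versus-ambient-volume}, $w(C)=\sqrt{\det G}$, which yields the two volume bounds. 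Finally, the unit-row constraint gives $\lambda_1\le\tr E_X=\|X\|_F^2=n$, so $(2\lambda_1)^{m_s/2}\le(2n)^{m_s/2}$.

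There is no serious obstacle. The only step needing care is the sign of the cross term $\tr((X^\top H)^2)$. It is nonnegative only because horizontality makes $X^\top H$ symmetric. For a general tangent vector this term can be negative, so the lower bound genuinely uses $H\in\Hz{X}$.
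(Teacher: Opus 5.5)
Your proposal is correct and follows the paper's proof essentially step for step. Both arguments use the zero diagonal to get $\|\dd\Gamma_s(X)[H]\|_{\mathrm E}^2=\tr(E_XH^\top H)+\|X^\top H\|_F^2$, with horizontality making the cross term a nonnegative square. Both then apply the same spectral bounds, and finish with the same $g_Q$-orthonormal-basis determinant argument for $w(C)$ together with $\lambda_1(C)\le\tr C=n$.
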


\begin{proof}
Let \(H\in\Hz{X}\) be the horizontal lift of
\(U\in\Tz{C}\Crk{s}{n}\), so that
\(U=\dd\Gamma_s(X)[H]\) and \(g_{Q,C}(U,U)=\|H\|_F^2\).
By \cref{lem:horizontality-criterion}, \(X^\top H\) is symmetric.
Since \(U\) has zero diagonal,
\[
  \begin{aligned}
    g_{\mathrm E,C}(U,U)
      &=\frac12\|XH^\top+HX^\top\|_F^2\\
      &=\tr(E_XH^\top H)+\tr\bigl((X^\top H)^2\bigr)\\
      &=\tr(E_XH^\top H)+\|X^\top H\|_F^2.
  \end{aligned}
\]
The spectral bounds for \(E_X\) and \(C\) give
\[
  \begin{aligned}
    \lambda_s(C)\|H\|_F^2
      &\le\tr(E_XH^\top H)
       \le\lambda_1(C)\|H\|_F^2,\\
    0&\le\|X^\top H\|_F^2
       =\tr(H^\top CH)
       \le\lambda_1(C)\|H\|_F^2.
  \end{aligned}
\]
Adding these inequalities proves
\[
  \lambda_s(C)g_{Q,C}(U,U)
  \le g_{\mathrm E,C}(U,U)
  \le 2\lambda_1(C)g_{Q,C}(U,U).
\]

Now choose a \(g_Q\)-orthonormal basis
\(U_1,\ldots,U_{m_s}\) and let
\(G_{\mathrm E}
\coloneq (g_{\mathrm E,C}(U_\alpha,U_\beta))_{\alpha,\beta}\).
The metric bounds place all its eigenvalues in
\([\lambda_s(C),2\lambda_1(C)]\).
The basis spans a tangent parallelepiped of \(g_Q\)-volume \(1\)
and \(g_{\mathrm E}\)-volume \(\sqrt{\det G_{\mathrm E}}\).
Their ratio is \(w(C)\), so
\[
  \lambda_s(C)^{m_s/2}
  \le w(C)=\sqrt{\det G_{\mathrm E}}
  \le \bigl(2\lambda_1(C)\bigr)^{m_s/2}.
\]
Finally, \(C\) is positive semidefinite with unit diagonal, so
\(\lambda_1(C)\le\tr C=n\).
\end{proof}

\begin{lemma}[Finiteness of stratum volumes]
\label{lem:finite-volume-comparison-section}
\label{lem:finite-principal-quotient-volume}
For \(n\ge s\ge2\), the stratum \(\Crk{s}{n}\) has finite, positive
quotient, coordinate-Euclidean, and matrix-Frobenius volumes. The union
of the lower-rank strata in its closure has zero \(m_s\)-dimensional
Hausdorff measure for each of the distances specified in
\cref{eq:volume-section-hausdorff-identifications}, using \(\dQ{s}\) on
the reduced quotient \(Q_s\).
\end{lemma}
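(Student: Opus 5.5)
We first establish finiteness and positivity of the quotient volume. The fiber-integration identity \cref{eq:volume-section-fiber-integration} with \(f\equiv1\) gives
\[
  \Vol(M_s)\ \ge\ \Vol_{M_s}(\Mss)=\int_{\Crk{s}{n}}v_s(C)\,\dd\Vol_Q(C).
\]
By \cref{lem:volume-section-orbit-sylvester}, \(v_s(C)=2^{-\binom s2/2}D_s(C)\Vol(O(s))\). The factor \(D_s\) can tend to zero near the boundary, so this lower bound alone does not control \(\Vol_Q\); we therefore do not use the raw weight.

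Instead, we use the compact length space \((Q_s,\dQ{s})\) of \cref{prop:reduced-quotient-global-metric-geometry}. The orbit projection \(\pi_s:(M_s,\dM{s})\to(Q_s,\dQ{s})\) is \(1\)-Lipschitz and surjective. Lipschitz maps do not increase \(m_s\)-dimensional Hausdorff measure up to the normalization constant \(1\), so \(\mathcal H^{m_s}_{\dQ{s}}(Q_s)\le\mathcal H^{m_s}_{\dM{s}}(M_s)\). However, \(m_s<\dim M_s=n(s-1)\), so the right-hand side is infinite and this bound is useless.

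We therefore argue locally with a coarea inequality. Cover \(Q_s\) by the images of finitely many slices. Concretely, we use the Cholesky--Schur charts of \cref{lem:cholesky-schur-charts} on \(\Crle{s}{n}\). Each chart closure is parametrized Lipschitz-continuously by the compact set \(\overline{\Crk{s}{s}}\times(\mathbb S^{s-1})^{n-s}\) of dimension \(m_s\), through the map \((A,Y)\mapsto[(L(A)^\top,Y^\top)^\top]\), extended by the Cholesky factor of a possibly singular \(A\). To get global Lipschitz control, replace the Cholesky map by the continuous factor map \((L,Y)\mapsto[(L^\top,Y^\top)^\top]\), where \(L\) ranges over the compact set of lower-triangular matrices with unit rows. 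This map from a compact \(m_s\)-dimensional Lipschitz manifold-with-corners \(\prod_i\overline{\mathbb S^{i-1}_+}\times(\mathbb S^{s-1})^{n-s}\) into \(M_s\), composed with \(\pi_s\), is Lipschitz. Its image is the whole of \(Q_s\), since every factor can be rotated into lower-triangular-on-\(I\) form after permuting rows, and finitely many row permutations suffice. Lipschitz images have finite \(\mathcal H^{m_s}\), so \(\Vol_Q(\Crk{s}{n})=\mathcal H^{m_s}_{\dQ{s}}(\Qss)<\infty\).

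Positivity is immediate because \(\Crk{s}{n}\) is a nonempty open \(m_s\)-manifold with a smooth Riemannian volume. Then \cref{prop:ambient-quotient-volume-bounds} gives \(\Vol_{\mathrm E}\le(2n)^{m_s/2}\Vol_Q<\infty\). Positivity of \(\Vol_{\mathrm E}\) follows from the positive density in \cref{thm:quotient-versus-ambient-volume}, and \(\Vol_{\mathrm F}=2^{m_s/2}\Vol_{\mathrm E}\).

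For the null-set claim, the lower-rank part of \(Q_s\) is the image under the same Lipschitz parametrizations of the boundary faces where some \(L_{ii}=0\), together with the lower-rank pieces. More cleanly, \(Q_s^{\le s-1}=\Xi_{s-1\to s}(Q_{s-1})\), and by \cref{lem:stratumwise-compatibility-padding} it is a finite union of strata \(\Qrk{s}{j}\) of dimension \(m_j<m_s\). Each of these is covered by Lipschitz images of compact \(m_j\)-dimensional parameter spaces, built by the same triangular construction with \(j\) columns, so it has \(\mathcal H^{m_s}=0\). Transport to \(\|\cdot\|_{\mathrm E}\) and \(\|\cdot\|_F\) uses the Lipschitz property of the Gram map \(X\mapsto XX^\top\) on the compact set \(M_s\), composed with the same parametrizations.

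The main obstacle is the finiteness step, specifically verifying that the triangular parametrization is Lipschitz into \((Q_s,\dQ{s})\) and surjects onto all of \(Q_s\), including near the boundary. We handle this by working upstairs: the parametrization is smooth into \(M_s\), and \(\pi_s\) is \(1\)-Lipschitz.
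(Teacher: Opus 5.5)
Your proof is correct, but the key finiteness step takes a different route from the paper.

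\textbf{The paper's route.} The paper obtains $\mathcal H^{m_s}_{\dQ{s}}(Q_s)<\infty$ from a general fact: a compact $m_s$-dimensional Alexandrov space has finite top-dimensional Hausdorff measure. This relies on the curvature bound of \cref{prop:reduced-quotient-global-metric-geometry}. For the null-set claim, it notes two things. First, each rank-$j$ stratum, with its intrinsic Riemannian distance, includes $1$-Lipschitz into $(Q_s,\dQ{s})$, so it has Hausdorff dimension at most $m_j<m_s$. Second, in ambient coordinates each such stratum is a smooth $m_j$-dimensional submanifold.

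\textbf{Your route.} You build an explicit surjective $1$-Lipschitz parametrization of all of $Q_s$ by the compact $m_s$-dimensional set $\prod_{i=1}^s\overline{\mathbb S^{i-1}_+}\times(\mathbb S^{s-1})^{n-s}$.
\begin{itemize}
\item Surjectivity is a QR decomposition of $X_I^\top$. This exists for every $s\times s$ block, singular or not, so no row permutations are actually needed.
\item Padding $\mathbb S^{i-1}\hookrightarrow\mathbb S^{s-1}$ preserves great-circle distance, so the parameter set embeds isometrically in $(M_s,\dM{s})$. Hence the composite with $\pi_s$ is $1$-Lipschitz.
\end{itemize}

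\textbf{Comparison.} Your argument is more elementary and self-contained, since it needs no Alexandrov volume theory. It also gives an explicit bound,
$\Vol_Q(\Crk{s}{n})\le\prod_{i=2}^{s}\tfrac12\Vol(\mathbb S^{i-1})\cdot\Vol(\mathbb S^{s-1})^{n-s}$; for $n=s=2$ this gives $\pi$ against the true value $\pi/\sqrt2$. It treats the quotient and ambient distances uniformly in the null-set claim: you compose the $j$-column parametrization, of dimension $m_j$, with either $\pi_s$ or the Lipschitz Gram map. The paper's version is shorter because it reuses the Alexandrov structure it has already established.

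\textbf{Two wording fixes.}
\begin{itemize}
\item The Cholesky extension to singular $A$ is not Lipschitz there, so that detour should be deleted. You do replace it correctly.
\item The rank-deficient locus is \emph{contained in}, not equal to, the image of the faces $\{L_{ii}=0\}$, because the rows of $Y$ can restore full rank. Either this containment (the faces have dimension $m_s-1$) or your cleaner $j$-column argument suffices.
\end{itemize}
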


\begin{proof}
By \cref{prop:reduced-quotient-global-metric-geometry}, the reduced
quotient \((Q_s,\dQ{s})\) is a compact Alexandrov space of dimension
\(m_s\). Its top-dimensional Hausdorff measure is finite
\cite{BuragoBuragoIvanov2001}. On the principal stratum this normalized
Hausdorff measure agrees with the quotient Riemannian volume, by
\cref{eq:volume-section-hausdorff-identifications}. The volume is
positive because the principal stratum is a nonempty smooth manifold.
Integrating \cref{prop:ambient-quotient-volume-bounds} gives
\[
  \Vol_{\mathrm E}(\Crk{s}{n})
  \le(2n)^{m_s/2}\Vol_Q(\Crk{s}{n})<\infty.
\]
Since \(w(C)>0\) throughout the stratum, its coordinate-Euclidean
volume is also positive. The identity
\(\Vol_{\mathrm F}=2^{m_s/2}\Vol_{\mathrm E}\)
gives the same conclusions for Frobenius volume.

For \(1\le j<s\), let \(m_j=n(j-1)-\binom j2\), in accordance
with the earlier stratum-dimension notation. Then
\[
  m_s-m_j
  =(s-j)\left(n-\frac{s+j-1}{2}\right)>0.
\]
For \(j\ge2\), the inclusion of the smooth rank-\(j\) quotient,
equipped with its intrinsic Riemannian distance, into \((Q_s,\dQ{s})\)
is \(1\)-Lipschitz. Indeed, the stratum metrics agree by
\cref{cor:fixed-rank-reduction-correlation}, and restricting the
admissible curves cannot decrease distance.
Its image consequently has Hausdorff dimension at most \(m_j\),
and hence zero \(\mathcal H_{\dQ{s}}^{m_s}\)-measure. The rank-one
piece is finite and has the same zero-measure property. In ambient
coordinates the rank-\(j\) piece is a smooth submanifold of dimension
\(m_j\), so it also has zero \(m_s\)-dimensional Hausdorff measure
for either ambient distance. Taking the finite union over \(j<s\)
proves the claim.
\end{proof}

The laws in \cref{eq:volume-section-normalized-laws} are therefore
well-defined. They may also be viewed as laws on \(\Crle{s}{n}\)
by assigning zero mass to the lower-rank strata. Those strata still belong
to the topological support: the principal stratum is dense in the closure,
and every nonempty open subset of it has positive volume. Thus a
top-dimensional uniform law on the rank-bounded set has rank exactly
\(s\) almost surely; it does not prescribe a mixture of the intrinsic
volume laws on different ranks.

\begin{corollary}[Normalized density and boundary decay]
\label{cor:normalized-ambient-quotient-density}
For \(C\in\Crk{s}{n}\), set
\[
  W_{n,s}(C)\coloneq 
  \pdet(C)^{\frac{n-s+1}{2}}
  D_s(C)\sqrt{\det A_C}.
\]
Then
\[
  \frac{\dd\mu_{n,s}^{\mathrm E}}{\dd\mu_{n,s}^{Q}}(C)
  =\frac{W_{n,s}(C)}
  {\displaystyle\int_{\Crk{s}{n}}W_{n,s}\,\dd\mu_{n,s}^{Q}}.
\]
Moreover, for every \(\delta>0\), there exist positive constants
\(c_-,c_+\), depending only on \(n,s,\delta\), such that
\[
  c_-\,\lambda_s(C)^{\frac{n-s+1}{2}}
  \le
  \frac{\dd\Vol_{\mathrm E}}
       {\dd\Vol_Q}(C)
  \le
  c_+\,\lambda_s(C)^{\frac{n-s+1}{2}}
\]
whenever \(\lambda_{s-1}(C)\ge\delta\).
The same bounds hold for the normalized density
\(\dd\mu_{n,s}^{\mathrm E}/\dd\mu_{n,s}^{Q}\), with possibly
different constants. In particular, both densities tend to zero
when \(\lambda_s(C)\to0\) while \(\lambda_{s-1}(C)\ge\delta\).
\end{corollary}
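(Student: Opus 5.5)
The normalized density formula follows directly from \cref{thm:quotient-versus-ambient-volume} and the definitions in \cref{eq:volume-section-normalized-laws}. By \cref{eq:volume-section-exact-euclidean-density}, \(\dd\Vol_{\mathrm E}=2^{(s-n)/2}W_{n,s}\,\dd\Vol_Q\). Dividing by the total masses, which are finite and positive by \cref{lem:finite-volume-comparison-section}, gives
\[
  \frac{\dd\mu^{\mathrm E}_{n,s}}{\dd\mu^Q_{n,s}}
  =\frac{\Vol_Q(\Crk{s}{n})}{\Vol_{\mathrm E}(\Crk{s}{n})}\,2^{(s-n)/2}W_{n,s}.
\]
The constant equals \(\bigl(\int W_{n,s}\,\dd\mu^Q_{n,s}\bigr)^{-1}\), because
\[
  \Vol_{\mathrm E}(\Crk{s}{n})=2^{(s-n)/2}\Vol_Q(\Crk{s}{n})\int W_{n,s}\,\dd\mu^Q_{n,s}.
\]
The normalized density is therefore a fixed positive multiple of \(\dd\Vol_{\mathrm E}/\dd\Vol_Q\). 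Once the two-sided bounds are proved for the unnormalized density, they transfer with rescaled constants depending only on \(n,s,\delta\), since the normalizing integral depends only on \((n,s)\).

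For the two-sided bounds I would estimate each factor of \(W_{n,s}\) separately, using the eigenvalue convention \cref{eq:volume-section-eigenvalue-convention}. Every positive eigenvalue satisfies \(\lambda_a\le\tr C=n\). Under \(\lambda_{s-1}\ge\delta\), the eigenvalues \(\lambda_1,\ldots,\lambda_{s-1}\) all lie in \([\delta,n]\).

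\emph{Pseudo-determinant.} Writing \(\pdet(C)=\lambda_s\prod_{a<s}\lambda_a\), we get
\[
  \delta^{s-1}\lambda_s\le\pdet(C)\le n^{s-1}\lambda_s.
\]
Raising to the power \((n-s+1)/2\) produces exactly the factor \(\lambda_s^{(n-s+1)/2}\) in the claim.

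\emph{Sylvester determinant.} Each pair sum satisfies \(\lambda_a+\lambda_b\le 2n\). For \(a<b\) at least one index is at most \(s-1\), so \(\lambda_a+\lambda_b\ge\delta\). Hence \(D_s(C)\) lies in \(\bigl[\delta^{\binom s2/2},(2n)^{\binom s2/2}\bigr]\). This uses the pairwise-sum structure noted in \cref{rem:curvature-degeneration-lower-rank}.

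\emph{Constraint factor.} By \cref{lem:unit-diagonal-normal-jacobians}, \(n^{-n/2}\le\sqrt{\det A_C}\le1\) uniformly on the stratum, with no assumption on the spectrum.

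Multiplying the three estimates and the constant \(2^{(s-n)/2}\) gives explicit \(c_\pm\). For example,
\[
  c_-=2^{(s-n)/2}\,\delta^{(s-1)(n-s+1)/2}\,\delta^{\binom s2/2}\,n^{-n/2}.
\]
The decay statement is then immediate, since \(\lambda_s^{(n-s+1)/2}\to0\) while \(n-s+1\ge1\).

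I expect no real obstacle here. The only points needing care are the lower bound on \(D_s\), which must cover the pair \((s-1,s)\) where one eigenvalue may be tiny, and the observation that the \(A_C\) bound is uniform and holds without any spectral gap.
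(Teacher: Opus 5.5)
Your proposal is correct and follows the paper's proof essentially step by step. You normalize the exact density from \cref{thm:quotient-versus-ambient-volume} using finiteness and positivity of the total volumes, then bound $\pdet(C)$, $D_s(C)$ and $\sqrt{\det A_C}$ factor by factor under $\lambda_{s-1}(C)\ge\delta$; the only cosmetic difference is your pair-sum bound $\lambda_a+\lambda_b\le 2n$, where the paper uses the sharper $\lambda_a+\lambda_b\le\tr C=n$, and either suffices.
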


\begin{proof}
By \cref{thm:quotient-versus-ambient-volume},
\[
  \dd\Vol_{\mathrm E}
  =2^{(s-n)/2}W_{n,s}\,\dd\Vol_Q.
\]
Both total volumes are finite and positive by
\cref{lem:finite-volume-comparison-section}. Normalizing the two
measures gives the first formula, with the constant factor
\(2^{(s-n)/2}\) canceling.

Suppose now that \(\lambda_{s-1}(C)\ge\delta\).
The first \(s-1\) positive eigenvalues lie in \([\delta,n]\).
Every pair \(\lambda_a(C)+\lambda_b(C)\), with \(a<b\),
therefore contains an eigenvalue at least \(\delta\), while its
sum is at most \(\tr C=n\). Consequently,
\[
  \begin{aligned}
    \delta^{s-1}\lambda_s(C)
      &\le\pdet(C)
       \le n^{s-1}\lambda_s(C),\\
    \delta^{\binom{s}{2}/2}
      &\le D_s(C)
       \le n^{\binom{s}{2}/2}.
  \end{aligned}
\]
Moreover, \cref{lem:unit-diagonal-normal-jacobians} gives
\(n^{-n/2}\le\sqrt{\det A_C}\le1\).
Substituting these bounds into the density formula proves the
claimed inequalities. Normalization multiplies the density by
a positive constant depending only on \(n,s\), so the same
bounds hold, with adjusted constants, for the normalized density.
\end{proof}

The density \(w\) is therefore globally bounded above, but tends to
zero when one positive eigenvalue vanishes and the others stay bounded
away from zero. Its reciprocal
\[
  w(C)^{-1}
  =\frac{\dd\Vol_Q}{\dd\Vol_{\mathrm E}}(C)
\]
is unbounded along such approaches to rank \(s-1\). On each set
\(\{C\in\Crk{s}{n}:\lambda_s(C)\ge\varepsilon\}\),
\(\varepsilon>0\), the two volume measures bound one another by
positive constant factors. No such comparison holds uniformly on the
whole stratum, despite their mutual absolute continuity there.

\subsection{Uniform laws and their lifts}
\label{subsec:volume-section-uniform-laws-lifts}

We now combine the orbit-volume correction with the comparison of
metrics to obtain probability laws on factors. Recall from
\cref{def:raw-gram-law} the reference law and its raw Gram pushforward:
\[
  \mathcal U_{M_s}
    =\frac{\Vol_{M_s}}{\Vol(M_s)},
  \qquad
  \eta_{n,s}=(\Gamma_s)_\#\mathcal U_{M_s}.
\]
Here \(\Vol_{M_s}\) is the volume of the restricted
Frobenius metric \(g_{M_s}\), so \(\mathcal U_{M_s}\) gives
independent uniform rows on \(\mathbb S^{s-1}\).
Recall from \cref{def:normalized-volume-laws} that
\(\mu_{n,s}^{Q}\) and \(\mu_{n,s}^{\mathrm E}\) are the normalized
quotient and ambient volume laws, respectively. The raw law \(\eta_{n,s}\) instead
has density \(\kappa_{n,s}D_s\) with respect to quotient volume.

By a \emph{lift} of a target law \(\mu\), we mean a probability law
\(\nu\) on \(M_s\) satisfying \((\Gamma_s)_\#\nu=\mu\).
All factor densities below are taken relative to
\(\mathcal U_{M_s}\) and are concentrated on \(\Mss\).

To cancel the factor \(D_s\) in
\cref{eq:volume-section-raw-gram-density}, reweight
\(\mathcal U_{M_s}\) by \(D_s(X)^{-1}\). This is the inverse
orbit-volume weight, up to a constant independent of \(X\).
It assigns less weight to factors belonging to larger orbits. Set
\begin{equation}
  \label{eq:quotient-volume-lifted-density}
  \begin{aligned}
    \dd\nu_{n,s}(X)
      &\coloneq Z_{n,s}^{-1}
      \ind{\Mss}(X)
        D_s(X)^{-1}\dd\mathcal U_{M_s}(X),\\
    Z_{n,s}
      &\coloneq \int_{\Mss}D_s(X)^{-1}\dd\mathcal U_{M_s}(X),
  \end{aligned}
\end{equation}

Note that as the rank deficient strata are of $\mathcal U_{M_s}$-measure 0, the $\ind{\Mss}(X)$ term is superfluous.
Furthermore, $\nu_{n,s}$ is a probability measure on $M_s$.
Since \(D_s(XR)=D_s(X)\) for \(R\in O(s)\), and the reference law $\mathcal U_{M_s}$
is \(O(s)\)-invariant, the $\nu_{n,s}$ is also \(O(s)\)-invariant.

\begin{theorem}[Quotient-volume lift]
\label{thm:uniform-rank-r-sampler}
For \(n\ge s\ge2\),
\[
  (\Gamma_s)_\#\bigl(\ind{\Mss}D_s^{-1}\mathcal U_{M_s}\bigr)
  =\kappa_{n,s}\Vol_Q,
\]
and
\[
  Z_{n,s}=\kappa_{n,s}\Vol_Q(\Crk{s}{n})\in(0,\infty),
  \qquad (\Gamma_s)_\#\nu_{n,s}=\mu_{n,s}^{Q}.
\]
\end{theorem}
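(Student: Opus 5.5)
The first identity is \cref{prop:weighted-quotient-volume-transfer} with the choice \(h\equiv1\). Explicitly, for every nonnegative measurable test function \(\varphi\) on \(\Crk{s}{n}\), the fiber-integration formula \cref{eq:volume-section-fiber-integration} and the orbit-volume formula \cref{eq:volume-section-orbit-volume} give
\[
  \int_{\Mss}\varphi(\Gamma_s(X))\,D_s(X)^{-1}\,\dd\mathcal U_{M_s}(X)
  =\frac{1}{\Vol(M_s)}\int_{\Crk{s}{n}}\varphi(C)\,\frac{v_s(C)}{D_s(C)}\,\dd\Vol_Q(C)
  =\kappa_{n,s}\int_{\Crk{s}{n}}\varphi\,\dd\Vol_Q .
\]
The ratio \(v_s/D_s\) is the constant \(2^{-\binom s2/2}\Vol(O(s))\), which is why the weight \(D_s^{-1}\) exactly cancels the fiber volume. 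This is the pushforward identity \((\Gamma_s)_\#\bigl(\ind{\Mss}D_s^{-1}\mathcal U_{M_s}\bigr)=\kappa_{n,s}\Vol_Q\). Both measures are Borel on \(\Crk{s}{n}\), so equality of integrals against all nonnegative test functions suffices.

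Next, take \(\varphi\equiv1\). The left side becomes \(Z_{n,s}\) by definition \cref{eq:quotient-volume-lifted-density}, and the right side becomes \(\kappa_{n,s}\Vol_Q(\Crk{s}{n})\). The constant \(\kappa_{n,s}\) is positive and finite, since \(\Vol(O(s))\) and \(\Vol(M_s)\) are positive and finite for the compact manifolds involved. By \cref{lem:finite-volume-comparison-section}, \(\Vol_Q(\Crk{s}{n})\in(0,\infty)\). Hence \(Z_{n,s}\in(0,\infty)\), so \(\nu_{n,s}\) is a well-defined probability measure. This shows a priori that \(D_s^{-1}\) is \(\mathcal U_{M_s}\)-integrable, even though \(D_s\) can degenerate near rank \(\le s-2\). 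I expect this integrability to be the only real subtlety, and the coarea identity handles it with no separate estimate.

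Finally, divide the pushforward identity by \(Z_{n,s}\):
\[
  (\Gamma_s)_\#\nu_{n,s}=\frac{\kappa_{n,s}}{Z_{n,s}}\Vol_Q=\frac{\Vol_Q}{\Vol_Q(\Crk{s}{n})}=\mu_{n,s}^{Q}.
\]
Since \(\mathcal U_{M_s}\) gives zero mass to the rank-deficient locus, the law \(\nu_{n,s}\) can equivalently be viewed on \(M_s\) with pushforward on \(\Crle{s}{n}\) assigning zero mass to lower ranks, consistent with \cref{def:normalized-volume-laws}.
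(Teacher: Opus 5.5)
Your proposal is correct and follows the paper's proof: you take \(h\equiv1\) in the weighted quotient-volume transfer proposition, evaluate both sides on \(\Crk{s}{n}\) to identify \(Z_{n,s}=\kappa_{n,s}\Vol_Q(\Crk{s}{n})\), use the finiteness lemma to get \(Z_{n,s}\in(0,\infty)\), and then normalize. Your extra details, unpacking the cancellation \(v_s/D_s=2^{-\binom s2/2}\Vol(O(s))\) and noting that the integrability of \(D_s^{-1}\) comes directly from this identity, are accurate.
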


\begin{proof}
Take \(h\equiv1\) in \cref{prop:weighted-quotient-volume-transfer}.
Evaluating both sides on \(\Crk{s}{n}\) gives the identity for
\(Z_{n,s}\). It is positive and finite by
\cref{lem:finite-principal-quotient-volume}. Dividing by \(Z_{n,s}\)
gives the normalized pushforward identity.
\end{proof}

The preceding results give explicit lifts of the two uniform laws,
while the raw Gram law is obtained directly from
\(\mathcal U_{M_s}\). We now summarize these lifts and the densities
of \(\eta_{n,s}\) with respect to the two uniform laws. 
See \cref{fig:volume-three-natural-laws} for an illustration.

\begin{corollary}[Lifted densities for the three natural laws]
\label{cor:ambient-euclidean-volume-lift}
The following source densities, given up to normalization relative
to \(\mathcal U_{M_s}\) on \(\Mss\), yield the indicated target
laws under \(\Gamma_s\):
\[
  \begin{array}{c|c}
    \text{target law on }\Crk{s}{n}
      &\text{unnormalized source density at }X\\ \hline
    \mu_{n,s}^{Q}
      &D_s(X)^{-1}\\[1ex]
    \mu_{n,s}^{\mathrm E}
      &\det(E_X)^{\frac{n-s+1}{2}}
       \sqrt{\det A_{\Gamma_s(X)}}\\[1ex]
    \eta_{n,s}
      &1.
  \end{array}
\]
The densities of the raw Gram law with respect to the two uniform
laws are
\[
  \frac{\dd\eta_{n,s}}{\dd\mu_{n,s}^{Q}}(C)
  =\frac{D_s(C)}
         {\displaystyle\int_{\Crk{s}{n}}D_s\,\dd\mu_{n,s}^{Q}},
\]
and
\[
  \frac{\dd\eta_{n,s}}{\dd\mu_{n,s}^{\mathrm E}}(C)
  =\frac{R_{n,s}(C)}
         {\displaystyle\int_{\Crk{s}{n}}
          R_{n,s}\,\dd\mu_{n,s}^{\mathrm E}},
  \qquad
  R_{n,s}(C)\coloneq 
  \pdet(C)^{-\frac{n-s+1}{2}}
  \det(A_C)^{-1/2}.
\]
\end{corollary}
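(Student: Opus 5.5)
The plan is to derive every row of the table, and both densities of \(\eta_{n,s}\), from the weighted transfer rule \cref{prop:weighted-quotient-volume-transfer}, using suitable choices of the weight \(h\). That proposition says that reweighting \(\mathcal U_{M_s}\) by \(h(\Gamma_s(X))/D_s(X)\) pushes forward to \(\kappa_{n,s}h\,\Vol_Q\).

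\emph{Quotient-uniform row.} Take \(h\equiv1\). This case is exactly \cref{thm:uniform-rank-r-sampler}.

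\emph{Raw Gram row.} The density \(1\) is the definition of \(\eta_{n,s}\).

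\emph{Ambient-uniform row.} Take
\[
  h(C)=w(C)=2^{(s-n)/2}\pdet(C)^{\frac{n-s+1}2}D_s(C)\sqrt{\det A_C},
\]
as given by \cref{thm:quotient-versus-ambient-volume}. Then \(h/D_s\) evaluated at \(\Gamma_s(X)\) equals a constant times \(\det(E_X)^{\frac{n-s+1}2}\sqrt{\det A_{\Gamma_s(X)}}\), because \(\pdet(XX^\top)=\det E_X\) and \(D_s(X)=D_s(XX^\top)\). The pushforward is therefore \(\kappa_{n,s}2^{(s-n)/2}\cdot w\,\Vol_Q=\kappa_{n,s}2^{(s-n)/2}\Vol_{\mathrm E}\).

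The normalizing constant must be finite and positive. By \cref{lem:unit-diagonal-normal-jacobians} we have \(\sqrt{\det A}\le1\), and \(\det E_X\le n^s\), so the source density is bounded. It is also positive on \(\Mss\), which has full measure. Alternatively, the total mass equals \(\kappa_{n,s}2^{(s-n)/2}\Vol_{\mathrm E}(\Crk sn)\), which lies in \((0,\infty)\) by \cref{lem:finite-volume-comparison-section}. Dividing by this mass gives \(\mu^{\mathrm E}_{n,s}\).

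\emph{Densities of \(\eta_{n,s}\).} Use \cref{eq:volume-section-raw-gram-density}, which gives \(\dd\eta_{n,s}=\kappa_{n,s}D_s\,\dd\Vol_Q\).
\begin{itemize}
  \item Dividing by \(\dd\mu^Q_{n,s}=\dd\Vol_Q/\Vol_Q(\Crk sn)\) shows that \(\dd\eta_{n,s}/\dd\mu^Q_{n,s}\) is proportional to \(D_s\). Since \(\eta_{n,s}\) is a probability measure, the constant is fixed by integrating against \(\mu^Q_{n,s}\); this yields the stated formula.
  \item For the ambient law, write \(\dd\Vol_Q=w^{-1}\dd\Vol_{\mathrm E}\). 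This is legitimate because \(w>0\) everywhere on the stratum, so the measures are mutually absolutely continuous by \cref{thm:quotient-versus-ambient-volume}. Then \(\dd\eta_{n,s}/\dd\mu^{\mathrm E}_{n,s}\) is proportional to \(D_s/w\), which is a constant times \(\pdet(C)^{-\frac{n-s+1}2}\det(A_C)^{-1/2}=R_{n,s}(C)\). Normalization again fixes the constant. Integrability of \(R_{n,s}\) against \(\mu^{\mathrm E}_{n,s}\) holds automatically, because the integral equals a constant times \(\eta_{n,s}(\Crk sn)=1\).
\end{itemize}

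\emph{Main obstacle.} The only delicate point is bookkeeping. One must confirm that every quantity is constant on \(O(s)\)-fibres, so that \(h\circ\Gamma_s\) makes sense, and that the lower-rank strata carry no mass under either law. The first follows from the orthogonal invariance of \(E_X\), up to conjugation, and of \(P_C\). The second follows from the analytic zero-set remark preceding \cref{def:raw-gram-law} together with \cref{lem:finite-volume-comparison-section}.
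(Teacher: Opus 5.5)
Your proposal is correct and follows the paper's proof essentially step by step. Both use the weighted transfer rule with \(h\equiv1\) and \(h=w\) for the two uniform lifts, divide the coarea density \(\dd\eta_{n,s}=\kappa_{n,s}D_s\,\dd\Vol_Q\) by \(w\) to get the raw Gram densities, and fix constants by total mass. One slip, harmless after normalization: since \(w(\Gamma_s(X))/D_s(X)=2^{(s-n)/2}\det(E_X)^{\frac{n-s+1}{2}}\sqrt{\det A_{\Gamma_s(X)}}\), the unnormalized ambient density pushes forward to \(2^{(n-s)/2}\kappa_{n,s}\Vol_{\mathrm E}\), not \(2^{(s-n)/2}\kappa_{n,s}\Vol_{\mathrm E}\).
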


\begin{figure}[H]
  \centering
  \begin{subfigure}[t]{.31\linewidth}
    \centering
    \includegraphics[width=\linewidth]{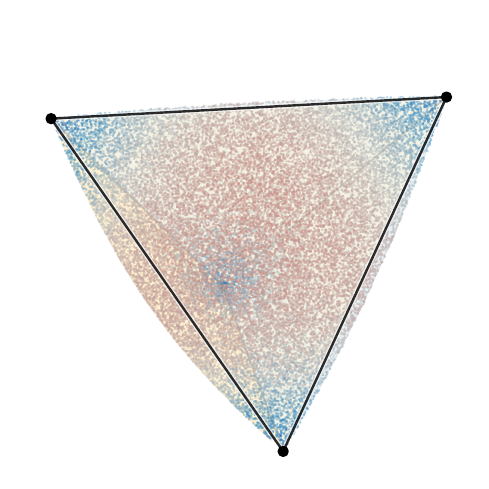}
    \caption{Product-sphere Gram law \(\eta_{3,3}\)}
  \end{subfigure}\hfill
  \begin{subfigure}[t]{.31\linewidth}
    \centering
    \includegraphics[width=\linewidth]{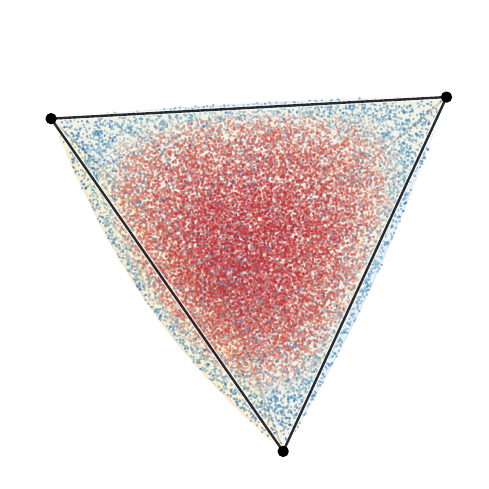}
    \caption{Euclidean-volume law \(\mu_{3,3}^{\mathrm E}\)}
  \end{subfigure}\hfill
  \begin{subfigure}[t]{.31\linewidth}
    \centering
    \includegraphics[width=\linewidth]{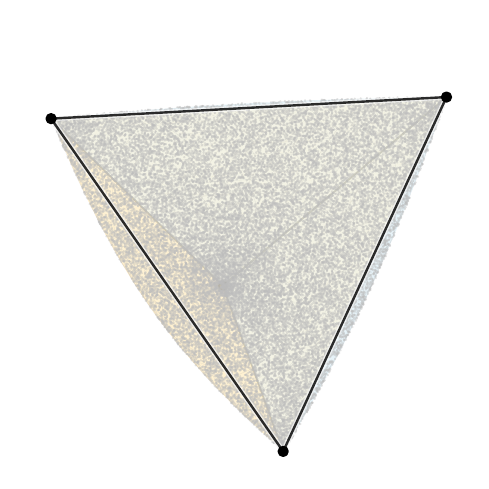}
    \caption{Quotient-volume law \(\mu_{3,3}^{Q}\)}
  \end{subfigure}
  \par\medskip
  \includegraphics[width=.75\linewidth]{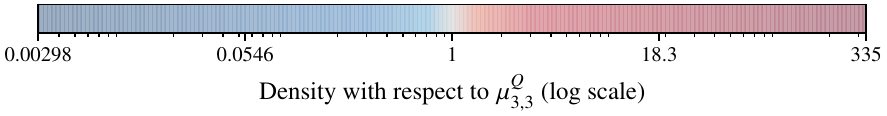}
  \caption{Three independent sampling of $\Crk 33$ with distributions densities given in \cref{cor:ambient-euclidean-volume-lift}. Colors correspond to theoretical density derivative with respect to $\mu^Q_{3,3}$. }
  \label{fig:volume-three-natural-laws}
\end{figure}

\begin{proof}
The quotient lift follows from \cref{thm:uniform-rank-r-sampler}.
For the ambient lift, apply
\cref{prop:weighted-quotient-volume-transfer} with \(h=w\), where
\(w=\dd\Vol_{\mathrm E}/\dd\Vol_Q\).
By \cref{thm:quotient-versus-ambient-volume},
\[
  \frac{w(\Gamma_s(X))}{D_s(X)}
  =2^{(s-n)/2}\det(E_X)^{\frac{n-s+1}{2}}
    \sqrt{\det A_{\Gamma_s(X)}}.
\]
More precisely, before normalization,
\[
  (\Gamma_s)_\#\left(
    \ind{\Mss}\det(E_X)^{\frac{n-s+1}{2}}
    \sqrt{\det A_{\Gamma_s(X)}}\,\mathcal U_{M_s}
  \right)
  =2^{(n-s)/2}\kappa_{n,s}\Vol_{\mathrm E}.
\]
Normalizing both sides gives the ambient lift in the table.

The constant source density gives \(\eta_{n,s}\) by definition,
and coarea yields
\[
  \dd\eta_{n,s}
  =\kappa_{n,s}D_s\,\dd\Vol_Q.
\]
Dividing this density by \(w\), the density of ambient volume
with respect to quotient volume, gives
\[
  \frac{\dd\eta_{n,s}}{\dd\Vol_{\mathrm E}}(C)
  =\frac{\kappa_{n,s}D_s(C)}{w(C)}
  =2^{(n-s)/2}\kappa_{n,s}R_{n,s}(C).
\]
Passing from each volume measure to its normalized law changes
only the constant prefactor. Thus the densities of \(\eta_{n,s}\)
with respect to \(\mu_{n,s}^{Q}\) and
\(\mu_{n,s}^{\mathrm E}\) are constant multiples of \(D_s\) and
\(R_{n,s}\), respectively. Since \(\eta_{n,s}\) has total mass
one, integrating each density against its reference law determines
the constant and gives the stated formulas.
\end{proof}

The final corollary determines when the inverse orbit-volume correction
is unnecessary.

\begin{corollary}[Uniformity of the raw Gram law]
\label{cor:raw-gram-not-uniform}
For \(n\ge s\ge2\), the raw Gram law \(\eta_{n,s}\) coincides
with the quotient-uniform law \(\mu_{n,s}^{Q}\) if and only if
\(s=2\).
\end{corollary}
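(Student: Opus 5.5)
By \eqref{eq:volume-section-raw-gram-density}, the raw Gram law has density \(\kappa_{n,s}D_s\) with respect to \(\Vol_Q\). By \cref{cor:ambient-euclidean-volume-lift}, its density with respect to \(\mu_{n,s}^{Q}\) is \(D_s/\int D_s\,\dd\mu_{n,s}^{Q}\). Hence \(\eta_{n,s}=\mu_{n,s}^{Q}\) if and only if \(D_s\) is \(\mu_{n,s}^{Q}\)-almost everywhere constant on \(\Crk{s}{n}\). Since \(D_s\) is continuous and \(\Vol_Q\) charges every nonempty open subset of the connected manifold \(\Crk{s}{n}\) (\cref{thm:path-connectedness-fixed-rank-strata}), this is equivalent to \(D_s\) being constant everywhere on the stratum. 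The proof therefore reduces to deciding when \(D_s\) is constant.

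For \(s=2\), we have \(D_2(C)=(\lambda_1(C)+\lambda_2(C))^{1/2}=(\tr C)^{1/2}=\sqrt n\), because the only positive eigenvalues are \(\lambda_1,\lambda_2\) and \(\tr C=n\) by the unit diagonal. So \(D_2\) is constant and the two laws coincide. This matches \cref{ex:volume-section-two-by-two}, where the quotient law is the arcsine law and the raw Gram law of two independent uniform circle points gives \(c=\cos\theta\) with \(\theta\) uniform.

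For \(s\ge3\), I will exhibit two points of \(\Crk{s}{n}\) with different values of \(D_s\). Note that \(D_s(C)^2=\prod_{a<b}(\lambda_a+\lambda_b)\) with \(\sum_a\lambda_a=n\).

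The first point is a balanced configuration with \(E_X=(n/s)\I_s\). For example, take \(X\) whose rows are chosen from a unit-norm tight frame in \(\R^s\). Such frames exist for every \(n\ge s\); for instance, the rows \(\sqrt{s/n}\,\times\) columns of an \(n\times s\) matrix with orthonormal columns and equal row norms, obtainable from the harmonic frame of \(\R^s\) (a truncated DFT, realified). This gives \(D_s^2=(2n/s)^{\binom s2}\).

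The second point is the configuration \(Z_0\) of \cref{prop:restricted-holonomy-rank-at-least-three}, where \(E_{Z_0}=\diag(n-s+1,1,\ldots,1)\). If \(n>s\), its spectrum differs from the balanced one. If \(n=s\), the first point is itself \(\I_s\), so I will instead perturb along the family \(X_\varepsilon\) with rows near \(e_i\) to move the spectrum off \(\I\).

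To compare the two values cleanly, I will use the strict concavity of \(\log\) under a constrained sum. The function \(\sum_{a<b}\log(\lambda_a+\lambda_b)\) is strictly Schur-concave on \(\{\sum\lambda_a=n,\ \lambda_a>0\}\) when \(s\ge3\). Indeed, each \(\lambda_a\) appears in \(s-1\ge2\) pairs, so a nonconstant spectrum strictly lowers it, by Jensen applied to the linear map \(\lambda\mapsto(\lambda_a+\lambda_b)_{a<b}\), which is injective for \(s\ge3\). Thus \(D_s\) attains its maximum exactly at the balanced spectrum, and any point of \(\Crk{s}{n}\) with a nonbalanced spectrum has strictly smaller \(D_s\). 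This makes the \(n=s\) case trivial as well: any \(X\) with unit rows and \(E_X\ne\I_s\), for instance \(\I_s\) with its first row slightly rotated toward \(e_2\), gives a nonconstant \(D_s\).

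The main obstacle is the existence of the balanced point for arbitrary \(n\ge s\). It is not actually needed: it suffices to have \emph{two} distinct spectra, and by injectivity of \(\lambda\mapsto(\lambda_a+\lambda_b)\) together with strict concavity, two spectra lying at different distances from balance give different values. To avoid invoking tight frames, I can instead use continuity along a path in \(\Mss\) from \(Z_0\) to a perturbed point, showing \(D_s\) is nonconstant along it by a derivative computation, \(\dd\log D_s=\tfrac12\sum_{a<b}(\dd\lambda_a+\dd\lambda_b)/(\lambda_a+\lambda_b)\), which is nonzero for a suitable tangent direction when the \(\lambda\) are not all equal.
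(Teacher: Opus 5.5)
Your reduction of $\eta_{n,s}=\mu_{n,s}^{Q}$ to constancy of $D_s$ on $\Crk{s}{n}$, and your $s=2$ case via $\lambda_1+\lambda_2=\tr C=n$, are exactly the paper's. For $s\ge3$ you argue differently, through strict concavity of $\lambda\mapsto\sum_{a<b}\log(\lambda_a+\lambda_b)$ on the simplex $\{\sum_a\lambda_a=n,\ \lambda_a>0\}$. As you first state it, with a balanced comparison point, this route is correct. Injectivity of the pair-sum map for $s\ge3$ gives strict concavity, and symmetry makes $(n/s)\mathbf 1$ the unique maximizer. So any achievable non-balanced spectrum has strictly smaller $D_s$ than an achievable balanced one. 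For $n=s$ the balanced point is $\I_s$, so that case is complete. For $n>s$ you need a unit-norm tight frame of $n$ vectors in $\R^s$. This is classical (harmonic frames work), but you only sketch it, and with the wrong scaling: if $Y\in\R^{n\times s}$ has orthonormal columns and equal row norms, the unit-row factor is $\sqrt{n/s}\,Y$, not $\sqrt{s/n}\,Y$.

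The step you use to declare the balanced point unnecessary is false. Strict concavity and injectivity of $\lambda\mapsto(\lambda_a+\lambda_b)_{a<b}$ do not imply that spectra at different distances from $(n/s)\mathbf 1$ give different values of $D_s$.

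\begin{itemize}
\item For $s=3$ one has $D_3^2=\prod_a(n-\lambda_a)$. Take $n=3$ and $0<t<1/2$. The spectra $(1+t,1+t,1-2t)$ and $(1+2t,1-t,1-t)$ both lie at distance $\sqrt6\,t$ from $(1,1,1)$, yet give $2(4-3t^2+t^3)$ and $2(4-3t^2-t^3)$.
\item By the intermediate value theorem there are therefore also spectra at different distances with equal $D_3$.
\end{itemize}

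What strict Schur-concavity does give is a strict inequality when one achievable spectrum strictly majorizes another. Your derivative fallback is also only a sketch. You would still have to exhibit a direction $H$ with $\diag(HX^\top)=0$ along which $\log D_s=\tfrac12\log\det\mathcal L_{E_X}$ has nonzero derivative. It should be written this way because the eigenvalue $1$ of $E_{Z_0}$ is repeated, so your individual $\dd\lambda_a$ are not defined there.

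The paper avoids all of this with an explicit family. Take rows $x_1=\cos\theta\,e_1+\sin\theta\,e_2$, $x_2=\cos\theta\,e_1-\sin\theta\,e_2$, $x_i=e_i$ for $3\le i\le s$, and $x_i=e_1$ for $i>s$. With $\beta=2\sin^2\theta$, this gives $E_{X_\theta}=\diag(n-s+2-\beta,\beta,1,\ldots,1)$ and $D_s(X_\theta)^2=2^{(s-2)(s-3)/2}(n-s+2)\bigl((n-s+3-\beta)(1+\beta)\bigr)^{s-2}$, which is visibly nonconstant on $0<\beta<2$. At $\beta=1$ this family has the spectrum of your $Z_0$, and for $0<\beta<1$ its spectrum strictly majorizes that one. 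So the same family also completes your concavity argument without any tight frame.
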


\begin{proof}
By \cref{cor:ambient-euclidean-volume-lift}, the density of
\(\eta_{n,s}\) with respect to \(\mu_{n,s}^{Q}\) is proportional
to \(D_s\). Since \(D_s\) is continuous and quotient volume is
positive on every nonempty open subset of the stratum, the two
laws coincide if and only if \(D_s\) is constant.
For \(s=2\), the trace identity gives
\[
  D_2(C)=\sqrt{\lambda_1(C)+\lambda_2(C)}=\sqrt n.
\]

For \(s\ge3\), let \(X_\theta\) have row transposes
\[
  x_1=\cos\theta\,e_1+\sin\theta\,e_2,\qquad
  x_2=\cos\theta\,e_1-\sin\theta\,e_2,
\]
with \(x_i=e_i\) for \(3\le i\le s\), \(x_i=e_1\) for \(i>s\),
and \(0<\theta<\pi/2\). Setting \(\beta\coloneq 2\sin^2\theta\) gives
\[
  E_{X_\theta}
  =\diag(n-s+2-\beta,\beta,1,\ldots,1).
\]
All diagonal entries are positive for \(0<\beta<2\), so
\(X_\theta\in\Mss\). Grouping the pair sums defining \(D_s\) yields
\[
  D_s(X_\theta)^2
  =2^{(s-2)(s-3)/2}(n-s+2)
    \bigl((n-s+3-\beta)(1+\beta)\bigr)^{s-2}.
\]
The quadratic factor
\[
  (n-s+3-\beta)(1+\beta)
  =n-s+3+(n-s+2)\beta-\beta^2
\]
is positive and nonconstant on \(0<\beta<2\).
Since \(s-2>0\), \(D_s\) is therefore nonconstant on
\(\Crk{s}{n}\), proving the claim.
\end{proof}

For \(s=1\), the construction in \cref{rem:rank-one-stratum}
gives the uniform discrete law on the rank-one correlation matrices.

\section*{Acknowledgements}
This work has been supported by the U.S. Department of Energy, Office
of Science, Office of Advanced Scientific Computing Research,
Mathematical Multifaceted Integrated Capability Centers (MMICCs)
program, under Contract No. DEAC02-05CH11231 at Lawrence Berkeley
National Laboratory.
This work was supported by ERC grant \#786854 \emph{G-Statistics} from the
European Research Council under the European Union's Horizon 2020 research
and innovation program, and by the French government through the
\emph{3IA C\^ote d'Azur} Investments ANR-23-IACL-0001 managed by the National
Research Agency.

OpenAI's ChatGPT was used to assist with manuscript organization,
language, and the drafting and checking of mathematical derivations.
The authors assume
responsibility for all content.

\bibliographystyle{plain}
\phantomsection
\addcontentsline{toc}{section}{References}

\bibliography{references}

\end{document}